\title{Acylindricity in Higher Rank\\ \vspace{5pt} Part I: Fundamentals}
\newcounter{scomments}
\newcounter{tcomments}
\let\@wraptoccontribs\wraptoccontribs
\newcounter{cl}
\newcounter{clno}
\newtheorem{theorem}{Theorem}[section]
\newtheorem*{question}{Question}
\newtheorem{prop}[theorem]{Proposition}
\newtheorem{cor}[theorem]{Corollary}
\newtheorem{lemma}[theorem]{Lemma}
\theoremstyle{definition}
\newtheorem{defn}[theorem]{Definition}
\newtheorem{remark}[theorem]{Remark}
\newtheorem{thmA}{Theorem}
\DeclareSymbolFont{bbsymbol}{U}{bbold}{m}{n}
\DeclareMathSymbol{\bbcomma}{\mathbin}{bbsymbol}{"2C}
\newcommand{\Z}{\mathbb Z}
\newcommand{\consttwo}{E}
\newcommand{\op}{\operatorname}
\renewcommand{\~}[1]{\overline{#1}}
\renewcommand{\geq}{\geqslant}
\renewcommand{\leq}{\leqslant}
\newcommand{\<}{\left\langle}
\renewcommand{\>}{\right\rangle}
\newcommand{\8}{\infty}
\renewcommand{\:}{\colon}
\renewcommand{\a}{\alpha}
\newcommand{\Aut}{\mathrm{Aut}\,}
\renewcommand{\b}{\beta}
\newcommand{\cs}[1]{\mathrm{Set}_{#1}}
\renewcommand{\Cap}[2]{\underset{#1}{\overset{#2}{\bigcap} }}
\renewcommand{\Cup}[2]{\underset{#1}{\overset{#2}{\bigcup} }}
\newcommand{\D}{\mathfrak{D}}
\newcommand{\e}{\epsilon}
\newcommand{\f}{\varphi}
\newcommand{\Fl}{\mathbb{F}}
\newcommand{\fix}{\mathrm{Fix}}
\newcommand{\F}{\mathbb {F}}
\newcommand{\g}{\gamma}
\newcommand{\G}{\Gamma}
\newcommand{\Homeo}{\mathrm{Homeo}}
\newcommand{\hood}{\mathcal{N}}
\newcommand{\Inf}[1]{\underset{#1}{\inf}}
\newcommand{\Inn}{\mathrm{Inn}\,}
\renewcommand{\int}{\varint}
\newcommand{\Isom}{\mathrm{Isom}\,}
\renewcommand{\L}{\Lambda}
\newcommand{\Lim}[1]{\underset{#1}{\lim}}
\renewcommand{\max}[1]{\underset{#1}{\mathrm{max}}}
\newcommand{\N}{\mathbb{N}}
\renewcommand{\O}{\mathcal{O}}
\newcommand{\Out}{\mathrm{Out}}
\renewcommand{\P}{\mathbb{P}}
\newcommand{\Prod}[2]{\underset{#1}{\overset{#2}{\prod} }}
\newcommand{\PSL}{\mathrm{PSL}}
\newcommand{\PGL}{\mathrm{PGL}}
\newcommand{\Q}{\mathbb{Q}}
\newcommand{\R}{\mathbb{R}}
\newcommand{\set}{\mathrm{Set}}
\newcommand{\SL}{\mathrm{SL}}
\newcommand{\stab}{\mathrm{Stab}}
\newcommand{\Sum}[2]{\underset{#1}{\overset{#2}{\sum} }}
\newcommand{\Sup}[1]{\underset{#1}{\sup}\,}
\newcommand{\Sym}{\mathrm{Sym}}
\newcommand{\X}{\mathbb{X}}
\newcommand{\mfN}{N}
\newcommand{\nl}{\mathrm{(NL)}}
\author{Talia Fern\'os, Sahana H Balasubramanya}
\address{Department of Mathematical Sciences, Lafayette College, Pardee Hall, USA}
\email{hassanba@lafayette.edu}
\address{Department of Mathematics, Vanderbilt University, 1326 Stevenson Center, Nashville, TN, USA}
\email{Talia.Fernos@Vanderbilt.edu}
\begin{document}
\renewcommand\windowpagestuff{\rule{2cm}{2cm}}
\begin{abstract}
We present a new notion of non-positively curved groups: the collection of discrete countable groups acting (AU-)acylindrically on finite products of $\delta$-hyperbolic spaces with general type factors and associated subdirect products. This work is inspired by the classical theory of $S$-arithmetic lattices and the flourishing theory of acylindrically hyperbolic groups.  In this paper - the first of three - we develop various fundamental results, explore elementary subgroups in higher rank, and exhibit a free vs abelian Tits Alternative. Along the way we give  representation-theoretic proofs of various results about acylindricity -- some methods are new even in the rank-one setting.    
 
The vastness of this class of groups is exhibited by recognizing that it contains $S$-arithmetic lattices with rank-one factors, acylindrically hyperbolic groups, colorable HHGs, groups with property (QT), and enjoys robust stability properties. 
\end{abstract}

\maketitle 

\begin{center}
\emph{In honor of Bhama Srinivasan\\ and Alex Furman and Denis Osin \\ on the occasions of their recent multiples of 10$^\textit{th}$ birthdays}
\end{center}
\begin{figure}[h]
    \centering
    \includegraphics[width=.58\linewidth]{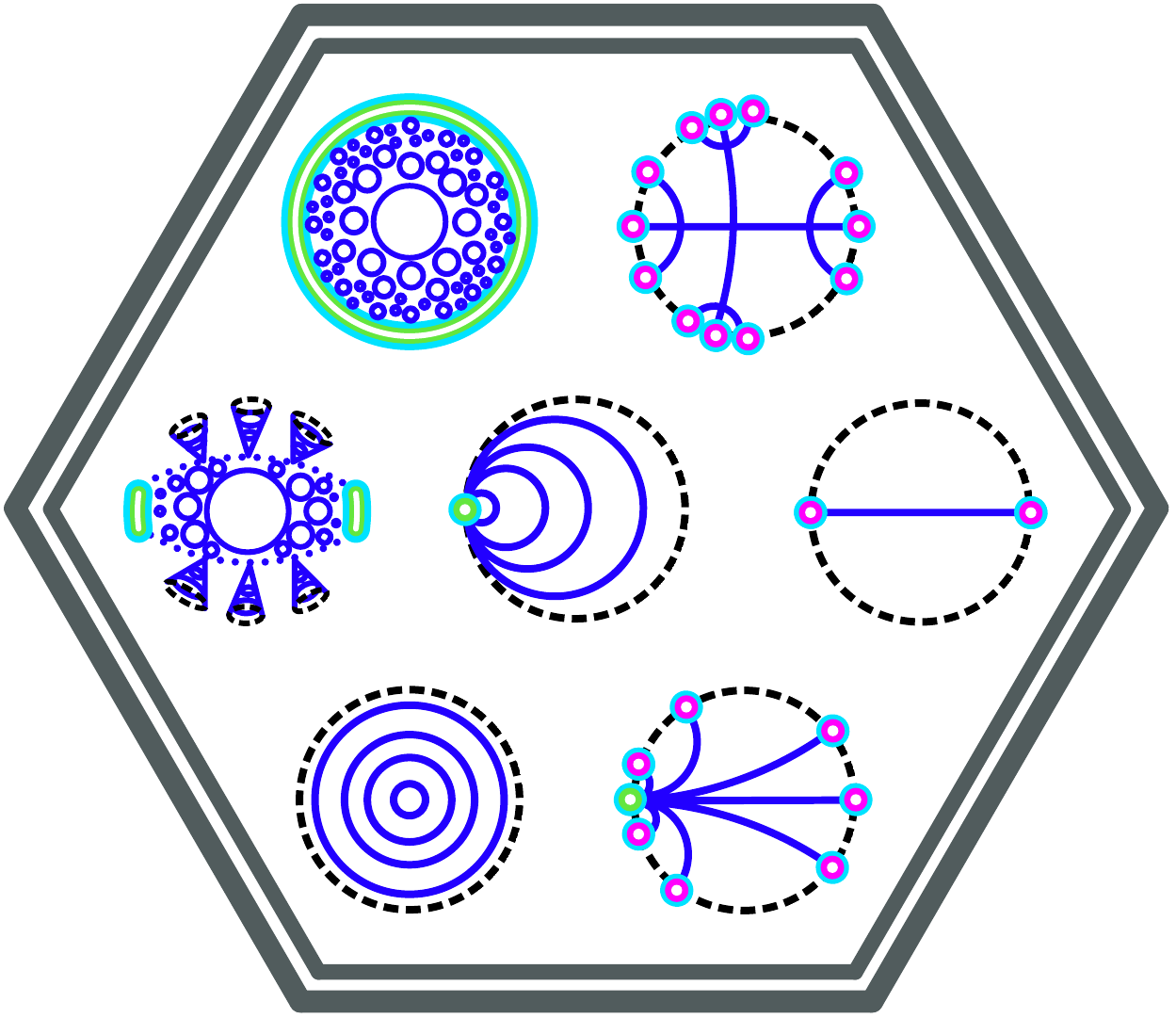}
    
    \emph{Figure showing the classification of actions on a $\delta$-hyperbolic space}
       
    Counter-clockwise from 3 o'clock: lineal, general type, tremble, rift, rotation, quasi-parabolic 

    Center: parabolic 
\end{figure}

\thispagestyle{empty}

\newpage
\tableofcontents
\thispagestyle{empty}

\newpage 
\addtocounter{page}{-2}
\section*{Table of Standing Notation}
\begingroup
\renewcommand*{\arraystretch}{1.75}
\begin{center}
\begin{table}[H]
\begin{tabular}{|c|c|}
 \hline 
 $\G$ & discrete countable group 
 \\
  \hline 
 $H$ & arbitrary subgroup 
 \\
 \hline
   $X$ & complete separable $\delta$-hyperbolic   \\
   or  $X_i$ & geodesic metric space \\
 \hline
  $Y, Z$ & arbitrary metric space 
  \\
\hline
$\hood_\e(\cdot)$, $\~\hood_\e(\cdot)$  & open, closed $\e$-neighborhood   \\
\hline
$\set_\e A$ & pointwise $\e$-coarse stabilizer of the set $A$
\\ 
\hline
$\stab_\G A$ & (setwise) stabilizer of the set $A$ in $\G$ \\
\hline 
$\fix_\G A$ & pointwise fixator of the set $A$ in $\G$ \\
 \hline  $R=R(\e), N=N(\e)$ & acylindricity constants, depend on $\e$
 \\\hline
 $\X= \Prod{i=1}{\D}X_i$ & product  of $\delta$-hyperbolic spaces 
 \\\hline
 $x\in\X, x=(x_1, \dots,x_i,\dots x_\D)$ & notation for coordinate factors
 \\
 \hline
 $\D$ & number of factors in product
 \\\hline 
 $\~X=\partial X\cup X$ & the Gromov bordification
 \\\hline $\~\X = \Prod{i=1}{\D} \~X_i$ & bordification of $\X$\\
 \hline 
 $\partial \X = \~\X \setminus \X$ & boundary of $\X$
 \\\hline $\partial_{reg}\X = \Prod{i=1}{\D} \partial X_i$ & regular boundary of $\X$\\
 \hline 
 $\Sym_\X(\D)$ & permutation group of isometric factors\\
 \hline 
 $\Aut \X =  \Sym_\X(\D)  \ltimes \Prod{i=1}{\D} \Isom X_i$ & automorphism group
 \\
 \hline
  $\Aut_{\!0} \X = \Prod{i=1}{\D} \Isom X_i$ & factor preserving automorphism group 
  \\\hline
  $g\in \Aut_{\!0} \X$, $g=(g_1, \dots, g_i, \dots, g_\D)$ & notation for coordinate factors
  \\
  \hline $\Isom_{\! 0} L$ & kernel of action on $\partial L$, $L$ a quasiline\\
  \hline 
  $\G_0\leq \G$ & (usually) the subgroup that maps to $\Aut_{\!0}\X$ \\
  \hline 
  $\mathfrak C_\G(g), \mathfrak N_\G(H)$ &   centralizer, normalizer in $\G$, for $g\in \G$, $H\leq \G$  \\
  \hline 
\end{tabular}
\caption{}
\label{table:notation_p1}
\end{table}
\end{center}
\endgroup

\section{Introduction}

Taking inspiration from the work of Sela for groups acting on trees \cite{SelaAcylAcces}, in 2008 Bowditch introduced the study of acylindrical actions  on general $\delta$-hyperbolic spaces with an eye towards understanding the action of (surface) mapping class groups on their respective hyperbolic curve complexes \cite{MasurMinsky, Bowditch2008}.

 In 2015, Osin introduced the class of acylindrically hyperbolic groups (i.e. those that admit a general type acylindrical action on a $\delta$-hyperbolic space), and proved that this succinct condition unifies many other classes which had previously been studied \cite{Acylhyp}. Philosophically, one may view the notion of an acylindrically hyperbolic group as a generalization of a uniform lattice in the isometry group of a locally compact $\delta$-hyperbolic space.  Since then, many implications of acylindrical hyperbolicity  have been established, guided by this philosophy  but requiring new techniques that circumvent the strength of properness and coboundedness conferred by  a geometric action (see \cite{surveyosin} for a survey). Because the defining condition is so succinct, the class of acylindrically hyperbolic groups  is vast and includes the groups classically studied in geometric group theory with hyperbolic-like properties: e.g.  (irreducible) right angled Artin groups,  $\Out(F_n)$, and many hierarchically hyperbolic groups (The class of hierarchically hyperbolic groups is another example of a class that unifies various others \cite{BHS1}). Noteworthy  members of this class are mapping class groups as well as all lattices (uniform or not) in the isometry groups of locally compact hyperbolic spaces, e.g. simple rank-one Lie groups.

In 1979 Harvey asked whether mapping class groups were arithmetic lattices \cite{Harvey}. Although this was answered in the negative by Ivanov \cite{Ivanov}, there are many philosophical parallels between mapping class groups and lattices (e.g. the finiteness of their outer-automorphism groups \cite{Ivanov1984, McCarthy, Gundogan}). Our work here is the consequence of extending nonelliptic-acylindrical actions as a proxy for geometric actions to the higher-rank setting. Specifically, we shall consider finite products of (general type) $\delta$-hyperbolic spaces with the $\ell^2$-product metric as \emph{nonpositively curved spaces} as the playground and groups admitting acylindrical-like actions as the players. With the aim of creating a framework that encompasses acylindrically hyperbolic groups as well as $S$-arithmetic lattices, uniform or not, we allow for non-uniform acylindricity as well. The two concepts are united under the term of \emph{acylindricity of ambiguous uniformity} (which we refer to as AU-acylindricity, see Definition~\ref{defn:typesofactions_p1}).  This landscape is large as is witnessed by the fact that it contains the already large class of acylindrically hyperbolic groups,  $S$-arithmetic lattices in semi-simple Lie groups with rank-one factors and enjoys robust stability properties. 
In this first of three papers, we develop a rigorous framework in which to place this playground and establish necessary fundamental results, such as a Tits Alternative.

In this Part I, we use the existing results of acylindrically hyperbolic groups, i.e. acylindricity in rank-one, to guide our development of the semi-simple theory of AU-acylindricity in higher rank. As Section~\ref{sec:mainthms} shows, we indeed extend many of the rank-one results. Consequently, this paper also serves as a starting point for systematically studying (AU-) acylindrical actions on finite products of $\delta$-hyperbolic spaces, a study that necessitated revisiting proofs in rank-one, and a natural trifurcation of elliptic actions. Understanding the nature of the elliptic actions will also allow us to ``tame" the actions in Part II \cite{BFPart2}. This will turn out to be a key step in establishing  a type of semi-simplicity in our higher rank setting, concretely bridging our two sources of inspiration: acylindrically hyperbolic groups and $S$-arithmetic lattices in products of rank-one groups.

 Part II \cite{BFPart2} also considers semi-simple-type consequences, such as the existence of a ``strongly canonical" product decomposition and its ramification for  a recent conjecture of Sela  announced during his Aisenstadt Chair Distinguished Lecture Series in June 2023 at the Centre de Recherches Math\'emathiques in Montreal overlapping with the \emph{Groups Around 3-Manifolds} conference, both of which were part of the Thematic Program in Geometric Group Theory. In Part III, we shall extend results from \cite{AbbottManning} to the higher-rank setting and examine a type of irreducibility witnessed on the level of maximal flats \cite{BFPart3}.

It is worth noting that  within the framework of AU-acylindrical 
actions on hyperbolic spaces, elliptic and parabolic actions are not capable of yielding meaningful information about the acting group. Indeed, given an isometric action of a group $\G$ on a metric space $Y$ one can create new ($\delta$-hyperbolic) spaces $Y_{ell}$, and $Y_{par}$ on which $\G$ acts by isometries. To obtain  $Y_{ell}$, one may add an additional ``central" point and attach a length-1 edge from that point to all other points in $Y$, yielding a space of diameter 2. The corresponding action of $\G$ has a fixed point and is acylindrical. In the case of $Y_{par}$, one may perform a horoball construction, as in  \cite{GrovesManning} and obtain a level-preserving, i.e. parabolic action of $\G$ on $Y_{par}$. Both such constructions yield an AU-acylindrical action on a $\delta$-hyperbolic space. Since this applies to any isometric action, clearly no meaningful information can be gleaned from such a framework.  This is one reason why we eventually focus on the ``semi-simple" case, i.e. the situation where the factor actions are all of general type, though we do not always restrict the factor actions in this way.

When all factors of $\X$ admit general type actions, one may  view it as a non-elementary analogue of a vector space. In this sense, the theory we develop of AU-acylindrical actions on finite products of $\delta$-hyperbolic spaces can be seen from a representation-theoretic point of view. In particular,  the collection of \emph{all} such actions can provide meaningful information about the group, a truth that goes back to the classical theory of linear representations and is also reflected in the results from \cite{ABO}. This is in contrast with the trend of considering geometric actions as ``best". However, mapping class groups are both acylindrically hyperbolic and have Property (QT) i.e. act by isometries on a finite product of quasi-trees so that the orbit maps are quasi-isometric embeddings. Therefore they admit a general type acylindrical action on a single hyperbolic space, as well as a proper and hence AU-acylindrical action on a product of quasi-trees.  Because each of these actions bring to light different and interesting properties, their relative value is incomparable. We also note that while elliptic actions can not provide meaningful information from the point of view of  ``nonpositive curvature", the class of such actions for a group $\G$  (via the above construction)  encompasses the entire universe of its isometric actions, including such important classes as its isometric actions on Hilbert or Banach spaces, in particular the left-regular or quasi-left-regular representations.

A point of interest to the geometric group theorist may be the fact that $\X$, under mild hypotheses on the factors, has a highly structured quasi-isometry group (e.g. the analogue to our $\Aut \X$ for quasi-isometries; which is virtually the product of the isometry groups of the factors, see Table~\ref{table:notation_p1}, or Section~\ref{Subsec:Products}) \cite{KleinerLeeb,EskinFarb,KapovichKleinerLeeb,Bowditch2016}. While quasi-isometry is the leading type of equivalence within  geometric group theory, particularly in the context of hyperbolicity, we remark that there are many interesting and important notions that are not invariant under quasi-isometry, for example the geometric CAT(0) property or the rigid Property (T) for groups (see \cite[Theorem 3.6.5]{BdlHV}). Relevant to our work is the fact that cocompact lattices in the same product of ambient groups are necessarily quasi-isometric, independent of whether they are reducible or irreducible. Therefore finitely presented simple lattices in products of trees  \cite{BurgerMozes2000, Wise} are quasi-isometric to reducible cocompact lattices in the same, which are virtually products of free groups. Therefore, while the tension between hyperbolicity and quasi-isometry has been incredibly fruitful, when we enter the world of non-positive curvature in higher-rank,  we must allow ourselves to let go of quasi-isometry as the sole focus.

 Since we are considering $\X$ with $d_2$, the $\ell^2$-product metric as nonpositively curved, one may wonder how far our designated group $\Aut \X$ is from the isometry group $\Isom(\X, d_2)$. Under natural hypotheses, these are the same.  We prove this using  \cite{FoertschLytchak}  in Part II \cite{BFPart2}.  We note that CAT(0) cube complexes effectively have the same structure \cite{Bregman2017} (see also \cite{CapraceSageev}).   

We note that we are not the first to consider variations on acylindricity: there are those studied by Hamenst\"adt \cite{Hamenstaedt}, Delzant \cite{Delzant}, and Genevois \cite{Genevois} (see also \cite{ChatterjiMartin}), effectively in the hyperbolic setting, as well as
Sela \cite{SelaHR1Pub, SelaHR2}  and  Wan and Yang
\cite{WanYang} in higher rank (see also \cite{Button}). One can easily verify that our nonuniform acylindricity implies the version studied by Hamenst\"adt. The weak acylindricity considered by Delzant is a uniform version of the WWPD property introduced by Bestvina-Bromberg-Fujiwara \cite{BBF}. Genevois' nonuniform acylindricity and weak acylindricity is respectively our AU-acylindricity and the standard acylindricity condition with $\e=0$.  Delzant's weak acylindricity also inspired the higher rank work of both Sela and Wan-Yang. The latter studies proper actions on products of hyperbolic spaces with factor actions that are weakly acylindrical in the sense of Delzant. Sela's versions of acylindricity concern his recent work on higher rank JSJ decompositions and we will consider these more deeply in Part II \cite{BFPart2} (see also \cite[Section 10]{BF}).


\section{Main Theorems}\label{sec:mainthms}

As Table~\ref{table:notation_p1} summarizes, we will use $\X$ to denote a finite product of $\D$-many complete, separable, $\delta$-hyperbolic geodesic spaces (which will often have isometry groups whose actions are of general type). Our target group for representations is  $\Aut \X$, which consists of  permutations of isometric factors along with the product of the isometry groups of the factors. 

\begin{defn} Let $\G$ be a discrete countable group, $Y$ a metric space and  $\G\to \Isom Y$ an isometric action.
\begin{itemize}
    \item If $\e>0$ then the joint $\e$-coarse stabilizer of points $x,y\in Y$ will be denoted by $$\set_\e\{x,y\} = \{g\in \G: d(gx,x), d(gy,y)\leq \e\}.$$
    \item The action is called \emph{AU-acylindrical}, or \emph{acylindrical of ambiguous uniformity} if for every $\e>0$, there exists  $R\geq 0$ such that for any points $x,y \in Y$ with $d(x,y) \geq R$, we have an ambiguous bound on cardinality $| \set_\e\{x,y\}| <\8$. 
    \item  The action is called \emph{acylindrical} if for every $\e>0$, there exists  $R\geq 0, N \geq 0$ such that for any points $x,y \in Y$ with $d(x,y) \geq R$, we have the uniform bound on cardinality $| \set_\e\{x,y\}| < N$. 
     \item  The action is called \emph{nonuniformly acylindrical} if it is AU-acylindrical but not acylindrical.
\end{itemize}    
\end{defn}

Groups that admit a non-elliptic acylindrical action in rank-one satisfy a Tits Alternative: they are either virtually  $\Z$ or contains a nonabelian free group \cite[Theorem 1.1]{Acylhyp}. 
Let $H$ be a group. We say a group $G$ is \emph{virtually} $H$ if $G$ contains a finite index subgroup that is isomorphic to $H$.
We contribute to the landscape of Tits Alternatives with:

\begin{thmA}[Tits Alternative]\label{intro:titsalt}
    Let $\G\to \Aut\X$ be acylindrical and not elliptic. Either $\G$ contains a nonabelian free group or $\G$ is virtually $\Z^k$, where $1\leq k\leq \D$, where $\D$ is the number of factors of $\X$.
\end{thmA}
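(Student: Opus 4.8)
The plan is to reduce to rank-one phenomena coordinate by coordinate, after first replacing $\G$ by the finite-index subgroup that respects the factor decomposition. Since $\Sym_\X(\D)$ is finite, the subgroup $\G_0\leq\G$ mapping into $\Aut_{\!0}\X=\prod_{i=1}^{\D}\Isom X_i$ has finite index, and both horns of the alternative — containing a nonabelian free group, or being virtually $\Z^k$ for a fixed $k$ — are insensitive to passing to and from finite-index subgroups; so I may assume $\G=\G_0$, with coordinate actions $\rho_i\colon\G\to\Isom X_i$. Because $\X$ carries the $\ell^2$ product metric, $\G\curvearrowright\X$ is elliptic if and only if every $\rho_i$ is, so the set $L:=\{i:\rho_i\text{ is not elliptic}\}$ is nonempty. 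Here I would invoke the coordinate trichotomy for acylindrical actions on $\X$ — the higher-rank form of the Bowditch--Osin classification, which I take from the fundamental results of this part — to conclude that each $\rho_i$ is elliptic, lineal, or of general type; write $E$ for the complement of $L$, the elliptic coordinates.

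If some $\rho_i$ is of general type, I am done: by definition $\rho_i(\G)$ then contains loxodromic isometries $a,b$ of $X_i$ with disjoint pairs of fixed points in $\partial X_i$, and choosing lifts of $a,b$ to $\G$ and applying Klein ping-pong to their north--south dynamics on $\partial X_i$ shows that $\langle a^n,b^n\rangle\leq\G$ is free of rank $2$ for all large $n$. (Only the boundary dynamics of one $\delta$-hyperbolic factor is used here; acylindricity enters this case only through the trichotomy, which is exactly what rules out focal or parabolic — i.e. solvable, Baumslag--Solitar-type — coordinate actions.) So assume henceforth that every $\rho_i$, $i\in L$, is lineal.

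Fix an orientation of each of the finitely many invariant quasilines and pass to the finite-index subgroup $\G'\leq\G$ with $\rho_i(\G')\leq\Isom_{\! 0}X_i$ for all $i\in L$; then $\tau:=(\tau_i)_{i\in L}\colon\G'\to\R^{|L|}$, where $\tau_i(g)$ is the signed translation length of $\rho_i(g)$ on $X_i$, is a genuine homomorphism. Put $\Lambda:=\tau(\G')$ and $K:=\ker\tau$. Since every element of $\G'$ acts with zero translation length on each elliptic factor, $|g|_{\X}=|\tau(g)|$ for all $g\in\G'$, so the nonzero elements of $\Lambda$ are precisely the values $\tau(g)$ at elements $g$ acting loxodromically on some factor; invoking the uniform lower bound $|g|_{\X}\geq\e_0>0$ on the translation length of such an element (again a fundamental result of this part, extending Bowditch's lemma on translation lengths of loxodromics), $\Lambda$ contains no nonzero vector shorter than $\e_0$, hence is discrete, hence $\Lambda\cong\Z^k$ with $1\leq k\leq|L|\leq\D$ (and $k\geq1$ since $L\ne\emptyset$ and a lineal action has loxodromic elements, so $\Lambda\ne 0$). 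Finally $K$ is finite: its image in each $\Isom_{\! 0}X_i$, $i\in L$, consists of isometries of zero translation length and so is coarsely bounded, whence $K$ acts on $\X$ with orbits of some finite diameter $D$; taking any $g\in\G'\setminus K$, the element $g$ is loxodromic on some $X_i$ (so $d_2(x,g^nx)\to\infty$) and normalizes the normal subgroup $K$ (so $K$ also moves $g^nx$ by at most $D$), giving $K\subseteq\set_D\{x,g^nx\}$, which is finite once $d_2(x,g^nx)\geq R(D)$ by acylindricity. Thus $\G'/K\cong\Z^k$ with $K$ finite, so $\G'$ — and therefore $\G$ — is virtually $\Z^k$ with $1\leq k\leq\D$.

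The two substantive inputs are the cited structural facts about acylindrical product actions: the coordinate trichotomy, which both supplies the independent loxodromics in the general-type case and excludes the focal coordinate actions that would otherwise break the dichotomy; and the uniform positive lower bound on loxodromic translation lengths, which is exactly what forces $\Lambda$ to be discrete (hence a genuine $\Z^k$) rather than, say, a dense subgroup of $\R^{|L|}$, thereby ruling out conclusions such as ``$\G$ is virtually $\Z[1/2]$''. Granting those, the one delicate internal step is the finiteness of $K$, which I handle by playing the normal subgroup $K$ off against a single element that is loxodromic on some factor, using acylindricity of the action on the product.
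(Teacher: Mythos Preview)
Your coordinate trichotomy is false, and this is the central gap. You claim that under an acylindrical action $\G\to\Aut_{\!0}\X$, each factor action $\rho_i$ must be elliptic, lineal, or of general type; the paper proves no such thing, and the claim does not hold. For a counterexample that survives your ``no general type'' reduction: let $\G=\Z$ act on $X_1=\R$ by translation (oriented lineal, uniformly proper) and on $X_2=\mathbb H^2$ by $z\mapsto z+n$ (parabolic). Since $\set_\e\{(x_1,x_2)\}\subseteq\set_\e^{X_1}\{x_1\}=\{n:|n|\leq\e\}$, the diagonal action on $X_1\times X_2$ is uniformly proper and hence acylindrical, yet $\rho_2$ is parabolic. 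What the paper actually establishes (Theorem~\ref{thm:elimmixedfactors}) is only that the factors cannot \emph{all} be elliptic, parabolic, or quasiparabolic; it then \emph{removes} elliptic and parabolic factors without losing acylindricity (Corollary~\ref{cor: clean up acyl}) and excludes quasiparabolic factors only after arranging a fixed regular boundary point (Lemma~\ref{Lem:noqpinacyl}). Your argument for the finiteness of $K=\ker\tau$ relies on the false trichotomy: you infer ``zero translation length on each non-elliptic factor, hence coarsely bounded orbits,'' but a parabolic isometry has zero stable translation length and unbounded orbits, so if a parabolic factor sits in your $L$ the claim that $K$ has orbits of uniformly bounded diameter is unjustified.

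A second gap is that you assert $\tau=(\tau_i)_{i\in L}$ is a genuine homomorphism. The Busemann function associated to an oriented lineal action is in general only a \emph{quasimorphism}; it becomes a homomorphism when the acting group is amenable (or the space is proper). The paper earns this by first proving (Theorem~\ref{theorem: elementary acyl is amenable}) that under AU-acylindricity the fixator of a regular boundary pair with distinct factors has polynomial growth and is hence amenable---a substantial argument you have bypassed. Without a homomorphism, $\tau(\G')$ is not a subgroup of $\R^{|L|}$ and the discreteness step collapses. Similarly, the uniform lower bound on loxodromic translation length you invoke is not stated or proved in this paper; the paper's route is instead to show (Proposition~\ref{prop: acyldescends}) that AU-acylindricity descends along the Busemann map to the translation action on $\R^\D$, and then to apply Lemma~\ref{lem: AU eucl is Zk}. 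Your normality trick for bounding $|K|$ via a single loxodromic is a nice alternative once the parabolic factors are gone and amenability is in hand, but as written the argument rests on results the paper does not supply.
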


Closely linked to the proof of Theorem~\ref{intro:titsalt} is the structure of the stabilizers of elements in the regular boundary of $\X$. In this vein, we prove the following result, which says, roughly speaking, that in an acylindrical action on $\X$, fixed points on the regular boundary must occur in pairs. Recall that $\partial_{reg} \X = \Prod{i=1}{\D} \partial X_i$ is the  regular boundary.

\begin{thmA}\label{intro:stabs}
Suppose that $\xi \in \partial_{reg}  \X$, where $\G \to \Aut\X$ is acylindrical. If $\fix_\G \{\xi\}$ is infinite then there exists an $\xi' \in \partial_{reg} \X$ such that  $\xi\neq \xi'$ and yet $\fix_\G\{\xi\} = \fix_\G\{\xi, \xi'\}$. Moreover, if $\D'= |\{i: \xi_i\neq \xi_i'\}|$ then $\fix_\G \{\xi\}$ is virtually   $\Z^k$ for some $1 \leq k \leq \D'\leq \D$, where $\D$ is the number of factors of $\X$.
\end{thmA}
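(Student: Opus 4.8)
The plan is to pass to the factor‑preserving part of the action, extract from the fixed regular point $\xi$ a Busemann cocycle $\beta\colon\fix_\G\{\xi\}\to\R^\D$, use acylindricity to show that the level sets of $\beta$ are finite (so that $\fix_\G\{\xi\}$ has a quasi‑Euclidean structure), and finally exclude the ``quasi‑parabolic'' factor behaviour that would otherwise spoil the dimension bound. As $\Sym_\X(\D)$ is finite, the preimage $\G_0$ of $\Aut_{\!0}\X$ has finite index in $\G$; put $H=\fix_\G\{\xi\}$ and $H_0=H\cap\G_0=\fix_{\G_0}\{\xi\}$, still infinite. The kernel of $\G\to\Aut\X$ is finite (it lies in every $\set_\e\{x,y\}$), so replacing $\G$ by its image changes all fixators only by a finite group and affects neither the equality in the statement nor the assertion ``virtually $\Z^k$'', and the restriction of an acylindrical action to a subgroup is acylindrical with the same constants, so $H_0\curvearrowright\X$ is acylindrical. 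Writing $\pi_i\colon\Aut_{\!0}\X\to\Isom X_i$ for the $i$‑th factor projection, each $\pi_i(h)$ ($h\in H_0$) fixes $\xi_i\in\partial X_i$. I record one fact: \emph{a subgroup $L\leq H_0$ with bounded orbits on $\X$ is finite}. Indeed, fix a basepoint $o$ and a geodesic ray $\rho$ in $\X$ from $o$ to $\xi$ (a genuine ray, since $\xi\in\partial_{reg}\X$); for $\ell\in L$ the rays $\ell\rho$ and $\rho$ are asymptotic factor by factor with starting points a bounded distance apart, so $\sup_t d(\ell\rho(t),\rho(t))$ is bounded uniformly over $L$, and choosing $t$ with $d(\rho(0),\rho(t))$ exceeding the acylindricity radius at that scale bounds $|L|$.

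The main tool is a Busemann cocycle. Fix geodesic rays $\rho_i$ from $o_i$ to $\xi_i$ and horofunctions $b_i$ with $b_i(o_i)=0$; since $b_i$ is equivariant up to an additive error $O(\delta)$ under isometries of $X_i$ fixing $\xi_i$, each $\beta_i(h):=b_i(\pi_i(h)o_i)$ is a quasimorphism $H_0\to\R$ of defect $O(\delta)$, and I set $\beta=(\beta_1,\dots,\beta_\D)\colon H_0\to\R^\D$. A thin‑triangle estimate gives, for all $h\in H_0$,
\[
\bigl|\,\limsup_{t\to\infty} d(h\rho(t),\rho(t))-\|\beta(h)\|\,\bigr|\leq C,\qquad C=C(\delta,\D),
\]
the norm being Euclidean because $\X$ carries the $\ell^2$‑product metric. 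Acylindricity then forces every level set of $\beta$ to be finite: for fixed $r$ let $R_0,N_0$ be the acylindricity constants at scale $r+C+1$; given $h$ with $\|\beta(h)\|\leq r$ choose $T_h$ with $d(h\rho(t),\rho(t))\leq r+C+1$ for $t\geq T_h$; the sets $\{h:\|\beta(h)\|\leq r,\ T_h\leq n\}$ increase to $\{h:\|\beta(h)\|\leq r\}$ and, for $n\geq R_0$, each lies in $\set_{r+C+1}\{\rho(n),\rho(n+R_0)\}$, hence has at most $N_0$ elements; an increasing union of sets of uniformly bounded size is finite. Thus $\beta(H_0)$ is a locally finite subset of $\R^\D$, the left‑invariant pseudometric $D(g,h):=\limsup_t d(g\rho(t),h\rho(t))$ on $H_0$ agrees with $\|\beta(g)-\beta(h)\|$ up to the bounded error above, and its balls have finite stabilizers; a standard argument then identifies $H_0$ up to quasi‑isometry with $\beta(H_0)$, which — being a locally finite coarse subgroup of $\R^\D$ — lies at bounded distance from a lattice $\Z^k$ inside a $k$‑dimensional subspace, $k\leq\D$. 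Hence $H_0$, and so $H$, is virtually $\Z^k$, with $k\geq1$ since $H_0$ is infinite.

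It remains to improve $k\leq\D$ to $k\leq\D'$ and to produce $\xi'$, and this is the one genuinely hard step. Since $\pi_i(H_0)$ fixes $\xi_i$, its action on $X_i$ is elliptic, parabolic, lineal, or quasi‑parabolic, and $\beta_i$ is bounded on $H_0$ exactly in the first two cases. \emph{I claim no factor is quasi‑parabolic}; here I expect to invoke the structure theory of elementary subgroups from the earlier part of the paper. Sketchily: a quasi‑parabolic factor $i$ carries an infinite horospherical subgroup $U_i\trianglelefteq\pi_i(H_0)$, acting parabolically on $X_i$ and normalized by an $X_i$‑loxodromic of $\pi_i(H_0)$ with attracting endpoint $\xi_i$; lifting $U_i$ to $H_0$, and using that the $\beta$‑level sets are finite while $U_i$ is infinite, one obtains loxodromics of some other factor $X_j$ all sharing the endpoint $\xi_j$; combining these with the unboundedly many small‑displacement elements furnished by $U_i$ (conjugation by the loxodromic contracting $U_i$ towards $\xi_i$) one produces, via suitable products, infinitely many elements of $\G_0$ coarsely fixing a single fixed pair of far‑apart points of $\X$, contradicting uniform acylindricity. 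Granting this, $\beta_i\equiv O(\delta)$ on $H_0$ for every non‑lineal factor, so $\beta(H_0)$ lies within bounded distance of the coordinate subspace of $\R^\D$ spanned by the $\D'$ lineal factors; hence $k\leq\D'$. Also $\D'\geq1$, since otherwise $\beta(H_0)$ would be bounded and $H_0$ finite.

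Finally, for each lineal factor $i$ let $\xi_i'$ be the second point of the canonical $\pi_i(H_0)$‑fixed pair $\{\xi_i,\xi_i'\}$ on the invariant quasiline, and for every other factor set $\xi_i':=\xi_i$ (in the parabolic/quasi‑parabolic case this is the unique $\pi_i(H_0)$‑fixed boundary point). Then $\xi'\in\partial_{reg}\X$, $\xi'\neq\xi$ because $\D'\geq1$, and $|\{i:\xi_i\neq\xi_i'\}|=\D'$. Every element of $H_0=\fix_{\G_0}\{\xi\}$ fixes $\xi'$ by construction, and if $g\in\fix_\G\{\xi\}$ permutes the factors by $\sigma\in\Sym_\X(\D)$ then $g$ normalizes $H_0$, hence permutes the lineal factors and carries each canonical fixed pair $\{\xi_i,\xi_i'\}$ onto $\{\xi_{\sigma(i)},\xi_{\sigma(i)}'\}$; as $g\xi=\xi$ forces $g\xi_i=\xi_{\sigma(i)}$, also $g\xi_i'=\xi_{\sigma(i)}'$, so $g\xi'=\xi'$. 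Therefore $\fix_\G\{\xi\}\subseteq\fix_\G\{\xi'\}$, that is $\fix_\G\{\xi\}=\fix_\G\{\xi,\xi'\}$, and together with the previous paragraphs this gives the theorem. The exclusion of quasi‑parabolic factors is the crux; everything else is bookkeeping around the acylindricity estimate.
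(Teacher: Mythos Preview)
Your strategy shares the paper's core tool—the Busemann quasi-cocycle $\beta\colon H_0\to\R^\D$ and the acylindricity bound on its level sets—but there is a real gap at ``$\beta(H_0)$ is a locally finite coarse subgroup of $\R^\D$, hence at bounded distance from a lattice, hence $H_0$ is virtually $\Z^k$.'' To turn $\beta(H_0)$ into an honest subgroup one would homogenize $\beta$, but the homogenization of a quasimorphism is a homomorphism only on amenable groups, which you have not established; and your level-set bound $|\{h:\|\beta(h)\|\leq r\}|\leq N_0(r)$ does not by itself give polynomial growth, since acylindricity says only that $N_0(r)$ is finite, with no control on its growth in $r$. Even granting a quasi-isometry of $(H_0,D)$ with $\Z^k$, promoting this to a virtual isomorphism of groups needs finite generation and QI-rigidity machinery you do not invoke. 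You also flag the exclusion of quasi-parabolic factors as the crux but give only a sketch; ironically, a correct ``virtually $\Z^k$'' would exclude quasi-parabolics for free (no free sub-semigroups in a virtually abelian group), while conversely the paper uses the exclusion of quasi-parabolics to manufacture the second fixed point that drives its proof of ``virtually $\Z^k$''—so your ordering leaves each step waiting on the other.

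The paper's route (Propositions~\ref{prop:elemsubvirtab} and~\ref{prop: reg fixed implies reg pair}) is: first exclude quasi-parabolic factors directly (Lemma~\ref{Lem:noqpinacyl}), then drop elliptic and parabolic factors (Corollary~\ref{cor: clean up acyl}), reducing to the case where every surviving factor is oriented lineal. Then $H_0$ fixes a \emph{pair} $\xi,\xi'\in\partial_{reg}\X$ with all coordinates distinct and hence preserves a quasi-flat $\Fl$. The F{\o}lner sets on $\Fl$ are unions over the $(2n+1)^\D$ lattice points of coarse point-stabilizers, each bounded by a \emph{single} acylindricity constant $N_\e$ independent of $n$; this is the source of the honest polynomial bound (Theorem~\ref{theorem: elementary acyl is amenable}) and hence of amenability. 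With amenability in hand $\beta$ becomes a genuine homomorphism, acylindricity descends to the translation action on $\R^\D$ (Proposition~\ref{prop: acyldescends}), and the structure of discrete subgroups of $\R^\D$ finishes the job. The two-fixed-point quasi-flat, not the single ray you use, is what makes the growth bound uniform.
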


  To emphasize that acylindricity extends the notion of cocompact lattices, we note that Godement's Compactness Criterion states that  $S$-arithmetic lattices are cocompact if and only if they contain no nontrivial unipotents \cite{BHC, Behr} (see Theorem~\ref{thm: Godement}). Nontrivial unipotent subgroups are precisely those that have a distinct fixed point on the regular boundary in the (irreducible) $S$-arithmetic case. In particular, any solvable subgroup is virtually diagonalizable, i.e. stabilizes a flat in the corresponding product of symmetric spaces and Bruhat-Tits buildings (which is a CAT(0) space when endowed with the $\ell^2$-product metric). Therefore, such a solvable subgroup acts properly on said flat and is hence $\Z^k$, where $k$ is bounded by the rank of the CAT(0) space. The classical Tits Alternative \cite{Tits} states that a linear group either contains a free group or is virtually solvable, and hence we recover a Tits Alternative as above for subgroups of such lattices (see Section \ref{sec: Godement}).

We also consider \emph{regular} elements, the higher rank analogue of loxodromic elements, namely those that do not permute the factors and whose projection to each factor is loxodromic (classically a regular element refers to one whose axis is contained in a unique maximal flat). Given $\G \to  Aut\X$  and $\g\in \G$ regular, the associated \emph{elementary subgroup} is $E_\G(\g):= E_\G(\g^-, \g^+):= \Cap{i=1}{\D}\fix_\G\{\g_i^-, \g_i^+\}$,  where $\g^-, \g^+\in \partial_{reg}\X$ are respectively the repelling and attracting fixed points for $\g$ (see Definition \ref{def: elem subgroup}).

Maher and Tiozzo used random walks to show that loxodromic elements in a general type action can be encountered asymptotically almost surely \cite{MaherTiozzo}. We use their result to easily conclude that regular elements exist. The structure of the centralizer is a higher-rank analog of \cite[Proposition 6]{BestvinaFujiwara}.

\begin{thmA}[Regular elements]\label{intro:regelts} Let $\G\to \Aut \X$ be an action with general type factors. Then there exists $\g\in \G$ that acts as a regular element. Furthermore, if the action is AU-acylindrical and $\g\in \G$ is a regular element then the centralizer $\mathfrak C_\G(\g)$ and the elementary subgroup $E_\G(\g)$ are both virtually $\Z^k$, for the same $k$ such that $1\leq k\leq \D$.
\end{thmA}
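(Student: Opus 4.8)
The plan is to split into two parts: existence of a regular element, then the structure of $\mathfrak C_\G(\g)$ and $E_\G(\g)$ for $\g$ regular under AU-acylindricity.

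For existence: each factor action $\G \to \Isom X_i$ is of general type, so by Maher–Tiozzo \cite{MaherTiozzo} a random walk on $\G$ produces, asymptotically almost surely, an element whose image in $\Isom X_i$ is loxodromic. I would first pass to the finite-index subgroup $\G_0 \leq \G$ that lands in $\Aut_{\!0}\X = \prod \Isom X_i$ (so we avoid the factor-permutations; a random walk on $\G$ stays in $\G_0$ with positive density, or we just run the walk on $\G_0$, which still acts of general type on each $X_i$ — here one needs that a finite-index subgroup of a general-type action is still general type, which is standard). Then I want a single $\g$ that is simultaneously loxodromic on all $\D$ factors. The cheap route: the set of $\g \in \G_0$ loxodromic on $X_i$ has asymptotic probability $1$ for each $i$, and a finite intersection of probability-$1$ events still has probability $1$, so asymptotically almost surely a single random element is loxodromic on every factor; such an element is regular by definition. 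One subtlety is that Maher–Tiozzo applies to one hyperbolic space at a time and one should make sure the same driving measure (finitely supported, generating) works for all factors — it does, since any nondegenerate finitely supported symmetric measure whose support generates $\G_0$ suffices.

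For the structure statement, assume the action is AU-acylindrical and $\g$ regular with $\g^\pm \in \partial_{reg}\X$. By definition $E_\G(\g) = \bigcap_{i=1}^{\D} \fix_\G\{\g_i^-,\g_i^+\} = \fix_\G\{\g^-,\g^+\}$, and since $\g^-\neq \g^+$ in every coordinate we are in the situation of Theorem~B with $\D' = \D$ (here I would invoke Theorem~B in its AU-acylindrical generality, or reprove the relevant part: the key point is that $\fix_\G\{\g^-,\g^+\}$ acts on the product of the $\D$ quasilines determined by the axes, and along each quasiline the $\e$-coarse-stabilizer-of-two-far-points condition forces the translation-length map to a quasiline to have discrete, hence virtually-$\Z$, image — this is exactly the rank-one argument run coordinatewise, bounded above by $\D$). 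So $E_\G(\g)$ is virtually $\Z^k$ with $1 \le k \le \D$; it contains $\g$ which is loxodromic on each factor, so $k \ge 1$. Next I would show $\mathfrak C_\G(\g) \leq E_\G(\g)$: any $h$ commuting with $\g$ sends the attracting/repelling fixed points of $\g$ to those of $h\g h^{-1} = \g$, hence fixes $\g^\pm$, so $h \in \fix_\G\{\g^-,\g^+\} = E_\G(\g)$. Conversely $\g$ itself lies in the center of $E_\G(\g)$? Not quite — instead, since $E_\G(\g)$ is virtually $\Z^k$ and $\g$ generates (virtually) a $\Z$-direction inside it that is "nondegenerate" in all coordinates, the centralizer of $\g$ inside a virtually-$\Z^k$ group is itself finite-index, hence also virtually $\Z^k$ for the same $k$; I'd make this precise by passing to a finite-index $\Z^k \leq E_\G(\g)$ on which conjugation by $\g$ acts, observing this action has finite order (it's a finite group acting on $\Z^k$), and its fixed subgroup — which contains the relevant power of $\g$ and has finite index — is $\mathfrak C_{\Z^k}(\g)$, still of rank $k$. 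Combining, $\mathfrak C_\G(\g)$ is virtually $\Z^k$ with the same $k$ as $E_\G(\g)$.

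The main obstacle I anticipate is the virtually-$\Z^k$ structure of $\fix_\G\{\g^-,\g^+\}$ under merely AU- (not uniform) acylindricity: with only an ambiguous finite bound on $\e$-coarse stabilizers of far-apart pairs, one must still extract discreteness of the coarse translation-length spectrum on each axis-quasiline and then assemble the $\D$ coordinates into a single lattice of rank $\le \D$. The coordinatewise discreteness should follow from the rank-one acylindricity machinery applied to the (general type, hence acylindrical-on-its-axis) factor actions, but care is needed because elements of $\fix_\G\{\g^-,\g^+\}$ need not be loxodromic on every factor — some coordinates may be elliptic — which is precisely why $k$ can be strictly less than $\D$; handling the elliptic coordinates cleanly (showing they contribute a finite group, using the trifurcation of elliptic actions promised in the introduction, or an AU-acylindricity argument directly) is the delicate step. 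Assuming Theorem~B is available in the AU-acylindrical setting this is immediate, so if the paper's Theorem~B is stated only for acylindricity I would instead prove the needed structural fact here directly, which is where the real work lies.
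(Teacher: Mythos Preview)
Your existence argument is essentially the paper's: pass to the factor-preserving subgroup, run a single random walk, and intersect the finitely many asymptotically-probability-one events coming from Maher--Tiozzo on each factor (the paper states this as Proposition~\ref{Prop:Regular Exist} with the elementary probability Lemma~\ref{Lem:prob}). Your identification of the structural input---that $E_\G(\g)=E_\G(\g^-,\g^+)$ is virtually $\Z^k$ under AU-acylindricity---is also right, and the paper supplies exactly this as Proposition~\ref{prop:elemsubvirtab}, so your worry about AU- versus uniform acylindricity is resolved in your favor.

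There is, however, a genuine gap in your centralizer step. You argue: pass to a finite-index normal $\Z^k\le E_\G(\g)$, note that conjugation by $\g$ gives a finite-order element of $\GL_k(\Z)$, and conclude that its fixed subgroup has finite index (hence rank $k$). This inference is false. The automorphism $-\mathrm{id}$ of $\Z$, or the coordinate swap on $\Z^2$, are finite-order with fixed subgroups of rank $0$ and $1$ respectively; correspondingly, in $D_\infty=\Z\rtimes\Z/2$ (virtually $\Z$) the reflection has centralizer of order $2$, and in $\Z^2\rtimes\langle\text{swap}\rangle$ an infinite-order element outside $\Z^2$ has centralizer virtually $\Z$, not $\Z^2$. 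So ``virtually $\Z^k$'' alone does not force $\mathfrak C_\G(\g)$ to have finite index in $E_\G(\g)$.

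What makes the conclusion true here is a stronger structural fact buried in the proof of Proposition~\ref{prop:elemsubvirtab}: because every element of $E_\G(\g)$ fixes each $\g_i^\pm$ \emph{pointwise} (not just setwise), the Busemann map $\beta=\Delta(\beta_1,\dots,\beta_\D):E_\G(\g)\to\R^\D$ is a genuine homomorphism with finite kernel (Proposition~\ref{prop: acyldescends} plus Lemma~\ref{lem: AU eucl is Zk}). Thus $E_\G(\g)$ is finite-by-abelian, not merely virtually abelian. In a finite-by-abelian group every conjugacy class is finite (since $hgh^{-1}g^{-1}\in\ker\beta$ for all $h$), so by orbit--stabilizer $\mathfrak C_\G(\g)$ has finite index. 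Equivalently, your conjugation action of $\g$ on the normal $\Z^k$ is not just finite-order but actually trivial, because $\beta$ is injective on any torsion-free subgroup and $\beta(\g a\g^{-1})=\beta(a)$. The paper's Corollary~\ref{cor: cent reg elmt} invokes this rather tersely (``virtually free abelian. This completes the proof''), but it is the finite-by-abelian structure, traceable to the orientation-preserving definition of $E_\G(\g)$, that does the work---and this is precisely what your argument misses.
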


Another well known result in rank-one is that pertaining to the classification of isometries and isometric actions (see Section~\ref{sec:isomrank1} and Theorem~\ref{thm:actionsclassrk1}). In this paper, we observe  that the well known classification of isometries and actions in rank-one can be refined:  elliptic elements and actions admit a trichotomy, manifesting themselves as \emph{trembles, rotations or rifts} (see Definition~\ref{defn:elementsclassextn}, and the left-most three images in the figure on the title page). Of note, the trembles shall in a sense take on the role of the center in a semi-simple lattice. This analogy is further extended to a  semi-simple dictionary in Part II \cite{BFPart2}, where controlling the trembles with the use of acylindricity will be helpful in dealing with results and proofs pertaining to lattice envelopes.

 Osin demonstrated that there is a tension between the acylindricity hypothesis and the possibilities for isometric actions on $\delta$-hyperbolic spaces -- only elliptic, lineal or general type actions are compatible (i.e. parabolic and quasiparabolic acylindrical actions do not exist see \cite[Theorem 1]{Acylhyp}). This implies that a group acting acylindrically will not contain elements acting parabolically.  We extend this result to the higher rank setting by proving the following, which furthers the analogy with ($S$-arithmetic) semi-simple cocompact lattices via Godement's Compactness Criterion  (see Theorem~\ref{thm: Godement}).

\begin{thmA}[Obstructions to acylindricity in higher rank]\label{intro:acylprob}Let $\G \to \Aut_{\!0}\X$  such that the projections $\G\to \Isom X_i$ are all either elliptic, parabolic or quasiparabolic (with at least one factor being parabolic or quasiparabolic). Then $\G \to \Aut_{\!0} \X$ is not acylindrical.
In particular, if $\G \to \Aut_{\!0}\X$ is acylindrical, then every element of $\G$ is either an elliptic isometry or contains a loxodromic factor.
\end{thmA}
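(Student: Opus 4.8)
The plan is to reduce the higher-rank statement to Osin's rank-one obstruction, \cite[Theorem 1]{Acylhyp}, by a projection argument. Suppose for contradiction that $\G \to \Aut_{\!0}\X$ is acylindrical, where by hypothesis at least one factor projection, say $\G \to \Isom X_{i_0}$, is parabolic or quasiparabolic. The key observation is that acylindricity on the $\ell^2$-product $\X$ should \emph{descend} to an acylindrical-type statement on a chosen factor, at least along a sequence of points that witnesses the pathology in that factor. Concretely, I would fix $\e>0$, let $R=R(\e)$, $N=N(\e)$ be the product acylindricity constants, and aim to produce, for arbitrarily large $N'$, points $x,y\in X_{i_0}$ with $d_{X_{i_0}}(x,y)$ large yet $|\set_\e\{x,y\}|\geq N'$ for the factor action — contradicting the fact (implicit in Osin's classification, and which I would cite or re-derive from Section~\ref{sec:isomrank1}) that parabolic and quasiparabolic actions are never acylindrical, in fact not even AU-acylindrical: their $\e$-coarse stabilizers of far-apart pairs are infinite.

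The first step is therefore to recall/establish the rank-one input: a parabolic or quasiparabolic isometric action of a group on a $\delta$-hyperbolic space is not acylindrical; more precisely, for such an action one can find, for every $\e > 0$ and every $R \geq 0$, a pair of points at distance $\geq R$ whose joint $\e$-coarse stabilizer is infinite. For parabolic actions this follows because there is a global fixed point on the boundary and the action on any horoball-like family is far from acylindrical; for quasiparabolic actions one uses the lineal/focal structure on a quasi-geodesic ray pointing at the fixed point, along which arbitrarily large ``coarse'' stabilizing sets accumulate. This is exactly the content of \cite[Theorem 1]{Acylhyp} together with the classification in Theorem~\ref{thm:actionsclassrk1}, so I would invoke it directly.

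The second, and I expect main, step is the \textbf{descent of acylindricity to a factor}. Given $g \in \set_\e\{x,y\}$ for points $x=(x_1,\dots,x_\D)$, $y=(y_1,\dots,y_\D)$ in $\X$, we have $d_{X_i}(g_i x_i, x_i) \leq \e$ and $d_{X_i}(g_i y_i, y_i) \leq \e$ \emph{for every} $i$ (since the $\ell^2$-distance dominates each coordinate distance), so $g$ lies in the factor-$i_0$ coarse stabilizer $\set_\e^{X_{i_0}}\{x_{i_0}, y_{i_0}\}$. The difficulty is the converse control: a set of group elements that $\e$-fixes $x_{i_0},y_{i_0}$ in the factor $X_{i_0}$ need not $\e$-fix anything in the other factors, so I cannot directly bound the factor stabilizer by $N$. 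To get around this, I would use the hypothesis that \emph{every} factor is elliptic, parabolic, or quasiparabolic: in each such rank-one action the group has a bounded orbit or a fixed point at infinity, so one can choose basepoints $p_i \in X_i$ (or appropriate points along the relevant horoball/ray) so that, along a suitable exhausting sequence, the coarse displacement in the non-$i_0$ factors stays uniformly bounded by some $\e' \geq \e$ on the relevant subgroups. Then, enlarging $\e$ to $\e'$ and using the product point $x = (p_1,\dots,p_{i_0-1}, x_{i_0}, p_{i_0+1},\dots,p_\D)$ and similarly $y$, the product $\e'$-coarse stabilizer $\set_{\e'}\{x,y\}$ \emph{contains} the factor-$i_0$ set we built in Step 1, which is infinite — contradicting AU-acylindricity of the product action (hence a fortiori its acylindricity). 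The technical heart is making the ``uniformly bounded displacement in the other factors'' precise: for an elliptic factor one takes $p_i$ in a bounded orbit and $\e'$ the diameter of that orbit; for a parabolic or quasiparabolic factor $X_i$ ($i \neq i_0$) one must be slightly more careful and either pass to the subgroup that moves $p_i$ boundedly or replace $p_i$ by a point deep in a horoball where the relevant elements act with small displacement — here the finitely-generated-on-finite-subsets nature of the pathological subgroup from Step 1 helps, since only finitely many group elements need to be controlled at each stage of the exhaustion.

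Finally, the ``in particular'' clause is immediate: if $\G \to \Aut_{\!0}\X$ is acylindrical and $\g \in \G$ has no loxodromic factor, then every factor $\g_i$ is elliptic, parabolic, or quasiparabolic on $X_i$; applying the main statement to the cyclic subgroup $\langle \g \rangle \to \Aut_{\!0}\X$ (whose restriction to each factor is again one of these three types, and which is acylindrical as a subaction of an acylindrical action), we conclude no factor can be parabolic or quasiparabolic, so every $\g_i$ is elliptic, i.e. $\g$ is an elliptic isometry of $\X$. The subtlety worth flagging is that a cyclic group acting on a $\delta$-hyperbolic space is either elliptic or lineal, never parabolic or quasiparabolic in the genuine sense — so for the ``in particular'' clause one really only needs the (easier) statement that a single element with a parabolic factor generates a non-acylindrical action, which is classical.
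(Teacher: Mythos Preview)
Your reduction by ``padding'' the other factors is exactly the paper's strategy for elliptic and parabolic factors: Lemma~\ref{Lem:elimellfactors} (elliptic factors contribute a bounded orbit, so enlarge $\e$) and Proposition~\ref{prop:removepara} (parabolic factors: go far enough along the ray to the fixed point so that any prescribed finite set acts with displacement at most $E$, via Lemma~\ref{lemmaelimpara}) do precisely what you describe. Your ``in particular'' paragraph is also correct and matches the paper.

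The gap is the quasiparabolic case. Your two proposed fixes --- ``pass to the subgroup that moves $p_i$ boundedly'' or ``go deep in a horoball'' --- both fail. A quasiparabolic factor contains loxodromic elements, and a loxodromic has stable translation length bounded away from zero, so its displacement is large at \emph{every} point of $X_i$; no choice of $p_i$ helps. Passing to a subgroup is no better: the infinite coarse-stabilizer set $S$ you build in factor $i_0$ consists (roughly) of elements with $|\b_{i_0}|$ small, but this places no constraint whatsoever on $\b_i$ for $i\neq i_0$. Elements of $S$ may well be loxodromic in factor $i$, and there is no reason the intersection of $S$ with $\{g: |\b_i(g)| \text{ bounded for all } i\}$ should remain infinite.

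The paper abandons the descent/padding picture for quasiparabolic factors and instead proves Lemma~\ref{Lem:noqpinacyl}: once the remaining factors are all quasiparabolic, the group fixes a point $\xi\in\partial_{reg}\X$, and one uses the \emph{product} of Busemann quasimorphisms $\b = (\b_1,\dots,\b_\D)$ to define level sets $F_n = \{g: \|\b(g)\|_\infty \leq n\}$. Acylindricity bounds $|F_n|$ uniformly (elements of $F_n$ eventually coarsely fix pairs of points on the ray to $\xi$), and the quasimorphism defect bounds how $F_n$ grows under multiplication. The conclusion is that every finitely generated subsemigroup of $\G$ has polynomial growth --- incompatible with the free semigroup guaranteed by a quasiparabolic factor. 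This growth argument is the missing idea.
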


As a consequence of this obstruction, we are able to deduce (see Corollary~\ref{cor:fin gen} and Proposition~\ref{prop:SL2ZS}) that while groups such as $\PSL_2\Z[\frac{1}{p}]$ act (with general type factors)  nonuniformly properly, and hence nonuniformly acylindrically,  on the product of $\delta$-hyperbolic spaces, it does not act acylindrically on any $\X$, unless the action is elliptic. This is in contrast to the rank-one setting. For example, $\PSL_2\Z[i]$ is a nonuniform lattice in $\PSL_2\mathbb C$ and hence has a general type nonuniformly proper action on the associated symmetric space, which is the (real) hyperbolic 3-space. However,  thanks to the existence of a WPD element and the associated  construction of Bestvina, Bromberg, and Fujiwara \cite{BBF, BBFS}, $\PSL_2\Z[i]$  also admits a general type acylindrical action on some other  $\delta$-hyperbolic space (which is necessarily not locally compact).

Using  the superrigidity for actions on hyperbolic spaces established by Bader, Caprace, Furman, and Sisto \cite{BaderCapraceFurmanSisto} (see Theorem~\ref{Thm: BCFS}), and representation theoretic tools we prove the following theorem. The reduction in the statement means passing to the essential cores of the factors of $\X$, as in Proposition~\ref{prop: ess core}, and the equivalence for actions  introduced in \cite[Definition 4.2]{BaderCapraceFurmanSisto}.

\begin{thmA}\label{intro: HR lattice AUacyl} 
 Fix integers ${\mathbf{D}}\geq 0$ and $F\geq 0$. Consider an irreducible lattice $\G\leq G$  where $G= \Prod{i=1}{\mathbf{D}}H_i\times \Prod{j=1}{F}G_j$ is a product of centerless simple Lie groups, $H_1, \dots, H_{\mathbf{D}}$ have real rank one, and $G_1, \dots, G_F$ have real rank strictly larger than one. 
Let $S_1, \dots, S_{\mathbf{D}}$ be the hyperbolic symmetric spaces for $H_1, \dots, H_{\mathbf{D}}$. There exists an AU-acylindrical action $\G\to \Aut\X$ with general type factors if and only if $F=0$, and up to a natural reduction, $\X =\mathbb S\times \mathbb S_{\mathrm{dup}}$, where $\mathbb S= \Prod{i=1}{\mathbf{D}}S_i$ and $\mathbb S_{\mathrm{dup}}$ is the (possibly empty) product of duplicate copies of factors of $\mathbb S$. Furthermore, the action is acylindrical if and only if $\G\leq G$ is cocompact.
\end{thmA}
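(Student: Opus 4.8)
The plan is to combine three ingredients: the superrigidity theorem of Bader--Caprace--Furman--Sisto (Theorem~\ref{Thm: BCFS}) to constrain what an AU-acylindrical action on $\X$ with general type factors can look like, the obstruction theorem (Theorem~\ref{intro:acylprob}) together with Godement's criterion (Theorem~\ref{thm: Godement}) to pin down when it is genuinely acylindrical, and the Tits Alternative / elementary subgroup results (Theorems~\ref{intro:titsalt}, \ref{intro:regelts}) as sanity checks on the rank. I would organize the argument into the ``only if'' direction and the ``if'' direction.

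For the \emph{only if} direction, suppose $\G \to \Aut\X$ is AU-acylindrical with general type factors. First, after passing to a finite-index subgroup of $\G$ we may assume the action is factor-preserving, i.e. lands in $\Aut_{\!0}\X = \Prod{i=1}{\D}\Isom X_i$; each coordinate gives a general type action $\G \to \Isom X_i$. By superrigidity (Theorem~\ref{Thm: BCFS}), each such coordinate action, after the natural reduction to the essential core, is equivalent to the $\G$-action on one of the hyperbolic symmetric spaces $S_i$ of the rank-one factors $H_i$ (the higher-rank simple factors $G_j$ cannot produce a general type action on a hyperbolic space, since an irreducible lattice in them would act with unbounded orbits yet property (T) forces a bounded orbit — this is exactly the content one extracts from BCFS, and it forces $F=0$ or at least that the $G_j$'s contribute nothing). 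Consequently each factor $X_i$ of $\X$ is, up to reduction, one of $S_1, \dots, S_{\mathbf D}$, possibly with repetitions; thus $\X = \mathbb S \times \mathbb S_{\mathrm{dup}}$ as claimed, and each $S_i$ must appear at least once because the projection to that factor must be general type (not elliptic). To see $F=0$: if some $G_j$ with real rank $\geq 2$ appears, then $\G$ surjects (virtually, by the irreducibility of the lattice and Margulis normal subgroup considerations) onto a lattice in $G_j$ with property (T); combined with the fact that every general type coordinate action of $\G$ factors through the rank-one data, one checks the $G_j$-direction is invisible to every hyperbolic factor, but then one must still verify that having such a $G_j$ present is genuinely incompatible with the \emph{existence} of \emph{any} AU-acylindrical general type action — here I would invoke that an AU-acylindrical general type action produces a regular element (Theorem~\ref{intro:regelts}) with virtually-$\Z^k$ centralizer, whereas the centralizer of any element in an irreducible lattice in a product containing a higher-rank factor is constrained by arithmeticity in a way that is compatible only when $F = 0$; alternatively, and more cleanly, I expect the paper derives $F=0$ directly from BCFS superrigidity applied to the hypothetical action.

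For the \emph{if} direction, assume $F = 0$ and $\X = \mathbb S \times \mathbb S_{\mathrm{dup}}$. Then $\G \leq G = \Prod{i=1}{\mathbf D} H_i$ is an irreducible lattice in a product of rank-one simple Lie groups, and $\G$ acts on $\mathbb S = \Prod{i=1}{\mathbf D} S_i$ diagonally via the inclusions into each $H_i$; the duplicate factors are handled by composing with the relevant coordinate projections. Each factor action $\G \to \Isom S_i$ is general type (the image is Zariski-dense hence non-elementary). One then checks this product action is AU-acylindrical: properness of the ambient $G$-action on $\mathbb S$ restricts to a proper, hence AU-acylindrical (in fact nonuniformly acylindrical when $\G$ is non-cocompact), action of $\G$; the duplicate copies do not destroy this since a joint coarse stabilizer of two points in $\X$ injects into a joint coarse stabilizer of the corresponding points in $\mathbb S$. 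Finally, the acylindricity dichotomy: if $\G \leq G$ is cocompact, the $\G$-action on $\mathbb S$ is cobounded and proper, hence (uniformly) acylindrical, and passing to $\X$ preserves this. Conversely, if $\G \leq G$ is not cocompact, then by Godement's Compactness Criterion (Theorem~\ref{thm: Godement}) $\G$ contains a nontrivial unipotent subgroup $U$; such a $U$ has a distinct fixed point on the regular boundary and acts on each $S_i$ parabolically — so the hypotheses of Theorem~\ref{intro:acylprob} are met by $U$ (all factor projections of $U$ are parabolic), whence the $\G$-action cannot be acylindrical (if it were, its restriction to $U$ would be, contradicting the obstruction). One must also rule out that \emph{some other}, non-standard, AU-acylindrical general type action of this non-cocompact $\G$ is acylindrical — but the ``only if'' analysis already showed every such action is, up to reduction, the standard one on $\mathbb S \times \mathbb S_{\mathrm{dup}}$, so the obstruction applies uniformly.

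The main obstacle I anticipate is the bookkeeping around the ``natural reduction'' and the equivalence of actions from \cite[Definition 4.2]{BaderCapraceFurmanSisto}: making precise that the BCFS conclusion identifies each hyperbolic factor with a specific $S_i$ (and not merely with \emph{something} quasi-isometric to it), controlling the essential cores, and ensuring the passage to a finite-index factor-preserving subgroup does not lose or gain acylindricity. The second delicate point is the rank count — confirming that the duplicated copies genuinely can occur (so the statement is sharp) and that $k \leq \D$ in the associated elementary-subgroup statements is consistent with $\mathbb S_{\mathrm{dup}}$ being nonempty; I would treat the duplication phenomenon by an explicit example (e.g. a lattice in $H \times H$ embedded diagonally, acting on $S \times S$) rather than in the abstract.
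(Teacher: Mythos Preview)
Your overall architecture matches the paper's: pass to $\Aut_{\!0}\X$, replace factors by essential cores, apply BCFS to identify each factor action with a standard action on some $S_{j(i)}$, then strip duplicates; for the acylindricity dichotomy, use Godement plus the parabolic obstruction (Theorem~\ref{intro:acylprob}). The ``if'' direction and the ``acylindrical $\Leftrightarrow$ cocompact'' step are essentially as in the paper.

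There is, however, a real gap in your ``only if'' direction, precisely at the step where you conclude $F=0$ and that \emph{every} $S_i$ occurs. Your sentence ``each $S_i$ must appear at least once because the projection to that factor must be general type (not elliptic)'' is a non sequitur: the general-type hypothesis is on the factors of $\X$, not on the $S_i$; BCFS only tells you that each factor of $\X$ is \emph{some} $S_{j(i)}$, not that the map $i\mapsto j(i)$ is surjective. Likewise, BCFS does \emph{not} by itself force $F=0$: when $F>0$ the theorem still applies and still produces coarsely minimal actions on the $S_i$ coming from the rank-one factors, so your fallback ``I expect the paper derives $F=0$ directly from BCFS'' does not work. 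Your alternative via centralizers of regular elements is not how the paper proceeds and would need substantial additional input from arithmeticity to be made rigorous.

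The missing idea is the following, and it is short: after removing duplicates (Lemma~\ref{lem: duplicate action}) you have an AU-acylindrical action of $\G$ on $\Prod{i\in I}{}S_i$ for some $I\subset\{1,\dots,\mathbf D\}$. This space is \emph{locally compact}, so by Lemma~\ref{Lem:acyl+loc comp implies unif proper_p1} AU-acylindricity upgrades to properness. But irreducibility of the lattice means that the projection of $\G$ to $\Prod{i\in I}{}H_i$ is dense whenever $F>0$ or $I\subsetneq\{1,\dots,\mathbf D\}$, and a dense subgroup cannot act properly on the symmetric space. This simultaneously forces $F=0$ and $I=\{1,\dots,\mathbf D\}$. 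You never invoke local compactness in the ``only if'' direction, and that is exactly the bridge the paper uses.
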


Earlier, we highlighted that nonuniform lattices in the isometry groups of locally compact hyperbolic spaces are acylindrically hyperbolic. A more general fact is true: in rank-one a nonuniformly acylindrical general type action can be used to find an acylindrical general type action (via \cite{BBFS}).  Theorem~\ref{intro: HR lattice AUacyl}, as well as the examples in  Section~\ref{sec: SL2} show that this is no longer the case in higher rank. Namely, these irreducible nonuniform lattices \emph{do not} admit acylindrical actions on any product of $\delta$-hyperbolic spaces. 
    
\begin{question}
    Which groups admit a nonuniformly acylindrical action on a finite product of $\delta$-hyperbolic spaces but do not admit an nonelliptic acylindrical action on any such finite product?
\end{question}

\noindent
\textbf{A note on \cite{BF}:} Much of the work presented here is available on the arXiv \cite{BF}, where we claimed \underline{\smash{without proof}} that the class of groups admitting an AU-acylindrical action on a product of hyperbolic spaces was closed under direct products. In fact, this is not true and only holds when the actions are proper. Similarly, Petyt and Spriano claimed without proof in \cite[Remark 4.9]{PetytSpriano} that the action of an HHG on the product of its eyries (i.e. the product of the maximally unbounded domains) is acylindrical which is not true for the same reasons. In a different direction, Bader, Caprace, Furman, and Sisto retracted a claim concerning rigidity for actions of $S$-arithmetic lattices on hyperbolic spaces, and must rather assume the ambient group is Lie \cite{BaderCapraceFurmanSisto} (see Theorem~\ref{Thm: BCFS} for a precise statement). 

However, we are thankful for how these events unfolded as it led us to explore the existence of a strongly canonical product decomposition  (see \cite[Theorem A]{BF}) which in turn allowed us to address the recent conjecture of Sela mentioned above (see \cite[Section 10]{BF}).  We perhaps may not have explored these directions had we remembered that the class is not closed under products, or potentially believed that $S$-arithmetic lattices were somehow exceptional (see Section~\ref{sec: SL2} for the case of $\SL_2 \mathfrak R$).

We note that despite the above issues, the  results \underline{\smash{proven}}  in \cite{BF} are true. In fact, the lack of closure under direct products strengthens the canonical product decomposition.  We also note that Sela's Conjecture concerns colorable HHG's. According to \cite[Theorem 3.1]{HagenPetyt}, these act properly and therefore AU-acylindrically on a finite product of $\delta$-hyperbolic spaces and so do admit a strongly canonical product decomposition. These claims will be  shown in the forthcoming Part II \cite{BFPart2}, where we also strengthen our treatment of the conjecture. 

 We are also thankful for the timing of Sela's Conjecture. Our first attempt at addressing it led us to consider subdirect products more carefully. The framework of subdirect products was then a solution we had on-hand to address the lack of closure discussed above. In Part II, we establish the stability of our results for (subdirect) products.

 Addressing the above issues required several additional pages of writing which led to the  splitting of the work from \cite{BF} into the present Part I and the  forthcoming Part II \cite{BFPart2} for the convenience of the reader.
\hfill $\blacksquare$

\medskip 
\noindent
\textbf{Acknowledgements: } The authors would like to thank Uri Bader, Jennifer Beck, Yves Benoist, Corey Bregman, Nic Brody, Montserrat Casals-Ruiz, Indira Chatterji, R\'emi Coulon, Tullia Dymarz, David Fisher, Alex Furman, Daniel Groves, Thomas Haettel, Jingyin Huang, Michael Hull, Kasia Jankiewicz, Fra\c cois Labourie, Marco Linton, Robbie Lyman, Joseph Maher, Jason Manning, Dan Margalit, Mahan Mj, Shahar Mozes, Denis Osin,  Harry Petyt, Jacob Russell, Michah Sageev, Zlil Sela, Alessandro Sisto, Davide Spriano, Henry Wilton, Daniel Woodhouse, and Abdul Zalloum  for useful conversations. 

 The first named author would like to thank Lafayette College for its generous start-up grant. The second named author would like to thank  the American Institute of Mathematics, the Fields Institute, the Insitut Henri Poincar\'e, The University of North Carolina, Greensboro, and the National Science Foundation for the support through NSF grants DMS 1802448 and 2005640.

Commutative diagrams were (eventually) made with the use of quivr by varkor, which can currently be found at (\url{q.uiver.app/}) with public github repository valkor/quivr (\url{https://github.com/varkor/quiver}).

\medskip 
\noindent \textbf{Structure of the paper: } Section~\ref{sec:basics} deals with some basic notation, definitions and results that will be used in this paper and subsequent ones. These are divided into 3 subsections, on products, hyperbolicity, and the bordification of a hyperbolic space. Sections~\ref{sec:isomrank1} and~\ref{sec:isomhigherrank_p1} address the classification of actions and isometries in rank-one and higher rank respectively; the first part of Theorem~\ref{intro:regelts} is proved as Proposition~\ref{Prop:Regular Exist}, and the second part is proved in Corollary~\ref{cor: cent reg elmt}.  Examples and non-examples are discussed in detail in Section~\ref{sec:examples_p1}, where Theorem~\ref{intro: HR lattice AUacyl}  is proved as Theorem~\ref{thm: HR acyl iff cocompact}. Section~\ref{sec:elemsubgrp} establishes the structure of elementary subgroups and the Tits Alternative Theorem~\ref{intro:titsalt}. Theorem~\ref{intro:stabs} is also proved therein as Propositions~\ref{prop:elemsubvirtab} and~\ref{prop: reg fixed implies reg pair}.


\section{Metric Basics}\label{sec:basics}

We begin with notions that do not require any special properties about the metric space, before progressing to complete separable $\delta$-hyperbolic geodesic spaces and their finite products. We collect some definitions and facts that we will need in this section. Since there are no geometric requirements on these results we state them for general metric spaces.

\subsection{The Polish Topology on $\Isom Y$} \label{Sec:toponisomgroup} Recall that a group is said to be \emph{Polish} if it is separable and admits a  complete metric with respect to which the group operations are continuous maps. Polish groups fit in a hierarchy as in Figure~\ref{fig:discretetopolish_p1}. 

\begin{figure}
    \centering
    \includegraphics[width=0.55\linewidth]{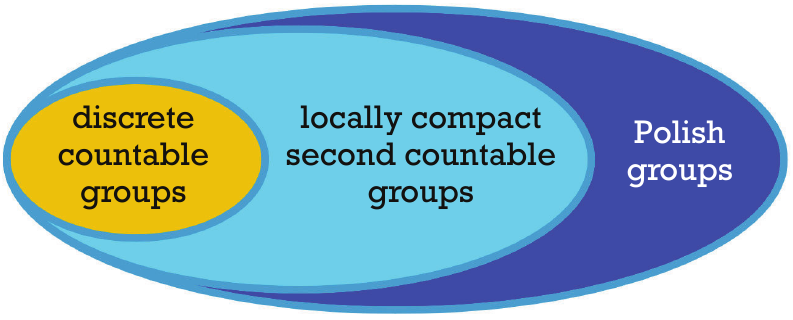}
    \caption{From discrete to Polish}
    \label{fig:discretetopolish_p1}
\end{figure}

If $Y$ is a complete separable metric space then  $\Isom Y$ is a Polish group with respect to the topology of pointwise convergence. More specifically, if $\{x_n\}\subset Y$ is a countable dense set and $g,h\in \Isom Y$ then the following metric is a complete  metric on $\Isom Y$ that generates the topology of pointwise convergence \cite[Example 9.B(9)]{Kechris}:

\begin{equation}\label{Polish_Metric_p1}
    d(g,h) = \Sum{k=0}{\8} \frac{1}{2^{n+1}}\left(\frac{d(g(x_k), h(x_k))}{1+d(g(x_k), h(x_k))} + \frac{d(g^{-1}(x_k), h^{-1}(x_k))}{1+d(g^{-1}(x_k), h^{-1}(x_k))}\right).
\end{equation}

If $Y$ is locally compact, separable, connected (which is the case when $Y$ is geodesic), and complete then by \cite{ManoussosStrantzalos} we have that $\Isom Y$ is locally compact with respect to the above metric and acts properly on $Y$. If, in addition, $Y$ is geodesic then every closed metric ball is compact by the Hopf-Rinow Theorem \cite[Proposition I.3.7]{BridsonHaefliger}. It is for these reasons that we will have the standing assumption that our metric spaces are separable and complete.   

\begin{defn}
    $Y$ is said to be uniformly locally compact if for every $\rho, \tau>0$ there is a $C(\rho, \tau)> 0$ such that for every $x\in Y$ and every open cover of the closed ball $\~\hood_\rho(x)$ by open $\tau$ balls, there is a subcover of cardinality at most $C(\rho, \tau)$.
\end{defn}

We note that if a metric space has cocompact isometry group then it is uniformly locally compact.

\subsection{Actions and AU-acylindricity}

Let $S$ be a set on which the group $\G$ acts, and let $A\subset S$. We shall denote the stabilizer of $A$ by $\stab A$ which is the set of $g\in \G$ such that  $g A=A$, and the fixator $\fix A = \Cap{a\in A}{} \stab\{a\}$. Note that for a single element $\fix\{a\} = \stab\{a\}$, while more generally $\fix A\leq \stab A$. This last subgroup inclusion is of finite index when $A$ is finite.

\begin{defn}\label{def:coarse_stab_p1}
    For an action $\G\to \Isom Y$, $x\in Y$, and $\e>0$, the $\e$\emph{-coarse stabilizer} of $x$ is $\set_\e\{x\}:=\{g\in \G: d(gx,x)\leq \e\}$. If $S\subset Y$ then we denote by $\set_\e S= \Cap{x\in S}{} \set_\e\{x\}$. The notation $\hood_\e(x)$ denotes the $\e$-neighborhood of $x$ in $Y$ (with respect to the metric on $Y$).
\end{defn}

\begin{defn}\label{defn:typesofactions_p1} An action  $\G\to \Isom Y$ is said to be 
\begin{enumerate}
\item \emph{cobounded} if for all $x\in Y$ there is an $R>0$ such that $\G\cdot\hood_R(x)=Y$;
\item \emph{cocompact} if there exists $C\subset Y$ compact such that $\G\cdot C= Y$;
 \item \emph{proper} if for every $\e>0$ and every $x\in Y$ we have $|\set_\e\{x\}|<\8$;
 \item \emph{uniformly proper} if for every $\e>0$ there is an $N>0$ so that for every $x\in Y$ we have $|\set_\e\{x\}|\leq N$;
    \item \emph{acylindrical}, if for every $\e>0$, there exist nonnegative constants $R=R(\e) $ and $ N=N(\e)$ such that for every $x,y \in X$ with $d(x,y) \geq R$, we have $$| \set_\e\{x,y\} |\leq N;$$  
    \item \emph{AU-acylindrical}, or \emph{acylindrical of ambiguous uniformity} if for every $\e>0$, there exists  $R\geq 0$ such that for every $x,y \in Y$ with $d(x,y) \geq R$, we have $| \set_\e\{x,y\}| <\8;$
    \item \emph{nonuniformly acylindrical}, if it is AU-acylindrical but not acylindrical. 
\end{enumerate}

\end{defn}

 The reader may think of AU-acylindricity as a type of properness for the associated action on $Y \times Y$, where $\e$-properness is only guaranteed on the complement of a ``thick diagonal", i.e. outside of the set of pairs that are at distance at most $R$ in $Y$. 

We also note that the type of action a lattice enjoys on its ambient space is always proper, and that proper actions are always AU-acylindrical. Acylindricity generalizes uniformly proper actions, in particular those of cocompact lattices. Tautologically, nonuniform acylindricity generalizes those that are not uniform, in particular the action of a nonuniform lattice on its ambient space. This is summarized in Figure~\ref{fig: LCSC to acyl}.

\begin{figure}[H]
    \centering
    \includegraphics[width=0.55\linewidth]{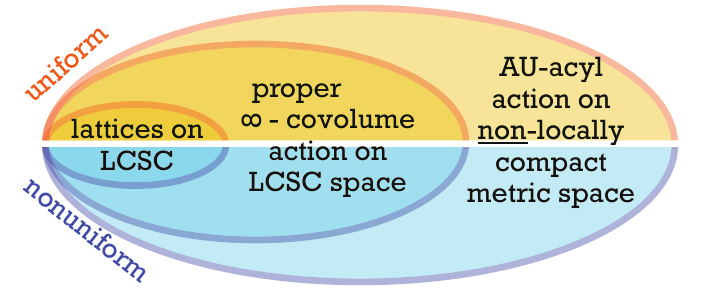}
    \caption{From lattices to AU-acylindrical actions }
    \label{fig: LCSC to acyl}
\end{figure}

We now state some elementary lemmas that will be used in future sections.

\begin{lemma}\label{Lem:prop cobound is acyl}
    If an action  $\G\to \Isom Y$ is  proper and cobounded then it is acylindrical. 
\end{lemma}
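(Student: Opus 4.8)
The plan is to prove the stronger assertion that a proper cobounded action is in fact \emph{uniformly proper}, and then to observe that uniform properness trivially implies acylindricity (one may simply take $R(\e)=0$, so that the condition $d(x,y)\geq R$ is vacuous and $\set_\e\{x,y\}\subseteq\set_\e\{x\}$ is already uniformly bounded).

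First I would fix once and for all a basepoint $x_0\in Y$ and invoke coboundedness at $x_0$ to obtain a radius $R_0>0$ with $\G\cdot\hood_{R_0}(x_0)=Y$. The heart of the argument is a conjugation estimate. Given an arbitrary point $x\in Y$, choose $g\in\G$ with $d(x,gx_0)\leq R_0$. Then for any $h\in\set_\e\{x\}$, using that $g$ and $h$ act by isometries together with the triangle inequality,
\[
d\big(g^{-1}hg\,x_0,\,x_0\big)=d\big(hgx_0,\,gx_0\big)\leq d(hgx_0,hx)+d(hx,x)+d(x,gx_0)\leq R_0+\e+R_0 .
\]
Hence $g^{-1}hg\in\set_{2R_0+\e}\{x_0\}$, i.e. $\set_\e\{x\}\subseteq g\,\set_{2R_0+\e}\{x_0\}\,g^{-1}$, and therefore
\[
|\set_\e\{x\}|\leq|\set_{2R_0+\e}\{x_0\}|<\8
\]
by properness. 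The crucial point is that the right-hand side does not depend on $x$; setting $N(\e):=|\set_{2R_0+\e}\{x_0\}|$ then establishes uniform properness.

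To conclude acylindricity, given $\e>0$ I would take $R(\e)=0$ and $N(\e)$ as just defined: for any $x,y\in Y$ one has $\set_\e\{x,y\}=\set_\e\{x\}\cap\set_\e\{y\}\subseteq\set_\e\{x\}$, whence $|\set_\e\{x,y\}|\leq N(\e)$, which is exactly the required bound.

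There is no serious obstacle here; the one step deserving attention is the recognition that \emph{pointwise} properness combined with coboundedness forces a \emph{uniform} bound. The conjugation trick is what makes this work: it transports every coarse stabilizer $\set_\e\{x\}$ into the single finite set $\set_{2R_0+\e}\{x_0\}$ attached to the fixed basepoint, so the cardinality bound $N(\e)$ is genuinely independent of the pair $(x,y)$.
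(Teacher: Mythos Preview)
Your argument is correct: the conjugation estimate cleanly shows that proper plus cobounded implies uniformly proper, and uniform properness trivially implies acylindricity with $R(\e)=0$. The paper itself does not supply a proof of this lemma, treating it as an elementary observation; your write-up would serve perfectly well as the omitted justification.
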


Since elements in the kernel of an action must fix all points in the space, we immediately have the following: 

\begin{lemma}\label{Lem:AU-acyl has finite kernel}
    If $\rho:\G \to \Isom Y$ is AU-acylindrical and $Y$ is unbounded then the kernel $\ker(\rho)= \{g\in \G: \rho(g) = \mathrm{id}_Y\}$ is finite.
\end{lemma}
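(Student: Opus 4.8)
The plan is to exploit the fact that every element of the kernel fixes $Y$ pointwise, so it lies in every $\e$-coarse joint stabilizer, and then to apply the AU-acylindricity hypothesis just once.

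First I would fix a single value of the parameter, say $\e = 1$, and use AU-acylindricity to produce the associated constant $R = R(1) \geq 0$ with the property that $|\set_1\{x,y\}| < \infty$ for all $x, y \in Y$ with $d(x,y) \geq R$. Next, since $Y$ is unbounded we have $\diam Y = \infty$, so there exist points $x_0, y_0 \in Y$ with $d(x_0, y_0) \geq R$; for this pair, $\set_1\{x_0, y_0\}$ is finite. Finally, if $g \in \ker(\rho)$ then $\rho(g) = \mathrm{id}_Y$, hence $d(gx_0, x_0) = d(gy_0, y_0) = 0 \leq 1$, so $g \in \set_1\{x_0, y_0\}$. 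Thus $\ker(\rho) \subseteq \set_1\{x_0, y_0\}$ is a subset of a finite set, and is therefore finite.

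There is no serious obstacle here. The only remarks worth making are that unboundedness of $Y$ is used in an essential way — it is precisely what guarantees a pair of points far enough apart for AU-acylindricity to apply, and the conclusion genuinely fails for bounded $Y$ (e.g. the $Y_{ell}$ construction from the introduction, or simply $Y$ a point, where the whole group can be the kernel of an AU-acylindrical action) — and that a single choice of $\e$ suffices, so one need not engage with the full family of acylindricity constants.
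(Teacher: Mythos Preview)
Your proof is correct and is exactly the immediate argument the paper has in mind; the paper does not even write out a proof, merely noting before the lemma that ``elements in the kernel of an action must fix all points in the space'' and stating the result as an immediate consequence. Your expansion of this one-line observation into a complete argument is precisely what is intended.
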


We now prove that the notion of AU-acylindricity coincides with the notion of properness for actions on locally compact spaces that are ``sufficiently continuously geodesic" (though AU-acylindricity and properness differ in general for non-locally compact spaces). (See Figure~\ref{fig: LCSC to acyl}.)

\begin{lemma}\label{Lem:acyl+loc comp implies unif proper_p1} Let $Y$ be an unbounded locally compact metric space. 
\begin{enumerate}
    \item If $\G\to \Isom Y$ is AU-acylindrical, then it is proper. 
    \item If $\G\to \Isom Y$ is uniformly proper, then it is acylindrical. 
    \item Assume $Y$ is uniformly locally compact and that there is some $T>0$ so that for every  $x\in Y$ and $n\in \N$  there is an $y\in Y$ with $d(x,y)\in[nT, (n+1)T]$. If $\G\to \Isom Y$  is acylindrical then it is uniformly proper.
\end{enumerate}
 \end{lemma}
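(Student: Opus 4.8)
The three parts all exploit the interplay between local compactness, the bound on $\e$-coarse stabilizers, and the ability to find points at prescribed distances.

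For part (1): Suppose $\G \to \Isom Y$ is AU-acylindrical and fix $\e > 0$ and $x \in Y$. I want $|\set_\e\{x\}| < \infty$. Pick $R = R(\e)$ from the AU-acylindricity hypothesis. Since $Y$ is unbounded... wait, I need a point $y$ with $d(x,y) \geq R$ so that $\set_\e\{x,y\}$ is finite — but $\set_\e\{x\}$ could be much larger than $\set_\e\{x,y\}$, since an element fixing $x$ coarsely need not fix $y$ coarsely.

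Let me reconsider. The right approach: fix $\e$ and $x$. Let $g \in \set_\e\{x\}$, so $d(gx,x) \leq \e$. The orbit $\set_\e\{x\} \cdot x$ lies in $\~\hood_\e(x)$, which is compact (local compactness — take $\e$ small, or cover by finitely many compact balls). I want to show $\set_\e\{x\}$ is finite. Suppose not: take infinitely many distinct $g_n \in \set_\e\{x\}$. By compactness of $\~\hood_\e(x)$ and continuity, after passing to a subsequence, the isometries $g_n$ "converge" in the sense that $d(g_n y, g_m y)$ becomes small for $y$ in a large ball around $x$ (this uses that isometries are determined by their behavior on a net, plus an Arzelà–Ascoli-type argument on balls, valid since $Y$ is locally compact and the $g_n x$ live in a compact set). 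Then for $n, m$ large, $g_m^{-1} g_n$ lies in $\set_{\e'}\{x, y\}$ for any prescribed $y$ at distance $\geq R(\e')$ — contradicting the finiteness from AU-acylindricity once we produce infinitely many such $g_m^{-1}g_n$. The main obstacle here is making the Arzelà–Ascoli / pointwise-to-uniform-on-compacta step precise without coboundedness; this is where local compactness is essential.

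**Parts (2) and (3).** For part (2): if the action is uniformly proper with constant $N(\e)$, then for any $x, y$, $\set_\e\{x,y\} \subseteq \set_\e\{x\}$, so $|\set_\e\{x,y\}| \leq N(\e)$; taking $R(\e) = 1$ and $N(\e)$ as given shows acylindricity immediately — no geometric hypothesis needed beyond unboundedness (to have pairs at distance $\geq R$ at all, which the ambient unboundedness supplies; actually acylindricity is vacuous where no such pair exists). For part (3): given acylindricity with constants $R(\e), N(\e)$, fix $\e$ and $x$. Using uniform local compactness, the closed ball $\~\hood_{R(\e)}(x)$ is covered by at most $C(R(\e), \e)$ balls of radius $\e$; any $g \in \set_\e\{x\}$ with $d(gx, x) \leq \e$... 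I instead want to control $\set_\e\{x\}$ directly. Pick, via the prescribed-distance hypothesis, a point $y$ with $d(x,y) \in [R(\e), R(\e)+T]$. Then $\set_\e\{x\} = \bigcup_z \set_\e\{x\} \cap \set_?\{z\}$ where $z$ ranges over... the cleaner route: decompose $\set_\e\{x\}$ according to which of the finitely many $\e$-balls (from the uniform-local-compactness cover of $\~\hood_{d(x,y)+\e}(x)$) contains $gy$. If $g, h$ land in the same ball, then $h^{-1}g \in \set_{2\e}\{x\}$ and $h^{-1}g$ moves $y$ by at most $2\e$, so $h^{-1}g \in \set_{2\e}\{x,y\}$, which has cardinality $\leq N(2\e)$. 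Hence $|\set_\e\{x\}| \leq C \cdot N(2\e)$ uniformly in $x$ — this $C$ depends only on $\e$ and $T$ and $R(\e)$ via uniform local compactness, giving uniform properness. The main obstacle in part (3) is bookkeeping the $\e$ versus $2\e$ and ensuring the covering constant $C$ is genuinely independent of $x$, which is exactly what "uniformly locally compact" buys us; the prescribed-distance hypothesis guarantees a pair $(x,y)$ with $d(x,y) \geq R(\e)$ but not too large, so that the relevant ball around $x$ has radius bounded independently of $x$.
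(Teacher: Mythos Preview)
Your treatment of parts (2) and (3) matches the paper's proof essentially line for line: part (2) is immediate from $\set_\e\{x,y\}\subseteq\set_\e\{x\}$, and for part (3) you pick $y$ at distance just above $R(2\e)$ from $x$, cover the closed ball $\~\hood_{\e+d(x,y)}(x)$ by at most $C$ many $\e$-balls via uniform local compactness, bucket $\set_\e\{x\}$ by which ball contains $gy$, and observe that two elements $g,h$ in the same bucket satisfy $h^{-1}g\in\set_{2\e}\{x,y\}$, giving the uniform bound $C\cdot N(2\e)$. This is exactly the paper's argument.

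For part (1) you reach for an Arzel\`a--Ascoli argument, extracting a convergent subsequence of the $g_n$ and then producing infinitely many $g_m^{-1}g_n\in\set_{\e'}\{x,y\}$. The paper instead simply remarks that the bucket argument from (3) already does the job: since we only need properness (not uniform properness), the covering constant for the ball around $x$ is allowed to depend on $x$, and unboundedness of $Y$ supplies a point $y$ with $d(x,y)\geq R(2\e)$. Your approach is not wrong, but it is more elaborate than needed and introduces the extra worry of making the convergence precise; the paper's reuse of (3) is both shorter and avoids that detour. Note also that your Arzel\`a--Ascoli step, like the bucket argument, tacitly needs the relevant closed ball (the one containing the orbit of $y$) to be totally bounded, so it does not actually sidestep the compactness issue you flag.
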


\begin{proof}  The proof of the second statement follows directly from the definitions. The proof of first statement is similar to that of the third, which we prove below. Since properness implies AU-acylindricity by definition, this establishes that AU-acylindricty and properness coincide for $Y$. 

To prove the third statement, let $\e>0$ and let $N(2\e), R(2\e)$ be the associated acylindricity constants for $2\e$. Fix $x\in Y$. Let $n_{R(2\e)} = \min\{n\in \N: nT\geq R(2\e)\}$. By the given condition, there is a $y\in Y$ with $(n_{R(2\e)}+1)T\geq d(x,y)\geq n_{R(2\e)} T\geq R$. 

Set $\rho=\e +(n_{R(2\e)}+1)T$. Since $Y$ is uniformly locally compact, there exist  $C(\rho, \e)\in \N$ and $z_1, \dots, z_m \in \~\hood_\rho(x)$ with  $m\leq C(\rho,\e )$,  forming a finite open cover $\Cup{i = 1}{m}\hood_\e(z_i)\supset \~\hood_\rho(x)$. 

 Let $g, h\in  \set_\e\{x\}$ we note that $g\in h\cdot\cs{2\e} \{x\}$ i.e. $d(h^{-1}gx,x) \leq 2\e$.
Furthermore, $gy\in \~\hood_{\rho}(x)$ since

$$d(x, gy) \leq d(x, gx) + d(gx, gy) \leq \e + d(x,y) \leq  \rho.$$
This means that $gy\in \hood_\e(z_i)$ for some $i\in \{1, \dots, m\}$. If also  $hy\in \hood_\e(z_i)$ for the same $i$, then $d(h^{-1}gy,y) \leq 2\e$ or equivalently $g\in h\cdot\cs{2\e}\{y\}$. 

Let $I\subset \{1, \dots, m\}$ be the set of indices defined by: $j\in I$ if and only if $\hood_\e(z_j) \cap \left(\set_\e\{x\}\cdot y\right)\neq \varnothing$. We have established that $I\neq \varnothing$ since the index $i \in I$. For each $j\in I$ fix $g_j$ with $g_jy \in \hood_\e(z_j)$. Then by the above, we have 
that $\cs\e \{x\}\subset \Cup{j\in I}{}\, g_j\cdot \cs{2\e} \{x,y\}$ and therefore has cardinality at most $C(\rho, \e)\cdot N(2\e)$.
\end{proof}

We shall use the following to deduce the Tits Alternative in later sections. 

\begin{lemma}\label{lem: virt isom to virt for abelian}
Let $1\to K\to \G\to A\to 1$ be a short exact sequence with $K$ finite and $A$ contains an isomorphic copy of $\Z^k$ as a finite index subgroup, for some $k\geq 1$. Then there is a subgroup of finite index $\G'\leq\G$ so that $\G'$ is isomorphic to $\Z^k$.
\end{lemma}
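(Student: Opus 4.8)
The plan is to reduce the problem to a statement about virtually abelian groups and then exploit the fact that $\Z^k$ is torsion-free and finitely generated, so that a finite normal subgroup can be ``absorbed'' into a finite-index complement. Concretely, write $q\colon \G\to A$ for the quotient map, and let $B\leq A$ be a finite-index subgroup isomorphic to $\Z^k$. First I would pass to $\G_1 := q^{-1}(B)$, which has finite index in $\G$ and fits into a short exact sequence $1\to K\to \G_1\to \Z^k\to 1$ with $K$ finite. Since $K$ is finite and normal in $\G_1$, conjugation gives a homomorphism $\G_1\to \Aut(K)$ whose kernel $\G_2 := \mathfrak C_{\G_1}(K)$ has finite index in $\G_1$ (as $\Aut(K)$ is finite) and still surjects onto a finite-index subgroup of $\Z^k$, hence onto a copy of $\Z^k$. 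Replacing $\G_1$ by $\G_2$, we may assume $K$ is central in $\G_1$.

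Now $\G_1$ is a central extension $1\to K\to \G_1\to \Z^k\to 1$ with $K$ finite. Such an extension is classified by a class in $H^2(\Z^k;K)$, which is a finite group (it is a finite direct sum of copies of $K$, since $H^2(\Z^k;K)$ is built from the finitely many ``wedge'' terms $\binom{k}{2}$ of $K$). Therefore the class becomes trivial after multiplying by the order $m := |H^2(\Z^k;K)|$, or more simply: the preimage $\G'$ of $m\Z^k\leq \Z^k$ under $\G_1\to\Z^k$ is a central extension of $m\Z^k\cong\Z^k$ by $K$ whose extension class is $m$ times the original, hence trivial, so the sequence $1\to K\to \G'\to\Z^k\to 1$ splits. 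Actually, I would phrase this even more elementarily to avoid invoking cohomology machinery: pick generators $a_1,\dots,a_k$ of $\Z^k$ and lifts $\tilde a_1,\dots,\tilde a_k\in\G_1$; the commutators $[\tilde a_i,\tilde a_j]$ lie in the finite central subgroup $K$, and replacing each $\tilde a_i$ by a suitable power $\tilde a_i^{m}$ (with $m$ a common multiple of the exponent of $K$) one arranges that the new lifts commute, because $[\tilde a_i^m,\tilde a_j^m]=[\tilde a_i,\tilde a_j]^{m^2}$ by centrality. The subgroup $\G'$ generated by these commuting lifts surjects onto $m\Z^k$, and because $K$ is a torsion group while $\langle \tilde a_1^m,\dots,\tilde a_k^m\rangle$ maps isomorphically to a free abelian group, $\G'\cap K$ is trivial, so $\G'\cong\Z^k$. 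Finally, $\G'$ has finite index in $\G_1$ (index $|K|\cdot m^k$) and $\G_1$ has finite index in $\G$, so $\G'\leq\G$ is the desired finite-index subgroup isomorphic to $\Z^k$.

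The main obstacle is the central extension splitting step: one must be careful that passing to powers of the lifts genuinely kills the commutators (this uses centrality of $K$ crucially, which is why the intermediate reduction to $\G_2$ is needed) and that the resulting abelian subgroup meets $K$ trivially rather than merely having finite intersection. The torsion-freeness of $\Z^k$ is what makes the last point work: any element of $\G'\cap K$ is a torsion element of $\G'$ mapping to $0$ in $\Z^k$, but $\G'$ modulo the trivial intersection is free abelian, forcing the element to be trivial — so in fact I should argue directly that the chosen commuting lifts generate a free abelian group of rank $k$ mapping isomorphically onto $m\Z^k$, which pins down $\G'\cap K=\{1\}$. Everything else is routine finite-index bookkeeping, and I would keep it brief.
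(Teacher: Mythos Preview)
Your proposal is correct and follows essentially the same route as the paper: reduce to the case where $K$ is central and the quotient is $\Z^k$, lift a free generating set, and replace the lifts by suitable powers so that they commute (the paper uses $p=|K|$ and the identity $\g_i\g_j^p\g_i^{-1}=\g_j^p h^p=\g_j^p$, while you use $[\tilde a_i^m,\tilde a_j^m]=[\tilde a_i,\tilde a_j]^{m^2}$, but these are equivalent manipulations). The only point the paper leaves more implicit than you do is why $\G'\cap K=\{1\}$; both arguments ultimately rest on the fact that an abelian group generated by $k$ elements which surjects onto $\Z^k$ must itself be $\Z^k$.
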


\begin{proof}

Consider the action of $\G$ by conjugation on $K$. This gives a homomorphism $\G\to \Aut K$, the latter is a finite group, and therefore the kernel is of finite index in $\G$. Furthermore, since $A$ contains a finite index subgroup isomorphic to  $\Z^k$, up to passing to a finite index subgroup of $\G$, we may assume that the action of $\G$ on $K$ by conjugation is trivial, and that $A$ is isomorphic to $\Z^k$. (We note that this reduction also amounts to passing to the center of $K$, which is characteristic and finite index in $K$.) 
    
    Let $A$ have  free generators $\{g_1, \dots, g_k\}$. Choose a lift  $\g_i\in \G$ of  $g_i$ for $i=1, \dots, k$. Note that if $k=1$ then $g_1$ has infinite order and hence the sequence is split. 

    Let $p$ be the cardinality of $K$. Then $h^p= 1$ for every $h\in K$. Fix $i, j\in \{1, \dots, k\}$. Then there exists an $h\in K$ such that $\g_i \g_j\g_i^{-1} = \g_j h$.  Since $\g_j$ commutes with $h$ we have that $$\g_i \g_j^p\g_i^{-1} = \g_j^p h^p=\g_j^p.$$
    Therefore, $[\g_i, \g_j^p] = 1$.

Applying the above argument for all pairs, we obtain that $\{\g_1^p, \dots, \g_k^p\}$ are commuting infinite order elements. Their projection to $A$ is a finite index subgroup and hence they generate a finite index subgroup $\G'$ in $\G$.
\end{proof}

\begin{lemma}\label{lem: AU eucl is Zk}
   Consider the action of $\R^\D$ on itself by left-translation. If $\G\to \R^\D$ is AU-acylindrical then it is uniformly proper and $\G$ is virtually  $\Z^k$, for $0\leq k\leq \D$. 
\end{lemma}

\begin{proof}
    First, since the action is AU-acylindrical, the kernel is finite by Lemma~\ref{Lem:AU-acyl has finite kernel}. By Lemma~\ref{Lem:acyl+loc comp implies unif proper_p1} the action is proper and hence the orbits are discrete. By \cite[Lemma 4, p102]{BorevichShafarevich} $\G$ mod the kernel of the action is  $\Z^k$, where $0\leq k\leq \D$. By Lemma~\ref{lem: virt isom to virt for abelian}, $\G$ is virtually $\Z^k$.
\end{proof}

\subsection{Hyperbolic Geodesic Metric Spaces}

The Gromov product of $x,y\in Y$ with respect to the base-point $o\in Y$ is defined as $$\<x,y\>_o=\frac{1}{2}(d(x,o)+d(o,y)-d(x,y))\geq 0.$$ It is a measure of how far the triple is from achieving the triangle equality.

Assume $Y$ is geodesic and consider $a,b,c\in Y$ with choice of geodesics, as in Figure~\ref{Fig: Triangle}. The points $s$ and $t$ are marked on the geodesic $[c,a]$ so that $d(c,b) = d(c,t)$ and $d(a,b)= d(a,s)$. The point $m$ is the midpoint between $s$ and $t$. The points $m'$ and $m''$ are then the corresponding points on the geodesics $[a,b]$ and $[b,c]$ respectively so that 

\begin{eqnarray*}
d(a,m)=d(a,m')\\
d(b,m')= d(b,m'')\\
d(c,m'')= d(c,m)
\end{eqnarray*}

This shows that $d(s,m) = d(m,t) = \<c,a\>_b$ and with the previous observations, we see that
$$d(b,m') = d(b,m'') = \<c,a\>_b.$$

Combining these, we obtain the comparison tripod on the right in Figure~\ref{Fig: Triangle}, which is uniquely determined by  $d(A,M) = d(a,m)$, $d(B,M) = d(b,m')$ and $d(C,M) = d(c,m)$. We note that this yields a natural projection

$$\pi: [a,b]\cup[b,c]\cup[c,a]\to[A,B]\cup[B,C]\cup[C,A]$$

\begin{center}

\end{center}

\begin{figure}
 \includegraphics[width=3.2in]{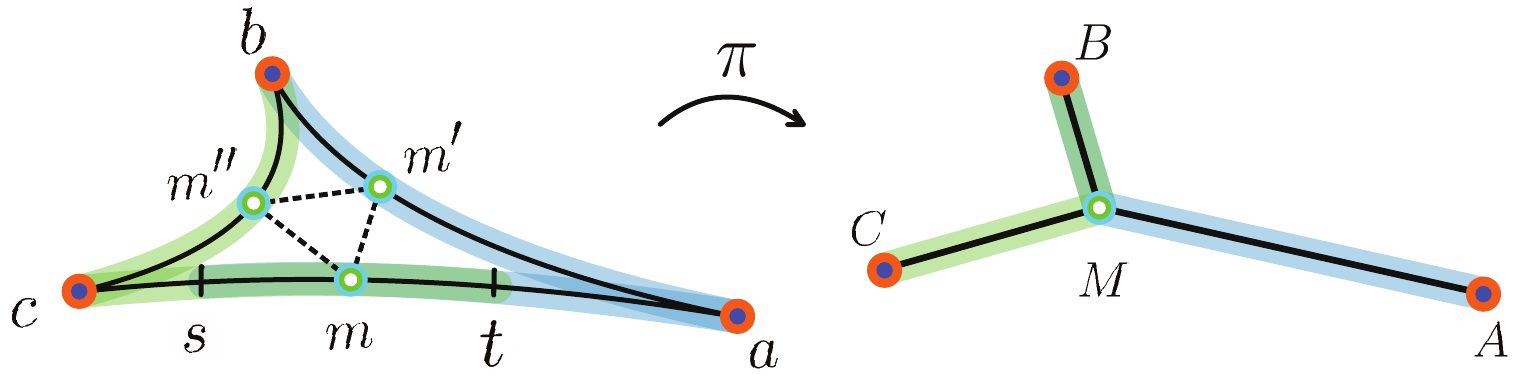}
 \caption{Gromov Product with Comparison Tripod for general triangle}
\label{Fig: Triangle}
\end{figure}

\begin{defn}
A geodesic metric space $X$ is said to be \emph{$\delta$-hyperbolic} if for every $a,b,c\in X$ with comparison tripod as above, the diameter of preimages of a point under $\pi$ is uniformly bounded by $\delta$. 
\end{defn}

\begin{remark}\label{Rem:center of triangle}
For a geodesic triangle as in Figure 1, we shall refer to its \emph{center} as the triple $\{m, m', m''\}= \pi^{-1}\{M\}$. We note that if $X$ is $\delta$-hyperbolic then the center of any geodesic triangle has diameter no larger than $\delta$. There are other (equivalent) formulations of $\delta$-hyperbolicity -- such as the so-called ``slim-triangles" condition due to Rips -- additional details can be found in \cite[Chapter III.H]{BridsonHaefliger}. 
\end{remark}

\begin{cor}\label{Cor: Gromov Prod vs projection}
     Let $X$ be a $\delta$-hyperbolic geodesic metric space. Fix $a,b,c\in X$ and $[a,c]$ a geodesic between $a$ and $c$. If $p\in [a,c]$ is a point closest to $b$ then $\<a,c\>_b\leq d(p,b) \leq \<a,c\>_b + \delta$. 
 \end{cor}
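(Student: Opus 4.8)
The plan is to compare the Gromov product $\<a,c\>_b$ with the distance $d(p,b)$ from $b$ to the geodesic $[a,c]$, using the comparison tripod for the triangle with vertices $a,b,c$ (with one side being the chosen geodesic $[a,c]$) that was set up in the discussion preceding Figure~\ref{Fig: Triangle}.

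First I would recall from that discussion the structure of the tripod: on the side $[c,a]$ we have points $t$ and $s$ with $d(c,t) = d(c,b) - \<c,a\>_b \cdot(\text{correction})$... more precisely, $d(s,m) = d(m,t) = \<c,a\>_b$ where $m$ is the midpoint of $[s,t]$, and the three ``feet'' $m \in [c,a]$, $m' \in [a,b]$, $m'' \in [b,c]$ form the center of the triangle, which has diameter at most $\delta$ by Remark~\ref{Rem:center of triangle}. The key geometric fact is that $m \in [a,c]$ satisfies $d(b,m) = \<a,c\>_b + (\text{something}\le\delta)$: indeed $d(b,m') = \<a,c\>_b$, and $d(m,m') \le \delta$, so by the triangle inequality $|d(b,m) - \<a,c\>_b| \le \delta$.

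Now for the two inequalities. For the lower bound $\<a,c\>_b \le d(p,b)$: since $p$ is the closest point on $[a,c]$ to $b$, the projection $\pi$ sends $p$ somewhere on the segment $[C,M]\cup[M,A]$ of the tripod; say $\pi(p)$ lies at tripod-distance $r \ge 0$ from $M$. Then $d(b,p) \ge d(B,\pi(p)) = d(B,M) + r = \<a,c\>_b + r \ge \<a,c\>_b$, using that the projection $\pi$ is $1$-Lipschitz in the appropriate sense — actually more carefully, $d(b,p)$ is at least the tripod distance from $B$ to $\pi(p)$ because $\pi$ does not increase distances along the relevant paths; alternatively, $d(b,p) \ge d(b,m') - \delta \cdot(\text{no})$ — the cleanest route is: $d(b,p) \ge d(b, \text{the foot on } [a,c]) $ is false in general, so instead use that $d(a,b) + d(b,c) - d(a,c) = 2\<a,c\>_b$ and $d(a,b) \le d(a,p) + d(p,b)$, $d(b,c)\le d(b,p)+d(p,c)$, $d(a,p)+d(p,c) = d(a,c)$ since $p\in[a,c]$, giving $2\<a,c\>_b \le 2d(p,b)$ directly. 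For the upper bound $d(p,b) \le \<a,c\>_b + \delta$: since $p$ is the closest point to $b$ on $[a,c]$ and $m\in[a,c]$, we have $d(p,b) \le d(m,b) \le d(m',b) + d(m,m') \le \<a,c\>_b + \delta$.

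\textbf{Main obstacle.} The lower bound is essentially immediate from the triangle inequality once one observes $p\in[a,c]$ (so $d(a,p)+d(p,c)=d(a,c)$), so the only real care needed is in pinning down the upper bound: one must correctly identify that the center point $m$ lies on the \emph{chosen} geodesic $[a,c]$ and that $d(b,m)\le \<a,c\>_b+\delta$ via the comparison tripod and the $\delta$-bound on the diameter of the center. I expect the write-up to be short; the subtle point is simply making sure the comparison-tripod bookkeeping from the pre-Figure discussion is invoked correctly rather than re-deriving it.
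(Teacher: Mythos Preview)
Your argument is correct. The paper states this corollary without proof, leaving it as an immediate consequence of the comparison-tripod discussion and Remark~\ref{Rem:center of triangle}; your write-up supplies precisely the two short steps one expects. The lower bound via the triangle inequality (adding $d(a,b)\le d(a,p)+d(p,b)$ and $d(b,c)\le d(p,b)+d(p,c)$ and using $d(a,p)+d(p,c)=d(a,c)$) is clean and does not even use hyperbolicity, and the upper bound $d(p,b)\le d(m,b)\le d(m',b)+d(m,m')\le \<a,c\>_b+\delta$ is exactly the intended use of the tripod center. Your identification of the only subtle point---that $m$ lies on the \emph{given} geodesic $[a,c]$, which is how the paper's pre-figure discussion constructs it---is on target. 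The exploratory false starts in your plan (the attempt to push the lower bound through the tripod projection) can simply be deleted in the final version.
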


\begin{defn}
    Let $C\geq 1$ and $\lambda\geq 0$. A map $\f: Y\to Z$ between metric spaces is a $(C, \lambda)$ \emph{quasi-isometric embedding} if it is coarsely bi-Lipschitz i.e. for all $y,y' \in Y$ we have
    $$\frac{1}{C}d(y,y')-\lambda\leq d(\f(y),\f(y'))\leq Cd(y,y') + \lambda.$$
    If in addition, $\f$ is \emph{coarsely surjective}, i.e. for every $z\in Z$ there is a $x\in Y$ such that $d(\f(x),z)\leq \lambda$ we  say $\f$ is a quasi-isometry. If $Y= \Z$ then the image of $\f$ is called a \emph{quasigeodesic}.
\end{defn}

Hyperbolicity is a quasi-isometry invariant \cite[Theorem III.H.1.9]{BridsonHaefliger}.

\subsubsection{Gromov Bordification}

Let $X$ be a $\delta$-hyperbolic geodesic metric space. A sequence $x(n)\in X$, $n\in \N$ is said to be a Gromov sequence if $\Lim{n,m\to\8}\<x(n),x(m)\>_o =\8$.  Recall that if $a(n,m)\in \R$ is a double indexed sequence then $\Lim{n,m\to\8}\, a(n,m) = \8$ if for every $L>0$ there is an $N>0$ so that if $n,m\geq N$ then $a(n,m)\geq L$. Two Gromov sequences $x(n), y(n), n\in \N$ are said to be equivalent if $\Lim{n,m\to\8}\<x(n),y(m)\>_o =\8$. 

The Gromov boundary as a set is the collection of equivalence classes of Gromov sequences converging to infinity and is denoted $\partial X$. If $\xi, \eta\in \partial X$ then we may extend the Gromov product by taking the following infimum ranging over equivalence class representatives:

$$\<\xi,\eta\>_o= \inf \left(\Lim{n,m\to \8}\<x(n),y(m)\>_o\right).$$

We topologize $\partial X$ by saying that two classes are topologically ``nearby" if they have ``large" Gromov product, or equivalently the corresponding quasigeodesic rays  fellow travel for a ``long'' time.

Since these conditions are independent of the choice of the base point $o\in X$  there is a natural map  \cite[Theorem 5.3]{Vaisala} (see also \cite{Hamann})
$$\Isom X\to \Homeo(\partial X), f\mapsto \partial f.$$

\begin{remark}\label{Rem: (1, 20delta)_p1} Benakli and Kapovich sketch an argument  that for every $x\in X$ and $\xi\in \partial X$, there is a $(1, 10\delta)$ quasigeodsic ray from $x$ to $\xi$. They also state that this can be used to prove that a pair of distinct boundary points  can be connected by a $(1, 20\delta)$ quasigeodesic \cite[Remark 2.16]{BenakliKapovich}. This sketch can be more rigorously seen by results from V\"ais\"al\"a's work \cite{Vaisala} as follows: By Remark 6.4, a $(\mu,h)$-road can be made into a $(1, \mu)$-quasigeodesic ray. By Theorem 6.7, for every $h>0$ there exists a $(4\delta+2h,h)$-road connecting $x$ and $\xi$. Hence there exists a  $(1, 5\delta)$ quasigeodesic ray connecting $x$ and $\xi$. Similarly, by Lemma 6.13, given distinct $\xi, \eta \in \partial X$, for every $h>0$ there exists a $(12\delta +10 h, h)$-biroad and so $\xi$ and $\eta$ can be connected by a $(1, 13\delta)$ quasigeodesic. 
The work of Bonk and Schramm  can also be employed here with less precision to the constants \cite[Proposition 5.2]{BonkSchramm}. The fact that the multiplicative constant here is trivial is indispensable for our results and therefore most will be phrased for $(1, \lambda)$ quasigeodesics. We note that there is a small discrepancy in that those sources use $\R$ to define quasigeodesics whereas we use $\Z$. This means that technically, the additive constants above should be corrected with a $+1$, where appropriate. For ease of notation, we shall ignore this discrepancy as well as the precision guaranteed by V\"ais\"al\"a: we will utilize $(1, 10\delta)$ quasigeodesic rays and $(1, 20\delta)$ quasigeodesics. 
\end{remark}

We shall denote by $\~ X = X\cup\partial X$ the Gromov bordification of $X$ and  note that it may not be compact, and $\partial X$ not closed, though they are both metrizable \cite[Section 5]{Vaisala}. It is for this reason that we use the term bordification. 

\subsubsection{The Morse Property}

An important property of hyperbolic metric spaces is the Morse property for stability of quasigeodesics. 

\begin{prop}[Finite Morse Property]\cite[Theorem 3.7]{Vaisala}\cite[Theorem III.H.1.7]{BridsonHaefliger}\label{prop:qg triangles slim hyp}  Let $X$ be a $\delta$-hyperbolic space, and $C\geq 1, \lambda\geq 0$ be two constants. Then there is a constant $M = M(\delta, C, \lambda)$ such that if $q$ is a $(C, \lambda)$-quasigeodesic in $X$ and $[x,y]$ is a geodesic segment between the end points of $q$, then the Hausdorff distance between $q$ and $[x,y]$ is at most $M$.
\end{prop}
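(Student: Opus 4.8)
This is the stability (Morse) lemma for quasigeodesics in $\delta$-hyperbolic spaces, and it is exactly the statement of \cite[Theorem III.H.1.7]{BridsonHaefliger} and \cite[Theorem 3.7]{Vaisala}; the plan is to reproduce the standard argument, which splits into a (purely metric) taming step and a logarithmic estimate that is where hyperbolicity enters.

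\textbf{Step 1 (taming).} Since $q$ is a priori only defined on $\Z$ (or a subinterval), I would first replace it by a \emph{continuous} path $q'$ from $x$ to $y$ obtained by joining consecutive sample points of $q$ by geodesic segments and then deleting obvious backtracking. A routine bookkeeping argument shows $q'$ can be taken to be a $(C',\lambda')$-quasigeodesic with $C',\lambda'$ depending only on $C,\lambda$; that the Hausdorff distance between $\mathrm{im}(q)$ and $\mathrm{im}(q')$ is bounded by a constant $k_0=k_0(C,\lambda)$; and, crucially, that the \emph{length} of the subpath of $q'$ between any two of its points $u,v$ is at most $k_1\,d(u,v)+k_2$ with $k_1,k_2$ depending only on $C,\lambda$. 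No hyperbolicity is used here.

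\textbf{Step 2 (logarithmic estimate).} Here I would prove: for any continuous rectifiable path $c$ in $X$ from $a$ to $b$ and any geodesic $[a,b]$, every $p\in[a,b]$ satisfies $d(p,\mathrm{im}(c))\le \delta\log_2\ell(c)+1$ (with the bound read as $\delta+1$ when $\ell(c)\le 1$). This follows by repeatedly bisecting $c$ into two subpaths of equal length: the comparison-tripod estimate of Remark~\ref{Rem:center of triangle} says $[a,b]$ lies in the $\delta$-neighborhood of the union of geodesics from $a$ and $b$ to the midpoint of $c$, and iterating $\log_2\ell(c)$ times down to subpaths of length $\le 1$ gives the bound.

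\textbf{Step 3 (both inclusions, and transfer back).} Applying Step 2 to $q'$, I would first show $\mathrm{im}(q')\subseteq\hood_{M_0}([x,y])$ for $M_0=M_0(\delta,C,\lambda)$: if some point $w$ of $q'$ realized distance $D$ from $[x,y]$, choose points $w_-,w_+$ of $q'$ on either side of $w$ at arclength $\approx 2D$ from $w$ (or the endpoints $x,y$), let $p_\pm$ be nearest points of $[x,y]$ to $w_\pm$, and apply Step 2 to the continuous path $[p_-,w_-]\ast(q'$ from $w_-$ to $w_+)\ast[w_+,p_+]$. Its length is $O(D)$ by the length control from Step 1, while $w$ is at distance $\ge D-O(1)$ from the geodesic $[p_-,p_+]$, so $D-O(1)\le\delta\log_2(O(D))+O(1)$, forcing $D\le M_0$. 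The reverse inclusion $[x,y]\subseteq\hood_{M_1}(\mathrm{im}(q'))$ then follows by a connectedness argument: for $p\in[x,y]$, track where $\mathrm{im}(q')$ passes from the side of $p$ near $[x,p]$ to the side near $[p,y]$, using the just-established neighborhood bound. Finally the Hausdorff bound for $q'$ versus $[x,y]$ combines with Step 1's estimate $k_0$ to give the constant $M=M(\delta,C,\lambda)$ for the original $q$.

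\textbf{Expected main obstacle.} The delicate point is the bootstrap in Step 3: the logarithmic estimate is only useful because the relevant subpath length is controlled \emph{by the very distance $D$ one is trying to bound} (via Step 1's length control), so the inequality $D - O(1)\le \delta\log_2(O(D))+O(1)$ is self-improving rather than vacuous. Making the constants close up this way — and recognizing that this forces the use of a continuous, length-controlled representative, hence Step 1 — is the crux; the rest is standard hyperbolic-triangle geometry.
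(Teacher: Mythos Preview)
Your argument is the standard Bridson--Haefliger proof and is correct. Note, however, that the paper does not supply its own proof of this proposition: it is stated with citations to \cite[Theorem 3.7]{Vaisala} and \cite[Theorem III.H.1.7]{BridsonHaefliger} and used as a black box, so there is nothing to compare beyond observing that you have faithfully reproduced the argument from those sources.
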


\begin{theorem}[Infinite Morse Property]\cite[Theorem 6.32]{Vaisala}\label{Slim Q-Bigons infinite_p1}
Let $X$ be $\delta$-hyperbolic, $C\geq 0$ and $\lambda \geq 0$. There exists $M= M(\delta,C,\lambda)$ so that if   $c, q$ are either two $(C, \lambda)$ quasigeodesic rays  starting from the same point, and converging to the same point in $\partial X$ or are two bi-infinite $(C, \lambda)$ quasigeodesics with the same endpoints on $\partial X$, then $d_{Haus}(im(c), im(q))\leq M$. 
\end{theorem}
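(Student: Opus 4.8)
The plan is to reduce both statements to the Finite Morse Property (Proposition~\ref{prop:qg triangles slim hyp}): I would truncate the quasigeodesics far out towards their common ideal endpoint(s), replace the truncated arcs by honest geodesic segments joining honest points of $X$, and then play off the thinness of geodesic triangles (resp.\ quadrilaterals) against the fact that the Gromov products of the far-out truncation points blow up. Throughout, write $M_1 = M(\delta, C, \lambda)$ for the constant of Proposition~\ref{prop:qg triangles slim hyp}. The Hausdorff bound $M$ I would produce depends only on $\delta, C, \lambda$, while the truncation depths are allowed to depend on the individual point being tracked --- this split is the whole point.

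I would handle the ray case first. Let $c, q$ be $(C,\lambda)$ quasigeodesic rays starting at $x_0$, both converging to $\xi\in\partial X$, and fix a point $p$ on $c$. The integer samples of $c$ and of $q$ are Gromov sequences representing the same $\xi$, so $\<c(T),q(T')\>_{x_0}\to\8$ as $T,T'\to\8$, and I would choose truncation times $T,T'$ large enough that $\<c(T),q(T')\>_{x_0} > d(x_0,p) + M_1 + \delta$. Proposition~\ref{prop:qg triangles slim hyp} applied to $c|_{[0,T]}$ produces a point $p'$ on a geodesic $[x_0,c(T)]$ with $d(p,p')\leq M_1$, and applied to $q|_{[0,T']}$ shows that a geodesic $[x_0,q(T')]$ lies in the $M_1$-neighbourhood of $im(q)$. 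In the geodesic triangle on $x_0, c(T), q(T')$, the point $p'$ lies within $\delta$ of $[x_0,q(T')] \cup [q(T'),c(T)]$; but Corollary~\ref{Cor: Gromov Prod vs projection} gives $d(x_0, [q(T'),c(T)]) \geq \<c(T),q(T')\>_{x_0} > d(x_0,p') + \delta$, so $p'$ cannot be within $\delta$ of $[q(T'),c(T)]$ and must therefore lie within $\delta$ of $[x_0,q(T')]$, hence within $M_1+\delta$ of $im(q)$. This yields $d(p, im(q)) \leq 2M_1 + \delta$, a bound independent of $p$; exchanging the roles of $c$ and $q$ then gives $d_{Haus}(im(c), im(q)) \leq 2M_1 + \delta$.

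For the bi-infinite case I would run the identical argument with a quadrilateral in place of the triangle. Let $c, q\colon\Z\to X$ be $(C,\lambda)$ quasigeodesics with $c(t),q(t)\to\xi_\pm$ as $t\to\pm\8$; fix a point $p$ on $c$ and take $x_0 = p$ as basepoint. Since $c(T), q(T')$ represent $\xi_+$ and $c(-S), q(-S')$ represent $\xi_-$, both $\<c(T),q(T')\>_{x_0}$ and $\<c(-S),q(-S')\>_{x_0}$ tend to $\8$, and I would choose $S,S',T,T'$ so that both exceed $M_1 + 2\delta$. Proposition~\ref{prop:qg triangles slim hyp} places $p$ within $M_1$ of a geodesic $[c(-S),c(T)]$, say at $p'$, and places $im(q)$ within $M_1$ of a geodesic $[q(-S'),q(T')]$. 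Cutting the geodesic quadrilateral on $c(-S), c(T), q(T'), q(-S')$ along a diagonal into two $\delta$-thin triangles shows that $p'$ lies within $2\delta$ of the union of the three sides other than $[c(-S),c(T)]$. By Corollary~\ref{Cor: Gromov Prod vs projection} the two ``cap'' sides $[c(T),q(T')]$ and $[q(-S'),c(-S)]$ are at distance at least $\<c(T),q(T')\>_{x_0} > M_1+2\delta$ (resp.\ $\<c(-S),q(-S')\>_{x_0} > M_1+2\delta$) from $x_0 = p$, hence more than $2\delta$ from $p'$; so $p'$ must lie within $2\delta$ of $[q(-S'),q(T')]$, hence within $2\delta + M_1$ of $im(q)$. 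Thus $d(p, im(q)) \leq 2M_1 + 2\delta$, again independent of $p$, and by symmetry $d_{Haus}(im(c), im(q)) \leq 2M_1 + 2\delta$.

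The step I expect to be the main obstacle --- really the only thing requiring care --- is securing uniformity of the Hausdorff bound: one must check that the estimates above genuinely involve only $\delta, C, \lambda$ even though the truncation depths grow with the point being tracked. The device that makes this work is the quantitative fact that the geodesic ``caps'' recede toward the ideal endpoints --- their distance to the basepoint is at least the relevant Gromov product, by Corollary~\ref{Cor: Gromov Prod vs projection} --- together with the divergence of those Gromov products, which is exactly the hypothesis that $c$ and $q$ converge to the common ideal endpoint(s). It is worth noting that no geodesics between ideal points are ever invoked: every geodesic segment in the argument joins honest points of the geodesic space $X$, so the possible non-properness of $X$ is irrelevant.
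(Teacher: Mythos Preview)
The paper does not supply its own proof of this statement; it is quoted from V\"ais\"al\"a, and Remark~\ref{rem: Morse constant} records that V\"ais\"al\"a deduces the infinite version from the finite Morse constant of Proposition~\ref{prop:qg triangles slim hyp}. Your argument does exactly this --- truncate, apply the finite Morse Property, and use thin triangles/quadrilaterals together with Corollary~\ref{Cor: Gromov Prod vs projection} to push the geodesic ``caps'' past the tracked point --- and it is correct, with uniform constants $2M_1+\delta$ (rays) and $2M_1+2\delta$ (bi-infinite) depending only on $\delta,C,\lambda$ as required.
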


\begin{remark}[The Morse Constant]\label{rem: Morse constant}
    In V\"ais\"al\"a's work, the constant  $M(\delta,C,\lambda)$ from Proposition~\ref{prop:qg triangles slim hyp} is used to prove Theorem~\ref{Slim Q-Bigons infinite_p1} and are therefore the same. We shall exclusively be using $(1, \lambda)$ quasigeodesics and later want to refer to ``the" Morse constant. So, to be precise we set $M_\lambda$ to be $\delta$ plus the infimum of the $M(\delta, 1, \lambda)$ from Proposition~\ref{prop:qg triangles slim hyp}.
\end{remark}

A consequence of the Morse Property is that the ``slim-triangles" formulation of hyperbolicity may be extended to quasigeodesic triangles as well; see \cite[Corollary III.H.1.8]{BridsonHaefliger} for instance.

The following lemma establishes a fellow-traveling property for $(1, \lambda)$ quasi-geodesics.  We note that the lemma is not true for $(C,\lambda)$ quasigeodesics if $C \neq 1$. For example, consider the quasigeodeisc $c, q: \Z\to \R$, where $c(n)= n$ and $q(n) = C n$. Then $d(c(n),q(n)) = |(C-1) n|$ which is unbounded if $C \neq 1$.

\begin{lemma}\label{lemma: fellow travel q-rays}
  Let $c, q: \N\to X$ be $(1,\lambda)$ quasigeodesic rays with $c(0)=q(0)$ and $c(n), q(n) \to \xi \in \partial X$. Let $M_\lambda$ be as in Remark~\ref{rem: Morse constant}. Then $d(c(n),q(n))\leq 2M_\lambda + 3\lambda$ for all $n\in \N$. 
\end{lemma}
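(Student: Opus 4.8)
The plan is to use the Infinite Morse Property (Theorem~\ref{Slim Q-Bigons infinite_p1}) together with the precision that our quasigeodesics have multiplicative constant exactly $1$. First I would invoke Theorem~\ref{Slim Q-Bigons infinite_p1} (via the constant $M_\lambda$ fixed in Remark~\ref{rem: Morse constant}) to conclude that the images of $c$ and $q$ are within Hausdorff distance $M_\lambda$ of each other. So for each $n\in\N$ there is an index $m=m(n)$ with $d(c(n),q(m))\leq M_\lambda$.

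The key step is then to bound $|n-m|$ using the $(1,\lambda)$-condition, which is where the hypothesis $C=1$ is essential (as the remark preceding the lemma emphasizes). Since both rays start at the common point $p:=c(0)=q(0)$, the $(1,\lambda)$-quasigeodesic inequality gives $d(p,c(n))\geq n-\lambda$ and $d(p,c(n))\leq n+\lambda$, and likewise $d(p,q(m))\geq m-\lambda$, $d(p,q(m))\leq m+\lambda$. Combining these with $|d(p,c(n))-d(p,q(m))|\leq d(c(n),q(m))\leq M_\lambda$ yields $|n-m|\leq M_\lambda+2\lambda$. Then applying the $(1,\lambda)$-condition to $q$ along the segment from index $m$ to index $n$ gives $d(q(m),q(n))\leq |n-m|+\lambda \leq M_\lambda+3\lambda$. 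Finally the triangle inequality
$$d(c(n),q(n))\leq d(c(n),q(m))+d(q(m),q(n))\leq M_\lambda + (M_\lambda+3\lambda) = 2M_\lambda+3\lambda,$$
which is exactly the claimed bound.

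I expect the main (minor) obstacle to be purely bookkeeping: one must be careful that the indices $n$ and $m$ are genuine arguments of the quasigeodesics (nonnegative integers) so that the $(1,\lambda)$-inequality applies on the sub-ray, and one should note that the degenerate cases (e.g. $m(n)$ not unique, or small $n$) cause no trouble since the estimates are one-sided inequalities throughout. No appeal to the limiting behavior $c(n),q(n)\to\xi$ is needed beyond justifying the hypothesis of Theorem~\ref{Slim Q-Bigons infinite_p1}; in particular the fellow-traveling bound is genuinely uniform in $n$, as stated.
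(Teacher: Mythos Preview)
Your proposal is correct and follows essentially the same argument as the paper: invoke the Infinite Morse Property to find $m$ with $d(c(n),q(m))\leq M_\lambda$, use the $(1,\lambda)$-condition at the common basepoint together with the reverse triangle inequality to bound $|n-m|\leq M_\lambda+2\lambda$, and conclude via the triangle inequality. The paper's proof is line-for-line the same idea, merely compressing your two-sided estimate for $|n-m|$ into a single displayed inequality.
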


\begin{proof}
 Fix $n\in \N$. By Theorem~\ref{Slim Q-Bigons infinite_p1} there is an $m_n\in \N$ so that $d(c(n),q(m_n))\leq M_\lambda$. Since $c$ and $q$ are quasigeodesics with the same base point, using the reverse triangle inequality we have that
 $$|n-m_n| -2\lambda \leq |d(c(n),c(0)) -d(q(m_n),q(0))| \leq d(c(n),q(m_n))\leq M_\lambda.$$
 Therefore
\begin{eqnarray*}
d(c(n),q(n))&\leq& d(c(n),q(m_n))+d(q(m_n),q(n))\\
&\leq& M_\lambda + |m_n-n| +\lambda \\
&\leq& 2M_\lambda + 3\lambda.
\end{eqnarray*}
\end{proof}

\begin{cor}\label{cor: (1,mu)-fellow travel}
 Let $c, q: \Z\to X$ be $(1,\lambda)$ oriented quasigeodesics  converging to the same points in $\partial X$, in the same direction.  Let $M_\lambda$ be the Morse constant for $(1, \lambda)$ quasigeodesic rays and $M'_\lambda$ be the Morse constant for $(1, \lambda +2M_\lambda)$ quasigeodesics. Then  for every $n\in \Z$ we have that $$d(c(n), q(n))\leq  2M'_\lambda + 8 M_\lambda +5\lambda+ d(c(0), q(0)).$$
\end{cor}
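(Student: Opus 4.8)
\textbf{Proof proposal for Corollary~\ref{cor: (1,mu)-fellow travel}.}
The plan is to reduce to Lemma~\ref{lemma: fellow travel q-rays}, which governs $(1,\mu)$ quasigeodesic rays that share a basepoint, by (a) re-indexing $q$ so that it passes within a constant of $c(0)$, and (b) treating the forward halves ($n\geq 0$) and backward halves ($n\leq 0$) separately, since these are rays converging to the (common) forward endpoint $\xi^+\in\partial X$ and backward endpoint $\xi^-\in\partial X$ of $c$ and $q$. Since $c$ and $q$ are bi-infinite $(1,\lambda)$ quasigeodesics with the same pair of endpoints on $\partial X$, the Infinite Morse Property (Theorem~\ref{Slim Q-Bigons infinite_p1}) gives $d_{\mathrm{Haus}}(\mathrm{im}(c),\mathrm{im}(q))\leq M_\lambda$. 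Hence there is an integer $k$ with $d(c(0),q(k))\leq M_\lambda$, and since $q$ is a $(1,\lambda)$ quasigeodesic,
\[
|k|\leq d(q(0),q(k))+\lambda\leq d(c(0),q(0))+M_\lambda+\lambda.
\]
Set $\hat q(n):=q(n+k)$; this is again a bi-infinite $(1,\lambda)$ quasigeodesic converging to $\xi^+$ as $n\to+\infty$ and to $\xi^-$ as $n\to-\infty$, and now $d(c(0),\hat q(0))\leq M_\lambda$.

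Next I would build an auxiliary forward ray $\tilde q^+\colon\N\to X$ by $\tilde q^+(0):=c(0)$ and $\tilde q^+(n):=\hat q(n)$ for $n\geq 1$. A direct triangle-inequality check, using $d(c(0),\hat q(0))\leq M_\lambda$, shows $\tilde q^+$ is a $(1,\lambda+M_\lambda)$ — a fortiori a $(1,\lambda+2M_\lambda)$ — quasigeodesic ray starting at $c(0)$ and converging to $\xi^+$; likewise $c|_{\N}$ is a $(1,\lambda)$, hence $(1,\lambda+2M_\lambda)$, quasigeodesic ray from $c(0)$ to $\xi^+$. Applying Lemma~\ref{lemma: fellow travel q-rays} with the additive constant $\lambda+2M_\lambda$ in place of $\lambda$ — so that the ``$M_\lambda$'' appearing there becomes $M'_\lambda=M_{\lambda+2M_\lambda}$ by the convention of Remark~\ref{rem: Morse constant} — yields
\[
d(c(n),\hat q(n))\leq 2M'_\lambda+3(\lambda+2M_\lambda)=2M'_\lambda+6M_\lambda+3\lambda\qquad(n\geq 1).
\]
An entirely symmetric argument applied to the backward halves $n\mapsto c(-n)$ and $n\mapsto\hat q(-n)$ (both converging to $\xi^-$) gives the same bound for all $n\leq -1$.

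Finally I would pass from $\hat q$ back to $q$: for every $n$,
\[
d(\hat q(n),q(n))=d(q(n+k),q(n))\leq|k|+\lambda\leq d(c(0),q(0))+M_\lambda+2\lambda,
\]
so for $n\neq 0$,
\[
d(c(n),q(n))\leq d(c(n),\hat q(n))+d(\hat q(n),q(n))\leq 2M'_\lambda+7M_\lambda+5\lambda+d(c(0),q(0)),
\]
which is within the asserted bound $2M'_\lambda+8M_\lambda+5\lambda+d(c(0),q(0))$; the case $n=0$ is trivial. The main obstacle is not a deep step but the bookkeeping: one must verify carefully that the re-indexed and spliced paths are genuine uniform $(1,\lambda+2M_\lambda)$ quasigeodesic rays based at $c(0)$, and keep the shift $k$ and the two directions $\xi^{\pm}$ straight so that forward halves are only ever compared with forward halves.
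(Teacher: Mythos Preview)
Your proof is correct and follows essentially the same approach as the paper: shift $q$ so that its time-zero point is $M_\lambda$-close to $c(0)$, splice to obtain shared-basepoint $(1,\lambda+2M_\lambda)$ rays, apply Lemma~\ref{lemma: fellow travel q-rays} on each half-line, and then undo the shift. The only cosmetic difference is that you modify $\hat q$ at $n=0$ whereas the paper modifies $c$ at $n=0$; your bookkeeping even yields the slightly sharper $7M_\lambda$ in place of the paper's $8M_\lambda$.
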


\begin{proof}
By Theorem~\ref{Slim Q-Bigons infinite_p1}, there exists $m_0\in \Z$ such that $d(c(0), q(m_0))\leq M_\lambda$. 
Let $q':\Z\to X$ be given by $q'(n)= q(n+m_0)$, which is still a $(1,\lambda)$ quasigeodesic. Furthermore, define $c'(n) = c(n)$ if $n\neq 0$ and $c'(0)=q'(0)$. Then the quasigeodesics $c'|_{[0,\8)}$ and $q'|_{[0,\8)}$ (respectively $c'|_{(-\8,0]}$ and $q'|_{(-\8,0]}$) are  $(1, \lambda +M_\lambda)$ quasigeodesic rays starting from the same point and converging to the same end point in $\partial X$. By Lemma~\ref{lemma: fellow travel q-rays} for each $n\in \N$ we have that 
$$d(c'(n), q'(n))\leq 2M'_\lambda + 3(\lambda + 2M_\lambda)$$
\begin{center}
    and
\end{center} 
$$d(c'(-n), q'(-n))\leq 2M'_\lambda + 3(\lambda + 2M_\lambda).$$

Now, let $n\in \Z$. Then
\begin{eqnarray*}
    d(c(n), q(n)) &\leq& d(c(n), c'(n))+ d(c'(n), q'(n)) +d(q'(n),q(n))\\
    &\leq & M_\lambda+ 2M'_\lambda + 3(\lambda + 2M_\lambda) + |m_0|+\lambda
    \\
    &\leq & 2M'_\lambda +7M_\lambda +4\lambda + d(q(m_0), q(0)) +\lambda
     \\
    &\leq &2M'_\lambda +7M_\lambda +5\lambda + d(q(m_0), c(0)) + d(c(0), q(0)) 
    \\
    &\leq &   2M'_\lambda +8M_\lambda +5\lambda +d(c(0), q(0)) 
     \\
\end{eqnarray*}

\end{proof}
 
 Observe that the proofs of Lemma~\ref{lemma: fellow travel q-rays} and Corollary~\ref{cor: (1,mu)-fellow travel} also hold for $(1, \lambda)$ quasigeodesic segments by applying the finite version of the Morse Property Proposition~\ref{prop:qg triangles slim hyp}. Furthermore, if we know the quasigeodesics in question are at bounded Hausdorff distance, then hyperbolicity condition on $X$ is unnecessary.

\subsection{Products}\label{Subsec:Products}
We begin by recalling the universal property for products and some consequences and then dive in to the context of this work: finite products of $\delta$-hyperbolic spaces.

The universal property for direct products is important as it encapsulates that a variety of objects associated to a product (e.g. a point in the product, or maps \emph{to} a product) are determined by its projections to the factors.

\subsubsection{The universal property}\label{subsec: Univ Prop}

 Recall that if $A_1,A_2,B$ are sets, and $p_i:A_1\times A_2\to A_i$,  is the natural projection defined by, $p_i(a_1,a_2):=a_i$  for $i=1, 2$ then for any set-maps $f_1:B\to A_1$ and  $f_2:B\to A_2$ there exists a unique set-map $f: B\to A_1\times A_2$ such that $p_1\circ f= f_1$ and $p_2\circ f= f_2$. This fits into the following commutative diagram. 
\begin{center}
\begin{tikzcd}
	{A_2} & {A_1\times A_2} \\
	B & {A_1}
	\arrow["{{{{p_2}}}}"', from=1-2, to=1-1]
	\arrow["{{{{p_1}}}}", from=1-2, to=2-2]
	\arrow["{{{{f_2}}}}", from=2-1, to=1-1]
	\arrow["{{{\exists ! f}}}"{description}, dashed, from=2-1, to=1-2]
	\arrow["{{{{f_1}}}}"', from=2-1, to=2-2]
\end{tikzcd}

\end{center}

\noindent
\textbf{\underline{Notation}:} Given a map $f:B\to A_1\times A_2$, we shall denote the factors as $f_1$, and $f_2$, respectively. Similarly, given $f_i:B\to A_i$, for $i=1, 2$, we may assemble this into $f: B\to A_1\times A_2$ in a unique fashion. However, if we wish to emphasize this assembly, we may write $\Delta(f_1, f_2): B\to A_1\times A_2$, where the symbol $\Delta$ represents the word ``diagonal". 

Note that if $B=B_1\times B_2$ is itself a product, there is no reason that $f_1$ must factor through (one or both of) the natural projections $p_i': B_1\times B_2\to B_i$ for $i=1, 2$. For example, consider a rotation $r:\R\times \R\to \R\times \R$, given by $r_1(x_1,x_2) = x_1\cos \theta -x_2\sin\theta$, and $r_2(x_1,x_2) = x_1\cos \theta +x_2\sin\theta$. The universal property states that $r(x_1,x_2)$ is uniquely determined by  $r_1$ and $r_2$. Note that both maps $r_1$ and $r_2$ depend on both variables. \emph{Such a map \underline{does not} preserve the factors.}

However, given set-maps $f_i: B_i\to A_i$, for $i=1, 2$ we do have a unique set map $f_1\times f_2: B_1\times B_2\to A_1\times A_2$ given by $(f_1\times f_2)(b_1, b_2) = (f_1(b_1),f_2(b_2))$, which \emph{\underline{does} preserve the factors}. The properties of $f_1\times f_2$ are captured by the following commutative diagram. Note the absence of maps $B_1\to A_2$ and $B_2\to A_1$.

\begin{center}
   \begin{tikzcd}
	{A_2} && {A_1\times A_2} \\
	{B_2} & {B_1\times B_2} \\
	& {B_1} & {A_1}
	\arrow["{p_2}"', from=1-3, to=1-1]
	\arrow["{p_1}", from=1-3, to=3-3]
	\arrow["{f_2}", from=2-1, to=1-1]
	\arrow["{\exists ! f_1\times f_2}"{description}, dashed, from=2-2, to=1-3]
	\arrow["{p_2'}"', from=2-2, to=2-1]
	\arrow["{p_1'}", from=2-2, to=3-2]
	\arrow["{f_1}", from=3-2, to=3-3]
\end{tikzcd}
\end{center}

Furthermore, the above commutative diagrams can be completed uniquely in a variety of categories, yielding unique homomorphisms for groups, continuous maps for topological spaces (with the product topology), and isometries (with respect to, for example, $\ell^p$-product metrics). 
For clarity, we note that in the image above, $$f_1\times f_2= \Delta(f_1\circ p_1',f_2\circ p_2').$$ 

\noindent
\textbf{\underline{Notation}:} When considering multiple factors, for example $\D>1$, and $f_i: B_i\to A_i$, for $i=1, \dots, \D$, we shall denote the above factor preserving product map as $$\Prod{i=1}{\D}f_i: \Prod{i=1}{\D}B_i \to\Prod{i=1}{\D}A_i$$

\subsubsection{Products of $\delta$-hyperbolic spaces and their isometry groups}
Finite products of $\delta$-hyperbolic spaces of course play a leading role in our work. We shall consistently use the notation that $\X=\Prod{i=1}{\D} X_i$, where $X_i$ are separable complete geodesic $\delta$-hyperbolic spaces. Separability and completeness will ensure that  their isometry groups are Polish (see Section~\ref{Sec:toponisomgroup}). In \cite{BFPart2}, these conditions will give us additional control of the spaces, allowing us to build an ``essential core" (see Proposition~\ref{prop: ess core}).

We denote by $\mathrm{Sym}_\X(\D)$ the collection of possible permutations of indices according to whether the corresponding spaces are isometric and fix such an isomorphism. This allows us to consider the corresponding automorphism group $\Aut\X :=\Sym_\X(\D)  \ltimes \Prod{i=1}{\D} \Isom X_i$ and we will denote by $\Aut_{\!0}\X= \Prod{i = 1}{\D} \Isom X_i$, which is the maximal subgroup which preserves the factors. 

\noindent
\textbf{\underline{Notation}:} Let $i \in \{1, \dots, \D\}$. For $x\in \X$, we shall denote the $i^\text{th}$ coordinate of $x$ as $x_i\in X_i$. For  $g\in \Aut_{\! 0}\X$, we shall denote the isometry that $g$ acts by on $X_i$ by $g_i\in \Isom X_i$. This does mean that we shall occasionally have to use notation such as $x^n$ or $x(n)$ to denote finite or infinite sequences of elements. 

There are of course different metrics that one could consider on $\X$, the class of $\ell^p$-product metrics being an important family of equivalent ones. The availability of the $\ell^2$-product metric allows us to place $\X$ in the category of non-positively curved spaces. However, we  will most often use the $\ell^\infty$ metric, and while we distinguish these in the following definition, we shall use $d$ for the $\ell^\infty$ metric, as it will be the one most utilized. We shall more generally, use $d$ to denote most metrics, unless confusion may arise. 

\begin{defn}[The $\ell^p$-product metric on $\X$]\label{def: lp metric}
 If  $x,y \in \X$ and $p\in [1,\8)$ then $d_p(x,y):=\sqrt[^p]{\Sum{i=1}{\D}[d_i(x_i,y_i)]^{^p}}$ and if $p=\8$ then $d_\8(x,y):=\max{i\in \{1,\dots, \D\}} \hspace{2pt} d_i(x_i,y_i).$
\end{defn}

We note that if the factors are geodesic then $\X$ is also geodesic with respect to the $\ell^p$-product metric (e.g. consider the product of geodesics in each factor and choose an $\ell^p$-geodesic in the resulting $\D$-dimensional interval). This is true in particular for $p=2$, allowing us to think of these spaces as being ``nonpositively curved". In Part II \cite{BFPart2}, we shall prove that under natural hypotheses, the $\ell^2$-isometry group of $\X$ is $\Aut\X$.

Using the Roller compactification and boundary of a CAT(0) cube complex as inspiration, we shall consider $\~{\X}:= \Prod{i=1}{\D} \~X_i$ and $\partial \X := \~{\X}\setminus \X$, which is only a bordification in general. (Recall that $\~ X= X\cup \partial X$ represents the Gromov bordification of the hyperbolic space $X$.) Note that  if $\D=2$ then $\partial \X= \big( X_1\times \partial X_2\big ) \cup\big(\partial X_1\times \partial X_2\big ) \cup  \big(\partial X_1\times X_2\big )$.
We define the \emph{regular boundary} as the subset $\partial_{reg}\X = \Prod{i=1}{\D} \partial X_i$. As is the case for the Roller compactification of a CAT(0) cube complex (e.g. \cite{NevoSageev,CFI, KarSageev,FernosFPCCC, FLM, FLM2}, also \cite{Link}) the regular boundary carries  the important dynamical information. We note that usually the regular boundary is referred to as the set of ``regular points". However, as none of these boundaries confer a compactification of $\X$ in general, we believe the use of the word ``boundary" here is acceptable. 

Consider the projection $p:\Aut \X\to \Sym_\X(\D)$. Then  $\ker(p)=\Aut_{\! 0}\X= \Prod{i=1}{\D}\Isom X_i$ is a finite index subgroup. We then have that if $\rho:\G\to \Aut\X$ is a homomorphism then  $\G_0 = \rho^{-1}(\ker(p))$ is of finite index in $\G$. 

As a product, $\Aut_{\! 0}\X$ comes equipped with natural projections $\pi_i:\Aut_{\! 0}\X\twoheadrightarrow \Isom X_i$ for each $i=1,\dots,\D$. 
As was discussed in Section~\ref{subsec: Univ Prop}, $\rho|_{\G_0}$ is uniquely determined by the factor actions  $\rho_i=\pi_i\circ \rho:\G_0\to \Isom X_i$, for $i=1,\dots,\D$, and conversely, given a homomorphism $\rho_i: \G_0\to \Isom X_i$ for each $i=1, \dots, \D$ there exists a unique homomorphism $\rho: \G_0\to \Aut_{\! 0}\X$ such that $\rho_i=\pi_i\circ \rho$, namely $\Delta(\rho_1, \dots, \rho_\D)$. The related question of inducing representations from a finite index subgroup will be considered in Section~\ref{Sec: rep theory basics}.

\section{Isometric Actions in Rank-one}\label{sec:isomrank1}

In order to classify the isometries in higher rank in the next section, we first discuss isometries and actions in rank-one here.

\subsection{Classification results.}

We recall some standard terminology used in relation to actions on hyperbolic spaces and introduce some new terminology along the way. Note that the following classification extends that obtained by Gromov in the case when the boundary is ``visible by geodesics", which includes the locally compact case \cite[Corollary 8.1.B]{Gro} and by Hamann in the general case \cite[Theorem 2.3, Proposition 4.4]{Hamann}. Recall that given a hyperbolic space $X$, we denote by $\partial X$ its Gromov boundary.

\begin{defn}\label{defn:elementsclassextn}
 Let $X$ be a (unbounded) $\delta$-hyperbolic space, and $\g\in \Isom X$. For $\e\geq 0$, define $\O^\e(\g)= \{x\in X: d(\g^n x, x) \leq \e \hspace{2pt} \forall n\in \Z\}$. We shall say that $\g$ is 

 \begin{enumerate}
 \item \emph{elliptic} if all orbits are bounded. More precisely, an elliptic isometry is 
\begin{enumerate}
    \item a \emph{tremble} if there is an $\e\geq 0$ such that the set $\O^\e(\g)=X$;
  \item a \emph{rotation} if $\O^\e(\g)$ is bounded for every $\e\geq 0$; 
   \item a \emph{rift} if it is neither a tremble nor a rotation. 
  \end{enumerate}

  \item \emph{parabolic} if $\langle \g \rangle$ has a unique fixed point in $\partial X$ and has unbounded orbits.

  \item \emph{loxodromic} if $\g$ is not elliptic and has exactly two fixed points on $\g^-, \g^+\in \partial X$, in which case $\Lim{n\to \pm \8}\g^n.x= \g^\pm$, for all $x\in \~ X\setminus \{\g^-, \g^+\}$.
 \end{enumerate}
\end{defn}

\begin{remark} For $\g$ loxodromic, the fixed point corresponding to $\g^-$ and $\g^+$  will be called the \emph{repelling} and \emph{attracting} fixed points of $\g$. Hamann guarantees the convergence for $x\in X$ in Theorem 2.3, and the extension to the boundary is Property (C1) \cite{Hamann}.
Furthermore, loxodromic elements are characterized by having positive \emph{stable translation length}, i.e. $\Inf{n\in \N}\,\dfrac{d(\g^n x,x)}{n}>0$ for all $x\in X$ and quasi-isometrically embedded orbits. 
\end{remark}

\begin{wrapfigure}[10]{r}{.28\textwidth}\vskip-.15in
\includegraphics[width=0.28\textwidth]{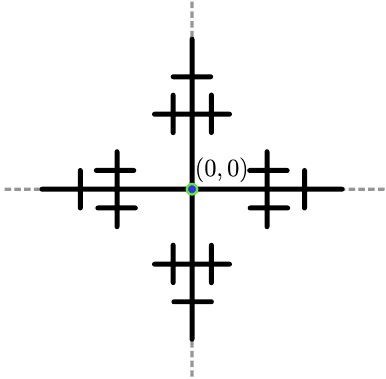}
    \label{fig:placeholder}
\end{wrapfigure}

\noindent
\textbf{Example:} To distinguish between the three types of elliptic elements, consider the standard embedding of the 4-valent tree $T$ in $\R^2$, as depicted in the adjacent figure. A rotation by $\pi/2$ would be a rotation on the tree,  and a flip along a horizontal axis would be a rift. On the other hand, trees without leaves have no non-trivial trembles. 

To create a non-trivial tremble, we may take the product of $T$ with a bounded graph. Then any nontrivial isometry of the bounded graph will induce a non-trivial tremble of the $\delta$-hyperbolic space $T\times C$. \hfill $\blacksquare$

\medskip
 Given an action $\rho:\G \to \Isom X$,  we denote by $\Lambda (\rho(\G))$ the set of limit points of $\rho(\G)$ in $\partial X$. That is, $$\Lambda (\rho(\G))=\partial X\cap \overline{\rho(\G) .x},$$ where $\overline{\rho(\G) .x}$ denotes the closure of the $\rho(\G)$-orbit of $x\in X$ in $\~X$; the limit set is independent of the choice of basepoint $x$. The $\rho$-action of $\G$ on $X$ naturally extends to a continuous action of $\G$ on $\partial X$ by postcomposing $\rho$ with $\partial: \Isom X\to \Homeo(\partial X)$.
 We also note that as $\partial X$ is not closed in general, neither is $\Lambda(\rho(\G))$. Moreover, we will often drop the name of the homomorphism $\rho$ and simply denote the set by $\Lambda(\G)$, if no confusion can arise.

The following theorem summarizes the standard classification of groups acting on hyperbolic spaces. Since the image of a homomorphism is a subgroup, we may also view this as a classification of subgroups of $\Isom X$. 

\begin{theorem}\label{thm:actionsclassrk1} Let $X$ be a hyperbolic space and $\rho:\G\to \Isom X$ a homomorphism. For $\e\geq 0$ w define $\O^\e(\rho(\G))=\{x\in X: d(\rho(g)x,x)\leq \e\text{ for all } g\in \G\}$. One of the following is true:
 \begin{enumerate}
 \item $|\L(\rho(\G)) |= 0$ and the action is \emph{elliptic}, i.e. $X=\Cup{\e\geq 0}{}\O^\e(\rho(\G))$. If in addition
 \begin{itemize}
     \item there exists $\e\geq 0$ such that $\O^\e(\rho(\G))=X$ then we say the action is a \emph{tremble};
     \item for every $\e\geq 0$ we have  that $\O^\e(\rho(\G))$ is bounded then we say the action is a \emph{rotation};
     \item for all $\e\geq 0$ sufficiently large we have that $\O^\e(\rho(\G))\neq X$ and unbounded, then we say the action is a \emph{rift}.

 \end{itemize}

  \item $|\L(\rho(\G))| = 1$ and the action is \emph{parabolic}. In this case,  $\rho(\G)$ has a unique fixed point in $\partial X$, unbounded orbits, and   no loxodromic elements;
   \item $|\L(\rho(\G))| = 2$ and the action is \emph{lineal}. In this case,  a loxodromic exists in $\rho(\G)$.  If further, $\rho(\G)$ fixes each limit point, the action is called \emph{oriented lineal}.
  
  \item $|\L(\rho(\G))| = \8$  and  the action is one of the following:
  \begin{enumerate}
    \item \emph{quasiparabolic} if in addition $\rho(\G)$ fixes a unique point in $\partial X$. In this case $\rho(\G)$ contains two loxodromic with distinct attracting fixed points and that generate a free semigroup;
      \item \emph{general type} if there exists two loxodromics in $\rho(\G)$   with disjoint fixed point sets  in $\partial X$ that freely generate a free subgroup of $\G$ acting by loxodromic elements.
  \end{enumerate}
 \end{enumerate}
\end{theorem}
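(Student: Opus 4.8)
\emph{Overview and strategy.} At its heart this is the classical classification of isometric actions on a hyperbolic space; the genuinely new content is the trichotomy of the elliptic case in item~(1). So the plan is to quote the backbone and prove that refinement. For the backbone --- the partition of $\rho\colon\G\to\Isom X$ into the four regimes indexed by $|\L(\rho(\G))|\in\{0,1,2,\8\}$, together with the assertions about (non)existence of loxodromics and free (semi)groups in each regime --- I would invoke Gromov \cite[Corollary 8.1.B]{Gro} in the locally compact setting and Hamann \cite[Theorem 2.3, Proposition 4.4]{Hamann} in general (compare \cite{Acylhyp}). The facts to extract are: $\L(\rho(\G))=\varnothing$ exactly when all orbits are bounded (the easy direction being that a bounded sequence has bounded Gromov products, hence no limit point on $\partial X$); if $|\L(\rho(\G))|=1$ the unique limit point is $\rho(\G)$-fixed and there is no loxodromic, since a loxodromic would push both its fixed points into $\L(\rho(\G))$; if $|\L(\rho(\G))|=2$ a north--south ping-pong with a pair $\{g,g^{-1}\}$ of far-displacing conjugates produces a loxodromic, the oriented case being the one where both limit points are fixed; and if $|\L(\rho(\G))|\geq 3$ one first builds a loxodromic from three quasi-geodesically independent directions and then runs the usual ping-pong dichotomy --- either $\rho(\G)$ fixes a point, yielding a free sub-semigroup of loxodromics with distinct attracting points (quasiparabolic), or one obtains two loxodromics with disjoint fixed-point pairs freely generating a free group (general type) --- and in either sub-case the orbit of a loxodromic axis already forces $|\L(\rho(\G))|=\8$, ruling out $3\leq|\L(\rho(\G))|<\8$. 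The substitute for geodesics to $\partial X$ throughout is the $(1,20\delta)$-quasigeodesic technology of Remark~\ref{Rem: (1, 20delta)_p1} and Theorem~\ref{Slim Q-Bigons infinite_p1}.

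\emph{The elliptic trichotomy: set-up.} Assume the elliptic case, so every orbit is bounded. Then $x\in\O^{D}(\rho(\G))$ with $D=\diam(\rho(\G)\cdot x)$ for each $x\in X$, whence $X=\bigcup_{\e\geq0}\O^\e(\rho(\G))$, and $\O^\e(\rho(\G))\subseteq\O^{\e'}(\rho(\G))$ whenever $\e\leq\e'$. Each $\O^\e(\rho(\G))$ is $\rho(\G)$-invariant: if $d(\rho(g)x,x)\leq\e$ for all $g\in\G$ and $h\in\G$, then $d(\rho(g)\rho(h)x,\rho(h)x)=d(\rho(h^{-1}gh)x,x)\leq\e$ for all $g\in\G$. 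The only point needing work is that $\O^\e(\rho(\G))\neq\varnothing$ for $\e$ large. For this, fix an orbit $S=\rho(\G)\cdot x_0$, put $r=\inf_{p\in X}\sup_{s\in S}d(p,s)$ and $Q=\{p\in X:\sup_{s\in S}d(p,s)\leq r+\delta\}$. A slim-triangle estimate --- take $p,q\in Q$, a midpoint $m$ of $[p,q]$, and for $s\in S$ with $d(m,s)\geq r$ compare $m$ against $[p,s]\cup[q,s]$ --- gives $\diam Q\leq K_\delta$ with $K_\delta$ depending only on $\delta$. Since $\rho(\G)$ preserves $S$, it preserves the nonempty set $Q$, so any $c\in Q$ has $d(\rho(g)c,c)\leq K_\delta$ for all $g$; hence $c\in\O^{K_\delta}(\rho(\G))$, and by nesting $\O^\e(\rho(\G))\neq\varnothing$ for all $\e\geq K_\delta$.

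\emph{The elliptic trichotomy: reading off the cases.} If $\O^\e(\rho(\G))=X$ for some $\e$ then, since $X$ is unbounded, this is the tremble case (and then $\O^{\e'}(\rho(\G))=X$ for all $\e'\geq\e$). Otherwise $\O^\e(\rho(\G))\neq X$ for every $\e\geq 0$: if additionally $\O^\e(\rho(\G))$ is bounded for every $\e$, this is the rotation case; and if not, picking $\e_0$ with $\O^{\e_0}(\rho(\G))$ unbounded (and enlarging $\e_0$ so that $\e_0\geq K_\delta$), nesting makes $\O^\e(\rho(\G))$ unbounded --- hence nonempty --- and $\neq X$ for all $\e\geq\e_0$, which is the rift case. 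The three are mutually exclusive (tremble and rotation would force $X$ bounded; tremble contradicts the rift requirement $\O^\e\neq X$ for all $\e$; rotation contradicts the rift requirement of unboundedness for large $\e$) and exhaustive, which is the claimed trichotomy. The element-level trichotomy of Definition~\ref{defn:elementsclassextn} is the case $\rho(\G)=\langle\g\rangle$.

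\emph{The main obstacle.} It lies entirely in the backbone, which is why we cite \cite{Gro,Hamann}: executing the ping-pong and north--south arguments without local compactness --- where $\partial X$ is not closed, $\L(\rho(\G))$ need not be closed, and $(1,\lambda)$-quasigeodesics must replace geodesics to the boundary --- is the delicate part. Within the new refinement the only subtlety is the quasi-center estimate above, which is a purely metric consequence of $\delta$-hyperbolicity and causes no trouble.
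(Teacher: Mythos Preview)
Your proposal is correct and mirrors the paper's own treatment: the paper does not prove this theorem either but simply records it as known, citing Gromov in the locally compact case and Hamann (together with Das--Simmons--Urba\'nski, which you might add for the free (semi)group assertions) in general, and explicitly flags the elliptic trichotomy as its only contribution --- calling it ``the observation that elliptic actions in general naturally have the above trifurcation.'' Your write-up is in fact more detailed than the paper's: the quasi-center argument you include to show $\O^\e(\rho(\G))\neq\varnothing$ for a uniform $\e=K_\delta$ is correct but unnecessary for the trichotomy itself, since nonemptiness already follows from $X=\bigcup_\e\O^\e(\rho(\G))$ and, in the rift case, from unboundedness.
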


\begin{remark}
    This theorem was proved by Gromov in the locally compact case, or when the boundary is accessible by geodesics, where it is argued that any $\delta$-hyperbolic space can be ultra-completed in an $\Isom X$-equivaraint fashion, i.e. completed so that it is geodesic and has a  boundary that is accessible by geodesics \cite[Section 8.2]{Gro}. Bonk and Schramm proved that a hyperbolic space can be completed through the use of transfinite induction \cite[Theorem 4.1]{BonkSchramm}, a procedure unlikely to be compatible with the isometry group. The general case for a geodesic $\delta$-hyperbolic space (not necessarily separable) follows from a combination of \cite[Theorem 2.8, Theorem 4.4]{Hamann} and \cite[Proposition 7.3.1, Proposition 10.5.4]{DasSimmonsUrbanski}. We contribute to the classification through the observation that elliptic actions in general naturally have the above trifurcation. 
\end{remark}

The reader may have noted the figure on the title page, which has a representation for each of the types of actions in the following theorem. This schematic should be read as one would read figures in the disk model of the hyperbolic plane. We hope that, beyond its aesthetic appeal, it is also instructive (see \cite{CoulonDorfsman-HopkinsHarrissSkrodzki} ``On the importance of illustration for mathematical research").

\subsection{Busemann quasimorphism}

A function $b\colon \G \to \mathbb R$ is a \emph{quasimorphism} if there exists a constant $C\geq 0$   such that $$|b(gh)-b(g)-b(h)|\le C$$ for all $g,h\in \G$. We say that $b$ has \emph{defect at most $C$}. If, in addition, the restriction of $b$ to every cyclic subgroup of $G$ is a homomorphism, then $b$ is called a homogeneous quasimorphism. Every quasimorphism $b$ gives rise to a homogeneous quasimorphism $\beta$ defined  as follows:
$$
\beta(g)=\lim_{n\to \infty} \frac{b(g{^{n}})}n \hspace{5pt} \forall g \in \G.
$$

We note that since $b(g^n)$ is uniformly close to a subadditive sequence this limit exists by Fekete's Lemma. 
The function $\beta$ is called the \emph{homogenization of $b$.} It is straightforward to check that
$$
 |\beta(g) -b(g)|\le C
$$
for all $g\in \G$ where $C$ is the defect of $b$ as above.

Given any action of a group on a hyperbolic space fixing a point $\xi$ on the boundary, one can associate the \emph{Busemann quasimorphism}. We briefly recall the construction and necessary properties here, and refer the reader to \cite[Sec. 7.5.D]{Gro} and \cite[Sec. 4.1]{Man} for further details.

\begin{defn}\label{Bpc}
Let $\G\to \Isom X$ be an action that fixes a point $\xi\in \partial X$.
Let $s=\{x(n)\}_{n\in \N}$ be any sequence of points of $X$ converging to $\xi$. Then  the function $b_{s}\colon \G\to \mathbb R$ defined by
$$
b_{s}(g)=\limsup\limits_{n\to \infty}\left(d (gx(0), x(n))-d(x(0), x(n)\right)
$$
is a quasimorphism. Its homogenization $\beta_{s}$ is called the \emph{Busemann quasimorphism}. It is known that this definition is independent of the choice of $s$ (see \cite[Lemma 4.6]{Man}), and thus we can drop the subscript and just write $\beta$.\end{defn}

Roughly speaking, the Busemann quasimorphism measures the translation of group elements towards or away from $\xi$, ignoring horocyclic translation. In particular $\b(g) \neq 0$ if and only if $g$ is loxodromic. Therefore, $\b \neq 0$ if the action is oriented lineal or quasiparabolic.  Note that, in general, $\b$ need not be a homomorphism, but it is when the group is amenable or if $X$ is proper (see \cite[Corollary 3.9]{Amen}). 

We will use the following results concerning the Busemann quasimorphism, which appear in \cite[Section 3]{ABR} in a slightly different setting. We include the setup, notation and statements here -- these will be reused later, particularly in Section~\ref{sec:elemsubgrp}. We omit some of the proofs as they follow directly from the proofs in \cite{ABR}.

\noindent
\textbf{\underline{Local Notation}:}  Let us establish the notation that will be used through until the end of the proof of Lemma \ref{lemmaelimpara}, and where these results are invoked. Let $X$ be $\delta$-hyperbolic,  $\xi \in \partial X$, $q: \Z\to X$ be a quasigeodesic converging to $\xi$ in the positive direction, and $M= M_{20\delta}$ be the Morse constant as in Remark \ref{rem: Morse constant}. If $g\in\fix\{\xi\}$ then the ray $q|_{[0,\infty)}$ and its translate $gq|_{[0,\infty)}$ are both $(1,20\delta)$ quasigeodesic rays that share the endpoint $\xi$ and thus  are eventually $M$-Hausdorff close by \cite[6.9.Closeness Lemma]{Vaisala}, and synchronously fellow travel forever after (see Lemma~\ref{lemma: Unif Bound}). Specifically, there are numbers $t_0=t_0(g)$ and $s_0=s_0(g)\geq 0$ depending on $g$ and $q(0)$ so that $q|_{[t_0,\infty)}$ and $gq|_{[s_0,\infty)}$ are $M$-Hausdorff close and $d(q(t_0),gq(s_0))\leq M$.   In other words $s_0$ is a bound for how long we must wait for the ray $gq|_{[0,\infty)}$ to become close to the ray $q|_{[0,\infty)}$. This depends only on $d(q(0),gq(0))$, and $s_0(g)$ may be chosen smaller than a (linear) function of $d(q(0),gq(0))$. We consider the difference $l=t_0-s_0$ as the amount that $g$ ``shifts'' the quasigeodesic $q$, which may be positive or negative. In the case when the space $X$ is a quasi-line, we may take $s_0 = 0$ since $q, gq$ share both end points on $\partial X$ and are thus always Hausdorff-close.

\begin{lemma}\cite[Corollary 3.15]{ABR}
\label{Lem:shiftfn}
Under the setup described above, there is a constant $\consttwo>0$ so that for any $g\in \G$, if $s\geq s_0(g)$ then $$d\big(q(s-\b(g)),gq(s)\big)\leq \consttwo.$$ 

In particular, when $X$ is a quasi-line, then $d\big(q(-\b(g)), gq(0) \big)\leq \consttwo$ and it follows that $d(q(0), gq(0)) \leq |\b(g)| + \consttwo + 20 \delta$. 
\end{lemma}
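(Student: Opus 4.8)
# Proof Proposal for Lemma \ref{Lem:shiftfn}

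\textbf{Overview of the approach.} The statement has two parts: a general estimate $d\big(q(s-\b(g)),gq(s)\big)\leq \consttwo$ valid for $s\geq s_0(g)$, and a specialization to the quasi-line case from which the bound $d(q(0),gq(0))\leq|\b(g)|+\consttwo+20\delta$ is extracted. Since the first part is quoted as \cite[Corollary 3.15]{ABR}, the plan is to recall the mechanism behind it and then focus the real work on deriving the quasi-line consequences cleanly within the notation set up in the excerpt.

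\textbf{Step 1: the general shift estimate.} Recall the local setup: $q\colon\Z\to X$ is a $(1,20\delta)$ quasigeodesic converging to $\xi$, $g\in\fix\{\xi\}$, and $q|_{[t_0,\infty)}$ fellow-travels $gq|_{[s_0,\infty)}$ with $d(q(t_0),gq(s_0))\leq M$, where $M=M_{20\delta}$. Write $l=t_0-s_0$ for the shift. The key point is that $\b(g)$, computed as the homogenization of the Busemann quasimorphism along $q$, agrees up to a uniform additive error with $-l$ (equivalently, with $\lim d(gq(0),x(n))-d(q(0),x(n))$ along $x(n)\to\xi$); this is precisely the content needing \cite{ABR}. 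Combining $d(q(t_0),gq(s_0))\leq M$ with the synchronous fellow-traveling (Lemma~\ref{lemma: fellow travel q-rays} / Corollary~\ref{cor: (1,mu)-fellow travel}, giving a bound like $2M_\lambda+3\lambda$) and the fact that $q$ is a $(1,20\delta)$ quasigeodesic (so moving the index by $|l-\b(g)|$ moves the point by at most $|l-\b(g)|+20\delta$, a bounded quantity), one gets $d\big(q(s-\b(g)),gq(s)\big)\leq \consttwo$ for all $s\geq s_0(g)$, where $\consttwo$ depends only on $\delta$ (through $M$ and the defect of the Busemann quasimorphism). I would simply cite \cite[Corollary 3.15]{ABR} here and record the constant $\consttwo$.

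\textbf{Step 2: the quasi-line specialization.} When $X$ is a quasi-line, $q$ and $gq$ share \emph{both} endpoints of $\partial X$, so by the Infinite Morse Property (Theorem~\ref{Slim Q-Bigons infinite_p1}) they are $M$-Hausdorff close along their whole length, and hence — as noted in the local setup — we may take $s_0(g)=0$. Applying the first part with $s=0$ gives immediately $d\big(q(-\b(g)),gq(0)\big)\leq\consttwo$. For the last inequality, use the triangle inequality:
\begin{equation*}
d(q(0),gq(0))\leq d\big(q(0),q(-\b(g))\big)+d\big(q(-\b(g)),gq(0)\big)\leq d\big(q(0),q(-\b(g))\big)+\consttwo.
\end{equation*}
Since $q$ is a $(1,20\delta)$ quasigeodesic, $d\big(q(0),q(-\b(g))\big)\leq |\b(g)|+20\delta$, which yields $d(q(0),gq(0))\leq|\b(g)|+\consttwo+20\delta$, as claimed.

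\textbf{Expected main obstacle.} The genuine mathematical content — identifying $\b(g)$ with the shift $l$ up to bounded error — lives in \cite{ABR} and is being imported, so within this paper the only care needed is bookkeeping: making sure $s_0(g)=0$ is legitimate in the quasi-line case (this rests on Theorem~\ref{Slim Q-Bigons infinite_p1} applied to two bi-infinite quasigeodesics with the same pair of endpoints) and that the quasigeodesic constant $20\delta$ is the right additive term in $d(q(0),q(-\b(g)))\leq|\b(g)|+20\delta$. No step should be difficult; the risk is merely in tracking constants consistently with the conventions of Remark~\ref{rem: Morse constant} and Remark~\ref{Rem: (1, 20delta)_p1}.
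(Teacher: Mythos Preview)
Your proposal is correct and matches the paper's approach: the paper omits the proof of the first part entirely (explicitly citing \cite[Corollary 3.15]{ABR} and noting ``we omit some of the proofs as they follow directly from the proofs in \cite{ABR}''), and the ``In particular'' clause for quasi-lines is exactly the straightforward specialization $s_0(g)=0$ plus triangle inequality that you spell out. There is nothing to add.
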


\begin{prop}\cite[Proposition 3.12]{ABR}\label{prop:smalltranslation}
Let $X$ be a quasi-line, $\xi\in \partial X$ and $\G\to \fix\{\xi\}$ be an action so that the associated Busemann quasimorphism $\b \colon \G\to \R$ is a  homomorphism. Define the function $A\colon [0,\8)\to \R$ as $$A(r) = \op{sup}\{ |\b(g)| \: g\in \G \text{ s.t. } d(q(0) ,gq(0)) \leq r\}.$$ Then there exists a constant $B>0$ such that for any $g, h \in \G$ with $d(q(0),gq(0))\leq r$ and $\b(h)\leq A(r)$, we have $d(gh q(0),hq(0))\leq |\b(g)|+B$.
\end{prop}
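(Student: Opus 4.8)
The plan is to exploit that $X$ is a quasi-line so that orbit-displacement along $q$ is, up to bounded error, measured by the homomorphism $\b$. First I would set up notation following the local conventions: fix the $(1,20\delta)$ quasigeodesic $q\colon\Z\to X$ converging to $\xi$, let $M=M_{20\delta}$ be the Morse constant, and let $\consttwo$ be the constant from Lemma~\ref{Lem:shiftfn}. Recall that since $X$ is a quasi-line we may take $s_0=0$ for every element, and Lemma~\ref{Lem:shiftfn} gives $d(q(-\b(g)),gq(0))\leq\consttwo$ for all $g$; dually, applying this to $g^{-1}$ and translating by $g$, we also get $d(gq(\b(g)),q(0))\le \consttwo$, i.e. $g$ moves $q(s)$ to within $\consttwo$ of $q(s-\b(g))$ for all $s\in\Z$ (using that $q$ is a $(1,20\delta)$ quasigeodesic, displacements of the index translate to displacements of the point up to additive $20\delta$).

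The core estimate is then to compare $ghq(0)$ with $hq(0)$. Using Lemma~\ref{Lem:shiftfn} for $h$, $hq(0)$ is $\consttwo$-close to $q(-\b(h))$. Applying the "shift by $\b$'' property of $g$ at the index $s=-\b(h)$, $g$ carries $q(-\b(h))$ to within $\consttwo+20\delta$ of $q(-\b(h)-\b(g))$. On the other hand, since $\b$ is a homomorphism, $\b(gh)=\b(g)+\b(h)$, so Lemma~\ref{Lem:shiftfn} applied to $gh$ says $ghq(0)$ is $\consttwo$-close to $q(-\b(gh))=q(-\b(g)-\b(h))$. Chaining these with the triangle inequality yields
$$d(ghq(0),hq(0))\leq d(ghq(0),q(-\b(g)-\b(h)))+d(q(-\b(g)-\b(h)),gq(-\b(h)))+d(gq(-\b(h)),ghq(0)),$$
and each term is bounded by a constant depending only on $\consttwo$ and $\delta$ — except that the last term is $d(g\cdot q(-\b(h)),g\cdot hq(0))=d(q(-\b(h)),hq(0))\le\consttwo$ by isometry invariance. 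So one gets $d(ghq(0),hq(0))\le 3\consttwo+20\delta$, which is even stronger than the claimed bound $|\b(g)|+B$; one simply sets $B=3\consttwo+20\delta$ (the hypotheses $d(q(0),gq(0))\le r$ and $\b(h)\le A(r)$ are not even needed for this cruder inequality, but keeping them does no harm and matches the reference's phrasing).

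The main subtlety — and where I would be most careful — is the translation between "shifting the index of $q$ by $t$'' and "moving a point by $|t|$ plus bounded error.'' Because $q$ is only a $(1,20\delta)$ quasigeodesic, $d(q(a),q(b))$ agrees with $|a-b|$ only up to additive $20\delta$; one must track these $20\delta$ terms through each application so that the final additive constant $B$ is honestly independent of $g$ and $h$. A secondary point to check is that Lemma~\ref{Lem:shiftfn}'s hypothesis $s\geq s_0(g)$ is vacuous here (as $s_0=0$ in the quasi-line case), so it may be applied at the negative indices $-\b(h)$ and $-\b(gh)$ without issue; if one were worried, one could instead phrase everything in terms of the bi-infinite fellow-traveling of $q$ and $gq$ guaranteed by Corollary~\ref{cor: (1,mu)-fellow travel}. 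With these bookkeeping matters handled, the proof is a short triangle-inequality argument, essentially identical to [ABR, Proposition 3.12], and I would simply cite that source for the omitted routine verification if a more refined dependence of $B$ on the data is desired.
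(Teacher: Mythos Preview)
The paper does not prove this proposition; it is stated with a citation to \cite{ABR} and the proof is explicitly omitted (``We omit some of the proofs as they follow directly from the proofs in \cite{ABR}''). So there is no paper proof to compare against.

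Your overall strategy is sound and is in fact the natural argument in the quasi-line setting: apply Lemma~\ref{Lem:shiftfn} to $h$ and to $gh$, use that $\b$ is a homomorphism so $\b(gh)=\b(g)+\b(h)$, and use that $q$ is a $(1,20\delta)$ quasigeodesic to compare $q(-\b(gh))$ with $q(-\b(h))$. Your observation that the hypotheses $d(q(0),gq(0))\leq r$ and $\b(h)\leq A(r)$ are superfluous in the quasi-line case is correct and worth noting.

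However, the displayed triangle inequality you wrote is broken: the chain on the right goes $ghq(0)\to q(-\b(g)-\b(h))\to gq(-\b(h))\to ghq(0)$, which is circular and never reaches $hq(0)$. The middle detour through $gq(-\b(h))$ is both unnecessary and the source of the slip. The clean chain is
\[
d(ghq(0),hq(0))\leq d\bigl(ghq(0),q(-\b(gh))\bigr)+d\bigl(q(-\b(gh)),q(-\b(h))\bigr)+d\bigl(q(-\b(h)),hq(0)\bigr),
\]
which gives $\consttwo+(|\b(g)|+20\delta)+\consttwo$, so $B=2\consttwo+20\delta$ works. With this correction your argument is complete. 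You should also be explicit that in the quasi-line case Lemma~\ref{Lem:shiftfn} holds for all $s\in\Z$ (not merely $s\geq 0$), since $q$ and $gq$ share both boundary points; you allude to this via Corollary~\ref{cor: (1,mu)-fellow travel}, which is the right justification.
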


We will also need the following lemma pertaining to the behavior of quasigeodesics in parabolic actions. If the action is parabolic, then $\G$ is infinite, and we may fix  a strictly increasing sequence $\{1\} \subsetneq S_1 \subsetneq S_2 \subsetneq \dots$ of finite symmetric subsets of $\G$. 

\begin{lemma}\label{lemmaelimpara}  Suppose $\G\to \fix \{\xi\}$ is parabolic and  $q: \N \to X$ is a $(1, 20\delta)$ quasigeodesic ray  converging to $\xi$. Let $E$ be the constant from Lemma~\ref{Lem:shiftfn}. Define $$\mathrm{M}(t) = \op{max} \{ m \mid d( gq(t) , q(t) ) \leq E \hspace{5pt} \forall g \in S_m \}.$$ Then for every $n \in \mathbb{N}$, there is a $t_n$ such that $\mathrm{M}(t) \geq n$ for all $t \geq t_n$. In particular, $\displaystyle \lim_{t \rightarrow \infty} \mathrm{M}(t) = \infty$. 
\end{lemma}

\begin{proof} Since the action is parabolic, there are no loxodromic elements and hence the Busemann quasimorphism $\b \equiv 0$ on $\G$ (and is hence a homomorphism).

Since the action is by isometries fixing $\xi$, if $q:\N\to X$ is a $(1, 20\delta)$ quasigeodesic converging to $\xi$, then so is $gq$ for every $g \in \G$. Fix $n\in \N$ a let $g_1, g_2\in S_n$. Then  there exists a $t_0 = t_0(g_1, g_2)$ such that $d(g_1q(t), g_2q(t)) \leq E$ for all $t \geq t_0$; this follows from Lemma~\ref{Lem:shiftfn}. Set $t_n = \op{max} \{t_0(g_1, g_2)\mid g_1 , g_2 \in S_n\}$. It follows that if $t \geq t_n$ then $S_n$ satisfies the definition of $\mathrm{M}(t)$ and therefore $\mathrm{M}(t) \geq  n $. 
\end{proof}

\section{Actions in Higher rank}\label{sec:isomhigherrank_p1}

We now turn our attention to classifying isometries in higher rank. Note that some of these have been considered in the work of Button \cite[Section 4]{buttonhigherrank}. 

\begin{defn}
An element $g=(g_1, \dots, g_{\D})\in \Aut_{\! 0}\X$ is said to be
\begin{enumerate}
\item \emph{elliptic} if all components $g_i$ are elliptic;
\item \emph{parabolic} if all components $g_i$ are  parabolic;
\item \emph{pseudo-parabolic} if all components $g_i$ are either elliptic or parabolic, with at least one of each type;
\item \emph{regular loxodromic} if all components $g_i$ are loxodromic;
\item \emph{weakly-loxodromic} if at least one component $g_i$ is loxodromic, but $g$ is not regular. 

\end{enumerate}
\end{defn}

\subsection{Obstructions and simplifications to acylindricity in higher rank.}\label{sec: obstructions/simplif}  Our first goal is to show that factors where the action is elliptic or parabolic do not ``contribute" to acylindricity, and can thus be omitted from $\X$ without losing the acylindricity of the action (see Lemmas~\ref{Lem:elimellfactors} and~\ref{prop:removepara}). For the case of AU-acylindricity, we note that Lemma~\ref{Lem:elimellfactors} shows that elliptic factors can still be omitted, but the remaining results require the stronger acylindricity assumption. 

Since the notion of an elliptic action is well defined for any metric space, we prove the following lemma in a more general setting. 

 \begin{lemma} \label{Lem:elimellfactors} Suppose that $\G \to \Isom Y \times \Isom Z$ is (AU-)acylindrical, where $Y,Z$ are metric spaces. If the projection $\G \to \Isom Y$ is elliptic then the projection $\G \to \Isom Z$ is (AU-)acylindrical. 
\end{lemma}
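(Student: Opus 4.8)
The plan is to unwind the definition of (AU-)acylindricity for the product action and exploit the fact that an elliptic action on $Y$ has uniformly bounded orbits on any given point — more precisely, that for each $y \in Y$ there is an $\e_0 = \e_0(y)$ with $\set_{\e_0}\{y\} = \G$ (this is exactly the content of being elliptic: $Y = \bigcup_{\e \geq 0} \O^\e(\rho(\G))$, applied to the base point). First I would fix a base point $y_0 \in Y$ and let $\e_0$ be such that $d(\rho_Y(g) y_0, y_0) \leq \e_0$ for all $g \in \G$; every point of $\G$ lies in the coarse stabilizer $\set_{\e_0}\{y_0\}$ for the $Y$-action.

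Next, given $\e > 0$ for the target action $\G \to \Isom Z$, I would set $\e' = \max\{\e, \e_0\}$ and invoke the (AU-)acylindricity of the product action $\G \to \Isom(Y \times Z)$ with respect to $\e'$ (using the $\ell^\infty$-product metric $d((y,z),(y',z')) = \max\{d_Y(y,y'), d_Z(z,z')\}$, which is the metric convention of the paper). This gives a constant $R = R(\e')$ (and in the acylindrical case also $N = N(\e')$) such that for any pair of points in $Y \times Z$ at distance at least $R$, the joint $\e'$-coarse stabilizer is finite (resp. has cardinality $< N$). Now, given $z, z' \in Z$ with $d_Z(z,z') \geq R$, I would consider the points $p = (y_0, z)$ and $p' = (y_0, z')$ in $Y \times Z$; since $d(p,p') = \max\{0, d_Z(z,z')\} = d_Z(z,z') \geq R$, we get that $\set_{\e'}\{p, p'\}$ is finite (resp. $< N$). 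The key observation is then the inclusion $\set_{\e}^Z\{z,z'\} \subseteq \set_{\e'}\{p,p'\}$: if $g$ moves both $z$ and $z'$ by at most $\e \leq \e'$ in $Z$, then since $g$ moves $y_0$ by at most $\e_0 \leq \e'$ in $Y$ (ellipticity), $g$ moves both $p$ and $p'$ by at most $\e'$ in the product metric. Hence $\set_\e^Z\{z,z'\}$ is finite (resp. $< N$), establishing (AU-)acylindricity of $\G \to \Isom Z$ with the constant $R(\e') = R(\max\{\e,\e_0\})$ (and $N(\e')$).

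I do not anticipate a serious obstacle here — the argument is essentially bookkeeping with the definitions. The one point requiring a little care is extracting the uniform bound $\e_0$ from ellipticity: being elliptic only asserts that $Y = \bigcup_{\e \geq 0}\O^\e(\rho(\G))$, i.e. that for each $\e$ the set $\O^\e$ may be a proper subset, but that \emph{some} $\e$ works for the chosen base point $y_0$. Equivalently, the $\G$-orbit of $y_0$ in $Y$ is bounded (all orbits are bounded in an elliptic action), so $\e_0 := \sup_{g \in \G} d_Y(\rho_Y(g) y_0, y_0) < \infty$ does the job; one should note this holds regardless of any hyperbolicity of $Y$, which is why the lemma is stated for general metric spaces. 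The rest is the straightforward inclusion of coarse stabilizers and the behavior of the $\ell^\infty$-product metric under restriction to a slice $\{y_0\} \times Z$.
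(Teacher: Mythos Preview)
Your proof is correct and follows essentially the same approach as the paper: fix a base point $y_0 \in Y$, use ellipticity to bound the $Y$-displacement of every group element, and then verify that the $\e$-coarse stabilizer of $\{z,z'\}$ in $Z$ embeds in the $\e'$-coarse stabilizer of $\{(y_0,z),(y_0,z')\}$ in the product. The only cosmetic difference is that the paper takes $\e' = \e + B$ (where $B$ is your $\e_0$) rather than your $\e' = \max\{\e,\e_0\}$; both choices work.
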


\begin{proof} We shall just prove the case of an acylindrical action since the case of nonuniformly acylindrical actions is analogous.  Let $\e >0$ be given. Fix $y \in Y$. Then there is a constant $B >0$ such that $\op{diam}(\G y) \leq B$. Let $R, \mfN$ be the acylindricity constants for $\e' = \e +B$ for the action on $Y \times Z$. Let $a,b \in Z$ be any points such that $d(a,b) > R$. Consider the set $$S = \{g \in \G \mid d(a, ga) \leq \e \text{ and } d(b, gb) \leq \e \}.$$ We claim that $|S| \leq \mfN$, which will prove acylindricity.  

Firstly, observe that \begin{align*}
    d( (y ,a), (y, b)) &= d(y ,y) + d(a, b) \\
    &= d(a, b)\\
    & \geq R
\end{align*}

Further, if $g \in S$, then \begin{align*}
    d((y, a), g(y, a)) &= d(y, gy) + d(a, ga) \\
    & \leq B+ \e\\
    & = \e'
\end{align*}

Similarly, $d((y,b), g(y,b)) \leq \e'$. By acylindricity of the action on $Y \times Z$, we have that $|S| \leq \mfN$.    \end{proof}

Similar to the previous result, the next lemma allows us to remove parabolic factors from an acylindrical action on a product. Again, we prove the result in a more general setting, as it might be helpful for future explorations.

\begin{prop}\label{prop:removepara} Let $\G \to \Isom Z \times \Isom X$ be acylindrical, where $Z$ is a metric space and $X$ is hyperbolic. If the projection $\G \to\Isom X$ is parabolic, then the projection $\G \to \Isom Z$ is acylindrical. 
 \end{prop}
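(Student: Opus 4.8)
The plan is to mirror the proof of Lemma~\ref{Lem:elimellfactors}, but now the obstruction is that a parabolic action has \emph{unbounded} orbits, so we cannot simply absorb the $X$-displacement into a fixed constant $B$. The key observation that rescues us is Lemma~\ref{lemmaelimpara}: in a parabolic action, if we pick a $(1,20\delta)$ quasigeodesic ray $q$ converging to the fixed point $\xi\in\partial X$, then for any finite subset $S\subset\fix_\G\{\xi\}$, all the translates $\{gq(t): g\in S\}$ become uniformly close (within $E$, the constant from Lemma~\ref{Lem:shiftfn}) once $t$ is large enough. So given $\e>0$, I first note that the projection $\G\to\Isom X$ lands in $\fix\{\xi\}$ for the unique parabolic fixed point $\xi$ (by Theorem~\ref{thm:actionsclassrk1}(2)), and I set up the quasigeodesic ray $q$ in $X$ and the function $\mathrm{M}(t)$ from Lemma~\ref{lemmaelimpara}.

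Let $R,N$ be the acylindricity constants for the action on $Z\times X$ associated to the parameter $\e'=\e+E$. I want to show $|\set_\e\{a,b\}|\leq N$ for any $a,b\in Z$ with $d(a,b)\geq R$, which gives acylindricity of $\G\to\Isom Z$ with constants $R(\e)=R(\e'),N(\e)=N(\e')$. The difficulty is that $\set_\e\{a,b\}\subseteq\G$ need not be finite a priori — that is exactly what we are trying to prove — so I cannot apply Lemma~\ref{lemmaelimpara} directly to the (possibly infinite) set $\set_\e\{a,b\}$. The fix is a standard compactness/exhaustion argument: suppose for contradiction that $|\set_\e\{a,b\}|>N$. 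Then there is a \emph{finite} subset $S\subseteq\set_\e\{a,b\}$ with $|S|=N+1$. Enlarge $S$ if necessary to a finite symmetric set containing $1$ (this only helps), and apply Lemma~\ref{lemmaelimpara}: there is a $t_S$ such that $d(gq(t),q(t))\leq E$ for all $g\in S$ and all $t\geq t_S$. Fix one such $t\geq t_S$ and let $p=q(t)\in X$.

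Now run the argument of Lemma~\ref{Lem:elimellfactors} with $y$ replaced by $p$: for each $g\in S$ we have $d((a,p),g(a,p))=d(a,ga)+d(p,gp)\leq\e+E=\e'$ (using the $\ell^1$ or $\ell^\infty$ product metric on $Z\times X$ — in either case $d(p,gp)\leq E$ suffices, since for $\ell^\infty$ the displacement is the max and for $\ell^1$ it is the sum; I will state it for whichever product metric is in force), and similarly $d((b,p),g(b,p))\leq\e'$, while $d((a,p),(b,p))\geq d(a,b)\geq R$. By acylindricity of $\G\to\Isom Z\times\Isom X$ with constants for $\e'$, we get $|S|\leq N$, contradicting $|S|=N+1$. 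Hence $|\set_\e\{a,b\}|\leq N$, as desired. The main obstacle, as indicated, is precisely the non-a-priori-finiteness of the coarse stabilizer in $Z$, which is why the proof must go through an arbitrary finite subset rather than applying Lemma~\ref{lemmaelimpara} once; everything else is bookkeeping with the product metric and the parabolic classification.
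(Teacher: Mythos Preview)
Your proof is correct and essentially the same as the paper's: both rely on Lemma~\ref{lemmaelimpara} to find, for any finite subset $S\subset\G$, a point $p=q(t)\in X$ far enough along the quasigeodesic ray so that $d(gp,p)\leq E$ for all $g\in S$, and then run the product-metric estimate exactly as in Lemma~\ref{Lem:elimellfactors} with $\e'=\e+E$. The only difference is that the paper argues by contrapositive (assuming the $Z$-action is not acylindrical and producing a witness that the product action is not), while you argue directly by contradiction on a finite subset of $\set_\e\{a,b\}$; these are logically interchangeable and use identical ingredients.
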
 

\begin{proof} Since the action of $\G$ on $X$ is parabolic, $\G$ has a unique fixed point $\xi$ in $\partial X$, and hence the associated Busemann quasimorphism $\b: \G\to \R$ is identically $0$. Let $E$ be the constant from Lemma~\ref{Lem:shiftfn} and $q$ a $(1, 20\delta)$ quasigeodesic converging to $\xi$. 

We proceed by contrapositive. Assume, that the action on $Z$ is not acylindrical. Then, there exists an $\e >0$ such that for any $R, \mfN >0$, there exist points $z,w\in Z$ with $d(z,w) \geq R$ and $$| \set_\e\{z,w\}| =| \{ g \in \G \mid d(z, gz) \leq \e \text{ and } d(w, gw) \leq \e \}| \geq \mfN.$$

Set $\e' = \e + E$. We will complete the proof by showing that the action of $\G$ on $Z \times Y$ is not acylindrical for this $\e' >0$.  Let $\G=\Cup{n\geq 1}{}S_{n}$ be a nested sequence of finite symmetric sets exhausting $\G$ (the group is infinite as the orbit is unbounded). Choose $m$ large enough so that $|\set_\e\{z,w\}\cap S_m| \geq N$. By Lemma~\ref{lemmaelimpara}, there is a $t_m>0$ such that $\mathrm{M}(t) \geq m$ for all $t \geq t_m$. It follows from the proof of Lemma~\ref{lemmaelimpara}, that $y = q(t_m) \in X$ is a point such that $$d(gy, y) \leq E$$ for all $g \in S_m \cap \set_\e\{z,w\}$. Then $(z, y), (w,y) \in Z \times X$ and $d((z, y), (w,y)) \geq R.$ Consider $g \in S_m \cap \set_\e\{z,w\}$. Then for $u\in \{z,w\}$, we have that $$d((u,y), g(u, y)) = \max{} \{d(u, gu), d(y, gy)\} \leq \e + E \leq \e'.$$ As a consequence $| \set_{\e'} \{(z, y), (w, y)\}| \geq \mfN$. This shows that any choice of $R, \mfN$ fail to satisfy the acylindricity condition for $\e'$ on $Z \times X$, as claimed.
\end{proof}

\begin{cor}\label{cor:elimpapra} Let $\G \to \Aut_{\!0}\X$ such that each factor is parabolic. Then the action of $\G$ on $\X$ is not acylindrical. 
\end{cor}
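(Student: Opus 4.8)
The plan is to induct on $\D$, the number of factors of $\X$, peeling off one parabolic factor at a time with Proposition~\ref{prop:removepara}. All of the geometric work is already contained in that proposition: if $\G$ acts acylindrically on $Z\times X$ with $X$ hyperbolic and the $X$-component parabolic, then the induced action on $Z$ is again acylindrical. So once the rank-one base case is available, the corollary drops out by a formal argument.

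For the inductive step I would assume the statement for products of fewer than $\D$ factors, take $\G\to\Aut_{\!0}\X$ with $\D\geq 2$ and every factor parabolic, and write $\X=Z\times X_\D$ where $Z=\Prod{i=1}{\D-1}X_i$ carries the $\ell^\8$-product metric. Then $Z$ is a metric space, $X_\D$ is hyperbolic, and the factor-preserving action of $\G$ on $\X$ is the same data as an isometric action $\G\to\Isom Z\times\Isom X_\D$ whose $X_\D$-component is parabolic. If this action were acylindrical, Proposition~\ref{prop:removepara} would force $\G\to\Isom Z$ to be acylindrical; but the image of this last map lies in $\Prod{i=1}{\D-1}\Isom X_i=\Aut_{\!0}\big(\Prod{i=1}{\D-1}X_i\big)$, all $\D-1$ of whose factors are still parabolic, so the inductive hypothesis says this action is \emph{not} acylindrical --- a contradiction. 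Hence the action on $\X$ is not acylindrical.

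The base case $\D=1$ carries the only genuine content: a parabolic action $\G\to\Isom X_1$ on an (automatically unbounded) hyperbolic space is not acylindrical. This is Osin's result \cite[Theorem 1]{Acylhyp}, which one may simply cite; alternatively, to keep the argument self-contained, I would argue directly. Let $E$ be the constant of Lemma~\ref{Lem:shiftfn}, let $q\colon\N\to X_1$ be a $(1,20\delta)$ quasigeodesic ray converging to the (unique) fixed point $\xi$, and suppose acylindricity holds with constants $R=R(E)$ and $N=N(E)$. Since the action is parabolic, $\G$ is infinite; exhaust it by finite symmetric sets $S_1\subsetneq S_2\subsetneq\cdots$. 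By Lemma~\ref{lemmaelimpara} there is a $t_n$ with $d(gq(t),q(t))\leq E$ for every $g\in S_n$ and every $t\geq t_n$. Then $q(t_n)$ and any point $q(t')$ with $t'\geq t_n$ and $t'-t_n\geq R+20\delta$ lie at distance at least $R$ (as $q$ is a $(1,20\delta)$ quasigeodesic), yet $S_n\subseteq\set_E\{q(t_n),q(t')\}$; hence the $E$-coarse stabilizer of this pair has cardinality at least $|S_n|$, which exceeds $N$ once $n$ is large enough --- contradicting acylindricity.

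I do not expect a real obstacle. The one point to watch is that the induction cannot be pushed down to ``$\D=0$'': the trivial action on a point is vacuously acylindrical, so the rank-one base case must be imported from the rank-one theory (or argued as above) rather than produced by Proposition~\ref{prop:removepara} itself. One could also bypass the induction entirely and run the base-case argument simultaneously across all $\D$ factors --- for each large $S_n$ choosing a single cutoff past which every element of $S_n$ is an $E$-coarse stabilizer of the chosen quasigeodesic point in every coordinate, then separating two such points of $\X$ in the first coordinate --- but the inductive packaging via Proposition~\ref{prop:removepara} is shorter to write.
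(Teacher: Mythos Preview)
Your proof is correct and follows essentially the same route as the paper: assume acylindricity, repeatedly apply Proposition~\ref{prop:removepara} to peel off parabolic factors until a single parabolic acylindrical action on a hyperbolic space remains, and invoke Osin's \cite[Theorem 1.1]{Acylhyp} for the contradiction. Your formalization as an induction on $\D$ (with the explicit warning that $\D=0$ cannot serve as the base case) and your optional self-contained argument for $\D=1$ via Lemma~\ref{lemmaelimpara} are welcome elaborations, but the core argument is the same.
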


\begin{proof} Assume, by contradiction, that the action on $\X$ is acylindrical. Repeatedly applying Proposition~\ref{prop:removepara} to the factors of $\X$ ultimately leaves one factor with a parabolic, acylindrical action, which is impossible by \cite[Theorem 1.1]{Acylhyp}.
\end{proof}

We now eliminate the possibility of quasiparabolic actions on all factors in the case of acylindrical actions in higher rank. 

\begin{cor}\label{cor: clean up acyl} Let $\rho:\G\to \Aut_{\! 0}\X$ be acylindrical and not elliptic. Let $I\subset\{1, \dots, \D\}$ be the subset of indices such that the factor action $\rho_i: \G\to \Isom X_i$ is neither elliptic, nor parabolic. Then $I\neq \varnothing$ and $\Delta(\rho_i: i\in I): \G\to \Prod{i\in I}{}X_i$ is acylindrical. 
\end{cor}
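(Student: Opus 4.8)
The plan is to first establish that $I \neq \varnothing$ by eliminating the degenerate cases, and then to bootstrap acylindricity of the full action down to acylindricity of the restricted action $\Delta(\rho_i : i \in I)$ by combining the two preceding results, Lemma~\ref{Lem:elimellfactors} and Proposition~\ref{prop:removepara}.

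First I would show $I \neq \varnothing$. Suppose for contradiction that $I = \varnothing$, i.e. every factor action $\rho_i$ is either elliptic or parabolic. Since $\rho$ is not elliptic, not every $\rho_i$ is elliptic, so at least one factor is parabolic. Group the indices into the elliptic ones $E$ and the parabolic ones $P$ (with $P \neq \varnothing$). Applying Lemma~\ref{Lem:elimellfactors} repeatedly to strip off the elliptic factors one at a time, the action $\Delta(\rho_i : i \in P): \G \to \Prod{i \in P}{} \Isom X_i$ is acylindrical. But this is an acylindrical action on a product all of whose factors are parabolic, which is impossible by Corollary~\ref{cor:elimpapra}. (If one wants $P$ to have a single element, one can instead invoke Proposition~\ref{prop:removepara} to peel off all but one parabolic factor and then cite \cite[Theorem 1.1]{Acylhyp} directly, exactly as in the proof of Corollary~\ref{cor:elimpapra}.) Hence $I \neq \varnothing$.

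Next, to see that $\Delta(\rho_i : i \in I)$ is acylindrical: write $\{1,\dots,\D\} = I \sqcup E \sqcup P$ where $E$ is the set of elliptic factors and $P$ the set of parabolic ones. Reorder the product so that $\X \cong \Prod{i\in I}{}X_i \times \Prod{i\in E}{}X_i \times \Prod{i \in P}{} X_i$; since $\rho$ is acylindrical on $\X$, it remains so after this reordering (acylindricity depends only on the metric, which is symmetric in the factors up to the $\ell^\infty$-comparison). Now apply Lemma~\ref{Lem:elimellfactors} once for each index in $E$: grouping $Z = \Prod{i \in I}{}X_i \times \Prod{i \in P}{}X_i$ and $Y = X_j$ for $j \in E$, the hypothesis that $\rho_j$ is elliptic lets us conclude the action on $Z$ is acylindrical, and we iterate. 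This leaves an acylindrical action $\G \to \Prod{i \in I}{} \Isom X_i \times \Prod{i \in P}{}\Isom X_i$. Then apply Proposition~\ref{prop:removepara} once for each index in $P$: take $X = X_j$ for $j \in P$ (hyperbolic, with $\rho_j$ parabolic) and $Z = \Prod{i\in I}{}X_i \times \Prod{i \in P \setminus \{j\}}{}X_i$ as the "other" factor; the proposition yields acylindricity of the action on $Z$, and we iterate over all of $P$. What remains is precisely that $\Delta(\rho_i : i \in I): \G \to \Prod{i \in I}{}\Isom X_i$ is acylindrical, as desired.

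The only mild subtlety — and the thing I would be careful to spell out — is that Lemma~\ref{Lem:elimellfactors} and Proposition~\ref{prop:removepara} are stated for a two-factor product $\Isom Y \times \Isom Z$ (resp. $\Isom Z \times \Isom X$), so each application requires regrouping the surviving factors of $\X$ as a single metric space $Z$ and isolating one factor to discard. This is routine given the universal property discussion in Section~\ref{Subsec:Products} and the fact that the $\ell^\infty$-metric on a product is unchanged under associating/permuting the factors, but it is worth noting that we must discard the elliptic factors \emph{before} the parabolic ones: Proposition~\ref{prop:removepara} requires its discarded factor to be hyperbolic, which is automatic here, whereas Lemma~\ref{Lem:elimellfactors} has no such constraint, so either order of those two batches works — but interleaving would force us to re-verify at each step that the retained block is still a legitimate metric space, which is cleanest to do by handling each batch monolithically. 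No genuinely new ideas are needed beyond the two cited results and the bookkeeping; there is no real obstacle.
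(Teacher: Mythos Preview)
Your proposal is correct and follows essentially the same approach as the paper: strip elliptic factors via Lemma~\ref{Lem:elimellfactors}, then parabolic factors via Proposition~\ref{prop:removepara}, with Corollary~\ref{cor:elimpapra} supplying the contradiction needed for $I\neq\varnothing$. The paper streamlines slightly by handling $I\neq\varnothing$ in the flow (remove elliptic factors first, then observe the remaining non-parabolic set is nonempty) rather than proving it separately up front, but the content is identical.
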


\begin{proof}
   Let $I_{ell},I_{par}\subset \{1, \dots, \D\}$ be the indices for which $\rho_i(\G)$ is elliptic, respectively parabolic. Since $\rho(\G)$ is not elliptic, we have that $J:=\{1, \dots, \D\}\setminus I_{ell}$ is not empty. By Lemma~\ref{Lem:elimellfactors}, the action $\Delta(\rho_i: i\in J): \G\to \Prod{i\in J}{}X_i$ is acylindrical. 

By Corollary~\ref{cor:elimpapra}, we have that $I= J\setminus I_{par}$ is not empty. And by Proposition~\ref{prop:removepara}, we have that $\Delta(\rho_i: i\in I): \G\to \Prod{i\in I}{}X_i$ is acylindrical. 
\end{proof}

\begin{lemma}\label{Lem:noqpinacyl} Suppose that $\G \to \Aut\X$ is acylindrical and fixes some $\xi \in \partial_{reg} \X$. Then every finitely generated subsemigroup of $\G$ has polynomial growth and no factor is quasiparabolic.  
\end{lemma}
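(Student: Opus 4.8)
The plan is to combine the Busemann quasimorphism machinery from Section~\ref{sec:isomrank1} with the acylindricity hypothesis to bound the displacement of group elements along the regular boundary directions. Write $\G_0 \leq \G$ for the finite-index subgroup mapping into $\Aut_{\!0}\X$; it suffices to prove the statement for $\G_0$, since polynomial growth and non-quasiparabolicity of factors pass between a group and a finite-index subgroup. For each factor index $i$, the action $\rho_i\colon \G_0 \to \Isom X_i$ fixes the point $\xi_i \in \partial X_i$, so we get a Busemann quasimorphism $\b_i\colon \G_0 \to \R$. First I would show that each $\b_i$ is in fact a genuine homomorphism on $\G_0$: this is where acylindricity does its work. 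If some $\b_i$ had defect bounded below by a positive constant on a nonabelian free subsemigroup, one could produce elements with large joint $\e$-coarse stabilizer at unbounded distance, contradicting acylindricity --- concretely, fixing quasigeodesic rays $q_j$ to $\xi_j$ in each factor, an element $g$ with small $|\b_i(g)|$ in every coordinate nearly fixes the point $(q_1(t_1), \dots, q_\D(t_\D))$ for appropriate $t_j$, by Lemma~\ref{Lem:shiftfn}, and two such points far apart have too many such elements stabilizing them. More precisely, I expect to show each $\b_i$ is a homomorphism by invoking the dichotomy that acylindrical actions on hyperbolic spaces are elliptic, lineal, or general type (Osin, \cite[Theorem 1]{Acylhyp}), applied to the factor actions: a factor that fixes a regular boundary point and is not elliptic must be lineal (it cannot be parabolic or quasiparabolic since those contradict acylindricity per Corollaries~\ref{cor:elimpapra} and the fact that an acylindrical action fixing a boundary point can't be quasiparabolic), and on a lineal (hence quasi-line) factor $\b_i$ is a homomorphism by the structure of oriented lineal actions.

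With all the $\b_i$ being homomorphisms $\G_0 \to \R$, assemble them into $\Phi = (\b_1, \dots, \b_\D)\colon \G_0 \to \R^\D$. The key claim is that the displacement $d_\8(x_0, g x_0)$ of the basepoint $x_0 = (q_1(0), \dots, q_\D(0))$ is controlled: using Lemma~\ref{Lem:shiftfn} coordinatewise, for any $g \in \G_0$ one has $d_i(q_i(0), g_i q_i(0)) \leq |\b_i(g)| + \consttwo + 20\delta$ in each quasi-line factor, and on elliptic factors the displacement is bounded by the orbit diameter; hence $d_\8(x_0, g x_0) \leq \|\Phi(g)\|_\infty + C$ for a uniform constant $C$. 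Then I would run the acylindricity argument in the target $\R^\D$: if $\G_0$ contained a finitely generated subsemigroup of superpolynomial (i.e. exponential) growth, then since $\Phi$ is a homomorphism to the abelian group $\R^\D$, the subsemigroup would have unboundedly many elements in some bounded ball of $\R^\D$ --- i.e. infinitely many elements $g$ with $\|\Phi(g)\|_\infty$ bounded --- all of which $\e$-coarsely fix a fixed pair of far-apart points in $\X$ (namely $x_0$ and a suitable translate), contradicting $|\set_\e\{x,y\}| < N$. The same contradiction, sharpened, shows no factor can be quasiparabolic: a quasiparabolic factor fixing a unique boundary point contains a free subsemigroup on two loxodromics, which would give exponential growth; alternatively one cites Corollary~\ref{cor: clean up acyl} directly to see quasiparabolic factors are already excluded once the action is acylindrical, making that half of the statement essentially a restatement.

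The main obstacle I anticipate is the first step: rigorously establishing that the Busemann quasimorphisms are homomorphisms, and more fundamentally that each factor action is either elliptic or lineal (never parabolic or quasiparabolic) given only that it fixes a regular boundary point and the product action is acylindrical. The subtlety is that acylindricity of the product action does not a priori descend to acylindricity of each factor action --- only Corollary~\ref{cor: clean up acyl} gives that after discarding elliptic and parabolic factors --- so one has to be careful to first apply Corollaries~\ref{cor:elimpapra} and~\ref{cor: clean up acyl} to reduce to the case where the surviving factors are lineal or general type, then use the hypothesis that $\xi$ is regular and fixed to rule out general type on those factors (a general type action has no global fixed point on the boundary). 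Once the factors are pinned down as lineal, the polynomial (in fact, at most degree $\D$) growth bound follows cleanly from the embedding-up-to-bounded-error into $\R^\D$ and a counting argument against acylindricity, exactly mirroring Lemma~\ref{lem: AU eucl is Zk}.
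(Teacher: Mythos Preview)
Your plan has a genuine circularity at the step you yourself flag as the main obstacle. You want to first show that each factor action $\rho_i$ is elliptic or oriented lineal, so that each $\b_i$ becomes an honest homomorphism, and only then run the counting argument. But ruling out quasiparabolic factors \emph{is} the content of the lemma. Your proposed fix --- invoke Osin's trichotomy on the factor actions --- does not apply, because acylindricity of the product action on $\X$ does not descend to acylindricity of $\rho_i$ on $X_i$; Corollary~\ref{cor: clean up acyl} only discards elliptic and parabolic factors and is silent on quasiparabolic ones. Likewise, your appeal to the ``in particular'' clause of Lemma~\ref{Lem:shiftfn} (the basepoint bound $d(q_i(0),g_iq_i(0))\leq |\b_i(g)|+E+20\delta$) is valid only when $X_i$ is already a quasi-line, which again presupposes the lineal conclusion you are trying to reach.

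The paper avoids this trap by never assuming the $\b_i$ are homomorphisms. It works with them as quasimorphisms of uniform defect $B$, sets $F_n=\{g\in\G:\|\b(g)\|_\infty\le n\}$, and uses Lemma~\ref{Lem:shiftfn} at \emph{large} parameter values $t\ge T$ along the product ray $q$ --- where that lemma applies to any action fixing $\xi_i$, lineal or not --- together with acylindricity of the product action to bound $|F_n|$. The quasimorphism inequality alone then places any length-$l$ product of generators from $F_m$ into $F_{l(m+B)}$, which suffices to exclude free subsemigroups and hence quasiparabolic factors. The mechanism you sketch in your second paragraph --- push $\G_0$ into $\R^\D$ via the assembled homomorphism $\b$ and invoke Lemma~\ref{lem: AU eucl is Zk} --- is essentially what happens in Proposition~\ref{prop: acyldescends} and the proof of Proposition~\ref{prop:elemsubvirtab}, but those come \emph{after} amenability has been established; the bootstrapping runs the other way around.
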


\begin{proof} Since acylindricity of the action passes to subgroups, without loss of generality, up to passing to a finite index subgroup, we may assume that $\G\to \Aut_{\!0}\X$. The components of $\xi$ are  $\xi_i\in \partial X_i$  $i\in \{1, \dots, \D\}$ and are $\G$-fixed. For each $i\in \{1, \dots, \D\}$, let  $q_i: \N \to X_i$  be a $(1, 10\delta)$ quasigeodesic ray converging to $\xi_i$, and  let $\b_i: \G\to \R$ be the Busemann quasimorphism associated to the action in each factor and of defect at most $B \geq 0$.  Set $\b: \G\to \R^{\D}$ and $q: \N \to \X$ to be the associated diagonal maps. i.e. $\b(g) = \Delta(\b_1(g),\dots, \b_\D(g) )$ and $q(n) = \Delta(q_1(n),\dots, q_\D(n) )$.  Since we are using the $\ell^\infty$-product metric on $\X$ and $\R^{\D}$, $q$ is a $(1, 20\delta)$ quasigeodesic in $\X$.

Define $F_n = \left\{ g \in \G \: \|\b(g)\|_\8 \leq n \right\} \subset \G$ for all $n\in \N$. The reader may think of these as F\o lner sets and we will show they have polynomial growth. 

\noindent
\textbf{Claim 1:} For each $n \in \N$, $| F_n| \leq \mfN$, where $\mfN$ is the smallest constant from the definition of acylindricity associated to $\e_n = n+ 20\delta + E$, where $E$ is the constant from Lemma~\ref{Lem:shiftfn}. 

Suppose by contradiction, that there exists a $k_0$ such that $|F_{k_0}| > \mfN$, for the $\mfN$ associated to $\e_{k_0}$. Let $Q\subset F_{k_0}$ be of cardinality  $\mfN+1$. Recall our notation that we have $h\mapsto (h_1, \dots, h_{\D})$. For each $ h\in Q$, it follows from Lemma~\ref{Lem:shiftfn}, that if $t \geq t_0(h_i)$, then $$d(q_i( t - \b_i(h)), h_iq_i(t)) \leq E.$$ Using the triangle inequality and that $q_i$ is a $(1, 20\delta)$ quasigeodesic, we get 
\begin{align*}
    d(q_i(t), h_iq_i(t)) &\leq d(q_i(t), q_i(t - \b_i(h)) + d(q_i(t - \b(h), h_iq_i(t))\\
    & \leq |\b_i(h)| + 20\delta + E\\
    & \leq k_0 + 20\delta +E = \e_{k_0}.
    \end{align*}

Take $T =\op{max} \{ t_0(h_i) \mid h \in Q, 1 \leq i \leq \D \}$. Then for all $t \geq T$, it follows that $$d(q(t), h.q(t)) \leq \e_{k_0}$$

Choose $a, b \geq T$ such that $d(q(a), q(b)) \geq R(\e)$, where $R$ is the constant from the definition of acylindricity for $\e_{k_0}$. Such $a,b$ exists as $q(t)$ is an infinite quasigeodesic. Then it follows that for all $h \in Q$, $d(q(a), h.q(a)) \le\e_{k_0} \text{ and } d(q(b), h.q(b)) \leq \e_{k_0}$. This contradicts the acylindricty of the action as since $|Q| = \mfN +1 > \mfN$. Thus Claim 1 holds. 

It is easy to verify that $\bigcup_{n} F_n = \G$.  Since we use subscripts to denote the factors of $\X$, we will use superscripts to enumerate elements in the following step -- these should not be confused for powers. Let $s^1, \dots, s^k \in \G$ and let $S=\<s^1, \dots, s^k \>^+$ be the semigroup generated by them. We now show that  $S$ has polynomial growth. Fix $m \in \N$ to be the smallest index such that $s^1,\dots, s^k \in F_m$.

\noindent
\textbf{Claim 2:} For any $l$, the product $a^1\cdots a^l \in F_{l(m + B)}$ where each $a^i \in \{s^1, \dots, s^k\}$ is a generator of $S$.

Since $a^1, \dots, a^l \in F_k$ and $\b_i$ is a quasimorphism, it is easy to check that for each $i= 1, \dots, {\D}$, $$|\b_i(a^1\cdots a^l)| \leq lB + |\b_i(a^{1})| + \dots |\b_i(a^{l})| \leq lB + lm,$$ which proves the claim.   

By  Theorem~\ref{thm:actionsclassrk1},  no factor can be quasiparabolic as this would yield a finitely generated subsemigroup of $\G$ with exponential growth.
\end{proof}

We now have all the tools to prove the following theorem.

\begin{theorem}\label{thm:elimmixedfactors} Let $\G \to \Aut_{\!0}\X$  such that the projections $\G\to \Isom X_i$ are all either elliptic, parabolic or quasiparabolic (with at least one factor being parabolic or quasiparabolic). Then $\G \to \Aut \X$ is not acylindrical.

In particular, if $\G \to \Aut_{\!0}\X$ is acylindrical, then every element of $\G$ is either an  elliptic, weakly loxodromic, or regular loxodromic isometry.
\end{theorem}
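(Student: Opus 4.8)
The plan is to deduce this from the simplification lemmas already established: Lemma~\ref{Lem:elimellfactors} (elliptic factors may be dropped while preserving (AU-)acylindricity), Corollary~\ref{cor:elimpapra} (an all-parabolic action on a product is never acylindrical), and Lemma~\ref{Lem:noqpinacyl} together with its consequence inside Corollary~\ref{cor: clean up acyl}. First I would argue the contrapositive of the main assertion: suppose $\G \to \Aut_{\!0}\X$ has every factor elliptic, parabolic, or quasiparabolic, with at least one parabolic or quasiparabolic factor, and suppose for contradiction the action is acylindrical (note that acylindricity on $\Aut\X$ restricts to acylindricity on the finite-index subgroup $\G_0$ mapping to $\Aut_{\!0}\X$, so it suffices to work inside $\Aut_{\!0}\X$). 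Let $I_{ell}$ be the set of elliptic factor indices. By Lemma~\ref{Lem:elimellfactors}, discarding these coordinates one at a time, the diagonal action $\Delta(\rho_i : i \notin I_{ell})$ on $\Prod{i \notin I_{ell}}{} X_i$ is still acylindrical. After this reduction every remaining factor is parabolic or quasiparabolic, and at least one such factor remains by hypothesis.

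Next I would handle the quasiparabolic factors. Here I would invoke Lemma~\ref{Lem:noqpinacyl}: if there were a surviving quasiparabolic factor, then — after noting that all these factors share the structure of an action fixing a point on $\partial X_i$ — the diagonal action would fix a point $\xi \in \partial_{reg}$ of the subproduct, and Lemma~\ref{Lem:noqpinacyl} forbids a quasiparabolic factor under an acylindrical action fixing a regular boundary point. (If some parabolic factor does not a priori fix the \emph{same} boundary point configuration as the quasiparabolic ones, one should observe that a parabolic action has a \emph{unique} boundary fixed point and a quasiparabolic action fixes a \emph{unique} boundary point as well, so the diagonal action does fix a unique $\xi \in \partial_{reg}$ of the subproduct; this is the input Lemma~\ref{Lem:noqpinacyl} needs.) Hence all surviving factors are parabolic. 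But then Corollary~\ref{cor:elimpapra} says an acylindrical action on a product all of whose factors are parabolic cannot exist — contradiction. This proves the first assertion.

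For the ``in particular'' clause: let $g \in \G$ and apply the first part to the cyclic subgroup $\langle g \rangle$ (acylindricity passes to subgroups). If $g$ is not elliptic and not regular loxodromic and not weakly loxodromic, then by the higher-rank classification of isometries every factor $g_i$ is elliptic or parabolic, and since $g$ is not elliptic at least one factor $g_i$ is parabolic; thus $\langle g \rangle \to \Aut_{\!0}\X$ is an acylindrical action whose factors are all elliptic or parabolic with at least one parabolic — precisely the forbidden configuration, contradiction. Hence $g$ is elliptic, weakly loxodromic, or regular loxodromic.

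The main obstacle I anticipate is the bookkeeping around \emph{which boundary point is fixed} when mixing parabolic and quasiparabolic factors, i.e. making sure the reduced diagonal action genuinely fixes a point of $\partial_{reg}$ of the relevant subproduct so that Lemma~\ref{Lem:noqpinacyl} applies; each parabolic and each quasiparabolic factor fixes a unique boundary point, so the diagonal action fixes the tuple of these points, which lies in the regular boundary of the subproduct — this should go through cleanly but deserves an explicit sentence. A secondary subtlety is the reduction from $\Aut\X$ to $\Aut_{\!0}\X$ via the finite-index subgroup $\G_0$, which must be stated since Lemma~\ref{Lem:noqpinacyl} and Corollary~\ref{cor:elimpapra} are phrased for factor-preserving actions.
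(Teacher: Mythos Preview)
Your proposal is correct and follows essentially the same route as the paper's proof, which simply says ``combining Lemmas~\ref{Lem:elimellfactors} and~\ref{Lem:noqpinacyl} and Proposition~\ref{prop:removepara}'' and then handles the ``in particular'' clause exactly as you do, by restricting to $\langle g\rangle$. Your version is more explicit about the bookkeeping (the regular boundary fixed point assembled from the unique fixed points of the parabolic and quasiparabolic factors), which the paper leaves to the reader; your use of Corollary~\ref{cor:elimpapra} in place of a direct appeal to Proposition~\ref{prop:removepara} is a cosmetic difference, as the former is an immediate consequence of the latter.
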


\begin{proof} The theorem follows by combining Lemmas~\ref{Lem:elimellfactors} and \ref{Lem:noqpinacyl} and Proposition~\ref{prop:removepara}. If $\G \leq \Aut_{\!0}\X$ is acylindrical, then so is every subgroup, in particular $\langle g \rangle$ for each $g \in \G$. The result now follows since it must be the case that $g$ is either elliptic in each factor or has at least one loxodromic factor.
\end{proof}

\subsection{Regular Elements Exist}
We now turn our attention to regular elements i.e those that preserve factors and act as a loxodromic in each factor. Regular elements will be consider more in Section~\ref{sec:elemsubgrp}. Our goal for this subsection is to prove the following proposition. (See also \cite{ CapraceSageev, CapraceZadnik, FLM}.)

\begin{prop}\label{Prop:Regular Exist}
If  $\G \to \Aut\X$ has general type factors then $\G$ contains regular elements.
\end{prop}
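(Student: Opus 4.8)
The plan is to produce, for each factor $i$, a loxodromic element of $\Isom X_i$ inside (the image of) $\G_0$, and then to ``align'' these into a single element that is loxodromic on every coordinate simultaneously. First I would reduce to the factor-preserving case: since $\G_0 = \rho^{-1}(\Aut_{\!0}\X)$ is finite index in $\G$ and acylindricity/general-type behavior passes to finite-index subgroups, it suffices to find a regular element in $\G_0$, so assume $\rho\colon\G\to\Aut_{\!0}\X$ with each factor action $\rho_i\colon\G\to\Isom X_i$ of general type. The existence of a \emph{single} loxodromic element in each factor is exactly \cite[Theorem 1.1]{Acylhyp} (or Theorem~\ref{thm:actionsclassrk1}); the cited input from Maher--Tiozzo \cite{MaherTiozzo} is what promotes this to a genericity statement, which is the real engine.

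The key steps, in order. (1) Invoke Maher--Tiozzo: for a general type action $\G\to\Isom X_i$ and a suitable (admissible, non-elementary) probability measure $\mu$ on $\G$ whose support generates $\G$, the $\mu$-random walk is loxodromic on $X_i$ asymptotically almost surely; that is, $\Prob(w_n \text{ is loxodromic on } X_i)\to 1$ as $n\to\infty$. (2) Take one common measure $\mu$ (e.g. finitely supported, symmetric, support generating $\G$) that works simultaneously for all $\D$ factors — this is possible since $\D$ is finite and ``general type'' is preserved, so each $\rho_i(\G)$ is non-elementary in $\Isom X_i$. (3) Apply a union bound: since each event $\{w_n \text{ loxodromic on } X_i\}$ has probability tending to $1$, the intersection over the finitely many $i\in\{1,\dots,\D\}$ also has probability tending to $1$; in particular for $n$ large there is a sample path $w_n=\g$ with $\g$ loxodromic on \emph{every} $X_i$. (4) Such a $\g\in\G_0$ has, by definition, each $\rho_i(\g)$ loxodromic and does not permute factors, hence is a regular (loxodromic) element; transport it back to $\G$ via the finite-index inclusion (or note $\g\in\G_0\le\G$ already lies in $\G$). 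This yields $\g^\pm\in\partial_{reg}\X$ as its repelling/attracting fixed points.

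I would expect the main obstacle to be technical rather than conceptual: precisely checking that one measure $\mu$ simultaneously satisfies the hypotheses of the Maher--Tiozzo loxodromic-genericity theorem for all $\D$ factor actions, and that these actions are genuinely non-elementary (not just ``general type'' as a word) in the sense required there — the spaces $X_i$ are only complete separable geodesic $\delta$-hyperbolic, possibly non-proper, so one should cite the version of \cite{MaherTiozzo} valid in that generality and confirm a generating measure exists (e.g. $\G$ is countable, take a symmetric finitely supported measure whose support generates, noting general type implies $\rho_i(\G)$ contains independent loxodromics so the induced random walk is non-elementary). A secondary point is the harmless bookkeeping of passing between $\G$ and $\G_0$: a regular element of $\G_0$ is a fortiori an element of $\G$ with all the required properties, so no further work is needed there. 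An alternative, more hands-on route avoiding random walks would be a ping-pong / North--South dynamics argument: pick loxodromics $a_i\in\rho_i(\G)$, use that each $\rho_i(\G)$ is general type to find elements moving $a_i$'s fixed points off themselves, and build a product of high powers whose $i$-th coordinate has, by a North--South/contraction estimate (Definition~\ref{defn:elementsclassextn}), positive stable translation length on $X_i$ for all $i$ at once; but the random-walk argument is cleaner given that \cite{MaherTiozzo} is already being cited, so I would present that as the main proof.
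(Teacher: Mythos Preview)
Your proposal is correct and follows essentially the same route as the paper: reduce to $\G_0\to\Aut_{\!0}\X$, fix one generating symmetric measure $\mu$, apply Maher--Tiozzo (Theorem~\ref{thm: MaherTiozzo}) to each factor, and use a union bound (the paper packages this as Lemma~\ref{Lem:prob}) to intersect the $\D$ events of positive probability. One small slip: the proposition has no acylindricity hypothesis, so your parenthetical about acylindricity passing to finite-index subgroups is irrelevant here---only the general-type condition on the factors is needed, and that holds for $\G_0$ by definition.
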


The proof of Proposition~\ref{Prop:Regular Exist} relies on the following results of Maher and Tiozzo and properties of probability measures, which will allow us to find regular elements in actions with general type factors with asymptotically high probability.

\begin{theorem}\cite[Theorem 1.4]{MaherTiozzo}\label{thm: MaherTiozzo}
 Let $\G\to \Isom X$ be a general type action, and $\mu$ be a generating symmetric probability measure on $\G$. Then the translation-length $\tau(w_n)$ of the $n^{th}$-step of the $\mu$-random walk grows at least linearly, i.e. there exists $T>0$ such that $\mathbb P(w\in \G^\N:\tau(w_n)\leq Tn) \to 0$ as $n\to \8$. 
\end{theorem}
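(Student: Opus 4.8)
The plan is to establish the theorem by combining a deterministic hyperbolic-geometry criterion for detecting loxodromics with two probabilistic inputs about the sample paths: linear growth of the displacement, and sublinearity of the backtracking. Fix a basepoint $o\in X$. The geometric input is the standard detection lemma: there is a constant $C=C(\delta)$ so that for any isometry $g$, writing $b(g)=\langle g^{-1}o,go\rangle_o$ for the Gromov product measuring the backtracking of $g$ at $o$, if $d(o,go)-2b(g)\ge C$ then $g$ is loxodromic and $\tau(g)\ge d(o,go)-2b(g)-C$. I would prove this by checking that under the hypothesis the bi-infinite concatenation $\cdots g^{-1}[o,go]\,[o,go]\,g[o,go]\cdots$ is a $(1,C')$-quasigeodesic: its only possible failure is backtracking at the translates $g^k o$, and at each such joint the relevant Gromov product equals $b(g)$ by equivariance, so the Morse property (Proposition~\ref{prop:qg triangles slim hyp}) forces $g$ to translate along a quasi-axis with translation length at least $d(o,go)-2b(g)-C$. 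Applying this with $g=w_n$, it suffices to produce a fixed $\lambda>0$ and to show, with probability tending to $1$, both (a) $d(o,w_no)\ge \lambda n$, and (b) $b(w_n)\le \beta n$ for a suitable fixed $\beta<\lambda/2$.

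For (a) I would invoke the positive drift of the walk. Since $d(o,w_{m+n}o)\le d(o,w_mo)+d(w_mo,w_{m+n}o)$ and the increment $d(w_mo,w_{m+n}o)$ is equidistributed with $d(o,w_no)$, the sequence $d(o,w_no)$ is subadditive along the cocycle; Kingman's subadditive ergodic theorem then yields an almost sure drift $\ell=\lim_n d(o,w_no)/n\ge 0$ (when $\mu$ has a finite first moment, with the moment-free statement replaced by its in-probability analogue). The crux is positivity, $\ell>0$: this uses that the action is general type, hence possesses two independent loxodromics by Theorem~\ref{thm:actionsclassrk1}, so that the unique $\mu$-stationary measure $\nu$ on $\partial X$ is non-atomic and supported on the uncountable limit set; a vanishing drift would be incompatible with this non-elementarity. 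I would then fix any $\lambda\in(0,\ell)$, so that $\mathbb{P}(d(o,w_no)\le\lambda n)\to 0$.

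For (b) I would use that the walk converges to the boundary, $w_no\to\xi^+\in\partial X$ almost surely with hitting measure $\nu$, and that (by symmetry of $\mu$, via the two-sided walk) the reversed walk determines an independent boundary point $\xi^-\sim\nu$ with $\xi^-\ne\xi^+$ almost surely. Deep backtracking $b(w_n)\ge\beta n$ forces the geodesics $[o,w_no]$ and $[o,w_n^{-1}o]$ to share an initial segment of length $\approx\beta n$, i.e. both endpoints to lie in a common shadow of depth $\beta n$ seen from $o$; the decay of the hitting measure of deep shadows, a consequence of the non-atomicity of $\nu$ and the central technical estimate of Maher--Tiozzo, gives $\mathbb{P}(b(w_n)\ge\beta n)\to 0$. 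This sublinear-backtracking bound, together with the positivity of drift in (a), is the principal obstacle: both require the weak-hyperbolicity random-walk machinery and in particular do not follow from elementary considerations when $\mu$ carries no moment hypothesis (note that $w_n^{-1}o$ is genuinely \emph{not} a convergent walk, which is precisely why (b) cannot be read off directly from boundary convergence and must be treated via shadow estimates).

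Finally I would combine the pieces. Fix $\lambda\in(0,\ell)$ and set $\beta=\lambda/4$ and $T=\lambda/3$. On the event $\{d(o,w_no)>\lambda n\}\cap\{b(w_n)\le\beta n\}$ the detection lemma applies for all large $n$, since $d(o,w_no)-2b(w_n)\ge \lambda n/2\ge C$, giving $\tau(w_n)\ge d(o,w_no)-2b(w_n)-C\ge \lambda n-\lambda n/2-C>Tn$. Hence, for all large $n$,
\[
\mathbb{P}\big(\tau(w_n)\le Tn\big)\le \mathbb{P}\big(d(o,w_no)\le\lambda n\big)+\mathbb{P}\big(b(w_n)>\beta n\big)\xrightarrow[n\to\infty]{}0,
\]
which is the assertion with this choice of $T$.
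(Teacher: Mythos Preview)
The paper does not prove this theorem: it is quoted verbatim as \cite[Theorem 1.4]{MaherTiozzo} and used as a black box in the proof of Proposition~\ref{Prop:Regular Exist}. There is therefore no ``paper's own proof'' to compare against.

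Your sketch is a faithful outline of the Maher--Tiozzo argument itself: the loxodromic-detection lemma via the Gromov product at the basepoint, positive drift of $d(o,w_no)$, and sublinear Gromov product $\langle w_n^{-1}o,w_no\rangle_o$ (equivalently, decay of the probability that the forward and reversed sample paths enter a common deep shadow). You correctly flag that the heart of the matter is establishing positive drift and the shadow estimate \emph{without} moment hypotheses and for non-proper $X$; these are exactly the technical contributions of Maher--Tiozzo, and your sketch does not supply them but rather names them. Two small points: the existence and uniqueness of the stationary measure $\nu$ on $\partial X$ in this generality is itself part of their package, not an a priori fact you can invoke; and the Kingman argument you mention only yields the almost-sure limit under a first-moment hypothesis, so in the moment-free setting the drift statement must be phrased and proved in probability, as you note parenthetically. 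With those caveats, the proposal is a correct high-level summary of the original proof, which is all one could reasonably expect here since the present paper simply imports the result.
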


\begin{lemma}\label{Lem:prob}
Let $\e\in (0,1)$, and $\P$ be a probability measure on $\Omega$. Assume $A_1, \dots, A_{\D}\subset \Omega$ are measurable sets such that $\P\left(A_i\right) > 1-\frac{\e}{\D}$ for each $i=1, \dots, {\D}$. Then $\P\left(\Cap{i=1}{\D}A_i\right) >1-\e$. 
\end{lemma}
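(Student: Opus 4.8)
The plan is to prove the bound by passing to complements and invoking finite subadditivity of $\P$. Write $A_i^{c} = \Om \setminus A_i$ for each $i$. By De Morgan's law, $\Om \setminus \Cap{i=1}{\D} A_i = \Cup{i=1}{\D} A_i^{c}$, so it is enough to show $\P\big(\Cup{i=1}{\D} A_i^{c}\big) < \e$; the conclusion then follows at once since $\P\big(\Cap{i=1}{\D} A_i\big) = 1 - \P\big(\Cup{i=1}{\D} A_i^{c}\big)$.

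First I would use the hypothesis: for each $i = 1, \dots, \D$ we have $\P(A_i^{c}) = 1 - \P(A_i) < 1 - \big(1 - \tfrac{\e}{\D}\big) = \tfrac{\e}{\D}$. Then, applying subadditivity of $\P$ to the finite union, $\P\big(\Cup{i=1}{\D} A_i^{c}\big) \leq \Sum{i=1}{\D} \P(A_i^{c}) < \Sum{i=1}{\D} \tfrac{\e}{\D} = \e$, where the second inequality is strict because it is a sum of $\D \geq 1$ strict inequalities. Combining this with the identity in the previous paragraph gives $\P\big(\Cap{i=1}{\D} A_i\big) > 1 - \e$, as claimed.

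There is no real obstacle here: this is just the finite Bonferroni inequality, and the only point requiring a moment's care is that strict inequality is preserved under summing finitely many strict inequalities — which is exactly why the hypothesis is stated with a strict $>$ and the conclusion comes out strict as well. The intended use is transparent from the surrounding material: the lemma will be applied with $\Om$ the path space of a symmetric generating $\mu$-random walk on $\G$ and $A_i$ the event that, at step $n$, the walk acts loxodromically on the $i$-th general type factor $X_i$, so that on $\Cap{i=1}{\D} A_i$ the corresponding group element is regular, and Theorem~\ref{thm: MaherTiozzo} guarantees $\P(A_i)$ is eventually as close to $1$ as desired.
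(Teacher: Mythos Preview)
Your proof is correct and follows essentially the same approach as the paper: pass to complements, apply finite subadditivity (the union bound), and invoke De Morgan's law to conclude. The paper's version is slightly terser but the steps are identical.
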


\begin{proof}
We shall use $A^c:= \Omega \setminus A$ to denote set compliments. For $i=1, \dots, {\D}$ our assumption is equivalent to $\P(A_i^c) < \frac{\e}{\D}$ which means that 
$$ \P\left(\Cup{i=1}{\D}A_i^c\right)\leq\Sum{i=1}{\D}\,\P(A_i^c) < \e$$
Therefore, by De-Morgan's Law, $\P\left(\Cap{i=1}{\D}A_i\right)> 1-\e$.
\end{proof}

\begin{proof}[Proof of Proposition~\ref{Prop:Regular Exist}]
As before, up to passing to a finite index subgroup, without loss of generality we  assume that $\G\to \Aut_{\!0}\X$. Fix a generating probability measure $\mu$ on $\G$, and $0<\e<1$. Since each projection is of general type, we may apply Theorem~\ref{thm: MaherTiozzo} to each action $\G\to \Isom X_i$, where $\tau_i: \G \to \R$ denotes the stable translation length of elements acting on $X_i$. This gives real numbers $T_i>0$, so that for any $n$ sufficiently large we have the following measurable sets:

\begin{enumerate}
    \item $A_i = \{w\in \G^\N:\tau_i(w)> T_i n\}$
    \item $\P(A_i) > 1 - \dfrac{\e}{\D}$
    
\end{enumerate}

It then follows from Lemma~\ref{Lem:prob} that $\P\left(\Cap{i=1}{\D}A_i\right) > 1 -\e$. In particular, the set $\Cap{i=1}{\D}A_i$ is non-empty and consists of regular elements.
\end{proof}

\subsection{Some Representation Theory}\label{Sec: rep theory basics}

Recall that an action by a group $\G$ on a set $S$ corresponds to a homomorphism from $\G$ to the group of permutations of $S$. If the action preserves additional structure, then the image of said homomorphism will belong to the subgroup of the  permutation group that preserves that structure. Therefore, when considering actions, representation theortic tools are indispensable. In this section, we explore the interactions between (AU-)acylindricity in the higher rank setting with some representation theoretic tools. Again, we use the $\ell^\8$-product metric on the product spaces. This means that if the distance in the product is large then some factor must be large. Similarly, if the distance in the product is small then all factors are small.

We begin with diagonal actions. Recall from the universal property for products, given homomorphisms $\rho_i: \G\to \Isom Y_i$, for $i=1, \dots, \D$, there is a unique diagonal homomorphism $\rho = \Delta(\rho_1, \dots, \rho_\D): \G\to \Prod{i=1}{\D}\Isom Y_i$ determined by the property that post-composition with the natural projection $\pi_j: \Prod{i=1}{\D}\Isom Y_i\to \Isom Y_j$ yields the equality $\pi_j\circ \rho= \rho_j$ for each $j=1, \dots, \D$.

\begin{lemma}\label{lem: diag of acyl is acyl} Let $\D\in \N$, $\G$ be a group, $Y_i$ be metric spaces. Suppose that $\rho_i: \G\to \Isom Y_i$ is an (AU-)acylindrical action for each $i=1, \dots, \D$. Then, the diagonal action $\rho=\Delta(\rho_1, \dots, \rho_\D):\G\to \Prod{i=1}{\D}\Isom Y_i$ is  (AU-)acylindrical. 
\end{lemma}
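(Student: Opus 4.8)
The plan is to reduce the statement about the diagonal action on $\Prod{i=1}{\D}\Isom Y_i$ (with the $\ell^\infty$-product metric) to the hypothesis on each factor action by exploiting the fact that distances in the $\ell^\infty$ metric are controlled factor-by-factor: a pair of points in the product is far apart exactly when \emph{some} factor is far apart, and a group element moves a point a little in the product exactly when it moves the point a little in \emph{every} factor. So if $g$ lies in $\set_\e\{x,y\}$ for the diagonal action, then $g_i := \rho_i(g)$ lies in $\set_\e\{x_i, y_i\}$ for the $i^{\text{th}}$ factor action, for every $i$; in particular it does so for the index $i$ that realizes the max $d(x,y) = d_i(x_i, y_i)$.

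First I would treat the acylindrical case. Fix $\e > 0$. For each $i = 1, \dots, \D$ let $R_i = R_i(\e)$ and $N_i = N_i(\e)$ be the acylindricity constants for $\rho_i$. Set $R = \max_{i}\{R_i\}$ and $N = \max_{i}\{N_i\}$. Given $x, y \in \Prod{i=1}{\D}Y_i$ with $d(x,y) \geq R$, pick $i_0$ with $d_{i_0}(x_{i_0}, y_{i_0}) = d(x,y) \geq R \geq R_{i_0}$. Then the map $g \mapsto \rho_{i_0}(g)$ sends $\set_\e\{x,y\}$ into $\set_\e\{x_{i_0}, y_{i_0}\}$, which has cardinality $< N_{i_0} \leq N$. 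This map need not be injective on $\G$ in general (the $\rho_i$ may have kernels), but here we only need an injection on the \emph{finite} set $\set_\e\{x,y\}$ — and in fact we do not even need injectivity: since $\rho_{i_0}$ is a homomorphism, $\rho_{i_0}(\set_\e\{x,y\})$ is a subset of $\set_\e\{x_{i_0},y_{i_0}\}$, but to bound $|\set_\e\{x,y\}|$ itself one does want the restriction of $\rho_{i_0}$ (or rather of the whole diagonal map $\rho$, which \emph{is} injective when at least one $\rho_i$ is faithful) to be injective. The cleanest fix: observe that $\rho = \Delta(\rho_1,\dots,\rho_\D)$ may fail to be injective, but $\set_\e\{x,y\}$ for the diagonal action is contained in $\bigcap_i \rho_i^{-1}(\set_\e\{x_i,y_i\})$, and $\rho_{i_0}$ restricted to this set need not be injective; however $|\set_\e\{x,y\}| \leq |\ker \rho| \cdot |\rho(\set_\e\{x,y\})|$, so one should note $\ker\rho = \bigcap_i \ker\rho_i$ is finite (each $\rho_i$ AU-acylindrical on an unbounded space has finite kernel by Lemma~\ref{Lem:AU-acyl has finite kernel}, assuming the $Y_i$ unbounded; if some $Y_i$ is bounded, Lemma~\ref{Lem:elimellfactors} lets us discard it). Actually the honest route is simplest: since the diagonal action's $\e$-coarse joint stabilizer maps into the $i_0$-factor's $\e$-coarse joint stabilizer with kernel contained in $\ker\rho_{i_0}$, and since these kernels are uniformly finite, we get a uniform bound $N' = |\ker\rho_{i_0}|\cdot N$ — but this is awkward because $i_0$ varies. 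The right statement to prove: $g \mapsto \rho(g)$ is injective restricted to $\G$ modulo the (fixed, finite) group $\ker\rho$, so work throughout with $\bar\G = \G/\ker\rho$ and the induced faithful diagonal action; then $\rho_{i_0}|_{\set_\e\{x,y\}}$ is injective because $\rho$ is faithful and $g \mapsto \rho(g)$ agrees with $g \mapsto \rho_{i_0}(g)$ on the relevant stabilizer only up to the other coordinates — no: I would instead simply note that the diagonal $\e$-coarse stabilizer injects into $\Prod{i}\set_\e\{x_i,y_i\}$ via $g \mapsto (\rho_i(g))_i$, which has all-but-one factor of \emph{a priori} unbounded size. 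The genuinely clean argument uses one faithful factor is not available, so the correct approach is: $|\set_\e\{x,y\}| = |\set_\e\{x,y\} \cap \ker\rho| \cdot [\text{image}] \leq |\ker\rho|\cdot|\set_\e\{x_{i_0},y_{i_0}\}| < |\ker\rho|\cdot N_{i_0}$, and then set $N = |\ker\rho|\cdot\max_i N_i$ with $|\ker\rho| = |\bigcap_i\ker\rho_i| \leq |\ker\rho_1| < \infty$ being finite (discarding bounded $Y_i$ first via Lemma~\ref{Lem:elimellfactors} so that Lemma~\ref{Lem:AU-acyl has finite kernel} applies). This gives the uniform bound, establishing acylindricity.

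The AU-acylindrical case is then strictly easier and essentially identical: fix $\e > 0$, take $R = \max_i R_i(\e)$, and for $x, y$ with $d(x,y) \geq R$ pick $i_0$ realizing the max; then $\set_\e\{x,y\}$ for the diagonal action injects (up to the finite $\ker\rho$) into $\set_\e\{x_{i_0},y_{i_0}\}$, which is finite by AU-acylindricity of $\rho_{i_0}$, hence $|\set_\e\{x,y\}| < \infty$. No uniform bound is needed, so the cardinality-of-kernel bookkeeping is even less delicate — one just needs finiteness, not a uniform value.

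The main obstacle, as the discussion above shows, is not a deep mathematical difficulty but a bookkeeping subtlety: the diagonal homomorphism $\rho = \Delta(\rho_1,\dots,\rho_\D)$ need not be injective (and individual $\rho_i$ need not be), so one cannot literally say ``$\set_\e\{x,y\}$ for $\rho$ equals $\set_\e\{x_{i_0},y_{i_0}\}$ for $\rho_{i_0}$'' — one only gets a map with finite kernel. I expect the write-up to handle this by first reducing to the case where all $Y_i$ are unbounded (bounded factors are removed by Lemma~\ref{Lem:elimellfactors}, since a bounded-space action is elliptic), so that $\ker\rho_i$ is finite for each $i$ by Lemma~\ref{Lem:AU-acyl has finite kernel}, hence $\ker\rho = \bigcap_i \ker\rho_i$ is finite; then absorbing $|\ker\rho|$ into the acylindricity constant. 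Alternatively, and perhaps more cleanly, one observes that $\set_\e\{x,y\}$ for the diagonal action embeds into $\set_\e\{x_{i_0},y_{i_0}\} \times \ker(\rho_{i_0}|_{\G_0})$ for a suitable subgroup — but the kernel-finiteness reduction is the most transparent. Everything else is a one-line application of the defining inequality of the $\ell^\infty$-metric.
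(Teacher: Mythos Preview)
Your core strategy is exactly the paper's: use the $\ell^\infty$ metric, take $R = \max_i R_i(\e)$ and $N = \max_i N_i(\e)$, and for $x,y$ with $d(x,y)\geq R$ pick an index $j$ realizing the maximum so that $d_j(x_j,y_j)\geq R_j$. The paper's proof is then one line: $\set_\e\{x,y\}\subset \set_\e\{x_j,y_j\}$, hence $|\set_\e\{x,y\}|\leq N$.

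Your long detour through kernels, injectivity of $\rho_{i_0}$, discarding bounded factors, and invoking Lemmas~\ref{Lem:AU-acyl has finite kernel} and~\ref{Lem:elimellfactors} is entirely unnecessary, and stems from a misreading of Definition~\ref{def:coarse_stab_p1}. The coarse stabilizer $\set_\e\{x_j,y_j\}$ for the factor action $\rho_j$ is by definition a subset of $\G$, not of $\Isom Y_j$: it is $\{g\in\G : d(\rho_j(g)x_j,x_j)\leq \e \text{ and } d(\rho_j(g)y_j,y_j)\leq \e\}$. The acylindricity hypothesis on $\rho_j$ bounds the cardinality of \emph{this} set by $N_j$. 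So $\set_\e\{x,y\}\subset \set_\e\{x_j,y_j\}$ is a literal inclusion of subsets of $\G$, and there is nothing to say about whether $\rho_j$ is faithful. Your proof is correct in the end, but you have manufactured and then laboriously resolved an obstacle that was never there.
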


\begin{proof} Let $\mathbb{Y} =\Prod{i=1}{\D} Y_i$, and let us use the $\ell^\8$-product metric. Consider an element $g \in \G$ and points $x,y\in\mathbb{Y}$. Let $\e > 0$ be given.  Since $\G \to \Isom Y_i$ is acylindrical for all $i$, choose  $R, N$ to be the maximum of the associated acylindricity constants  for $\e$ from each factor $Y_i$. We will show that $R, N$ satisfy acylindricity requirements for the diagonal action of $\G$ on $\mathbb Y$. 

Let $\displaystyle d(x, y) = \max{i =1, \dots,\D} d(x_i , y_i) \geq R$. Then $d(x_j , y_j) \geq R$ for some $1 \leq j \leq \D$.  Let $g \in \set_\e\{x,y\}$. Since $d( x, \rho(g) x)$, respectively $d( y, \rho(g) y)$ is the maximum of the coordinate distances, the inequality holds in the $j^\textrm{th}$ coordinate, i.e.  $d(x_j, \rho_j(g)x_j) \leq \e$ and similarly, $d(y_j, \rho_j(g)y_j) \leq \e$. This shows that $\set_\e\{x,y\}\subset \set_\e\{x_j,y_j\}$, and hence
  $$|\set_\e\{x,y\}|\leq |\set_\e\{x_j,y_j\}|\leq N.$$
\end{proof}

\begin{lemma}\label{lem: duplicate action}
    Let $Y$ and $Z$ be metric spaces. Consider homomorphisms $\rho: \G\to \Isom Y$, $\sigma : \G\to \Isom Z$. For an element $\a\in \Isom Y$, let $\rho_\a(g) = \a\rho(g)\a^{-1}$. The associated diagonal action $\Delta(\rho, \sigma): \G\to \Isom Y \times \Isom Z$ is (AU-)acylindrical if and only if the diagonal action $\Delta(\rho,\rho_\a, \sigma): \G\to \Isom Y \times\Isom Y \times \Isom Z$ is (AU-)acylindrical.
\end{lemma}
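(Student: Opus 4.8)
The plan is to prove both directions by passing through the explicit relationship between the coarse stabilizers of points in the two product spaces. Write $W = Y \times Z$ and $W' = Y \times Y \times Z$, both equipped with the $\ell^\infty$-product metric, and let $\rho' = \Delta(\rho, \rho_\a, \sigma)$ and $\rho'' = \Delta(\rho, \sigma)$. The key observation is that the map $\iota \colon W' \to W$ which forgets the second coordinate is $1$-Lipschitz, while the map $j \colon W \to W'$ defined by $j(y,z) = (y, \a y, z)$ is a quasi-isometric embedding (in fact bi-Lipschitz onto its image once one notes $d(\a y, \a y') = d(y,y')$, so $d_{W'}(j(y,z), j(y',z')) = \max\{d(y,y'), d(z,z')\} = d_W((y,z),(y',z'))$; thus $j$ is an isometric embedding). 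Moreover $\rho'$ is equivariant with respect to $j$ only up to the twist by $\a$ on the middle factor — the point is that $g \in \set_\e^{\rho'}\{j(p), j(q)\}$ imposes $d(\a y, \rho_\a(g) \a y) \le \e$, which says $d(y, \rho(g) y) \le \e$ again since $\rho_\a(g)\a = \a \rho(g)$. So the middle factor contributes no new constraint beyond what the first factor already gives.

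First I would prove the forward direction ($\rho''$ (AU-)acylindrical $\implies$ $\rho'$ (AU-)acylindrical): this is essentially immediate from Lemma~\ref{lem: diag of acyl is acyl}, since $\rho_\a = \a \rho(\cdot)\a^{-1}$ is (AU-)acylindrical whenever $\rho$ is (conjugating the action by a fixed isometry changes nothing: $\set_\e^{\rho_\a}\{x,y\} = \set_\e^{\rho}\{\a^{-1}x, \a^{-1}y\}$ and $d(\a^{-1}x, \a^{-1}y) = d(x,y)$), and $\sigma$ is a summand of the (AU-)acylindrical action $\rho''$ hence itself (AU-)acylindrical by taking $Z$-coordinates (or by Lemma~\ref{Lem:elimellfactors}-type reasoning, but more simply: if $\rho''$ is acylindrical, restricting attention to pairs of the form $(y_0, z), (y_0, w)$ for a fixed $y_0$ shows $\sigma$ is acylindrical with the same constants). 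Then $\rho' = \Delta(\rho, \rho_\a, \sigma)$ is a diagonal of three (AU-)acylindrical actions, so Lemma~\ref{lem: diag of acyl is acyl} applies.

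For the reverse direction ($\rho'$ (AU-)acylindrical $\implies$ $\rho''$ (AU-)acylindrical), I would argue directly from the definition. Given $\e > 0$, let $R$ (and $N$, in the acylindrical case) be the constants for $\rho'$ at scale $\e$. Take $p = (y,z), q = (y', z') \in W$ with $d_W(p,q) \ge R$; set $\hat p = (y, \a y, z)$ and $\hat q = (y', \a y', z')$ in $W'$. As noted, $d_{W'}(\hat p, \hat q) = d_W(p,q) \ge R$. Now if $g \in \set_\e^{\rho''}\{p, q\}$, then $d(y, \rho(g)y) \le \e$, $d(z,\sigma(g)z)\le\e$, and the same with primes; but then also $d(\a y, \rho_\a(g)\a y) = d(\a y, \a \rho(g) y) = d(y, \rho(g) y) \le \e$, and likewise for $\a y'$, so $g \in \set_\e^{\rho'}\{\hat p, \hat q\}$. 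Hence $\set_\e^{\rho''}\{p,q\} \subseteq \set_\e^{\rho'}\{\hat p, \hat q\}$, which is finite (resp.\ of cardinality $< N$) by (AU-)acylindricity of $\rho'$. This gives the desired bound with the same $R$ (and $N$).

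I do not anticipate a genuine obstacle here; the only point requiring a moment's care is confirming that conjugating an isometric action by a fixed ambient isometry preserves (AU-)acylindricity and that the $\ell^\infty$-metric makes $j$ isometric — both are one-line verifications. The structure above handles the AU-acylindrical and acylindrical cases uniformly, since in the forward direction Lemma~\ref{lem: diag of acyl is acyl} is already stated for both, and in the reverse direction the subset inclusion of coarse stabilizers transports both a finiteness statement and a uniform cardinality bound.
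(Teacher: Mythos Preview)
Your reverse direction is correct and essentially identical to the paper's: the isometric embedding $j(y,z)=(y,\a y,z)$ together with the observation $d(\a y,\rho_\a(g)\a y)=d(y,\rho(g)y)$ gives $\set_\e^{\rho''}\{p,q\}\subset\set_\e^{\rho'}\{j(p),j(q)\}$, and the same $R$ (and $N$) work.

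The forward direction, however, has a genuine gap. You invoke Lemma~\ref{lem: diag of acyl is acyl} by asserting that $\rho$, $\rho_\a$, and $\sigma$ are each (AU-)acylindrical. But this is not given and is false in general: individual factors of an acylindrical diagonal action need not be acylindrical. For instance, take $\G=\Z^2$ with $\rho,\sigma:\Z^2\to\Isom\R$ the two coordinate projections; then $\Delta(\rho,\sigma)$ is the standard proper (hence acylindrical) action on $\R^2$, yet neither $\rho$ nor $\sigma$ is even AU-acylindrical, since each has infinite kernel sitting inside every coarse stabilizer. Your proposed verification that $\sigma$ is acylindrical by ``restricting to pairs $(y_0,z),(y_0,w)$'' fails because membership in $\set_\e^{\rho''}\{(y_0,z),(y_0,w)\}$ also requires $d(\rho(g)y_0,y_0)\le\e$, a constraint absent from $\set_\e^{\sigma}\{z,w\}$; and Lemma~\ref{Lem:elimellfactors} does not apply since $\rho$ is not assumed elliptic.

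The paper's forward argument instead works directly at the level of the $\rho''$-action. One computes that for arbitrary $(y,y',z),(b,b',c)\in Y\times Y\times Z$,
\[
\set_\e^{\rho'}\{(y,y',z),(b,b',c)\}=\set_\e^{\rho''}\{(y,z),(b,c)\}\,\cap\,\set_\e^{\rho''}\{(\a^{-1}y',z),(\a^{-1}b',c)\},
\]
using $d(\rho_\a(g)y',y')=d(\rho(g)\a^{-1}y',\a^{-1}y')$. If $d((y,y',z),(b,b',c))\ge R$ in the $\ell^\infty$ metric, then one of $d(y,b)$, $d(y',b')$, $d(z,c)$ is at least $R$; in every case one of the two pairs $\{(y,z),(b,c)\}$ or $\{(\a^{-1}y',z),(\a^{-1}b',c)\}$ is $R$-separated in $Y\times Z$, so acylindricity of $\rho''$ bounds one of the intersected sets by $N$, and hence the intersection. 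Note the $Z$-coordinate is shared between the two terms, which is exactly what makes the case $d(z,c)\ge R$ go through.
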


This lemma shows that duplicating a representation neither helps nor hinders (AU-) acylindricity, even up to postcomposition with an inner-automorphism of $\Isom Y$.

\begin{proof} We shall prove the case of acylindricity since the case of ambiguous uniformity is similar.  
For clarity we write down the actions of $\G$ on $Y\times Z$ and $Y\times Y\times Z$, without the use of the cumbersome notation $\Delta(\rho, \sigma)$ and $\Delta(\rho,\rho_\a, \sigma)$: If
$g\in \G$, $(y,z)\in Y\times Z$,  $(y,y',z)\in Y\times Y\times Z$ then 
$$g.(y,z)= \big(\rho(g)y,\sigma(g)z\big)\;\;\text{ and } \;\; g.(y,y', z)= \Big(\rho(g)y,\big(\a\rho(g)\a^{-1}\big)y,\sigma(g)z\Big).$$

We will need the following calculation. We  utilize a blackboard comma in various places in hopes of aiding readability:
\begin{eqnarray}
\begin{split}
    d\bigg(g.(y,y', z)\bbcomma (b,b', c)\bigg) 
    &= d\bigg(\Big(\rho(g)y,\big(\a\rho(g)\a^{-1}\big)y,\sigma(g)z\Big)\bbcomma \Big(b,b', c\Big)\bigg)
    \\ &= \max{}\left\{d\Big(\rho(g)y,b\Big)\bbcomma d\Big(\big(\a\rho(g)\a^{-1}\big).y',b'\Big)\bbcomma d\Big(\sigma(g)z,c\Big)\right\}
    \\ &= \max{}\left\{d\Big(\rho(g)y,b\Big)\bbcomma d\Big(\rho(g)\big(\a^{-1}.y'\big),\a^{-1}.b'\Big)\bbcomma d\Big(\sigma(g)(z),c\Big)\right\}
    \\ &= d\bigg(\Big(\rho(g)y, \rho(g)\big(\a^{-1}.y'\big), \sigma(g)(z)\Big)\bbcomma \Big(b, \a^{-1}.b', c\Big)\bigg).
    \end{split}
\end{eqnarray}

 A similar calculation for $d\big(g.(y,z)\bbcomma (b,c)\big)$ shows that 
\begin{equation}\label{eq: 3 to 2}
    d\Big(g.(y, \a.y,z)\bbcomma (b, \a.b, c)\Big)=d\Big(g.(y,z)\bbcomma (b,c)\Big).
\end{equation}
It follows that 
\begin{equation}\label{eq: set up.down}
\begin{split}
 \set_\e\{(y,y', z)\} &=\set_\e\{(y, z)\}\cap \set_\e\{(\a.y', z)\}\\
   &= \{g\in \G: d(\rho(g)y,y)\bbcomma d(\rho(g)\a.y',\a.y')\bbcomma d(\sigma(g)z,z)\leq \e\}.
\end{split}
\end{equation}
These in turn allow us to calculate that 
\begin{eqnarray}\label{eq: triple down}
\begin{split}
   \set_\e\{(y,y',z)\bbcomma (b,b',c)\} & = \set_\e\{(y,y',z)\} \cap \set_\e\{ (b,b',c)\} \\
   & = \set_\e\{(y,z)\} \cap \set_\e\{(\a.y',z)\}\cap \set_\e\{(b,c)\} \cap \set_\e\{(\a.b',c)\} \\
   & = \Big(\set_\e\{(y,z)\bbcomma(b,c)\}\Big) \cap \Big(\set_\e\{(\a.y',z)\bbcomma(\a.b',c)\} \Big)
    \end{split}
\end{eqnarray}

Suppose now that the action of $\G$ on $Y\times Y\times Z$ is acylindrical. Fix $\e>0$ and let $R, N\geq 0$ be the associated constants. We claim that these constants work for the action of $\G$ on $Y\times Z$ as well. 

Let $(y,z)\bbcomma (b,c)\in Y\times Z$ be such that $d((y,z)\bbcomma (b,c))\geq R$. By equation (\ref{eq: 3 to 2}) (for $g=1$) we have that $d((y,\a.y,z)\bbcomma (b,\a.b,c))\geq R$. Furthermore, we claim that $\set_\e\{(y,z)\bbcomma (b,c)\}= \set_\e\{(y,\a.y,z)\bbcomma (b,\a.b,c)\}$. By definition, $\set_\e\{(y,z)\bbcomma (b,c)\} = \set_\e\{(y,z)\}\cap \set_\e\{(b,c)\}$. By equation  (\ref{eq: 3 to 2}), the latter is equal to $\set_\e\{(y, \a. y, z)\}\cap \set_\e\{(b, \a. b, c)\}$, which by definition is $\set_\e\{(y,\a.y,z)\bbcomma (b,\a.b,c)\}$.

 Therefore, since $\set_\e\{(y,z)\bbcomma (b,c)\}= \set_\e\{(y,\a.y,z)\bbcomma (b,\a.b,c)\}$ we deduce that $$|\set_\e\{(y,z) \bbcomma (b,c)\}|\leq N.$$

Conversely, suppose that the action of $\G$ on $Y\times Z$ is acylindrical. Fix $\e>0$ and let $R, N\geq 0$ be the associated constants. We claim that these constants work for the action of $\G$ on $Y\times Y\times Z$ as well. 

Let $(y,y',z)\bbcomma (b,b',c)\in Y\times Y\times Z$ be such that $d((y,y',z)\bbcomma (b,b',c))\geq R$. By calculation (\ref{eq: triple down}) above,  we have that $$\set_\e\{(y,y',z)\bbcomma (b,b',c)\} = \set_\e\{(y,z)\bbcomma(b,c)\}\cap \set_\e\{(\a.y',z)\bbcomma(\a.b',c)\} .$$ Furthermore, if $d((y,y',z)\bbcomma (b,b',c))>R$ then one of the factors must be large, i.e.  $d(y,b)\geq R$, $d(y',b')\geq R$ or $d(z, c)\geq R$. Either way, by acylindricity of the action of $\G$ on $Y\times Z$, the coarse stabilizer has cardinality at most $N$.
 \end{proof}

In what follows, the homomorphism $\rho'$ is the induced representation from $\G$ to $\G'$ and often denoted by $\mathrm{ind}_{\G}^{\G'}\!\rho$. This somewhat cumbersome notation is sometimes useful, but it will not be for our purposes (see for example \cite[Appendix E]{BdlHV}.) Breaking with our previous convention of how elements in a product are written, given a finite set $F$, we identify $\X^{|F|}$ with the set of sequences indexed by elements in $F$ and values in $\X$, i.e. for each $\chi\in F$ and choice of $\mathbf{x}_\chi\in\X$, the corresponding element in $\Prod{\chi\in F}{}\X$ will be denoted by $(\mathbf{x}_\chi)_{\chi\in F}$. The formulation below comes from \cite[Example 4.2.12]{Zimmer} and is also used in  \cite[page 1793]{FernosRelT}. We note that the induction can be performed generally from any action of $\G$. We will need the following lemma, which is straightforward to check. 

\begin{lemma}\label{lem: precomp aut and acyl} Let $\G$ be a  group, $\a\in \Aut \G$ an automorphism, $Y$ a metric space and  $\rho: \G\to \Isom Y$. The action $\rho(\G)$ is (AU-)acylindrical if and only if the action $\rho\circ\a: \G\to \Isom Y$ is (AU-)acylindrical. 
\end{lemma}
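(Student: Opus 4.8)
The plan is to unwind the definitions and observe that $\rho \circ \a$ and $\rho$ have literally the same orbits and the same coarse stabilizers, up to relabeling group elements by the bijection $\a$. More precisely, since $\a$ is an automorphism of $\G$, for any $x, y \in Y$ and any $\e > 0$ we have
\begin{equation*}
\set_\e^{\rho\circ\a}\{x,y\} = \{g \in \G : d(\rho(\a(g))x, x), d(\rho(\a(g))y,y) \leq \e\} = \a^{-1}\big(\set_\e^{\rho}\{x,y\}\big),
\end{equation*}
where the superscript indicates with respect to which action the coarse stabilizer is taken. Since $\a^{-1}$ is a bijection of $\G$, the set $\set_\e^{\rho\circ\a}\{x,y\}$ is finite if and only if $\set_\e^{\rho}\{x,y\}$ is, and moreover they have the same cardinality.

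With this identity in hand the lemma is immediate in both the acylindrical and the AU-acylindrical case. First I would fix $\e > 0$. If $\rho$ is AU-acylindrical, pick the associated $R = R(\e) \geq 0$; then for any $x,y \in Y$ with $d(x,y) \geq R$ we have $|\set_\e^{\rho\circ\a}\{x,y\}| = |\set_\e^{\rho}\{x,y\}| < \infty$, so $\rho\circ\a$ is AU-acylindrical with the same constant $R$. If $\rho$ is acylindrical, pick the associated $R = R(\e)$ and $N = N(\e)$; then for $d(x,y) \geq R$ we get $|\set_\e^{\rho\circ\a}\{x,y\}| = |\set_\e^{\rho}\{x,y\}| \leq N$, so $\rho\circ\a$ is acylindrical with the same constants. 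The converse direction follows by applying what we have just shown to the automorphism $\a^{-1}$ and the homomorphism $\rho\circ\a$, noting that $(\rho\circ\a)\circ\a^{-1} = \rho$.

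There is essentially no obstacle here; the only thing to be careful about is bookkeeping the direction of the bijection (whether one writes $\a$ or $\a^{-1}$), but since the conclusion is stated as an ``if and only if'' this is cosmetic. I would present the proof in two or three lines: record the displayed set identity, note it preserves cardinality, and conclude.

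\begin{proof}
Let $\e > 0$ and $x, y \in Y$. Since $\a : \G \to \G$ is a bijection, directly from the definition we have
\begin{equation*}
\{g \in \G : d(\rho(\a(g))x, x) \leq \e,\ d(\rho(\a(g))y, y) \leq \e\} = \a^{-1}\big(\set_\e\{x,y\}\big),
\end{equation*}
where on the right $\set_\e\{x,y\}$ is taken with respect to the action $\rho$. As $\a^{-1}$ is a bijection of $\G$, the left-hand side is finite (respectively, of cardinality at most $N$, for a fixed $N$) if and only if $\set_\e\{x,y\}$ is. Hence, for any fixed $\e$, the action $\rho \circ \a$ satisfies the AU-acylindricity (respectively, acylindricity) condition with constant $R(\e)$ (respectively, constants $R(\e), N(\e)$) if and only if $\rho$ does. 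This proves that $\rho(\G)$ is (AU-)acylindrical if and only if $\rho \circ \a$ is.
\end{proof}
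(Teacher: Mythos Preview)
Your proof is correct. The paper does not give a proof of this lemma at all, merely remarking that it ``is straightforward to check''; your argument is precisely the natural verification one would expect and fills in the omitted details cleanly.
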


\begin{prop}[Induced representation]\label{prop: fin ind induction}
     Let $\G\leq \G'$ be of finite index and let $F= \G'/\G$ be the set of cosets, where $\chi_0\in F$ is the identity coset. If $\rho:\G\to \Aut\X$ is a homomorphism then there exists $\rho':\G'\to \Aut\! \big(\Prod{\chi\in F}{}\X\big)=\Sym(F)\ltimes \Prod{\chi\in F}{}\Aut \X$ such that
     \begin{enumerate}
         \item\label{(1)} the permutations in $\Sym (F)$ associated to $\rho'(\G)$ fix the identity coset $\chi_0\in F$ and the associated projection coincides with $\rho$, i.e. $\pi_{\chi_0}\circ\rho'= \rho: \G\to \Aut\X$;
         \item\label{(2)} if $\G_0\leq \G$ is the kernel of the permutation representation $\G' \to \Sym(F)$ then for each $\kappa\in F$ there exists an inner automorphism $\a_\kappa\in \Inn(\G')$ such that $\rho'|_{\G_0} = \Delta(\rho|_{\G_0}\circ \a_\chi)_{\chi\in F}$;
         \item\label{(3)} $\rho(\G)$ has general type factors if and only if $\rho'(\G')$ has general type factors;
         \item\label{(4)} if the action of $\rho(\G)$ on $\X$ is (AU-)acylindrical then the action of $\rho'(\G')$ on $\Prod{\chi\in F}{}\X$  is (AU-)acylindrical.
     \end{enumerate}
\end{prop}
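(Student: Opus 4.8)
The plan is to construct $\rho'$ explicitly via the standard induced-representation formula and then verify the four listed properties, with the analytic content concentrated in (4). Concretely, fix a transversal $\{s_\chi\}_{\chi\in F}$ for $\G$ in $\G'$ with $s_{\chi_0}=1$. For $g\in\G'$ and $\chi\in F$ let $\sigma_g\in\Sym(F)$ be the permutation $\chi\mapsto g\chi$ (left multiplication on cosets) and set $c(g,\chi) = s_{g\chi}^{-1}\, g\, s_\chi \in \G$, the usual cocycle. Then define $\rho'(g)\in\Sym(F)\ltimes\prod_{\chi\in F}\Aut\X$ to act by the permutation $\sigma_g$ on the index set together with the coordinate isometries $\big(\rho(c(g,\chi))\big)_{\chi\in F}$; more precisely $\big(\rho'(g)\mathbf{x}\big)_\chi = \rho\big(c(g, g^{-1}\chi)\big)\,\mathbf{x}_{g^{-1}\chi}$. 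A direct check of the cocycle identity $c(gh,\chi) = c(g,h\chi)c(h,\chi)$ shows this is a homomorphism. Property (1) is immediate since $\G$ stabilizes $\chi_0$ (as $g\chi_0 = \chi_0$ for $g\in\G$) and $c(g,\chi_0) = g$ for $g\in\G$, so $\pi_{\chi_0}\circ\rho' = \rho$ on $\G$. Property (2): restricting to $\G_0 = \ker(\G'\to\Sym(F))$, all permutations are trivial, and $c(g,\chi) = s_\chi^{-1} g s_\chi$, which is conjugation by $s_\chi^{-1}$, so $\rho'|_{\G_0} = \Delta(\rho|_{\G_0}\circ\alpha_\chi)_{\chi\in F}$ with $\alpha_\chi$ conjugation by $s_\chi^{-1}$ in $\G'$ (note $\G_0\trianglelefteq\G'$ so this is well-defined). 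Property (3): the factor actions of $\rho'(\G')$ are, up to the finite-index passage to $\G_0$ and Lemma~\ref{lem: precomp aut and acyl}, just copies of the factor actions of $\rho(\G)$; since general type is inherited by and from finite-index subgroups (via the classification Theorem~\ref{thm:actionsclassrk1}, a finite-index subgroup of a general type group is general type and conversely), (3) follows.

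The main work is (4). The strategy is to reduce to the finite-index subgroup $\G_0$ and then invoke the results already proven about diagonal actions and precomposition with automorphisms. First, observe that (AU-)acylindricity passes to and from finite-index subgroups: if $\rho(\G)$ is (AU-)acylindrical then so is its restriction to the finite-index subgroup $\G\cap(\text{whatever we need})$, and conversely a finite-index overgroup of an (AU-)acylindrically acting group still acts (AU-)acylindrically on the same space (the coarse stabilizers only grow by a bounded multiplicative factor, namely the index). So it suffices to show $\rho'(\G_0)$ acts (AU-)acylindrically on $\prod_{\chi\in F}\X$. By property (2), $\rho'|_{\G_0} = \Delta(\rho|_{\G_0}\circ\alpha_\chi)_{\chi\in F}$. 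Now $\rho|_{\G_0}$ is (AU-)acylindrical (restriction to finite index), hence so is each $\rho|_{\G_0}\circ\alpha_\chi$ by Lemma~\ref{lem: precomp aut and acyl} (precomposition with the automorphism $\alpha_\chi$ of $\G_0$ — note $\alpha_\chi$ restricts to an automorphism of $\G_0$ since $\G_0$ is normal in $\G'$). Finally, Lemma~\ref{lem: diag of acyl is acyl} says a diagonal of finitely many (AU-)acylindrical actions is (AU-)acylindrical, so $\rho'(\G_0)$ is (AU-)acylindrical on $\prod_{\chi\in F}\X$, and therefore so is $\rho'(\G')$.

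The anticipated obstacle is bookkeeping rather than a genuine difficulty: one must be careful that $\Aut\big(\prod_{\chi\in F}\X\big) = \Sym(F)\ltimes\prod_{\chi\in F}\Aut\X$ rather than something involving $\Sym$ of all the individual factors of $\prod_{\chi\in F}\X$ — but since each copy of $\X$ already has its own $\Sym_\X(\D)$ folded into $\Aut\X$, and $F$-coordinates get permuted as blocks, this identification is exactly the one in the statement. One should also double-check that ``(AU-)acylindrical passes to finite-index overgroups'' is legitimate: given $\e>0$ with constants $R,N$ (or just $R$ and finite cardinality) for $\G_0$ acting on a space $W$, and $g,h\in W$ with $d(g,h)\ge R$, the set $\{g'\in\G':d(g'g,g)\le\e,\ d(g'h,h)\le\e\}$ is covered by $[\G':\G_0]$-many translates of the corresponding $\G_0$-set, hence finite (bounded by $[\G':\G_0]\cdot N$ in the uniform case). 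This is the routine verification referenced; combined with the three cited lemmas, it closes (4). The remaining items (1)--(3) are formal once the cocycle formula is written down.
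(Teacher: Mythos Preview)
Your proof is correct and follows essentially the same approach as the paper: the identical cocycle construction, the same verifications for (1)--(3), and the same reduction for (4) via $\G_0$ using Lemma~\ref{lem: precomp aut and acyl} and Lemma~\ref{lem: diag of acyl is acyl}, followed by the finite-index overgroup argument. The paper spells out the overgroup step explicitly by writing $\set_\e^{\G'}\{\mathbf{x},\mathbf{y}\} \subset \Cup{h\in C}{}\big(\set_\e^{\G_0}\{\rho'(h)\mathbf{x},\rho'(h)\mathbf{y}\}\big)\cdot h$ for a transversal $C$ of $\G_0$ in $\G'$, which is precisely the ``translates'' argument you sketch; note that the $\G_0$-sets involved are for the \emph{translated} pairs $(\rho'(h)\mathbf{x},\rho'(h)\mathbf{y})$ rather than a single fixed pair, a detail your sketch elides but which does not affect the cardinality bound.
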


The maps are summarized in the following commutative diagrams, where the multiple arrows on the right represent the various maps according to $\kappa\in F$ (see Section~\ref{subsec: Univ Prop}):

\begin{center}
 \begin{tikzcd}
		{\G'} & {\Sym(F)\ltimes\Prod{\chi\in F}{}\Aut\X} \\
	& {\Sym(F\setminus\{\chi_0\})\ltimes(\Prod{\chi\in F}{}\Aut\X)} \\
	\G & {\Aut \X}
	\arrow["{{\rho'}}", from=1-1, to=1-2]
	\arrow[hook, from=2-2, to=1-2]
	\arrow["{{\pi_{\chi_0}}}", two heads, from=2-2, to=3-2]
	\arrow[hook, from=3-1, to=1-1]
	\arrow["{{\rho'|_{\G}}}", squiggly, from=3-1, to=2-2]
	\arrow["\rho"', from=3-1, to=3-2]
\end{tikzcd}
\begin{tikzcd}
	{\G'} && {\Sym(F)\ltimes\Prod{\chi\in F}{}\Aut\X} \\
	{\G_0} && {\Prod{\chi\in F}{}\Aut\X} \\
	\\
	{\G_0} && {\Aut \X}
	\arrow["{{{{\rho'}}}}", from=1-1, to=1-3]
	\arrow[hook, from=2-1, to=1-1]
	\arrow["{{\rho'|_{\G_0}}}", squiggly, from=2-1, to=2-3]
	\arrow[hook, from=2-3, to=1-3]
	\arrow["{{\pi_\kappa, \kappa\in F}}", two heads, from=2-3, to=4-3]
	\arrow[shift right, two heads, from=2-3, to=4-3]
	\arrow[shift right=2, two heads, from=2-3, to=4-3]
	\arrow["{{\a_\kappa, \kappa \in F}}", shift left=2, from=4-1, to=2-1]
	\arrow[shift left, from=4-1, to=2-1]
	\arrow[from=4-1, to=2-1]
	\arrow["{{\Delta(\rho_{\G_0}\circ\a_\chi)_{\chi\in F}}}"{description}, dashed, from=4-1, to=2-3]
	\arrow[from=4-1, to=4-3]
	\arrow[shift left, from=4-1, to=4-3]
	\arrow["{{\rho|_{\G_0}\circ\a_\kappa, \kappa\in F}}"', shift right, from=4-1, to=4-3]
\end{tikzcd}
\end{center}
\begin{proof} 
Let $\X'= \Prod{\chi\in \F}{}\X= \{(\mathbf{x}_\chi)_{\chi\in F}: \mathbf{x}_\chi\in \X\}$. We begin by defining the homomorphism $\rho': \G'\to \Aut \X'=\Sym(F)\ltimes \Prod{\chi\in F}{}\Aut \X$.
Recall that since $F=\G'/\G$ is the set of left cosets, left multiplication in $\G'$ yields a natural permutation representation $\G'\to \Sym(F)$. Let $p:\G'\to \G'/\G$, given by $p(g) = g\G=g\chi_0$ be the natural projection. Choose a section $s:F\to \G'$ taking the identity coset to the identity of $\G'$, i.e. $p(s(\chi)) = \chi$ and $s(\chi_0)=1$. Consider the map $c: \G'\times F\to \G$  given by $$c(g,\chi)= [s(g \chi)]^{-1}gs(\chi).$$
The fact that $c$ is well defined follows from the fact that $p(gs(\chi))=p(s(g\chi))$. Further, it is easy to verify that the map $c$ is a cocycle, meaning that it satisfies the relation:
$$c(g_1g_2,\chi) = c(g_1, g_2\chi)\cdot c(g_2, \chi).$$

   We are now ready to define the induced homomorphism $\rho': \G'\to \Aut\X'$. For $g\in \G'$ define
$$\rho'(g)\Big((\mathbf{x}_\chi)_{\chi\in F}\Big) := \Big(\rho\big(c(g,\chi)\big)(\mathbf{x}_\chi)\Big)_{g\chi\in F}$$

This definition has a permutation of the cosets according to $g: \chi\mapsto g\chi$ (witnessed in the index) and in each factor we have $\rho(c(g,\chi))\in \Aut \X$. Therefore, $\rho'(g)\in \Sym(F)\ltimes \Prod{\chi\in F}{}\Aut\X$ as promised and the cocycle relation ensures this is indeed a homomorphism.

\noindent
\textbf{Item (\ref{(1)}):} Observe that if $g\in \G$ then $g\G=\G$. Since $\G=\chi_0$, this means that $g\chi_0=\chi_0$. Also, we have chosen  $s(\chi_0)=1$, and so $c(g,\chi_0)= s(g\chi_0)^{-1}gs(\chi_0)= g$, i.e. $c(\cdot, \chi_0)|_{\G}:\G\to \G$ is the identity map. This means that $\rho'(\G)$ preserves the $\chi_0$-coordinate. We may therefore postcompose with the associated projection $\pi_{\chi_0}: \Sym(F\setminus\{\chi_0\})\ltimes \Prod{\chi\in F}{}\Aut\X\to \Aut\X$ and the above calculation shows that indeed $\pi_{\chi_0}\circ \rho'|_{\G}=\rho: \G\to \Aut \X$. This completes the proof of the first claim in the statement.

\noindent
\textbf{Item (\ref{(2)}):}
Let $\G_0\leq \G$ denote  the finite index kernel of the permutation action $\G'\to \Sym(F)$. (Note that, in case $\G$ was normal to begin with, we would have that $\G_0=\G$.) 
If $g\in \G_0$ then $g\chi=\chi$ for each $\chi\in F$, meaning that 
$$c(g, \chi)= s(g\chi)^{-1}gs(\chi) = s(\chi)^{-1}gs(\chi).$$
In other words, $c(\cdot, \chi)|_{\G_0}: \G_0\to \G_0$ is the inner automorphism $\a_\chi:g\mapsto s(\chi)^{-1}gs(\chi)$. Furthermore, 
\begin{eqnarray*}
    \rho'(g)((\mathbf{x}_\chi)_{\chi\in F}) &=& (\rho(c(g,\chi))(\mathbf{x}_\chi))_{g\chi\in F}\\
     &=& (\rho\circ\a_\chi(g)(\mathbf{x}_\chi))_{\chi\in F},
\end{eqnarray*}
meaning that $\rho'|_{\G_0} = \Delta(\rho\circ \a_\chi)_{\chi\in F}.$

\noindent
\textbf{Item (\ref{(3)}):}
The factor actions of $\rho'(\G')$ are the factor actions of $\rho(\G)$ together with their precompositions with $\a_\chi$, for $\chi\in F$, which preserves the action being general type.

\noindent
\textbf{Item (\ref{(4)}):}
We shall prove the claim for acylindricity; as before the nonuniform case follows analogously. Furthermore, since  acylindricity is inherited by subgroups, it is sufficient to prove that if $\rho(\G_0)$ is acylindrical then $\rho'(\G')$ is acylindrical. As established in (\ref{(2)}), $\rho'_{\G_0} = \Delta(\rho\circ\a_\chi)_{\chi\in F}$, which is acylindrical by Lemma~\ref{lem: precomp aut and acyl}.

Let $C\subset \G'$ be a set of coset representatives for $\G_0\leq \G'$. Then $\G'=\Cup{h\in C}{}h\G_0$. We claim that 
$$\set_\e^{\G'}\{(\mathbf{x}_\chi)_{\chi\in F},(\mathbf{y}_\chi)_{\chi\in F}\} \subset\Cup{h\in C}{}\Big[\Big( \set_\e^{\G_0}\big\{\rho'(h) (\mathbf{x}_\chi)_{\chi\in F},\rho'(h)(\mathbf{y}_\chi)_{\chi\in F}\big\}\Big)\cdot h\Big].$$

Indeed, if $g\in \set_\e^{\G'}\{(\mathbf{x}_\chi)_{\chi\in F},(\mathbf{y}_\chi)_{\chi\in F}\} \subset \G'$ then there exists $h\in C$ such that $g=hg_0$ for some $g_0\in \G_0$, and so

\begin{eqnarray*}
    d\big(\rho'(g) (\mathbf{x}_\chi)_{\chi\in F},\rho'(g)(\mathbf{y}_\chi)_{\chi\in F}\big) &=& d\big(\rho'(hg_0) (\mathbf{x}_\chi)_{\chi\in F},\rho'(hg_0)(\mathbf{y}_\chi)_{\chi\in F}\big) \\
    &=& d\Big(\rho'(hg_0h^{-1})\rho'(h) (\mathbf{x}_\chi)_{\chi\in F},\rho'(hg_0h^{-1})\rho'(h)(\mathbf{y}_\chi)_{\chi\in F}\Big) 
\end{eqnarray*}

Since $\G_0$ is normal, we have  that $hg_0h^{-1}\in \set_\e^{\G_0}\big\{\rho'(h) (\mathbf{x}_\chi)_{\chi\in F},\rho'(h)(\mathbf{y}_\chi)_{\chi\in F}\big\}$, and so 

$$hg_0\in \Big(\set_\e^{\G_0}\big\{\rho'(h) (\mathbf{x}_\chi)_{\chi\in F},\rho'(h)(\mathbf{y}_\chi)_{\chi\in F}\big\}\Big)\cdot h$$

Let $\e>0$ and $R,N_0\geq 0$ be the acylindricity constants for $\rho'(\G_0)$. The above calculations show that $R$ and $N=|F|\cdot N_0$ are acylindricity constants for $\rho'(\G')$. 
\end{proof}

\section{A Plethora of Examples}\label{sec:examples_p1}

Let $G_1, \dots, G_F$ be locally compact second countable groups. A subgroup $\G\leq \Prod{i=1}{F}G_i$ is said to be a \emph{lattice} if it is discrete and has finite covolume (with respect to the Haar measure on the product). Moreover, it is said to be an \emph{irreducible lattice} if the natural projection of $\G$ to $\Prod{j\in J}{}G_j$ is dense (or dense in a finite index subgroup) for all $J\subsetneq \{1,\dots, F\}$.  In both the reducible or irreducible case, discreteness ensures that $\G$ acts properly on $G = \Prod{i=1}{F}G_i$. Moreover, if the factors of $G$ have a nice model geometry then $\G$ acts properly on the associated product as well. For example,  if $\G$ is ($S$-)arithmetic semi-simple and $K_i\leq G_i$ is maximal compact then  $\Prod{i=1}{F}G_i/K_i$ is a product of symmetric spaces (and Bruhat-Tits buildings when $S\neq \varnothing$), each of which is CAT(0). Taking the $\ell^2$-product metric yields a CAT(0) space on which $\G$ acts properly and hence AU-acylindrically. 

In the same way that $\delta$-hyperbolic spaces ``coarsify" the notion of negative curvature, we consider finite products of $\delta$-hyperbolic spaces with the $\ell^2$-product metric as a coarse version of nonpositive curvature (though admittedly we almost exclusively work with the Lipschitz equivalent $\ell^\8$-product metric). Similarly one can view groups acting AU-acylindrically on said products as a coarsening of a lattice in nonpositive curvature, or a ``generalized lattice". We  will further this analogy in \cite{BFPart2}: if all factor actions are of general type, then the generalized lattice enjoys a type of semi-simple behavior.

In this section we will discuss the class of groups which we can considered to be generalized lattices within the setting of finite products of $\delta$-hyperbolic spaces.
It is straightforward to verify that the class of groups that admit such an AU-acylindrical (but not proper) action is not closed under direct products. Nevertheless, we prove in \cite{BFPart2} that the consequences we deduce of said class remain true for groups that  can be represented as a subdirect product in a product of groups within the class.

Additionally, we also prove in \cite{BFPart2} that our class of groups is closed under passing to ``general type" subgroups  and virtual isomorphism. Interestingly, to date there are two notable related questions that remain open. Namely whether the class of CAT(0) groups (respectively acylindrically hyperbolic groups) is invariant under virtual isomorphism, in particular finite index extension. The key issue here is that the standard approach of inducing a representation from a finite index subgroup (see Proposition~\ref{prop: fin ind induction}) does not produce a space sufficiently small to retain a cocompact action (respectively retain hyperbolicity). The class under consideration here however, is not mired by these obstacles, thanks to the intertwining of the  (AU-)acylindricity property and the higher-rank structure.

We compile a list of examples, beginning with those that can be found in the existing literature.

\begin{enumerate}
\item Acylindrically hyperbolic groups (see also \cite[Theorem 1.2]{Acylhyp}), with $\D=1$. This is, in itself, an extensive class of groups, including, for example:

\begin{itemize}
\item Most (non-finite) mapping class groups \cite{Bowditch2008, MasurMinsky}.
    \item Non-cyclic Hyperbolic groups
    \item Relatively hyperbolic groups (see e.g. \cite{DGO})
     \item Nonamenable amalgamated products (see \cite{MinasyanOsin} and \cite[Theorem 2.25]{DGO})
    \item Many small cancellation groups \cite{GruberSisto}
    \item  Groups acting properly on proper CAT(0) spaces with rank-one elements \cite{Sisto2018}. In particular, this includes irreducible RAAGs. (See also \cite{ChatterjiMartin, Genevois}.)
    \item Many Artin groups \cite{HaettelXXLArtin, MartinPrzytycki, Vaskou, HagenMartinSisto}

\end{itemize}

\item Since our class is closed under passing to general type subgroups that act properly, we also obtain groups with exotic finiteness properties such as the kernels of Bieri-Stallings and Bestvina-Brady, see \cite{Bieri, Stallings}, \cite{BestvinaBrady} and other subdirect products such as those in \cite{BridsonHowieMillerShort, Bridson, BridsonMiller} (See also \cite{NicolasPy}).

\item Infinite groups with property (QT) admit an action on a finite product of quasitrees whose orbit maps yield a quasi-isometric embedding, which are in particular proper \cite{BBFQt}. In this case, due to properness, the action is AU-acylindrical. 

\item The class of colorable Hierarchically Hyperbolic Groups (abbreviated as HHGs), or more generally any finitely generated group that admits an action $\G\to \Aut\X$ for which the orbit maps are quasi-isometric embeddings \cite{HagenPetyt}. Again, due to properness, the action is AU-acylindrical.

\item Groups acting properly on products of trees, such as Coxeter groups and RAAGs \cite{Januszkiewicz, Hosaka, Button}, or finitely generated subgroups of $\PGL_2K$, where $K$ is a global field of positive characteristic \cite{FisherLarsenSpatzierStover}.

\item Finitely generated infinite groups that are quasi-isometric to a finite product of proper cocompact non-elementary $\delta$-hyperbolic spaces in fact act geometrically, and hence acylindrically on a (possibly different) finite product of rank-one symmetric spaces and locally finite $\delta$-hyperbolic graphs \cite[Theorem K]{Margolis}. 

\item\label{Ex:S-arith}  Finitely generated subgroups $\G < \SL_2 \~\Q$, where $\~\Q$ is the algebraic closure of $\Q$. See Section~\ref{sec: SL2} for more details.
 
\item Limit groups (in the sense of Sela) or fully residually free (in the sense of Kharlampovich and Miasnikov) are relatively hyperbolic with respect to their higher-rank maximal abelian subgroups \cite{Alibegovic,Dahmani}. Subdirect products of such have been studied \cite{BridsonHowieMillerShort}, \cite{KochloukovaLopezdeGamizZearra}. (See also \cite{BaumslagRoseblade}.)
\end{enumerate}

\begin{remark}
    Mapping class groups importantly appear in both (1) and (4) (with $D>1$), each of which confers different information and are therefore both of interest. We take the opportunity to highlight the importance of taking a ``representation theoretic" point of view, namely to value all such actions.  
\end{remark}

\subsection{The Iwasawa Decomposition and  Godement's Compactness Criterion}\label{sec: Godement} 

As has been discussed above, AU-acylindricity generalizes the notion of a proper action, which in turn generalizes the type of action a lattice enjoys on its ambient group. In the context of linear groups, unipotent elements play an important role, as the reader will see throughout the rest of this section. 

If $G$ is a semisimple  Lie group (over a local field) then it admits an \emph{Iwasawa decomposition} $G=KA\mathcal N$, where $K\leq G$ is a maximal compact subgroup, $A\mathcal N\leq G$ is a maximal solvable subgroup, $\mathcal N\leq A\mathcal N$ its maximal unipotent subgroup, and $A$ the maximal diagonalizable subgroup \cite[Theorem 29.3]{Bump}, \cite[Proposition 11.100]{AbramenkoBrown}. (Having fixed a linear representation, a unipotent subgroup is such that all eigenvalues are equal to 1, and diagonalizable means that the representation can be conjugated in an algebraically closed field so as to make the subgroup diagonal.) The symmetric space or Bruhat-Tits building associated to $G$ is $Z=G/K$ is a CAT(0) space (with the $\ell^2$-product metric if $G$ is not strongly irreducible) and has Furstenberg boundary  $G/A\mathcal N$. 

If $G$ is a simple Lie group over $\R$ of rank-one  then $Z$ is one of the algebraic hyperbolic spaces \cite{DasSimmonsUrbanski} and the visual boundary of $Z$ as a CAT(0) space can be naturally identified with the Furstenberg boundary, i.e.   $\partial Z\cong H/\mathcal N$ \cite[IV.4.50.Lemma]{GuivarchJiTaylor}, \cite[Appendix 5.6]{BallmannGromovSchroeder}.

Moreover, the Iwasawa decomposition yields the following connections between algebra and geoemtry:

\begin{itemize}
    \item an element $\g\in G$, respectively a subgroup $H$ is elliptic if and only if it can be conjugated into $K$;
      \item an element $\g\in G$, respectively a subgroup $H$ is parabolic if and only if it can be conjugated into $\mathcal N$ \cite[Appendix 5.4]{BallmannGromovSchroeder};
    \item an element  $\g\in G$ is loxodromic, respectively a subgroup $H$ is lineal if and only if it can be conjugated into $A$;
    \item  a subgroup $H$ is quasi-parabolic if and only if it can be conjugated into $A\mathcal N$ and is not lineal or parabolic;
     \item  a subgroup $H$ is general type (with full limit set) if and only if it is Zariski-dense.
\end{itemize}

Unipotent elements also play a staring role in Godement Compactness criterion. This conjecture was proved for standard lattices (i.e. $G(\Z)\leq G(\R)$) by Borel and Harish-Chandra \cite[Theorem 3]{BHC} and later generalized by Behr to the $S$-arithmetic case \cite[Theorem C]{Behr} (see also \cite[Theorem 5.8]{Benoist}). 

\begin{theorem}[Godement Compactness criterion] \label{thm: Godement}
Let $G(\Q)\leq \SL_n\Q$ be an algebraic group defined over $\Q$, and $S\subset \Z$ be a finite set of primes. The lattice $G(\Z[S^{-1}])\leq G(\R)\times\Prod{p\in S}{}G(\Q_p)$ is cocompact if and only if $G(\Z)\leq G(\R)$ is cocompact if and only if $G(\Z)$ has no nontrivial unipotent elements.
\end{theorem}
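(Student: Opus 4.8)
The statement is a classical theorem of reduction theory, due in the arithmetic case to Borel and Harish-Chandra and extended to the $S$-arithmetic case by Behr, so the plan is to recall the standard proof rather than attempt a new one. The unifying observation is that all three conditions are equivalent to one algebraic property of $G$: that $G$ is \emph{$\Q$-anisotropic}, i.e. contains no nontrivial $\Q$-split torus (equivalently, has $\Q$-rank zero). Thus the argument naturally splits into two parts: \textbf{(A)} $G(\Z)$ (resp. $G(\Z[S^{-1}])$) contains a nontrivial unipotent element if and only if $G$ is $\Q$-isotropic; and \textbf{(B)} $G$ is $\Q$-anisotropic if and only if $G(\Z)$ is cocompact in $G(\R)$, and also if and only if $G(\Z[S^{-1}])$ is cocompact in $G(\R)\times\Prod{p\in S}{}G(\Q_p)$.

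For part (A), one direction is soft. If $G$ is $\Q$-isotropic, fix a nontrivial $\Q$-split torus $T\leq G$; for each $\Q$-root $\alpha$ the root subgroup $U_\alpha\leq G$ is a nontrivial unipotent subgroup defined over $\Q$, and since a split unipotent $\Q$-group has Zariski-dense, infinite integer points, $U_\alpha(\Z)$ — hence $G(\Z)$ — contains a nontrivial unipotent; the same argument with $U_\alpha(\Z[S^{-1}])$ handles the $S$-arithmetic case. Conversely, a nontrivial unipotent $u\in G(\Z)$ has a nilpotent logarithm which is a $\Q$-point of the Lie algebra, and by the Jacobson--Morozov theorem over $\Q$ (or Borel--Tits) it completes to an $\mathfrak{sl}_2$-triple, yielding a nonconstant $\Q$-morphism $\SL_2\to G$; its image contains a nontrivial $\Q$-split torus, so $G$ is $\Q$-isotropic. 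This makes precise the assertion in the excerpt that nontrivial unipotents are exactly the algebraic witnesses of proper $\Q$-parabolics.

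For part (B), the direction ``cocompact $\Rightarrow$ no nontrivial unipotents'' is elementary and uses no arithmeticity. Suppose $G(\Z)$ is cocompact in $G(\R)$ and write $G(\R)=C\cdot G(\Z)$ with $C$ compact. For $\gamma\in G(\Z)$ and $h\in G(\R)$, write $h=c\delta$ with $c\in C$, $\delta\in G(\Z)$, so $h\gamma h^{-1}=c\,(\delta\gamma\delta^{-1})\,c^{-1}$; the $G(\Z)$-conjugates of $\gamma$ form a closed discrete subset of $G(\R)$, and a routine compactness argument then shows the full $G(\R)$-conjugacy class of $\gamma$ is closed, whence $\gamma$ is a semisimple element of $G(\R)$ — so it cannot be a nontrivial unipotent. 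The identical argument works for $G(\Z[S^{-1}])$ inside $G(\R)\times\Prod{p\in S}{}G(\Q_p)$ (closedness of the conjugacy class is checked factor by factor). Combined with part (A), this gives ``cocompact $\Rightarrow$ $G$ is $\Q$-anisotropic'' in both settings. The remaining — and genuinely hard — direction is ``$G$ $\Q$-anisotropic $\Rightarrow$ cocompact'', which is the Mostow--Tamagawa theorem, proved through reduction theory: one exhibits a Siegel set $\mathfrak S=K\cdot A_t\cdot\omega$ (with $K$ maximal compact, $A_t$ a cone in a maximal $\Q$-split torus, $\omega$ compact in the unipotent radical) and a finite set $\mathcal F\subset G(\Q)$ with $\mathfrak S\cdot\mathcal F\cdot G(\Z)=G(\R)$; when $G$ is $\Q$-anisotropic the torus factor $A_t$ collapses to a point, so $\mathfrak S\cdot\mathcal F$ is compact and $G(\R)/G(\Z)$ is compact. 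Behr's theorem is the $S$-arithmetic refinement, where reduction theory is carried out adelically (or one uses strong approximation to compare $G(\Z[S^{-1}])\leq G(\R)\times\Prod{p\in S}{}G(\Q_p)$ with the adelic quotient), and again $\Q$-anisotropy forces the Siegel domain to be compact.

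The main obstacle is concentrated entirely in this last direction: it rests on the full strength of reduction theory for arithmetic and $S$-arithmetic groups — the construction of Siegel fundamental sets and the fact that their noncompactness is governed precisely by the $\Q$-split torus directions. Everything else is formal algebraic-group manipulation, Jacobson--Morozov over $\Q$, and the closed-orbit characterization of semisimple elements. For the purposes of the present paper it therefore suffices to invoke Borel--Harish-Chandra and Behr for this input, which is what we do.
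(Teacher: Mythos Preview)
Your proposal is a correct and informative sketch of the classical argument, but note that the paper does not actually prove this theorem at all: it is stated as a known result and attributed directly to Borel--Harish-Chandra \cite[Theorem 3]{BHC} for the arithmetic case and Behr \cite[Theorem C]{Behr} for the $S$-arithmetic extension (with a pointer also to \cite[Theorem 5.8]{Benoist}). No proof, sketch, or further discussion is given in the paper beyond these citations.

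Your write-up therefore goes well beyond what the paper does. The decomposition into (A) the equivalence ``nontrivial unipotent $\Leftrightarrow$ $\Q$-isotropic'' via Jacobson--Morozov/Borel--Tits, and (B) the Mostow--Tamagawa direction via reduction theory and Siegel sets, is the standard route and is correctly outlined; the closed-conjugacy-class argument for ``cocompact $\Rightarrow$ no unipotents'' is also standard. Since you yourself conclude by invoking Borel--Harish-Chandra and Behr for the hard direction, your proposal ultimately lands in the same place as the paper---the difference is only that you supply expository context the paper omits. There is no discrepancy in approach to flag, because the paper offers no approach to compare against.
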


Therefore, combining these results with Theorem \ref{thm:elimmixedfactors} we obtain:

\begin{cor}\label{Cor: cocompact iff acyl}
    Let $G(\Q)\leq \SL_n\Q$ be an algebraic group defined over $\Q$, and $S\subset \Z$ be a finite set of primes such that the irreducible factors of $G(\R)\times\Prod{p\in S}{}G(\Q_p)$ have rank one. The lattice $G(\Z[S^{-1}])\leq G(\R)\times\Prod{p\in S}{}G(\Q_p)$ acts acylindrically on $\X$ the product of algberaic hyperbolic spaces  and Bruhat-Tits trees if and only if $G(\Z)\leq G(\R)$ is cocompact if and only if $G(\Z)$ has no nontrivial unipotent elements.
\end{cor}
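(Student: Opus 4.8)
The plan is to deduce Corollary~\ref{Cor: cocompact iff acyl} by chaining together the structure theory already assembled in the excerpt with Godement's Compactness Criterion. Write $G = G(\R)\times\Prod{p\in S}{}G(\Q_p)$ and $\G = G(\Z[S^{-1}])$, and let $Z_i$ denote the rank-one symmetric space or Bruhat--Tits tree associated to the $i$-th irreducible factor of $G$, so that $\X = \Prod{i}{}Z_i$ with the $\ell^\infty$-product metric (Lipschitz-equivalent to the $\ell^2$ one). Since each factor has rank one, each $Z_i$ is an algebraic hyperbolic space or a tree, hence $\delta$-hyperbolic, and $\G$ acts on $\X$ by the product of its projections to the factor isometry groups. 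The first implication, ``$G(\Z)\leq G(\R)$ cocompact $\Leftrightarrow$ $G(\Z)$ has no nontrivial unipotents,'' is exactly Theorem~\ref{thm: Godement}, so that half is free.

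Next I would establish ``$\G\leq G$ cocompact $\Rightarrow$ the action on $\X$ is acylindrical.'' A cocompact lattice acts properly (by discreteness) and coboundedly on $G$, and since $K_i\leq G_i$ is maximal compact the orbit map $G\to \X = G/K$ (product of the $G_i/K_i$) is a quasi-isometry intertwining the $\G$-actions; thus $\G$ acts properly and coboundedly on $\X$. By Lemma~\ref{Lem:prop cobound is acyl}, such an action is acylindrical. (One must note the action is not elliptic because $\X$ is unbounded and $\G$ is infinite with cobounded action — the orbit is all of $\X$ up to bounded error.)

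For the converse, ``$\G$ acts acylindrically on $\X$ $\Rightarrow$ $G(\Z)$ has no nontrivial unipotents,'' I would argue by contrapositive using Theorem~\ref{thm:elimmixedfactors} (the higher-rank obstruction). Suppose $u\in G(\Z)\leq \G$ is a nontrivial unipotent. By the Iwasawa dictionary recorded in Section~\ref{sec: Godement}, $u$ — and hence the cyclic group $\langle u\rangle$ — is parabolic in $G$, meaning that in each rank-one factor $G_i$ the projection of $u$ lies (after conjugation) in the maximal unipotent $\mathcal N_i$, i.e. acts parabolically or elliptically on $Z_i$, with at least one factor genuinely parabolic since $u\neq 1$. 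If the action of $\G$ on $\X$ were acylindrical, so would be the restriction to $\langle u\rangle$ (acylindricity passes to subgroups); but then Theorem~\ref{thm:elimmixedfactors} forbids $\langle u\rangle\to\Aut_{\!0}\X$ from having all factors elliptic/parabolic/quasiparabolic with at least one parabolic — a contradiction. Hence no such $u$ exists. Combining the three implications in a cycle (cocompact $\Rightarrow$ acylindrical $\Rightarrow$ no unipotents $\Rightarrow$ $G(\Z)$ cocompact $\Rightarrow$ $\G$ cocompact, the last by Theorem~\ref{thm: Godement}) closes the loop.

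The main obstacle I anticipate is bookkeeping the conjugation/reduction step cleanly: a priori $\G\to\Aut\X$ may permute isometric factors, and one should pass to the finite-index subgroup $\G_0$ landing in $\Aut_{\!0}\X$ before invoking Theorem~\ref{thm:elimmixedfactors}; since $u$ is unipotent it has infinite order and a power $u^m\in\G_0$ remains a nontrivial unipotent, so this costs nothing. A secondary point requiring a line of care is checking that the acylindrical action on $\X$ is genuinely ``not elliptic'' when it needs to be — but we only need the element-level conclusion of Theorem~\ref{thm:elimmixedfactors}, which applies to $\langle u^m\rangle$ regardless of the global action type, so this is not actually needed for the converse direction. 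The only genuinely geometric input beyond cited results is the standard fact that $\G$ cocompact lattice $\Rightarrow$ proper cobounded action on $G/K$, which is routine.
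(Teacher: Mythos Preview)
Your proof is correct and follows essentially the same route as the paper: Godement's Criterion (Theorem~\ref{thm: Godement}) handles the equivalence between cocompactness of $G(\Z)$ and absence of nontrivial unipotents, cocompactness yields a proper cobounded action and hence acylindricity via Lemma~\ref{Lem:prop cobound is acyl}, and the converse is obtained by observing that a nontrivial unipotent in $G(\Z)$ projects to a parabolic isometry in at least one archimedean factor (and elliptic in the $p$-adic factors, since $G(\Z)\subset G(\Z_p)$), contradicting Theorem~\ref{thm:elimmixedfactors} applied to the cyclic subgroup. Your careful remarks about passing to $\G_0$ are unnecessary here since the lattice already lands in $\Aut_{\!0}\X$, but they do no harm.
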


Indeed, a unipotent element in $G(\Z)$ will project to a unipotent element in each irreducible factor of $G(\R)$. Since these factors are  assumed to be rank-one, such an element from $G(\Z)$ is either the identity if $G(\Z)$ is cocompact (and hence the action is acylindrical) or not the identity (and hence the action is nonuniformly acylindrical).

\begin{remark}
    This allows us to contextualize the obstructions found in Section \ref{sec: obstructions/simplif}. As we saw above, if $G(\R)$ has rank-one factors, then unipotent elements in $G(\Z)$ will correspond to elements that act as parabolic isometries in each factor associated to $G(\R)$. 
\end{remark}

The reader will see that unipotent elements continue to reappear below. We also point the reader to \cite[2.4.Proposition]{LubotzkyMozesRaghunathan} which states that distortion characterizes unipotent elements.

\subsection{Semi-Simple Lattices and Superrigidity}

In what follows, an action of $\G$ on a hyperbolic space $X$ is called \emph{coarsely minimal} if $X$ is unbounded, the limit set of $\G$ in $\partial X$ is not a single point, and every quasi-convex $\G$-invariant subset of $X$ is coarsely dense. Given a metric space $X$ and $C \geq 0$, denote by $Bdd_C (X)$ the set of all closed
subsets of diameter at most $C$, endowed with the Hausdorﬀ metric. Notice that
$Bdd_C (X)$ is quasi-isometric to $X$. Moreover, two actions of a group $\G$ on metric spaces $X_1,X_2$ are said to be \emph{equivalent} if there exists a $\G$-equivariant quasi-isometry $X_1 \to Bdd_C (X_2)$ for some $C \geq 0$.  

\begin{remark}\label{rem: equiv and AU}
   Let $X_1, \dots, X_\D,X_1', \dots, X_\D'$ be $\delta$-hyperbolic spaces, and set $\X= \Prod{i=1}{
    \D} X_i$, and $\X'= \Prod{i=1}{
    \D} X_i'$. It is straightforward to check that if $\G$ acting on $X_i$ is equivalent to $\G$ acting on $X'_i$ for all $1 \leq i \leq \D$, then the action $\G\to \Aut_{\! 0}\X$ is (AU-)acylindrical if and only if the action $\G\to \Aut_{\! 0}\X'$ is (AU-)acylindrical. 
\end{remark}

 We note that, the following theorem applies to reducible lattices as well, provided that the hypotheses are met for the corresponding factors, which is how it is originally stated in \cite{BaderCapraceFurmanSisto}. We also note that if $\D=0$ then no coarsely minimal actions exist, which was first proved by Haettel \cite{Haettel} (see Theorem~\ref{Thm: Haettel HR} below). 

\begin{theorem}\cite[Theorem 1.1]{BaderCapraceFurmanSisto} \label{Thm: BCFS} Fix integers ${\mathbf{D}}\geq 0$ and $F\geq 0$. Consider an irreducible lattice $\G\leq G$  where $G= \Prod{i=1}{{\mathbf{D}}}H_i\times \Prod{j=1}{F}G_j$ is a product of centerless simple Lie groups. Assume further that $H_1, \dots, H_{\mathbf{D}}$ have real rank one, and $G_1, \dots, G_F$ have real rank strictly larger than one. 
If ${\mathbf{D}}\geq 2$ or $F>0$ then any coarsely minimal isometric action of $\G$ on a geodesic hyperbolic space is equivalent to one of the actions
$$\G \longrightarrow G \stackrel{pr_i}{\longrightarrow} H_i \stackrel{\rho_i}{\longrightarrow} \Isom S_i \hspace{10pt} (1 \leq i\leq {\mathbf{D}})$$
where $S_i$ is the hyperbolic symmetric space of the factor $H_i$, $i=1, \dots, {\mathbf{D}}$.
\end{theorem}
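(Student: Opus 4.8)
The statement is quoted from \cite{BaderCapraceFurmanSisto}; here is the boundary-theoretic strategy one would follow to prove it. Write $G=\Prod{i=1}{\mathbf D}H_i\times\Prod{j=1}{F}G_j$, let $\G\le G$ be the irreducible lattice, and let $\rho\colon\G\to\Isom X$ be a coarsely minimal action on a geodesic hyperbolic space. The plan has three stages: reduce to a general type action; build a $\G$-equivariant measurable boundary map from the Furstenberg boundary of $\G$ into $\partial X$; and show this map is carried by a single rank-one factor $H_{i_0}$, then promote it to an algebraic map from which the asserted equivalence with $\rho_{i_0}\colon\G\to H_{i_0}\to\Isom S_{i_0}$ is read off. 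For Stage~1: since $X$ is unbounded, $\rho$ is not elliptic; since $\Lambda(\rho(\G))$ is not a single point, $\rho$ is not parabolic (Theorem~\ref{thm:actionsclassrk1}). An irreducible lattice $\G$ with $\mathbf D\geq 2$ or $F>0$ lies in a semisimple group of real rank at least two, hence satisfies Margulis' normal subgroup theorem and, crucially, carries no nonzero homogeneous quasimorphism (a property inherited by finite-index subgroups). Consequently the Busemann quasimorphism of any lineal or quasiparabolic action of $\G$ — which is nonzero as soon as loxodromics exist — would furnish such a quasimorphism on $\G$ or on an index-two subgroup, a contradiction; therefore $\Lambda(\rho(\G))$ is infinite and $\rho$ is of general type.

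For Stage~2, fix an admissible generating probability measure $\mu$ on $\G$ (obtained from an absolutely continuous measure on $G$). By Maher--Tiozzo (Theorem~\ref{thm: MaherTiozzo}) there is a $\mu$-stationary probability measure $\nu$ on $\partial X$, and since $\rho$ is of general type $(\partial X,\nu)$ is a $\mu$-boundary, so there is a $\G$-map from the Poisson boundary $B(\G,\mu)$ onto it. For a suitable such $\mu$ the Poisson boundary of the irreducible lattice $\G$ is the full Furstenberg boundary $G/P=\Prod{i=1}{\mathbf D}H_i/P_i\times\Prod{j=1}{F}G_j/Q_j$; composing, one obtains a $\G$-equivariant measurable map
$$\phi\colon\ \Prod{i=1}{\mathbf D}H_i/P_i\times\Prod{j=1}{F}G_j/Q_j\ \longrightarrow\ \partial X .$$

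In Stage~3, the heart of the argument is to show that $\phi$ is measurably independent of every higher-rank coordinate $G_j/Q_j$ and of all but one rank-one coordinate $H_{i_0}/P_{i_0}$. For the higher-rank coordinates one exploits that a simple Lie group of real rank $\geq 2$ admits no unbounded isometric action on a hyperbolic space — Haettel's theorem, stated below as Theorem~\ref{Thm: Haettel HR} — together with the amenability of the parabolics $Q_j$ and irreducibility, ruling out any contribution of $G_j/Q_j$ to the dynamics on $\partial X$; a further boundary-rigidity argument (or the $\Z^2$-structure of $\G$) forces the dependence down to a single $H_{i_0}/P_{i_0}$ (if $\mathbf D=0$ this step yields a contradiction, recovering Haettel's non-existence result). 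Then $\phi$ becomes essentially a map $H_{i_0}/P_{i_0}\to\partial X$ intertwining the $\G$-actions and equivariant under the dense projection of $\G$ to $H_{i_0}$, hence governed by a measurable $H_{i_0}$-action on $\partial X$; the convergence dynamics on $\partial X$ together with rationality of equivariant measurable maps out of a flag variety promote $\phi$ to a continuous $H_{i_0}$-equivariant map, identifying $\partial X$ with the visual boundary $\partial S_{i_0}\cong H_{i_0}/P_{i_0}$ as $\G$-spaces. Finally, since a coarsely minimal hyperbolic action is determined up to equivalence by its boundary action — via a barycentre/coarse-bicombing construction producing a $\G$-equivariant quasi-isometry $X\to\mathrm{Bdd}_C(S_{i_0})$ — we conclude that $\rho$ is equivalent to $\rho_{i_0}$.

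\textbf{Main obstacle.} The decisive and most delicate step is the rank-one reduction in Stage~3: ruling out any dependence of the boundary map on the higher-rank coordinates and on more than one rank-one coordinate, and then upgrading a merely measurable equivariant map to an algebraic one. This is precisely where higher-rank rigidity must be invoked, and the case $F=0$, $\mathbf D\geq 2$ — where property (T) is unavailable — is the subtlest, relying on irreducibility and the interplay between the several rank-one boundaries rather than on a single higher-rank factor.
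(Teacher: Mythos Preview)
The paper does not prove this theorem: it is quoted verbatim from \cite[Theorem 1.1]{BaderCapraceFurmanSisto} and used as a black box in the proof of Theorem~\ref{thm: HR acyl iff cocompact}. There is therefore no ``paper's own proof'' to compare against.

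That said, your sketch is broadly in the spirit of the actual Bader--Caprace--Furman--Sisto argument, and you correctly locate the main difficulty. A few comments on accuracy. Stage~1 is essentially right: vanishing of bounded cohomology (Burger--Monod) for irreducible higher-rank lattices kills the Busemann quasimorphism and rules out lineal/quasiparabolic actions. Stage~2 is the right shape, though in \cite{BaderCapraceFurmanSisto} the boundary map is produced via the general machinery of strong boundaries and relative metric ergodicity rather than by invoking Maher--Tiozzo. Your Stage~3, however, is where the sketch becomes genuinely incomplete. The reduction to a single rank-one factor in \cite{BaderCapraceFurmanSisto} does \emph{not} proceed by appealing to Haettel's theorem (which is logically downstream, and in any case concerns $\G$ rather than the factor $G_j$); instead it uses a Weyl-group and gate-map argument intrinsic to the boundary theory, together with metric ergodicity of the factor boundaries. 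Your parenthetical ``or the $\Z^2$-structure of $\G$'' is a placeholder, not an argument. Likewise, the final step --- that a coarsely minimal action is determined up to equivalence by its boundary action --- is the second substantial piece of \cite{BaderCapraceFurmanSisto} and requires real work (their ``tightness'' notion and a cross-ratio type argument), not just a barycentre construction.

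In short: your outline is a fair roadmap, but Stages~3 and the final promotion step are precisely the content of \cite{BaderCapraceFurmanSisto}, and your sketch does not supply the mechanisms that make them work.
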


\begin{remark}\label{rem: finitely many inner}
Fix $i\in \{1,\dots, {\mathbf{D}}\}$ and set $H=H_i$, $S=S_i$, $\rho=\rho_i$. Theorem~\ref{Thm: BCFS} does not address the quality of the homomorphism $\rho: H\to \Isom S$. The proof relies on coarse equivalence, which in general does not produce a measurable map. (Note that measurable homomorphisms between Lie groups are continuous \cite{Kleppner} and  hence smooth \cite[Corollary 3.50]{HallLieGrps}). We shall argue that $\rho$  can be taken to be rational automorphism of $H$, achievable by conjugation in $\Isom S$. First, the coarse minimality condition ensures that the image is not trivial. Since $\G$ is a higher-rank irreducible lattice, we may apply Margulis Superrigidity \cite{Margulis} to deduce that $\rho|_{\G}$ extends to a rational map $\rho': H\to \Isom S$, and so we may replace $\rho$ with $\rho'$ if necessary. (Note that there are uncountably many nonmeasurable homomorphisms  $SL_2\R\to \SL_2\R$ that restrict to the identity on $\SL_2\Z$.)

Since $H$ is connected and $S$ is its symmetric space, we have that $H$ is the connected component of the identity and has finite index in $\Isom S$. Let  $\Aut^{\! \8} H$ denote the group of smooth automorphisms of $H$. Since $H$ is rank one, we have that $\Inn H \leq \Aut^{\! \8} H$ is of index at most 2, precisely corresponding to the index $H\leq \Isom S$  (see \cite[Corollary 2.15]{Gundogan}, also \cite{ChuahZhang}). Therefore, any smooth map $H\to \Isom S$ is in the same $\Isom S$-conjugacy class of the identity embedding $H\hookrightarrow \Isom S$. We shall call the corresponding action of $\a:\G\to \Isom S$ the \emph{standard action of $\G$ on $S$}. This is touched on in \cite[Example 1.4]
{BaderCapraceFurmanSisto}. 
\hfill $\blacksquare$
\end{remark}

In Part II \cite{BFPart2} (see also \cite[Lemma 4.26]{BF}) we prove the following result. (This is alluded to in \cite[Remark 4.3]{BaderCapraceFurmanSisto}; a similar idea is also considered in \cite[Lemma 2.21]{PropNL}.) The term ``essential" is inspired by a similar construction in \cite{CapraceSageev}. We shall use it to prove the results that follow. 

\begin{prop}[The Essential Core]\label{prop: ess core}
    Let $\rho:\G\to \Isom X$ be of general type. There exists a $\rho(\G)$-invariant quasiconvex $\delta'$-hyperbolic subspace $\mathcal E_\rho(X)\subset X$ on which  the restricted action $\rho(\G)|_{_{\mathcal E_\rho(X)}}$ is coarsely minimal. In particular, any  $\rho(\G)|_{_{\mathcal E_\rho(X)}}$-invariant quasiconvex subset of $\mathcal E_\rho(X)$ is coarsely dense in $\mathcal E_\rho(X)$ and $\partial \mathcal E_\rho(X) = \Lambda(\rho(\G))$ is infinite.
\end{prop}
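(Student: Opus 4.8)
The plan is to build $\mathcal E_\rho(X)$ as (a thickening of) the quasiconvex hull of the limit set $\Lambda(\rho(\G))$, and then verify that the resulting subspace is hyperbolic, $\rho(\G)$-invariant, and carries a coarsely minimal action. First I would recall that since $\rho$ is of general type, $\Lambda := \Lambda(\rho(\G))$ is infinite (in fact uncountable) by Theorem~\ref{thm:actionsclassrk1}, and that it is $\rho(\G)$-invariant by naturality of the boundary action $\Isom X \to \Homeo(\partial X)$. Fix a basepoint $o \in X$. For a constant $h > 0$ (to be taken, say, $h = 20\delta$ so that Remark~\ref{Rem: (1, 20delta)_p1} applies), define $\mathcal E_\rho(X)$ to be the union of all $(1,20\delta)$-quasigeodesics with both endpoints in $\Lambda$, together with a bounded neighborhood thereof to ensure the set is coarsely connected and contains $o$ up to bounded error. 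Because $\Lambda$ is $\rho(\G)$-invariant and isometries send $(1,20\delta)$-quasigeodesics to $(1,20\delta)$-quasigeodesics, this set is $\rho(\G)$-invariant.

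The next step is quasiconvexity and hyperbolicity. Quasiconvexity of $\mathcal E_\rho(X)$ in $X$ follows from the Morse property (Proposition~\ref{prop:qg triangles slim hyp} / Theorem~\ref{Slim Q-Bigons infinite_p1}): a geodesic between two points on biinfinite $(1,20\delta)$-quasigeodesics with endpoints in $\Lambda$ fellow-travels a biinfinite quasigeodesic whose endpoints lie in $\Lambda$ (one connects the relevant endpoints using density of the quasigeodesic set and the closeness lemma), so it stays within a bounded neighborhood of $\mathcal E_\rho(X)$. A quasiconvex subset of a $\delta$-hyperbolic space is itself $\delta'$-hyperbolic with the induced metric for $\delta'$ depending only on $\delta$ and the quasiconvexity constant; this is standard (e.g.\ via the slim-triangles criterion of Remark~\ref{Rem:center of triangle}). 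One also checks $\partial \mathcal E_\rho(X) = \Lambda$: the inclusion $\supseteq$ is immediate since $\mathcal E_\rho(X)$ contains quasigeodesic rays limiting to every point of $\Lambda$, and $\subseteq$ holds because any Gromov sequence in $\mathcal E_\rho(X)$ lies boundedly close to the quasiconvex hull of $\Lambda$, hence converges into $\overline{\Lambda}$; but a boundary point of the hull that is a limit of points of $\Lambda$ must itself lie in $\Lambda$ by a standard hull argument (two quasigeodesic rays from $o$ whose endpoints are $\delta$-close-for-a-long-time have endpoints that are genuinely close, and the hull adds no new boundary points). Here I would invoke the $(1,20\delta)$-quasigeodesic connectivity of pairs of boundary points from Remark~\ref{Rem: (1, 20delta)_p1}.

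Finally, coarse minimality. The action is on an unbounded space (since $\Lambda$ is infinite, $\mathcal E_\rho(X)$ is unbounded), and its limit set is $\Lambda$, which is not a single point. It remains to see that any $\rho(\G)|_{\mathcal E_\rho(X)}$-invariant quasiconvex subset $Y \subseteq \mathcal E_\rho(X)$ is coarsely dense. If $Y$ is nonempty and invariant, then $\partial Y$ is a nonempty closed $\rho(\G)$-invariant subset of $\Lambda$; since the action is of general type, $\Lambda$ is the unique minimal closed invariant subset and every nonempty closed invariant subset of $\partial X$ meeting $\Lambda$ must contain $\Lambda$ (one produces loxodromics whose attracting points are dense in $\Lambda$ using the ping-pong free subgroup from Theorem~\ref{thm:actionsclassrk1}, and iterating any such loxodromic on a point of $\partial Y$ forces its attracting fixed point into $\partial Y$). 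Hence $\partial Y = \Lambda = \partial \mathcal E_\rho(X)$, and since $Y$ is quasiconvex with the same boundary as the quasiconvex hull $\mathcal E_\rho(X)$, the Morse property forces $Y$ to be coarsely dense in $\mathcal E_\rho(X)$: every biinfinite quasigeodesic with endpoints in $\Lambda = \partial Y$ lies in a bounded neighborhood of $Y$, and these cover $\mathcal E_\rho(X)$ up to bounded error by construction.

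The main obstacle I anticipate is the bookkeeping around $\partial \mathcal E_\rho(X) = \Lambda$ and the coarse-density argument: one must be careful that the quasiconvex hull of $\Lambda$ does not acquire extra boundary points and that ``invariant quasiconvex subset has full boundary'' genuinely uses general type (density of attracting fixed points of loxodromics in $\Lambda$), rather than merely nonemptiness. Getting uniform constants throughout — so that $\delta'$ and the coarse-density constant depend only on $\delta$ — is the technical heart, and is where the $(1,20\delta)$-quasigeodesic machinery of Remark~\ref{Rem: (1, 20delta)_p1} and the Morse constant of Remark~\ref{rem: Morse constant} do the work.
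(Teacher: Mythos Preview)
The paper does not prove this proposition here: it is stated and then explicitly deferred to Part~II \cite{BFPart2}, with pointers to \cite[Remark~4.3]{BaderCapraceFurmanSisto} and \cite[Lemma~2.21]{PropNL} where the construction is alluded to. Your approach---take the weak quasiconvex hull of the limit set, i.e.\ the union of all $(1,20\delta)$-quasigeodesics with both endpoints in $\Lambda(\rho(\G))$---is the standard construction and almost certainly what those references and Part~II carry out; the outline you give is correct.

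Two small points worth tightening. In the coarse-minimality step you implicitly assume $\partial Y\neq\varnothing$; you should note that a nonempty $\rho(\G)$-invariant $Y$ cannot be bounded, since any $y\in Y$ would then have bounded orbit, contradicting general type. Second, $\mathcal E_\rho(X)$ with the induced metric need not be geodesic, so the slim-triangles definition of hyperbolicity does not apply verbatim; quasiconvexity makes it coarsely geodesic (or one passes to the $4$-point inequality), which is routine but deserves a sentence. Your own diagnosis of the delicate spots---that the hull adds no new boundary points, and that minimality of $\Lambda$ for the boundary action (via north--south dynamics of loxodromics) is what forces $\partial Y=\Lambda$---is on target.
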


\begin{theorem}\label{thm: HR acyl iff cocompact}
    Let $\G\leq G$ be an irreducible lattice as in Theorem~\ref{Thm: BCFS} and $S_i$ the hyperbolic symmetric space for the factors  $H_i$, $i=1, \dots, {\mathbf{D}}$. There exists an AU-acylindrical action $\G\to \Aut\X$ with general type factors if and only if $F=0$, and up to passing to the essential cores and equivalence of the factors of $\X$, $\X =\mathbb S\times \mathbb S_{\mathrm{dup}}$, where $\mathbb S= \Prod{i=1}{\mathbf{D}}S_i$ and $\mathbb S_{\mathrm{dup}}$ is the (possibly empty) product of duplicate copies of factors of $\mathbb S$. Furthermore, the action is acylindrical if and only if $\G\leq G$ is cocompact.
\end{theorem}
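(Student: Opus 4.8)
The plan is to package the two directions separately. For the ``only if'' direction, suppose $\rho\colon\G\to\Aut\X$ is AU-acylindrical with general type factors. First I would pass to the finite-index subgroup $\G_0$ mapping into $\Aut_{\!0}\X$; since $\G$ is an irreducible higher-rank (when $\D\geq 2$ or $F>0$) or rank-one lattice, $\G_0$ is again an irreducible lattice of the same type, and AU-acylindricity is inherited by subgroups. Next, restrict each factor action to its essential core $\mathcal E_{\rho_i}(X_i)$ (Proposition~\ref{prop: ess core}); by Remark~\ref{rem: equiv and AU} this does not affect AU-acylindricity, and now each factor action is coarsely minimal. Feeding this into Theorem~\ref{Thm: BCFS}: if $F>0$ there are no coarsely minimal actions of $\G$ on a hyperbolic space at all, contradicting generality of the factors — hence $F=0$. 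If instead $F=0$ but $\D\geq 2$, Theorem~\ref{Thm: BCFS} (together with Remark~\ref{rem: finitely many inner}, which upgrades the equivalence to the standard action up to conjugacy in $\Isom S_i$) forces each factor action to be equivalent to the standard action on some $S_{k(i)}$, for a function $k\colon\{1,\dots,\D\}\to\{1,\dots,\mathbf D\}$. Here I must rule out the case $\D=1$, $F=0$ being the only surviving option when $\mathbf D=1$: when $\mathbf D\geq 2$, Theorem~\ref{Thm: BCFS} still applies because the ambient group is a nontrivial product, and a single factor $\X=S_i$ does not give an AU-acylindrical action of the whole irreducible lattice — the kernel of $\G\to H_i$ is an infinite normal subgroup acting trivially, violating Lemma~\ref{Lem:AU-acyl has finite kernel}. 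More generally, the same argument shows $k$ must be surjective: if some index $j\in\{1,\dots,\mathbf D\}$ is missed, then $\ker(pr_j)\cap\G$ is infinite and acts with bounded orbits on every factor, again contradicting Lemma~\ref{Lem:AU-acyl has finite kernel}. So up to equivalence $\X$ contains each $S_i$ at least once, and every other factor is a duplicate of one of them; reordering, $\X\simeq\mathbb S\times\mathbb S_{\mathrm{dup}}$.

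For the ``if'' direction, suppose $F=0$ and, up to the reduction, $\X=\mathbb S\times\mathbb S_{\mathrm{dup}}$. The lattice $\G\leq G=\Prod_{i=1}^{\mathbf D}H_i$ acts properly on $\mathbb S=\Prod S_i$ (with the $\ell^2$, hence the Lipschitz-equivalent $\ell^\infty$, metric) since $S_i=H_i/K_i$ and $\G$ is discrete; a proper action is AU-acylindrical by definition. Now the diagonal action on $\mathbb S$ is AU-acylindrical by Lemma~\ref{lem: diag of acyl is acyl} (each factor $\G\to\Isom S_i=\rho_i(G)$ restricted along $\G\hookrightarrow G$ is the standard action, which is AU-acylindrical as a factor of the proper action on $G$), and adjoining the duplicates $\mathbb S_{\mathrm{dup}}$ — each an $\Isom S_j$-conjugate of one of the $\rho_i$ by a standard action — changes nothing by repeated application of Lemma~\ref{lem: duplicate action}. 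Finally, each factor $\G\to\Isom S_i$ is of general type: the image is the Zariski-dense projection of the irreducible lattice $\G$ into the rank-one simple group $H_i$, so by the algebra/geometry dictionary of Section~\ref{sec: Godement} the action is of general type. Hence $\G\to\Aut\X$ is AU-acylindrical with general type factors.

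For the final sentence on acylindricity, I would argue: if $\G\leq G$ is cocompact, then $\G$ acts cocompactly (and properly) on $\mathbb S$, and on each $S_i$; a proper cobounded action is acylindrical by Lemma~\ref{Lem:prop cobound is acyl}, so each factor action is acylindrical, and then the diagonal action on $\mathbb S$ is acylindrical by Lemma~\ref{lem: diag of acyl is acyl}, and remains so after adjoining duplicates by Lemma~\ref{lem: duplicate action}. Conversely, suppose $\G\to\Aut\X$ is acylindrical; after the reduction, restrict to one factor $\G\to\Isom S_i$ (acylindricity passes to factors of a diagonal action, again via the $\ell^\infty$-metric argument in Lemma~\ref{lem: diag of acyl is acyl}). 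The symmetric space $S_i$ is locally compact, geodesic, complete, with cocompact isometry group, hence uniformly locally compact and satisfying the scaling hypothesis of Lemma~\ref{Lem:acyl+loc comp implies unif proper_p1}(3); therefore the action of $\G$ on $S_i$ is uniformly proper. A uniformly proper action of a lattice on the symmetric space of its ambient simple group is cocompact — here I would invoke Godement's Compactness Criterion via Corollary~\ref{Cor: cocompact iff acyl} in the $S$-arithmetic incarnation, or more directly: a non-cocompact rank-one lattice contains nontrivial unipotents, which act parabolically on $S_i$ and hence not uniformly properly (the $\e$-coarse stabilizers along a horocycle are unbounded), a contradiction. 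Thus $\G(\Z)\leq H_i$ is cocompact for each $i$, and since $\G$ is an irreducible lattice in the product this forces $\G\leq G$ to be cocompact.

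The main obstacle I expect is the bookkeeping in the ``only if'' direction: correctly combining Theorem~\ref{Thm: BCFS} (which only gives an \emph{equivalence} of actions, and only under the hypothesis $\mathbf D\geq 2$ or $F>0$) with Remark~\ref{rem: finitely many inner} (upgrading to the standard action up to conjugacy) and the essential-core reduction, while rigorously showing the multiplicity function $k$ is surjective — the last point leaning on Lemma~\ref{Lem:AU-acyl has finite kernel} applied to the infinite normal subgroups $\ker(pr_j\!\mid_\G)$ — and separately handling the edge case $\mathbf D=1$ (where Theorem~\ref{Thm: BCFS} does not directly apply and one argues via acylindrical hyperbolicity of the rank-one lattice directly). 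The acylindricity equivalence at the end is comparatively routine given Lemmas~\ref{Lem:prop cobound is acyl}, \ref{Lem:acyl+loc comp implies unif proper_p1} and Corollary~\ref{Cor: cocompact iff acyl}.
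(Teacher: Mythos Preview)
Your proposal has a genuine gap in the ``only if'' direction, and related confusions in the other parts.

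The central error is your claim that for an irreducible lattice $\G\leq G=\Prod{i=1}{\mathbf D}H_i\times\Prod{j=1}{F}G_j$, the subgroup $\ker(pr_j)\cap\G$ is infinite. In fact, for an irreducible lattice in a product of center-free simple groups, this intersection is trivial: it is a normal subgroup of $\G$ whose projection to $H_j$ is trivial, and by irreducibility (density of projections) together with the normal subgroup theorem, any proper normal subgroup of $\G$ is central, hence trivial here. So your mechanism for forcing surjectivity of $k$ (and your earlier argument ruling out $\X=S_i$ when $\mathbf D\geq 2$) collapses. Relatedly, your claim that ``if $F>0$ there are no coarsely minimal actions'' is only correct when $\mathbf D=0$; when $\mathbf D\geq 1$ and $F>0$, Theorem~\ref{Thm: BCFS} still allows the standard actions on the $S_i$, so you cannot dismiss $F>0$ this way. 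The paper's route is different and is what you are missing: once the factor actions are identified (up to equivalence and removing duplicates via Lemma~\ref{lem: duplicate action}) with standard actions on some subproduct $\Prod{i\in I}{}S_i$, this space is \emph{locally compact}, so by Lemma~\ref{Lem:acyl+loc comp implies unif proper_p1} AU-acylindricity is equivalent to properness; but irreducibility means that if $F>0$ or $I\subsetneq\{1,\dots,\mathbf D\}$ the projection of $\G$ to $\Prod{i\in I}{}H_i$ is dense, hence the action is not proper. This is the key step.

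There is a secondary confusion running through your ``if'' direction and the acylindricity equivalence: you repeatedly assert that the individual factor actions $\G\to\Isom S_i$ are (AU-)acylindrical, or that acylindricity of the diagonal passes down to the factors. Neither is true --- the projection $\G\to H_i$ is dense, so the factor action is far from proper, and Lemma~\ref{lem: diag of acyl is acyl} only goes from factors to the diagonal, not conversely. For the ``if'' direction, the point is simply that $\G$ acts properly on the full product $\mathbb S$ (as a lattice), and Lemma~\ref{lem: duplicate action} lets you adjoin duplicates. For ``acylindrical $\Rightarrow$ cocompact'', the paper argues via Theorem~\ref{thm:elimmixedfactors}: acylindricity on $\X$ forbids elements acting parabolically in every factor, i.e.\ forbids nontrivial unipotents in $\G$, and then Godement's criterion (Theorem~\ref{thm: Godement}) gives cocompactness. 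Your attempt to restrict acylindricity to a single factor and invoke Lemma~\ref{Lem:acyl+loc comp implies unif proper_p1}(3) there does not work, since the single-factor action is not acylindrical to begin with.
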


\begin{proof} 
   Suppose $F=0$. Then $G= \Prod{i=1}{{\mathbf{D}}} H_i$ is a product of centerless simple Lie groups of real rank one and $\G$ acts properly (and with finite covolume) on $\mathbb S$ the corresponding product of hyperbolic symmetric spaces. Hence, the standard action $\Delta(\a_1, \dots, \a_{\mathbf{D}}): \G\to \Aut\mathbb S$  is AU-acylindrical. Furthermore, by the Tits Alternative for linear groups, the factor actions either have a finite orbit on the boundary or are of general type. Since the projection of $\G\to H_i$ is Zariski-dense, the action is of general type for each $i=1, \dots, {\mathbf{D}}$. Furthermore, if $\G$ is cocompact, then the action is uniformly proper and hence acylindrical. 

   Conversely, suppose that $\rho: \G\to \Aut\X$ is AU-acylindrical with general type factors, where $\X=\Prod{i=1}{\D}X_i$.  Since a finite index subgroup of $\G$ is still an irreducible lattice in $G$ and AU-acylindricity with general type factors is inherited by finite index subgroups, without loss of generality, we assume that  $\rho(\G)\leq\Aut_{\! 0}\X$.
   
   Since the factors are of general type, by Proposition~\ref{prop: ess core} they each have $\G$-invariant subspaces on which the action is coarsely minimal. Since the subspaces are invariant, the action on the product of the subspaces is still an AU-acylindrical action on a finite product of hyperbolic spaces with general type factors. So without loss of generality, we may assume that the factor actions are in fact coarsely minimal.
   
 By Theorem~\ref{Thm: BCFS} and Remark~\ref{rem: finitely many inner}, up to equivalence, for each $i\in \{1, \dots, \D\}$ there is a $j(i) \in \{1, \dots, {\mathbf{D}}\}$ such that the factor action $\rho_i: \G \to \Isom X_i$ is equivalent to the standard action  $\a_{j(i)}:\G\to \Isom S_{j(i)}$. Combining these factors, we get  an equivalence between the action $\rho:\G \to \Aut_{\! 0}\X$ and the diagonal of these standard actions  $\Delta\Big(\a_{j(1)}, \dots, \a_{j(\D)}\Big):\G\to \Aut_{\!0}\Big( \Prod{i=1}{\D}S_{j(i)}\Big)$. As discussed in Remark~\ref{rem: equiv and AU}, that action on $\Prod{i=1}{\D}S_{j(i)}$ is AU-acylindrical. 

 By Lemma~\ref{lem: duplicate action}, any duplicate factors in $\Prod{i=1}{\D}S_{j(i)}$ can be removed. Therefore, we may assume that $\D\leq {\mathbf{D}}$ and up to relabeling,  $S_{j(i)}= S_i$.  
 Since $\Prod{i=1}{\D}S_i$ is locally compact, AU-acylindricity is equivalent to properness of the action. However, since $\G$ is irreducible, if $F>0$ or $I\subsetneq \{1, \dots, {\mathbf{D}}\}$ the projection of $\G$ to $\Prod{i\in I}{}H_i$ is dense and therefore, the corresponding action $\G\to \Prod{i\in I}{}\Isom S_i$ is not proper. 
 This proves that $F=0$ and $\D={\mathbf{D}}$.

Finally, if the action is acylindrical then $\G$ is in fact cocompact. Indeed, by the Godement Compactness Criterion (Theorem~\ref{thm: Godement}) $\G$ is cocompact if and only if it has no non-trivial unipotents. Unipotents are precisely those elements that will act parabolically in each factor (see Section~\ref{sec: Godement}). Therefore,  by Theorem~\ref{thm:elimmixedfactors}, no such elements in $\G$ can exist, i.e. $\G$ has no nontrivial unipotents.
\end{proof}

 \subsection{The Special Linear Group of Dimension 2}\label{sec: SL2}

 Recall that if $\mathfrak R\subset \mathbb C$ is a ring then $SL_2\mathfrak R$ is the set of matrices with determinant 1 with entries from $\mathfrak R$. 
 
 We now consider a class of groups that is not addressed in Theorem~\ref{Thm: BCFS}  (nor in Theorem~\ref{Thm: Haettel HR} below) -- $\SL_2\mathfrak R$, where $\mathfrak R=\O_K[S^{-1}]$ is the ring of $S$-integers in $K$ a finite extension of $\Q$. This class (and its subgroups) was mentioned in Example (\ref{Ex:S-arith}) above.

\begin{defn} 
    Let $\mathfrak R$ be a ring. The group $\mathrm{EL}_2\mathfrak R$ is the group generated by \emph{elementary} matrices, i.e. $2\times 2$ matrices $g$ such that there exists  $r\in \mathfrak R$ for which $$g= \begin{pmatrix}
        1 & r \\ 
        0 & 1
    \end{pmatrix}  \text{ or } \begin{pmatrix}
        1 & 0 \\
        r & 1
    \end{pmatrix}.$$
\end{defn}

\begin{defn}
    Let $U_1$, $U_2$ be the upper and lower unipotent subgroups of $\mathrm{EL}_2\mathfrak R$ respectively. The group $\mathrm{EL}_2\mathfrak R$ is \emph{elementary boundedly generated} if there exists $n_0>0$ so that for every $g\in \mathrm{EL}_2\mathfrak R$ there exists $g_1, \dots, g_{n_0}\in U_1\cup U_2$ such that $g= g_1\cdots g_{n_0}$. 
\end{defn}

We will need the following result about finitely generated rings of algebraic numbers, their group of units, and the relationship to bounded generation and acylindrical hyperbolicity. The first item follows immediately by observing that the lower (respectively upper)  triangular group in $\SL_2 \mathfrak R$ is isomorphic to  $\mathfrak R^\times \ltimes \mathfrak R$, where the action $\mathfrak R^\times \to \Aut \mathfrak R$ is given by multiplication by the square of an element (respectively the square of its inverse). 

\begin{theorem}\label{thm: fin gen ring and units}
Let $\mathfrak R\subset \~\Q$ be a finitely generated ring of algebraic numbers.
\begin{enumerate}
    \item\label{units and uper triang} The group of units $\mathfrak R^\times$ is finite if and only if the \emph{upper} triangular group in $\SL_2 \mathfrak R$ is virtually abelian if and only if the \emph{lower} triangular group in $\SL_2 \mathfrak R$ is virtually abelian.
    \item\cite{Vaserstein}\label{Vaserstein}  If $\O_K[S^{-1}]$ has infinitely many units then $\SL_2\O_K[S^{-1}]= \mathrm{EL}_2\O_K[S^{-1}]$ 
   \item\cite{MorganRapinchukSury}\label{MRS} The ring $\O_K[S^{-1}]$ has infinitely many
units if and only if the group $\SL_2 \O_K[S^{-1}]$ is elementary boundedly generated, and in this case every element can be expressed as a product of at most 9 elementary matrices. 
\item\label{OO1} There exists a finite field extension $K$ of $\Q$, and a finite set of valuations $S$ such that $\mathfrak R\leq \O_K[S^{-1}]$ are finite index inclusions. Furthermore, $\mathfrak R$ has infinitely many units if and only if $\O_K[S^{-1}]$ has infinitely many units if and only if $S\neq\varnothing$ or $K\not\subset\Q(\sqrt{-a})$, where $a\in \N$.

\end{enumerate}
\end{theorem}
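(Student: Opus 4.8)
The plan: items (\ref{Vaserstein}) and (\ref{MRS}) are quoted verbatim from \cite{Vaserstein} and \cite{MorganRapinchukSury} and need no further argument, so the content to prove is (\ref{units and uper triang}) and (\ref{OO1}); these are logically independent, with (\ref{OO1}) providing the reduction of an arbitrary finitely generated ring of algebraic numbers to an $S$-integer ring and (\ref{units and uper triang}) being the genuinely algebraic statement.

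\textbf{Item (\ref{units and uper triang}).} Let $B^{+}$ (resp.\ $B^{-}$) be the upper (resp.\ lower) triangular subgroup of $\SL_2\mathfrak R$; by transposition it suffices to treat $B^{+}$. The map $\begin{pmatrix}u&b\\0&u^{-1}\end{pmatrix}\mapsto u$ is a surjection $B^{+}\to\mathfrak R^{\times}$ whose kernel $U^{+}$, the subgroup of matrices with $1$'s on the diagonal, is abelian and isomorphic to $(\mathfrak R,+)$; the same reparametrization realizes $B^{+}$ as $\mathfrak R^{\times}\ltimes\mathfrak R$ with $u$ acting by multiplication by $u^{2}$, as in the remark preceding the statement. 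Hence if $\mathfrak R^{\times}$ is finite, $B^{+}$ contains the abelian subgroup $U^{+}$ with finite index, so it is virtually abelian. For the converse I argue contrapositively: assume $B^{+}$ is virtually abelian, choose an abelian finite-index subgroup, let $N\trianglelefteq B^{+}$ be its normal core (abelian, of finite index $m:=[B^{+}:N]$), so that $g^{m}\in N$ for every $g$ and therefore $[g^{m},h^{m}]=1$ for all $g,h\in B^{+}$. Apply this with $g=\mathrm{diag}(u,u^{-1})$, $u\in\mathfrak R^{\times}$, and $h=\bigl(\begin{smallmatrix}1&1\\0&1\end{smallmatrix}\bigr)$; using
\[
\begin{pmatrix}u&0\\0&u^{-1}\end{pmatrix}\begin{pmatrix}1&r\\0&1\end{pmatrix}\begin{pmatrix}u^{-1}&0\\0&u\end{pmatrix}=\begin{pmatrix}1&u^{2}r\\0&1\end{pmatrix},
\]
the relation $g^{m}h^{m}g^{-m}=h^{m}$ becomes $u^{2m}\cdot m=m$ in $\mathfrak R$, and since $\mathfrak R$ is an integral domain of characteristic $0$ this forces $u^{2m}=1$. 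Thus $\mathfrak R^{\times}$ has exponent dividing $2m$; being finitely generated (it embeds into $\O_K[S^{-1}]^{\times}$, which is finitely generated by Dirichlet's $S$-unit theorem, cf.\ item (\ref{OO1})), $\mathfrak R^{\times}$ is finite. The same computation with $u^{-2}$ replacing $u^{2}$ handles $B^{-}$.

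\textbf{Item (\ref{OO1}).} Write $\mathfrak R=\Z[\a_{1},\dots,\a_{n}]$ with $\a_{i}\in\~\Q$ and put $K=\Q(\a_{1},\dots,\a_{n})$, a number field that is precisely the fraction field of $\mathfrak R$. Clearing denominators (some $N\in\N$ makes each $N\a_{i}$ an algebraic integer) gives $\Z\subseteq\mathfrak R\subseteq\O_{K}[\tfrac1N]$. Let $\widetilde{\mathfrak R}$ be the integral closure of $\mathfrak R$ in $K$: from $\Z\subseteq\mathfrak R$ we get $\O_{K}\subseteq\widetilde{\mathfrak R}$, and from $\O_{K}[\tfrac1N]$ being integrally closed we get $\widetilde{\mathfrak R}\subseteq\O_{K}[\tfrac1N]$. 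An overring of the Dedekind domain $\O_{K}$ contained in $\O_{K}[\tfrac1N]$ is a ring of $S$-integers $\O_{K}[S^{-1}]$ for a set $S$ of finite places lying among the finitely many primes dividing $N$; moreover $\widetilde{\mathfrak R}$ is module-finite over $\mathfrak R$ (finiteness of the integral closure of a finitely generated $\Z$-algebra), so $\widetilde{\mathfrak R}/\mathfrak R$ is a finitely generated torsion $\mathfrak R$-module, hence finite, and $d:=[\O_{K}[S^{-1}]:\mathfrak R]<\8$. This is the claimed finite-index inclusion. For the units, $\mathfrak R^{\times}\leq\O_{K}[S^{-1}]^{\times}$, and since $d$ annihilates the quotient we have $d\,\O_{K}[S^{-1}]\subseteq\mathfrak R$; hence the kernel of the reduction $\O_{K}[S^{-1}]^{\times}\to(\O_{K}[S^{-1}]/d\,\O_{K}[S^{-1}])^{\times}$ (a finite group) is contained in $\mathfrak R^{\times}$ (if $u\equiv 1$ then $u$ and $u^{-1}$ lie in $1+d\,\O_{K}[S^{-1}]\subseteq\mathfrak R$), so $\mathfrak R^{\times}$ has finite index in $\O_{K}[S^{-1}]^{\times}$ and one is infinite exactly when the other is. Finally, by Dirichlet's $S$-unit theorem $\O_{K}[S^{-1}]^{\times}$ has rank $r_{1}+r_{2}-1+|S|$, which vanishes precisely when $S=\varnothing$ and $r_{1}+r_{2}=1$, i.e.\ when $K=\Q$ or $K$ is imaginary quadratic, i.e.\ $K\subset\Q(\sqrt{-a})$ for some $a\in\N$; therefore $\O_{K}[S^{-1}]^{\times}$ (equivalently $\mathfrak R^{\times}$) is infinite iff $S\neq\varnothing$ or $K\not\subset\Q(\sqrt{-a})$.

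\textbf{Main obstacle.} The computation in (\ref{units and uper triang}) is short and the surrounding argument is bookkeeping; the part that needs the most care is (\ref{OO1}), specifically the two finite-index claims — that $[\O_{K}[S^{-1}]:\mathfrak R]<\8$, which rests on finiteness of integral closure for finitely generated $\Z$-algebras together with the classification of overrings of a Dedekind domain, and that this finiteness is inherited by the unit groups. I would also be careful to state that ``finite index inclusion'' refers to finite index of additive groups and that $K$ must be taken to be the fraction field of $\mathfrak R$, not an arbitrary number field containing it, since otherwise the finite-index statement fails (e.g.\ $\Z\subset\O_{K}$).
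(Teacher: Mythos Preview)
Your proposal is correct and in fact supplies considerably more than the paper does. The paper's own treatment of this theorem is minimal: for item (\ref{units and uper triang}) it records only the observation you also start from---that the triangular subgroup is $\mathfrak R^\times\ltimes\mathfrak R$ with the square action---and declares the result immediate; for item (\ref{OO1}) it says the statement is folklore and defers the proof entirely to a forthcoming update of \cite{OsinOyakawa}. So there is no substantive comparison to make: you have filled in what the paper leaves out.

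Two small comments. In item (\ref{units and uper triang}) you can avoid the forward reference to item (\ref{OO1}) and the $S$-unit theorem: once you have $u^{2m}=1$ for every unit $u$, every unit is a root of unity in the number field $K=\mathrm{Frac}(\mathfrak R)$, and a number field contains only finitely many roots of unity; this already gives $|\mathfrak R^\times|<\infty$. In item (\ref{OO1}), the implication ``$\widetilde{\mathfrak R}/\mathfrak R$ is a finitely generated torsion $\mathfrak R$-module, hence finite'' is not automatic for a general domain; it holds here because $\mathfrak R$ is a one-dimensional Noetherian domain with finite residue fields (equivalently, because $\mathfrak R$ and $\widetilde{\mathfrak R}$ are both finitely generated torsion-free modules over the PID $\Z[1/N]$ of the same rank, so their quotient is finite). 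With that clause added, your argument for (\ref{OO1}) is complete and is exactly the kind of proof one would expect to appear in the reference the paper cites.
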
 

The last unjustified item is currently folklore, but its proof will appear in a forthcoming update of \cite{OsinOyakawa}.
 We can now prove the following result about $\SL_2\mathfrak R$; the proof follows the strategy developed in \cite{ShalomBddGen(T)}.
Recall that actions on bounded metric spaces are always acylindrical.

\begin{prop}\label{prop: loc comp nonamen proper}
    For each $i=1, \dots, \D$, let $X_i$ be unbounded locally compact $\delta$-hyperbolic spaces such that if $\xi\in \~X_i$ then $\fix\{\xi\}\leq {\Isom X_i}$ is amenable. Let $\G$ be nonamenable and $\rho_i: \G\to \Isom X_i$ have amenable kernel such that $\Delta(\rho_1, \dots, \rho_\D): \G\to \Aut_{\! 0}\X$ is proper. If $I \subset \{1, \dots, \D\}$ is the subset of indices for which $\rho_i(\G)$ is general type, then $I\neq \varnothing$ and $\Delta(\rho_i: i\in I):\G\to \Aut_{\! 0}\Prod{i\in I}{}X_i$ is proper.
\end{prop}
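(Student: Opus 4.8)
The plan is to show that, under the stated hypotheses, \emph{every} factor action $\rho_i(\G)$ must already be of general type, so that $I=\{1,\dots,\D\}$ and both assertions become immediate.

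The starting point is that for each $i$ the image $\rho_i(\G)\cong\G/\ker\rho_i$ is nonamenable: an extension of an amenable group by an amenable group is amenable, so if $\rho_i(\G)$ were amenable then $\G$, being (amenable)-by-(amenable) via $\ker\rho_i$, would be amenable — contrary to hypothesis. Thus it suffices to prove that a nonamenable image of an isometric action on one of the $X_i$ must be of general type.

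To see this I would invoke the classification in Theorem~\ref{thm:actionsclassrk1} and rule out each of the other four possibilities for $\rho_i(\G)$. If $\rho_i(\G)$ is elliptic it has a bounded orbit; since $X_i$ is locally compact, the discussion in Section~\ref{Sec:toponisomgroup} gives that $\Isom X_i$ is locally compact and acts properly on the (proper) space $X_i$, so the set $\{\phi\in\Isom X_i:d_i(\phi x_0,x_0)\leq B\}$ is relatively compact; as $\rho_i(\G)$ lies in such a set it is relatively compact, hence amenable — a contradiction. If $\rho_i(\G)$ is parabolic or quasiparabolic it fixes a point $\xi\in\partial X_i\subseteq\~X_i$, so $\rho_i(\G)\leq\fix\{\xi\}$, which is amenable by hypothesis — a contradiction. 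If $\rho_i(\G)$ is lineal, the subgroup of $\rho_i(\G)$ fixing both limit points has index at most two and is contained in the amenable group $\fix\{\xi\}$ for either limit point $\xi\in\partial X_i$; since a group with an amenable finite-index subgroup is amenable, this again contradicts nonamenability. Hence $\rho_i(\G)$ is of general type for every $i$.

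Therefore $I=\{1,\dots,\D\}$; in particular $I\neq\varnothing$, and $\Delta(\rho_i:i\in I)$ coincides with $\Delta(\rho_1,\dots,\rho_\D)$, which is proper by hypothesis. I do not anticipate a genuine obstacle; the only place needing slight care is the elliptic case, where one must pass from ``all orbits bounded'' to ``relatively compact image'' using the standing facts about isometry groups of proper spaces, together with the elementary permanence properties of amenability (subgroups, finite-index overgroups, closures of amenable subgroups) used throughout.
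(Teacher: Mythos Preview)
Your argument for the elliptic case does not go through, and in fact your conclusion that $I=\{1,\dots,\D\}$ is false. You deduce that an elliptic image $\rho_i(\G)$ is ``relatively compact, hence amenable''. It is true that in a proper space the closure $\overline{\rho_i(\G)}\leq\Isom X_i$ is compact, hence amenable \emph{as a locally compact topological group}. But the contradiction you need is with the nonamenability of $\rho_i(\G)\cong\G/\ker\rho_i$ \emph{as a discrete group}, and a discrete subgroup of a compact group need not be amenable: the classical embedding $F_2\hookrightarrow SO(3)$ is the standard counterexample. Concretely, take $\G=F_2$, let $\rho_1:F_2\hookrightarrow SO(3)\hookrightarrow\Isom\mathbb H^3$ be this faithful elliptic action, and let $\rho_2:F_2\to\Isom\mathbb H^2$ be a faithful Fuchsian (hence proper, general type) action. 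All hypotheses of the proposition are satisfied (point and boundary stabilizers in $\mathbb H^n$ are compact or solvable, hence amenable; kernels are trivial; the diagonal action is proper since $\rho_2$ already is), yet $\rho_1(\G)$ is elliptic, so $I=\{2\}\subsetneq\{1,2\}$.

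The paper's proof is accordingly more cautious: it uses the amenability hypothesis only at \emph{boundary} points to rule out parabolic, quasiparabolic and lineal factors, and explicitly allows elliptic factors to survive. Nonemptiness of $I$ is then obtained from a different source---properness of $\Delta(\rho_1,\dots,\rho_\D)$ forces its kernel to be finite (Lemma~\ref{Lem:AU-acyl has finite kernel}), so the infinite group $\G$ cannot act elliptically on every factor---and properness of $\Delta(\rho_i:i\in I)$ comes from Lemma~\ref{Lem:elimellfactors}, which says that elliptic factors may be discarded without losing (AU-)acylindricity. You will need both of these ingredients; there is no shortcut around the elliptic case.
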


\begin{proof}
    Since $\G$ is nonamenable and the kernels of $\rho_1, \dots, \rho_\D$ are amenable, the images $\rho_i(\G)$ must be nonamenable. Furthermore,  the hypotheses on amenability of boundary point stabilizers and the classification of actions Theorem~\ref{thm:actionsclassrk1} eliminates all possibilities except elliptic and general type. 

    By Lemma~\ref{Lem:AU-acyl has finite kernel} the kernel $\Delta(\rho_1, \dots, \rho_\D)$ is finite and since $\G$ is nonamenable, it is infinite. Therefore, not all factor actions are elliptic, in particular, the set of indices $I$ for which $\rho_i(\G)$ is general type is not empty. By Lemma~\ref{Lem:elimellfactors} we have that $\Delta(\rho_i: i\in I):\G\to \Aut_{\! 0}\Prod{i\in I}{}X_i$ is proper.
 \end{proof}

\begin{remark}\label{rem:amen elem}
    Let $X$ be one of the standard algebraic hyperbolic spaces or an infinite locally finite tree. As discussed in Section~\ref{sec: Godement}, or proved in \cite[Section 4]{PaysValette}, $\fix\{\xi\}$ is amenable for every $\xi\in \partial X$, and therefore, these spaces satisfy the hypotheses of Proposition~\ref{prop: loc comp nonamen proper}.
\end{remark}

\begin{cor}\label{cor:fin gen}
    If $\G\leq \SL_2\~\Q$ be finitely generated and nonamenable then there exists a proper action with general type factors $\G\to \Aut_{\! 0}\X$, where $\X$ is a product of finitely many copies of the hyperbolic plane, hyperbolic 3-space and Bruhat-Tits trees.
\end{cor}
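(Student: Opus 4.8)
The plan is to reduce to Proposition~\ref{prop: loc comp nonamen proper} applied to a carefully chosen product of rank-one symmetric spaces and Bruhat-Tits trees. First I would invoke the structure result Theorem~\ref{thm: fin gen ring and units}(\ref{OO1}): since $\G\leq \SL_2\~\Q$ is finitely generated, all matrix entries lie in some finitely generated subring $\mathfrak R\subset \~\Q$, and there is a number field $K$ and a finite set of valuations $S$ with $\mathfrak R\leq \O_K[S^{-1}]$ a finite-index inclusion, so $\G\leq \SL_2\O_K[S^{-1}]$. Since AU-acylindricity (indeed properness) with general type factors is inherited by and detected on finite-index subgroups (via Proposition~\ref{prop: fin ind induction}), and we are allowed to enlarge $\G$ within $\SL_2\O_K[S^{-1}]$ and pass to finite index freely, I would work directly with $\G\leq \SL_2\O_K[S^{-1}]$.

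Next, realize $\SL_2\O_K[S^{-1}]$ as an $S$-arithmetic lattice in $G=\Prod{v}\SL_2(K_v)$, the product over the archimedean places of $K$ (each $\SL_2\R$ or $\SL_2\mathbb C$, contributing $\Isom \mathbb H^2$ or $\Isom \mathbb H^3$) and the places $v\in S$ (each $\SL_2(K_v)$ for $K_v$ a non-archimedean local field, acting on its Bruhat-Tits tree $T_v$, which is locally finite). Then $\G$ acts properly on $\X_0 = \Prod{v}Z_v$, the $\ell^2$-product of these rank-one symmetric spaces and trees, since a discrete subgroup of $G$ acts properly on $G/K$ and hence on $\X_0$; this proper diagonal action is $\Delta(\rho_1,\dots,\rho_\D)$ with each $\rho_i$ the composition $\G\hookrightarrow G\to \SL_2(K_v)\to \Isom Z_v$. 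The factors $Z_v$ are unbounded locally compact $\delta$-hyperbolic spaces, and by Remark~\ref{rem:amen elem} every boundary point stabilizer $\fix\{\xi\}\leq \Isom Z_v$ is amenable, so the hypotheses of Proposition~\ref{prop: loc comp nonamen proper} on the $X_i$ are met. The kernel of each $\rho_i$ is contained in the center of $\SL_2$ intersected with $\G$, which is finite, hence amenable, and $\G$ is nonamenable by hypothesis. Applying Proposition~\ref{prop: loc comp nonamen proper} yields a nonempty subset $I$ of indices for which $\rho_i(\G)$ is of general type, and the restricted diagonal action $\Delta(\rho_i:i\in I):\G\to \Aut_{\!0}\Prod{i\in I}Z_i$ is proper. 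Setting $\X= \Prod{i\in I}Z_i$, which is a product of finitely many copies of $\mathbb H^2$, $\mathbb H^3$, and Bruhat-Tits trees, gives the claim.

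The main obstacle I anticipate is purely bookkeeping: confirming that a finitely generated $\G\leq \SL_2\~\Q$ genuinely sits as a lattice in a product of $\SL_2$ over local fields with the right model geometries — this is where Theorem~\ref{thm: fin gen ring and units}(\ref{OO1}) does the heavy lifting in identifying $K$ and $S$, and one must be careful that the Bruhat-Tits trees for $\SL_2(K_v)$ at $v\in S$ are locally finite (they are, with valency $|{\mathcal O}/\mathfrak p|+1$) so that $\X_0$ is locally compact, which is what licenses the use of Proposition~\ref{prop: loc comp nonamen proper}. A secondary point is ensuring the passage between $\G\leq \SL_2\mathfrak R$ and $\G\leq \SL_2\O_K[S^{-1}]$ and between $\G$ and finite-index overgroups does not disturb nonamenability (it does not: nonamenability passes up and down finite-index inclusions) nor the property of having general type factors for the final action. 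No new ideas beyond assembling the cited results should be required.
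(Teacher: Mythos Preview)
Your approach is essentially the same as the paper's: embed $\G$ into $\SL_2\O_K[S^{-1}]$, use the proper action of this $S$-arithmetic lattice on the product of rank-one symmetric spaces and Bruhat-Tits trees, verify the hypotheses of Proposition~\ref{prop: loc comp nonamen proper} via Remark~\ref{rem:amen elem}, and apply it. Your discussion of passing to finite-index overgroups and invoking Proposition~\ref{prop: fin ind induction} is unnecessary---once $\G$ sits inside the lattice it inherits the proper action directly as a subgroup, with no need to enlarge or induce---but this is clutter rather than error.
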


\begin{proof} Since $\G$ is finitely generated, there exists a finite field extension $\Q\subset K$, and $S$ a finite set of valuations on $\Q$ such that $\G\leq \SL_2\O_K[S^{-1}]$. The latter is a lattice in $\Prod{i=1}{r} \SL_2\R \times \Prod{j =1}{c} \SL_2\mathbb{C} \times \Prod{s \in S}{}\SL_2 K_s$ which acts properly (and with finite covolume) on the associated product of rank-one symmetric spaces and Bruhat-Tits trees, and hence the action of $\G$ is proper as well. Furthermore, the factor actions are injective and hence have trivial kernel.
By Remark~\ref{rem:amen elem}, these factors have amenable boundary-point stabilizers and the result follows from Proposition~\ref{prop: loc comp nonamen proper}.

\end{proof}

\begin{prop}\label{prop:SL2ZS} 
Let $\mathfrak R\subset \~\Q$ be a finitely generated infinite ring of algebraic numbers and let $\G=\SL_2 \mathfrak R$. Then there exists a nonelliptic acylindrical action $\G\to \Aut\X$ if and only if  $\mathfrak R^\times$ is finite, in which case $\G$ is acylindrically hyperbolic.

\end{prop}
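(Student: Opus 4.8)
The plan is to establish both implications, first reducing to the case where $\mathfrak R$ is a ring of $S$-integers. By Theorem~\ref{thm: fin gen ring and units}(\ref{OO1}), $\mathfrak R$ is a finite-index subring of some $\O_K[S^{-1}]$, so a standard congruence-subgroup argument (using the conductor of $\mathfrak R$ in $\O_K[S^{-1}]$) shows $\G=\SL_2\mathfrak R$ is a finite-index subgroup of $\SL_2\O_K[S^{-1}]$. Now both sides of the asserted equivalence, together with the ``in which case'' clause, are invariant under passing between a group and a finite-index subgroup: restriction of an acylindrical (resp.\ nonelliptic, resp.\ acylindrically hyperbolic) action to a finite-index subgroup is again acylindrical (resp.\ nonelliptic, since the big orbit is a finite union of translates of the small one, resp.\ acylindrically hyperbolic), and conversely Proposition~\ref{prop: fin ind induction} produces from a nonelliptic acylindrical action of $\G$ one of $\SL_2\O_K[S^{-1}]$, the induced action remaining nonelliptic because by item~(\ref{(2)}) of that proposition its restriction to the finite-index subgroup $\G_0$ is a diagonal of automorphic twists of the original action, hence still has unbounded orbits. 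Since $\mathfrak R^\times$ is finite iff $(\O_K[S^{-1}])^\times$ is (Theorem~\ref{thm: fin gen ring and units}(\ref{OO1}) again), I may and will assume $\mathfrak R=\O_K[S^{-1}]$.

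\emph{Suppose $\mathfrak R^\times$ is finite.} By Theorem~\ref{thm: fin gen ring and units}(\ref{OO1}) this forces $S=\varnothing$ and $K\subseteq\Q(\sqrt{-a})$ for some $a\in\N$, so $K=\Q$ or $K$ is imaginary quadratic and $\mathfrak R=\O_K$. Then $\G=\SL_2\O_K$ is a lattice in the rank-one simple Lie group $\SL_2\R$ (if $K=\Q$) or $\SL_2\mathbb C$ (if $K$ is imaginary quadratic), acting properly on the associated real hyperbolic space $\mathbb H^2$ or $\mathbb H^3$. This action is of general type: $\G$ is Zariski-dense (see Section~\ref{sec: Godement}) and unbounded in the ambient Lie group, so by the classification Theorem~\ref{thm:actionsclassrk1} together with amenability of boundary-point stabilizers (Remark~\ref{rem:amen elem}) it is neither elliptic, parabolic, lineal, nor quasiparabolic. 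As the action is proper, every loxodromic element of $\G$ is a WPD element, so $\G$ is acylindrically hyperbolic and, by \cite{BBF,BBFS}, admits a general type — in particular nonelliptic — acylindrical action on some (necessarily non-locally-compact) hyperbolic space $X$, which we regard as an action $\G\to\Aut X$ with $\D=1$. This settles this direction and the ``in which case'' clause.

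\emph{Suppose $\mathfrak R^\times$ is infinite;} I claim no nonelliptic acylindrical action $\rho\colon\G\to\Aut\X$ exists, for any finite product $\X$ of $\delta$-hyperbolic spaces. Since the torsion subgroup of $\mathfrak R^\times$ is finite (it consists of roots of unity in the number field generated by $\mathfrak R$), there is a unit $s\in\mathfrak R^\times$ of infinite order. Let $U_1,U_2\leq\G$ be the upper and lower unipotent subgroups, each isomorphic to $(\mathfrak R,+)$, let $T=\<\mathrm{diag}(s,s^{-1})\>$, and set $B=U_1T=\left\{\begin{pmatrix}s^n & r\\ 0 & s^{-n}\end{pmatrix}: n\in\Z,\ r\in\mathfrak R\right\}\leq\G$, so $B\cong\mathfrak R\rtimes_{s^2}\Z$. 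First, $B$ is not virtually abelian: a finite-index abelian subgroup $A\leq B$ would contain an element projecting to a nonzero element $m_0$ of $B/\mathfrak R\cong\Z$ together with a nonzero element $x$ of the infinite group $A\cap\mathfrak R$, and commutativity would give $s^{2m_0}x=x$, i.e.\ $(s^{2m_0}-1)x=0$ in the domain $\mathfrak R$, forcing $s^{2m_0}=1$, contrary to $s$ having infinite order. As $B$ is solvable it contains no nonabelian free subgroup, and it is not virtually $\Z^k$; since $\rho|_B$ is acylindrical, the Tits Alternative (Theorem~\ref{intro:titsalt}) forces $\rho|_B$, hence $\rho|_{U_1}$, to be elliptic. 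The identical argument applied to the lower-triangular subgroup shows $\rho|_{U_2}$ is elliptic. Now fix $x_0\in\X$ and put $D=\max\{\diam(U_1x_0),\diam(U_2x_0)\}<\8$. By Theorem~\ref{thm: fin gen ring and units}(\ref{MRS}) every $g\in\G=\mathrm{EL}_2\mathfrak R$ is a product $g=g_1\cdots g_9$ with each $g_j\in U_1\cup U_2$; telescoping and using that $\rho$ is isometric,
$$d(\rho(g)x_0,x_0)\;\leq\;\Sum{j=1}{9}d(\rho(g_j)x_0,x_0)\;\leq\;9D.$$
Hence the $\G$-orbit of $x_0$ is bounded, so $\rho$ is elliptic — a contradiction. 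This completes the equivalence.

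\emph{On the main obstacle.} The heart of the argument is showing that the unipotent subgroups must act elliptically. This cannot be read off from $U_1$ alone: $U_1\cong(\mathfrak R,+)$ is abelian and is perfectly compatible with a nonelliptic acylindrical action (it could act loxodromically), so the Tits Alternative must be applied not to $U_1$ but to the strictly larger solvable group $B=U_1\rtimes T$, and it is exactly the existence of an infinite-order unit that makes $B$ fail to be virtually abelian. Once both unipotent subgroups are elliptic, bounded elementary generation (Morgan–Rapinchuk–Sury) turns this into a bound on the orbit of a single point, and the contradiction with nonellipticity is immediate.
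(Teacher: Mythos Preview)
Your proof is correct and follows essentially the same approach as the paper: apply the Tits Alternative to a solvable, not-virtually-abelian (upper/lower triangular) subgroup to force the unipotents to act elliptically, then invoke bounded elementary generation (Morgan--Rapinchuk--Sury) to conclude the whole group acts elliptically; and for the converse, recognize $\SL_2\O_K$ as a rank-one lattice with a proper general-type action and a WPD element. The only organizational difference is that you reduce to $\mathfrak R=\O_K[S^{-1}]$ at the outset (via the conductor/congruence-subgroup argument and Proposition~\ref{prop: fin ind induction}), whereas the paper keeps $\mathfrak R$ general and instead induces the given action up to $\SL_2\O_K[S^{-1}]$ inside the proof; and you apply the Tits Alternative to the explicit subgroup $B=U_1\rtimes\langle\mathrm{diag}(s,s^{-1})\rangle$, while the paper applies it to the full triangular groups using Theorem~\ref{thm: fin gen ring and units}(\ref{units and uper triang}).
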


\begin{proof}
Assume that $\mathfrak R^\times$ is infinite and $\rho:\G\to \Aut \X$ is acylindrical. We will deduce that the action is elliptic. By Theorem~\ref{thm: fin gen ring and units}.\ref{OO1}, there exists a finite field extension $K\supset \Q$ and a finite set of non-archimdean valuations $S$ on $K$ such that $\mathfrak  R\leq \O_K[S^{-1}]$. Therefore,  $\G=\SL_2 \mathfrak R\leq \SL_2 \O_K[S^{-1}]$ are finite index inclusions of groups. Moreover, since $\mathfrak R$ has infinitely many units, so does $\O_K[S^{-1}]$. Let $\G':=\SL_2 \O_K[S^{-1}]$ and $F$ denote the set of cosets $\G'/\G$.

By Proposition~\ref{prop: fin ind induction}, setting $\X'=\Prod{\chi\in F}{}\X$ there exists  $\rho':\G'\to \Aut \X'=\Sym(F)\ltimes \Aut \X'$ that is acylindrical and such that $\rho'(\G)$ does not permute the factor corresponding to $\chi_0$, the identity coset in $F$. 
Since projections are 1-Lipschitz, in particular, if $(\mathbf{x}_\chi)_{\chi\in F}, (\mathbf{y}_\chi)_{\chi\in F}\in \X'$ then 
$$d(\mathbf{x}_{\chi_0}, \mathbf{y}_{\chi_0})\leq d\big((\mathbf{x}_\chi)_{\chi\in F}, (\mathbf{y}_\chi)_{\chi\in F}\big).$$

Therefore, to deduce that the action of $\rho(\G)$ on $\X$ is elliptic, it is sufficient to deduce that the $\rho'(\G)$ action on $\X'$ is elliptic. To this end, let  $U',L'\leq \G'$ be the upper and lower  triangular groups. Since the group of units in  $\mathfrak R$ is infinite, so is that in $\O_K[S^{-1}]$ and hence $U'$ and $L'$ are not virtually abelian.
  By the Tits Alternative Theorem~\ref{intro:titsalt}, it follows that the action of $\rho'(U')$ and $\rho'(L')$ on $\X'$ must be elliptic.
  
  We now deduce that the action $\rho'(\G')$ is also elliptic.
  Fix $(\mathbf{x}_\chi)_{\chi\in F}\in \X'$ and $R>0$ such that $d((\mathbf{x}_\chi)_{\chi\in F},\rho'(g)(\mathbf{x}_\chi)_{\chi\in F})\leq R$ for all $g\in U'\cup L'$. By Theorem~\ref{thm: fin gen ring and units}.\ref{MRS} $\G'$ is boundedly generated by $U'$ and $L'$, i.e. for every $g'\in \G'$ there exists $g_1, \dots, g_9\in U'\cup L'$ such that $g'= g_1 \cdots g_9$. We then have that 

  \begin{eqnarray*}
      d\Big((\mathbf{x}_\chi)_{\chi\in F}\bbcomma\rho'(g)(\mathbf{x}_\chi)_{\chi\in F}\Big)&=& d\Big((\mathbf{x}_\chi)_{\chi\in F}\bbcomma\rho'(g_1 \cdots g_9)(\mathbf{x}_\chi)_{\chi\in F}\Big)\\
  &\leq & \Sum{i=1}{9}d\Big((\mathbf{x}_\chi)_{\chi\in F}\bbcomma\rho'(g_i)(\mathbf{x}_\chi)_{\chi\in F}\Big)\leq 9R.
  \end{eqnarray*}
  In particular, $\rho'(\G')$  and hence the subgroup $\rho'(\G)$ are acting elliptically.

Conversely, let $\mathfrak R^\times$ be finite. Then $\mathfrak{R}\leq \mathcal{O}_K$ is a finite index inclusion of abelian groups, where $\mathcal{O}_K$ is the ring of integers in $K= \Q(\sqrt{-a})$, for some $a\in\N$. Therefore, $\SL_2\mathfrak{R}\leq \SL_2\mathcal{O}_K$ is also finite index. In this case, $\SL_2\mathcal{O}_K$ is a nonuniform lattice in $\SL_2 \R$ (respectively $\SL_2\mathbb C$), according to whether $a=0$ (respectively $a>0$). In either case, the standard action on the associated hyperbolic space is general type and proper. Furthermore, any matrix whose trace has absolute value larger than 2 is a WPD element. Therefore,  $\SL_2\mathcal{O}_K$ is acylindrically hyperbolic  and so is $\G$. \end{proof}

\subsection{Genericity for Unipotent-Free}
In this section, we deduce that admitting an AU-acylindrical action on a CAT(0) space is generic among the finitely generated subgroups of $SU_n$. The key to this is the following result by Douba.

\begin{theorem}\cite[Theorem 1.1.i]{Douba}
Let $\G\leq \SL_n\mathbb C$ be finitely generated such that no non-trivial element of $\G$ is unipotent. Then $\G$ acts properly on a CAT(0) space. 
\end{theorem}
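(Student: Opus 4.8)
The plan is to produce, from the given linear structure, a discrete action of $\G$ on a finite product of symmetric spaces and Bruhat--Tits buildings; since each such factor is $\mathrm{CAT}(0)$ and the $\ell^2$-product of $\mathrm{CAT}(0)$ spaces is $\mathrm{CAT}(0)$, this gives the desired proper action. The unipotent-free hypothesis enters precisely in arranging that the action be \emph{proper}: it is what allows one to replace the possibly transcendental ring of matrix entries by (a ring of $S$-integers in) a number field, where such a discrete action is available.

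\emph{Step 1 (an arithmetic model).} Since $\G$ is finitely generated, the entries of a finite generating set together with their inverses generate a finitely generated subring $R\subset\mathbb{C}$, which is a domain of characteristic $0$, and $\G\leq\SL_n R$. One would then invoke a specialization argument of the type used by Tits in his proof of the Tits alternative: a finitely generated subgroup of $\SL_n\mathbb{C}$ admits a faithful specialization into $\SL_n\~\Q$, and, ``$\gamma$ is unipotent'' being a Zariski-closed condition on each of the countably many $\gamma\in\G$, the specialization may be chosen so as to introduce no new unipotent elements --- so one reduces to the case $\G\leq\SL_n K$ with $K$ a number field. Enlarging the ground ring, write $\G\leq\SL_n\big(\O_K[S^{-1}]\big)$ for a finite set $S$ of places of $K$ containing all archimedean places and all finite places dividing the denominators or determinants occurring in the chosen generators.

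\emph{Step 2 (the $\mathrm{CAT}(0)$ space and properness).} For each $v\in S$ let $Z_v$ be the Riemannian symmetric space of $\SL_n K_v$ when $v$ is archimedean, and the Bruhat--Tits building of $\SL_n K_v$ when $v$ is non-archimedean; each $Z_v$ is a complete $\mathrm{CAT}(0)$ space, so $Z=\Prod{v\in S}{}Z_v$ with the $\ell^2$-product metric is $\mathrm{CAT}(0)$. The diagonal homomorphism $\SL_n\big(\O_K[S^{-1}]\big)\to\Prod{v\in S}{}\SL_n K_v$ acts on $Z$ by isometries. Because $S$ contains every archimedean place, $\O_K[S^{-1}]$ is discrete in $\Prod{v\in S}{}K_v$, hence $\SL_n\big(\O_K[S^{-1}]\big)$ is discrete in $\Prod{v\in S}{}\SL_n K_v$; since the point-stabilizers in $Z_v$ are exactly the maximal compact subgroups of $\SL_n K_v$, this action --- and therefore the restricted action of $\G$ --- on $Z$ is proper. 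This finishes the proof.

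\emph{Main obstacle.} Everything of substance is in Step 1. For a group of positive transcendence degree one cannot simply conjugate the given representation into $\SL_n\~\Q$, so the descent to a number field must be carried out at the level of the abstract group: one must exhibit a faithful homomorphism into $\SL_n\~\Q$ that simultaneously respects all (possibly infinitely many) relations of $\G$ and avoids the countably many witnesses to unipotency. Verifying that such a specialization exists --- equivalently, that unipotent-freeness descends to a faithful arithmetic model --- is the heart of the matter; once past it, Step 2 is the standard fact that $S$-arithmetic groups act properly on the associated product of symmetric spaces and buildings.
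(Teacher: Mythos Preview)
This theorem is not proved in the present paper --- it is simply quoted from Douba --- so there is no ``paper's proof'' to compare against. I will therefore assess your outline on its own merits.

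Your Step~2 is correct and entirely standard: once $\G\leq\SL_n\O_K[S^{-1}]$ with $S$ containing all archimedean places, discreteness of the $S$-integers gives discreteness of $\SL_n\O_K[S^{-1}]$ in $\prod_{v\in S}\SL_n K_v$, and the diagonal action on the product of symmetric spaces and Bruhat--Tits buildings is proper. Note, however, that this step uses no unipotent-freeness whatsoever (e.g.\ $\SL_2\Z$ acts properly on $\mathbb H^2$), so in your scheme the hypothesis has to be doing all of its work in Step~1.

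Step~1 is where the genuine gap lies, and you have correctly flagged it as the ``main obstacle'', but the mechanism you propose does not work. You want a $\~\Q$-point of $\mathrm{Spec}\,R$ avoiding the countably many proper closed loci ``$\gamma$ becomes unipotent''. Over an uncountable algebraically closed field this is a Baire-type argument; over the \emph{countable} field $\~\Q$ it fails outright. Concretely, for $R=\Z[t]$ one can take $S$ to be the set of all minimal polynomials over $\Z$; every $\~\Q$-point of $\mathbb A^1$ is a zero of some member of $S$. So ``countably many proper closed conditions'' cannot be avoided over $\~\Q$ by cardinality alone. The specialization trick you attribute to Tits is used by Tits to preserve \emph{finitely} many inequalities (enough to keep two specific elements generating a free semigroup), not the countably many you need here; the two situations are not analogous. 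Whether a faithful unipotent-free specialization to $\SL_n\~\Q$ \emph{always} exists is a separate question, but your argument as written does not establish it, and it is not clear how the unipotent-free hypothesis would help a Baire-style avoidance argument even if one could be made.

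There is also external evidence that Douba's route is different from yours. The paper remarks that for $n=2$ the resulting CAT(0) space is a product of copies of $\mathbb H^2$, $\mathbb H^3$, and \emph{not necessarily locally finite} Bruhat--Tits trees. Non-locally-finite trees arise only from valuations whose completions are not locally compact --- exactly what one gets from discrete valuations on a transcendental finitely generated ring --- and would never appear if one had first specialized to a number field. This points to an argument that works directly with valuations on the ring $R$ of matrix entries: one chooses finitely many (possibly higher-rank-residue-field) discrete valuations so that every infinite-order element of $\G$ has an eigenvalue of nontrivial absolute value at one of them, and it is precisely the unipotent-free hypothesis that guarantees every infinite-order element has an eigenvalue which is not a root of unity and can therefore be detected in this way.
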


If $n=2$ then the associated CAT(0) space will be the $\ell^2$-product of copies of $\mathbb{H}^2, \mathbb{H}^3$ and (not necessarily locally finite) Bruhat-Tits trees, as above.

The reader may wonder how to find examples of such groups that are free of non-trivial unipotents: Consider the special unitary group $\mathrm{SU}_n\leq \SL_n\mathbb C$, which is the group of matrices whose inverse is given by the transpose-complex conjugate, i.e.  $AA^*=I$. By way of this equation, we see that all eigen-values of a unitary matrix must belong to the unit circle in $\mathbb C$, and in particular, the only unipotent element (i.e. the only element to have sole eigen value 1) in $\mathrm{SU}_n$ is the identity. Moreover, if $n>1$ then $\mathrm{SU}_n$ is not virtually solvable, and so by the Tits Alternative, it contains nonamenable groups.  Using Lemma~\ref{Lem:acyl+loc comp implies unif proper_p1}, we obtain the following corollary. Note that $\mathrm{SU}_n$ can also be replaced with any group within its isogeny class; isogenies are algebraic isomorphisms up to finite kernel.

\begin{cor}
    Let $n_1, \dots, n_k\in \N$, and $\G\leq \Prod{i=1}{k}\mathrm{SU}_{n_i}$ be finitely generated. Then $\G$ acts properly (and hence AU-acylindrically) on a CAT(0) space. If in fact $\G$ can be conjugated in to $\mathrm{SU}_{n_i} \cap \SL_{n_i}(\~\Q)$ for each $i=1, \dots, k$ then the CAT(0) space is uniformly locally compact and hence the $\G$ action is acylindrical.  If $\{n_1, \dots, n_k\}=\{2\}$, then the CAT(0) space is a finite product of copies of $\mathbb H^2$, $\mathbb H^3$, and locally finite trees.  
\end{cor}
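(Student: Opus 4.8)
The plan is to manufacture the action coordinate by coordinate from Douba's theorem, and then, in the algebraic case, to upgrade properness to acylindricity via Lemma~\ref{Lem:acyl+loc comp implies unif proper_p1}. For each $i$ let $\G_i\leq \mathrm{SU}_{n_i}$ denote the image of $\G$ under the $i$-th coordinate projection $\pi_i\colon\Prod{i=1}{k}\mathrm{SU}_{n_i}\to \mathrm{SU}_{n_i}$. Each $\G_i$ is finitely generated, and since a unitary matrix is diagonalizable with all eigenvalues on the unit circle, the only unipotent element of $\mathrm{SU}_{n_i}\leq \SL_{n_i}\mathbb C$ is the identity; hence $\G_i$ has no nontrivial unipotents. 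By \cite[Theorem 1.1.i]{Douba}, $\G_i$ acts properly on a CAT(0) space $Z_i$, and we equip $Z=\Prod{i=1}{k}Z_i$ with the $\ell^2$-product metric, which is CAT(0). As the $\G_i$ are the literal coordinate projections of $\G\leq \Prod{i=1}{k}\mathrm{SU}_{n_i}$, the map $g\mapsto(\pi_1(g),\dots,\pi_k(g))$ embeds $\G$ in $\Prod{i=1}{k}\G_i$, and the induced diagonal action $\G\to \Prod{i=1}{k}\Isom Z_i$ is proper: if a point of $Z$ and its $g$-image are within $\e$, then each coordinate is moved by at most $\e$, which confines each $\pi_i(g)$ to a finite set, and $g\mapsto(\pi_i(g))_i$ is injective. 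Since proper actions are AU-acylindrical, this proves the first assertion. If $\{n_1,\dots,n_k\}=\{2\}$, then by the observation recorded above (for $n=2$ Douba's space may be taken to be an $\ell^2$-product of copies of $\mathbb H^2$, $\mathbb H^3$ and Bruhat--Tits trees), $Z$ is a finite product of copies of $\mathbb H^2$, $\mathbb H^3$ and trees.

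Suppose now that $\G$ is conjugate into $\mathrm{SU}_{n_i}\cap \SL_{n_i}\~\Q$ for every $i$. Then, after conjugating, each $\G_i$ is a finitely generated subgroup of $\SL_{n_i}\~\Q$, so its matrix entries lie in a common number field $K$, whence $\G_i\leq \SL_{n_i}\O_K[S^{-1}]$ for some finite set $S$ of non-archimedean places of $K$. The $S$-arithmetic group $\SL_{n_i}\O_K[S^{-1}]$ is a lattice in $G_i=\Prod{v\mid\8}{}\SL_{n_i}K_v\times\Prod{v\in S}{}\SL_{n_i}K_v$, which acts properly and cocompactly on $Z_i^0$, the $\ell^2$-product of the symmetric spaces of the archimedean factors and the Bruhat--Tits buildings of the $v\in S$; thus $Z_i^0$ is a locally compact CAT(0) space whose isometry group acts cocompactly, and is therefore uniformly locally compact. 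With $Z^0=\Prod{i=1}{k}Z_i^0$, the diagonal $\G$-action on $Z^0$ is proper exactly as above, $Z^0$ is uniformly locally compact (a finite product of uniformly locally compact spaces is uniformly locally compact), and when all $n_i=2$ the buildings occurring are locally finite trees, which gives the last assertion.

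It remains to prove that the proper action of $\G$ on the uniformly locally compact space $Z^0$ is acylindrical; by Lemma~\ref{Lem:acyl+loc comp implies unif proper_p1} it suffices to show the action is uniformly proper, and this is exactly where the unipotent-free hypothesis enters. Since each coordinate of $g\in\G$ is conjugate to a unitary matrix, $g$ is semisimple over $\mathbb C$, hence over every completion, and so acts on each $Z_i^0$ --- and therefore on $Z^0$ --- as a semisimple, i.e. elliptic or hyperbolic, isometry. In the discrete group $\Prod{i}{}\SL_{n_i}\O_K[S^{-1}]$ an element acting elliptically on $Z^0$ fixes a point, hence lies in a finite stabilizer and has finite order bounded by a constant $M$ depending only on the $n_i$ and $[K:\Q]$ (Jordan--Minkowski), while an element acting hyperbolically has translation length bounded below by a fixed $\e_0>0$ over all of $\G$, its non-unipotent eigenvalues being $S$-units that are not roots of unity and hence of height bounded away from $0$. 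Thus for $\e<\e_0$ the coarse stabilizer $\set_\e\{x\}$ of any $x\in Z^0$ consists only of elements of order $\leq M$; each such $g$ fixes the circumcenter of its $\langle g\rangle$-orbit of $x$, a point within distance $M\e$ of $x$, and a covering argument using the uniform local compactness of $Z^0$ (together with the uniform bound $M$) bounds $|\set_\e\{x\}|$ by a quantity depending on $\e$ alone. Hence the action is uniformly proper, and acylindricity follows from Lemma~\ref{Lem:acyl+loc comp implies unif proper_p1}.

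The main obstacle is this last step: passing from the semisimple dichotomy to genuine uniform properness. Morally this is the thin-subgroup analogue of Godement's compactness criterion (Theorem~\ref{thm: Godement}) --- for the full $S$-arithmetic lattice $\Prod{i}{}\SL_{n_i}\O_K[S^{-1}]$ the action on $Z^0$ is typically only nonuniformly acylindrical, with the failure of uniformity concentrated along the cuspidal (unipotent) directions, and the content of the argument is that the hypothesis on $\G$ excises precisely those directions. The cleanest packaging is to show that the orbit map $\G\to Z^0$ is a quasi-isometric embedding whenever $\G$ is unipotent-free, after which uniform properness is immediate from uniform local compactness; making the circumcenter/covering estimate above uniform in $x$, or establishing this quasi-isometric embedding, is where the real work lies.
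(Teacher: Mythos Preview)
Your treatment of the first assertion is correct, though the coordinate-by-coordinate detour through the images $\G_i$ is unnecessary: since $\Prod{i=1}{k}\mathrm{SU}_{n_i}$ embeds block-diagonally in $\mathrm{SU}_N\leq \SL_N\mathbb C$ for $N=\Sum{i=1}{k}n_i$, a single application of Douba's theorem to $\G\leq \SL_N\mathbb C$ already produces the proper action. The paper's argument is exactly this, stated in the two sentences preceding the corollary.

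For the acylindricity claim in the algebraic case you have put your finger on a genuine difficulty, and your self-diagnosis in the last paragraph is accurate. The paper invokes only Lemma~\ref{Lem:acyl+loc comp implies unif proper_p1}, but that lemma gives (2) uniformly proper $\Rightarrow$ acylindrical and (3) acylindrical $+$ uniformly locally compact $\Rightarrow$ uniformly proper; it does \emph{not} give proper $+$ uniformly locally compact $\Rightarrow$ acylindrical, and that implication is false in general --- $\SL_2\Z$ on $\mathbb H^2$ is proper on a uniformly locally compact space but not acylindrical, the failure coming precisely from the unipotent horoball stabilizers. So the unipotent-free hypothesis must enter, exactly as you say, and the paper's one-line citation elides this step.

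Your outline (every element is semisimple; Jordan--Minkowski bounds the torsion; algebraic eigenvalues that are not roots of unity force a uniform lower bound on hyperbolic translation length) is the right skeleton, but the circumcenter/covering step as written does not close the gap: it locates a fixed point of each torsion $g\in\set_\e\{x\}$ near $x$, yet many distinct torsion elements can fix points in the same small ball, so this alone does not bound $|\set_\e\{x\}|$. Two ways to finish: either invoke the Margulis lemma for the ambient semisimple group, so that for $\e$ below the Margulis constant the subgroup generated by $\set_\e\{x\}$ is virtually nilpotent, hence (having no unipotents) virtually abelian semisimple and therefore finite of uniformly bounded order; or carry out the quasi-isometric embedding you propose, for which the paper already points to the relevant input --- \cite[2.4.Proposition]{LubotzkyMozesRaghunathan} identifies the distorted elements in an $S$-arithmetic lattice as exactly the unipotent ones, so the unipotent-free $\G$ is undistorted and the orbit map is a quasi-isometric embedding, whence uniform properness and then acylindricity by Lemma~\ref{Lem:acyl+loc comp implies unif proper_p1}(2).
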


\subsection{Groups with Property (NL)}\label{Subsec: NL}

In \cite{PropNL}, the authors systemize the study of groups that do not admit \emph{any} actions on hyperbolic spaces with a loxodromic element -- such groups are said to have Property $\nl$.  In other words, the only  actions such a group can admit on a hyperbolic space are elliptic or parabolic. If every finite index subgroup also has Property $\nl$, the group is said to be \emph{hereditary} $\nl$. 

A primary example of a group with this property comes from higher-rank lattices. Note that if $F=1$ in the following statement, the irreducibility condition should be ignored. We also note that the lattices in the below theorem do act properly on the associated product of model geometries. 

\begin{theorem}\label{Thm: Haettel HR}\cite[Theorem A]{Haettel} Let $F\geq 1$ and $\G\leq G$ be an irreducible lattice in $G=\Prod{i=1}{F}G_i$, where $G_i$ is a higher rank, almost simple connected, algebraic group with finite center over a local field, for $i=1, \dots, F$. Then any action of $\G$ by isometries on a $\delta$-hyperbolic space is  either elliptic or parabolic.
\end{theorem}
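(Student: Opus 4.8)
\medskip
\noindent
\emph{Proof strategy.} The plan is to use the classification of actions to reduce to excluding loxodromic elements, and then to exploit two rigidity features of higher‑rank lattices: the vanishing of their homogeneous quasimorphisms, and superrigidity for their actions on hyperbolic spaces. First, by Theorem~\ref{thm:actionsclassrk1} an isometric action of $\G$ on a hyperbolic space is elliptic, parabolic, lineal, quasiparabolic, or general type, and each of the latter three contains a loxodromic element; so it suffices to show $\G$ admits no action with a loxodromic element, i.e. $\G$ has Property~$\nl$. The property ``every action is elliptic or parabolic'' is insensitive to passage to finite‑index subgroups in both directions (downward trivially; upward because a finite‑index subgroup's fixed boundary point is unique and hence $\G$‑fixed, while orbits grow only by a bounded factor). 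Hence, invoking Margulis arithmeticity (valid since each $G_i$ has rank at least two), we are free to replace $\G$ by a convenient finite‑index arithmetic subgroup when needed.

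\noindent
\emph{Lineal and quasiparabolic actions.} Such an action fixes a boundary point $\xi$ (after an index‑$\leq 2$ passage in the lineal case), so the Busemann quasimorphism $\b\colon\G\to\R$ of Definition~\ref{Bpc} is defined, and it is nonzero precisely because the action has a loxodromic element. But $\G$ admits no nonzero homogeneous quasimorphism: homomorphisms $\G\to\R$ vanish since $\G$ has property (T) (a lattice in a product of almost simple groups of rank $\geq 2$, each of which has (T)), and Burger--Monod injectivity of the comparison map $H^2_b(\G;\R)\to H^2(\G;\R)$ for higher‑rank irreducible lattices forces every homogeneous quasimorphism to be a homomorphism. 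Homogenising $\b$ contradicts this. (In the lineal case one may instead note directly that a finite‑index subgroup of $\G$ would surject onto $\Z$, impossible under (T).)

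\noindent
\emph{General type actions --- the main obstacle.} A general type action comes with two loxodromics having disjoint fixed‑point sets and generating a free subgroup of loxodromics. When the ambient group $G$ is a product of real Lie groups the cleanest route is to pass to the essential core $\mathcal E_\rho(X)$ of Proposition~\ref{prop: ess core}, on which the restricted action is coarsely minimal, and then apply Theorem~\ref{Thm: BCFS} with $\mathbf D=0$: a coarsely minimal action of $\G$ on a hyperbolic space would be equivalent to one of the actions coming from rank‑one factors, of which there are none, so no such action exists. Over non‑Archimedean local fields this superrigidity input is unavailable, and one must argue from the arithmetic structure instead: either (a) run Fujiwara's counting‑quasimorphism construction (as in Bestvina--Fujiwara) on the coarsely minimal action to produce infinitely many independent homogeneous quasimorphisms on $\G$, again contradicting the vanishing above --- the delicate point being that one must guarantee these quasimorphisms are nontrivial, which typically needs some control (a WPD‑type property) on the action; or (b) use that in the isotropic (non‑cocompact) case $\G$ is virtually boundedly generated by unipotent root subgroups, each exponentially distorted in $\G$ by Lubotzky--Mozes--Raghunathan and hence acting with no loxodromic element, whereas in the anisotropic (cocompact) case one exploits property (T) together with the abundance of commuting higher‑rank flats in $\G$. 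Carrying this through uniformly over all local fields --- in particular the general type, cocompact, non‑Archimedean case --- is where the real work lies, and is the step I expect to be the main obstacle.

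\noindent
\emph{Conclusion.} With lineal, quasiparabolic, and general type actions all excluded, Theorem~\ref{thm:actionsclassrk1} leaves only elliptic and parabolic actions, which is the assertion.
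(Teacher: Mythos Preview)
The paper does not contain a proof of this theorem: it is quoted as an external result, attributed to Haettel via the citation \cite[Theorem A]{Haettel}, and is used as an input (see the sentence preceding Theorem~\ref{Thm: BCFS}, where the authors note that the case $\mathbf D=0$ of that superrigidity theorem was first proved by Haettel). So there is no ``paper's own proof'' to compare against; your proposal is an attempt to sketch an independent argument.

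Two comments on the sketch itself. First, your handling of the lineal and quasiparabolic cases via the Busemann quasimorphism and Burger--Monod is sound and is essentially how one expects those cases to collapse. Second, in the general type case you invoke Theorem~\ref{Thm: BCFS} with $\mathbf D=0$, but as the paper itself remarks, that instance of BCFS \emph{is} Haettel's theorem in the real case, so this is not an independent reduction --- though BCFS do give their own proof, so logically it is fine. More seriously, you correctly identify that the non-Archimedean (especially cocompact) case is where the real content lies, and your alternatives (a) and (b) are both heuristic at this level: the WPD control needed for (a) is exactly what is not given, and the bounded-generation route (b) handles only the isotropic case. Haettel's actual argument proceeds quite differently, via median and coarse-median techniques and the structure of higher-rank buildings, and does not separate the Archimedean and non-Archimedean cases in this way. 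So your outline is a reasonable orientation but is not a proof; the acknowledged obstacle is genuine.
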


Further examples of groups with Property $\nl$, besides the obvious finite groups and the higher-rank lattices considered above,  include Burnside groups, Tarskii monster groups, Grigorchuk groups, Thompsons groups $T,V$ and many ``Thompson-like" groups  (see \cite[Theorem 1.6]{PropNL} for a more extensive list). It is worth noting however that Thompson's group $V$ and similar-type diagram groups act properly on infinite-dimensional CAT(0) cube complexes \cite{Farley}. Furthermore, while finitely generated torsion groups do not act on 2-dimensional CAT(0) complexes \cite{NorinOsajdaPrzytycki},  free Burnside groups do act non-trivially on infinite dimensional CAT(0) cube complexes \cite{Osajda}. Finitely generated bounded-torsion groups are conjectured not to admit fixed-point free actions on locally compact CAT(0) spaces. In a personal communication, Coulon and Guirardel have told us of some upcoming work: there is a finitely generated infinite amenable (unbounded) torsion group G which admits a proper action on an infinite dimensional CAT(0) cube complex.

Returning to our context, it follows from \cite[Corollary 6.5]{PropNL}, that groups with the hereditary $\nl$ property also do not admit interesting actions on products of hyperbolic graphs. In particular, it follows from the proof of \cite[Proposition 6.4]{PropNL} and Theorem~\ref{thm:elimmixedfactors}, that if $\G$ is a hereditary $\nl$ group, and $\X$ is a product of hyperbolic spaces, then $\G \to \Aut \X$ is acylindrical only if the action is elliptic in each factor.


\section{Elementary subgroups and the Tits Alternative}\label{sec:elemsubgrp}

In this section, we explore the structure of the stabilizer of a pair of points  in the regular boundary $\partial_{reg}\X= \Prod{i=1}{\D}\partial X_i$. If the points have distinct factors and the action is AU-acylindrical, we will prove that the stabilizer is virtually free abelian of rank no larger than $\D$. This is an analog of the result in rank-one \cite[Proposition 6]{BestvinaFujiwara}, \cite[Theorem 1.1]{Acylhyp}, and for HHGs \cite[Theorem 9.15]{DurhamHagenSisto}, although our proof uses new techniques. We also consider the stabilizer of a single point in the regular boundary in a group acting acylindrically and obtain the same result, which is as expected according to our philosophy (see Section~\ref{sec: Godement}).

\begin{defn}\label{def: elem subgroup}
Given $\G \to  Aut\X$ and $\xi^-, \xi^+\in \partial_{reg}\X$ with distinct factors, i.e. $\xi_i^-\neq \xi_i^+$ for each $i=1, \dots, \D$, the subgroup  $\Cap{i=1}{\D}\fix_\G\{\xi_i^-, \xi_i^+\}$ is the associated \emph{elementary subgroup}, denoted by $E_\G(\xi^-, \xi^+)$. Recall that $\fix_\G\{\xi_i^-, \xi_i^+\}$ is the set of elements that fix the set $\{\xi_i^-, \xi_i^+\}$ pointwise. If $\g\in \G$ is a regular element, then the associated \emph{elementary subgroup} is $E_\G(\g):= E_\G(\g^-, \g^+)$, where $\g^-, \g^+\in \partial_{reg}\X$ are respectively the repelling and attracting fixed points for $\g$. 
\end{defn}

Our main goal is to prove the following results. 

\begin{prop}\label{prop:elemsubvirtab} If $\G\to \Aut \X$ is AU-acylindrical and not elliptic, and $\xi^-, \xi^+ \in \partial_{reg}\X$ with distinct factors then $E_\G(\xi^-, \xi^+)$ is virtually  $\Z^k$ where $1\leq k \leq\D$. \end{prop}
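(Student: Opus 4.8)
The plan is to reduce the problem, factor by factor, to the rank-one situation, where the structure of elementary subgroups is already understood, and then reassemble the pieces using the product structure. First I would observe that $E_\G(\xi^-,\xi^+)$ preserves each pair $\{\xi_i^-,\xi_i^+\}$ in $\partial X_i$; passing to the finite-index subgroup $E_0 \leq E_\G(\xi^-,\xi^+)$ that fixes each $\xi_i^\pm$ individually (this is index at most $2^\D$), we get a homomorphism $E_0 \to \prod_{i=1}^\D \fix_{\Isom X_i}\{\xi_i^-,\xi_i^+\}$, i.e.\ each factor action fixes two distinct boundary points and is therefore elliptic or oriented lineal. Since AU-acylindricity is inherited by subgroups (and the diagonal action is AU-acylindrical by Lemma~\ref{lem: diag of acyl is acyl} in reverse — more precisely, the restriction of an AU-acylindrical action to a subgroup is AU-acylindrical), $E_0 \to \Aut_{\!0}\X$ is AU-acylindrical. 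Because each factor action fixes a point of $\partial X_i$, the associated Busemann quasimorphism $\b_i\colon E_0 \to \R$ is defined; on an oriented lineal factor it detects loxodromics ($\b_i(g)\neq 0$ iff $g$ acts loxodromically on $X_i$), and on an elliptic factor $\b_i\equiv 0$.

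Next I would assemble $\b = \Delta(\b_1,\dots,\b_\D)\colon E_0 \to \R^\D$, a quasimorphism with bounded defect, whose homogenization is the honest assembled map since each $\b_i$ is already homogeneous. Let $k$ be the rank of the image (more precisely, the dimension of the $\R$-span of $\b(E_0)$ in $\R^\D$, after discarding the coordinates on which $\b_i$ vanishes identically); clearly $0 \leq k \leq \D$. I claim $E_0$ maps, with finite (indeed elliptic-type) kernel, onto a lattice in $\R^k$. The key step — and this is where I expect the real work to lie — is showing that $\ker(\b|_{E_0})$ is finite: an element in this kernel acts elliptically on each factor fixing the prescribed boundary points, and I would want to use AU-acylindricity to bound such a subgroup. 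The argument should run parallel to the quasi-line analysis underlying Lemma~\ref{Lem:shiftfn} and Proposition~\ref{prop:smalltranslation}: on each factor, $\b_i(g)=0$ forces the translate $g q_i$ of a $(1,20\delta)$ quasigeodesic converging to $\xi_i^+$ to fellow-travel $q_i$ synchronously with no shift, hence $d(q_i(0), g q_i(0))$ is uniformly bounded (by $E + 20\delta$, say). Taking the diagonal quasigeodesic $q = \Delta(q_1,\dots,q_\D)$ in $\X$, which is a $(1,20\delta)$ quasigeodesic for the $\ell^\infty$ metric, every $g \in \ker(\b|_{E_0})$ moves every point $q(t)$ a uniformly bounded amount; choosing $t,t'$ with $d(q(t),q(t'))\geq R$ then traps $\ker(\b|_{E_0})$ inside a single $\e$-coarse stabilizer $\set_\e\{q(t),q(t')\}$, which is finite by AU-acylindricity.

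Once $\ker(\b|_{E_0})$ is finite, the image $\b(E_0) \leq \R^k$ is a finitely generated(?) subgroup; one still needs it to be discrete to conclude it is $\Z^k$. Discreteness should again follow from AU-acylindricity: if a sequence $g_n \in E_0$ had $\b(g_n) \to 0$ with $\b(g_n)\neq 0$, then for large $n$ the elements $g_n$ would all move a fixed pair of far-apart points on the diagonal quasigeodesic by a bounded amount — using Lemma~\ref{Lem:shiftfn} to control the shift by a linear function of $\|\b(g_n)\|$ plus a constant, and crucially that near $0$ the shift is controlled — producing infinitely many elements in one coarse stabilizer, a contradiction. So $\b(E_0)$ is a discrete subgroup of $\R^k$ spanning a $k$-dimensional subspace, hence isomorphic to $\Z^k$ (by \cite[Lemma 4, p102]{BorevichShafarevich}, as used in Lemma~\ref{lem: AU eucl is Zk}). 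Thus $E_0$ sits in a short exact sequence $1 \to (\text{finite}) \to E_0 \to \Z^k \to 1$, and Lemma~\ref{lem: virt isom to virt for abelian} gives that $E_0$, and hence $E_\G(\xi^-,\xi^+)$, is virtually $\Z^k$. Finally, $k \geq 1$: since $\G \to \Aut\X$ is not elliptic and... — here I would need to argue $E_\G(\xi^-,\xi^+)$ is infinite in the relevant cases, or rather interpret the statement as: \emph{when} $E_\G$ is infinite it is virtually $\Z^k$ with $k\geq 1$ (if $E_\G$ is finite the statement with $k$ interpreted loosely still holds, or one restricts to the infinite case as in Theorem~\ref{intro:stabs}); the honest lower bound $k\geq 1$ comes precisely from $E_0$ being infinite forcing $\b(E_0)\neq 0$ by the finiteness of $\ker(\b|_{E_0})$ established above. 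The main obstacle is the discreteness argument for $\b(E_0)$ in $\R^k$ — packaging the rank-one shift estimates of Section~\ref{sec:isomrank1} simultaneously across all factors, with the $\ell^\infty$ metric, so that small nonzero values of $\b$ still yield genuine contradictions with AU-acylindricity.
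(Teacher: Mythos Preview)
Your overall architecture mirrors the paper's---pass to the factor-preserving elementary subgroup, use the Busemann maps $\b_i$ to produce a map to $\R^\D$, and then argue that the kernel is finite and the image is a lattice. The paper packages the last step exactly as you suggest, via Lemma~\ref{lem: AU eucl is Zk} and Lemma~\ref{lem: virt isom to virt for abelian}. So strategically you are on the right track.

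There is, however, a genuine gap. You treat $\b = \Delta(\b_1,\dots,\b_\D)$ as a homomorphism throughout: you speak of $\ker(\b|_{E_0})$ as a subgroup, of $\b(E_0)$ as a subgroup of $\R^\D$, and ultimately of a short exact sequence $1\to(\text{finite})\to E_0\to \Z^k\to 1$. But the homogeneous Busemann quasimorphism $\b_i$ is \emph{not} a homomorphism in general---it is only guaranteed to be one when the acting group is amenable (or the space is proper). Without this, the ``kernel'' is merely a subset, the ``image'' need not be closed under addition, and the short exact sequence does not exist. Your discreteness argument likewise presumes additivity: from $\b(g_n)\to 0$ you want to compare the $g_n$ to each other, which requires $\b(g_n^{-1}g_m)$ to be controlled by $\b(g_n)-\b(g_m)$ rather than merely by that quantity plus a fixed defect.

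The paper closes this gap by inserting an additional step \emph{before} invoking $\b$ as a homomorphism: it first proves directly that every finitely generated subgroup of $E_\G(\xi^-,\xi^+)$ has polynomial growth of degree at most~$\D$ (Theorem~\ref{theorem: elementary acyl is amenable}), hence $E_\G(\xi^-,\xi^+)$ is amenable, hence each $\b_i$ is an honest homomorphism. Only then does it run the descent argument (Proposition~\ref{prop: acyldescends}) to push AU-acylindricity down to the translation action on $\R^\D$. Interestingly, the polynomial-growth argument uses essentially the same geometric input you identified---the uniform fellow-travelling estimates of Corollary~\ref{cor: Unif Bound quasiflat} on the product of quasilines $\Fl=\prod_i L_i$, combined with AU-acylindricity to bound the coarse stabilizers---so your instincts about \emph{where} the work lies are correct. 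What you are missing is the logical ordering: establish amenability first, so that $\b$ becomes a genuine homomorphism, and only then run the kernel/image analysis.
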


\begin{prop}\label{prop: reg fixed implies reg pair} Assume $\G \to \Aut\X$ is acylindrical and not elliptic. If $\xi \in \partial_{reg}  \X$ then either $\stab \{\xi\}$ is finite or there exists an $\xi' \in \partial_{reg} \X$ such that $\xi \neq \xi'$ and $\fix_\G\{\xi\} = \fix_\G\{\xi, \xi'\}$. Furthermore, if all factor actions are neither elliptic nor parabolic, then $\xi_i'\neq \xi_i$ for each $i=1, \dots, \D$.
\end{prop}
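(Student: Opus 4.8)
The plan is to read off $\xi'$ from the structure of the factor actions of $\fix_\G\{\xi\}$, so assume $H:=\fix_\G\{\xi\}=\stab\{\xi\}$ is infinite (otherwise there is nothing to prove). First I would replace $H$ by $H_0:=H\cap\G_0$, a normal subgroup of finite index in $H$, hence infinite, which acts on $\X$ preserving the factors. Since $H_0\subseteq\fix_\G\{\xi\}$, each factor action $\rho_i\colon H_0\to\Isom X_i$ fixes $\xi_i\in\partial X_i$. The action of $H_0$ on $\X$ is acylindrical (acylindricity passes to subgroups) and fixes $\xi\in\partial_{reg}\X$, so Lemma~\ref{Lem:noqpinacyl} forbids quasiparabolic factors; combining this with the classification Theorem~\ref{thm:actionsclassrk1} and the elementary observation that a lineal group fixing a boundary point must have that point among its two limit points, and hence be oriented lineal, we get that each $\rho_i(H_0)$ is elliptic, parabolic, or oriented lineal. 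Let $J:=\{i:\rho_i(H_0)\text{ is oriented lineal}\}$, and for $i\in J$ let $\eta_i\neq\xi_i$ denote the second limit point of $\rho_i(H_0)$.

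The crux, and the step I expect to be the main obstacle, is to prove $J\neq\varnothing$. Suppose $J=\varnothing$; then for every $i$ the group $\rho_i(H_0)$ contains no loxodromic element (elliptic and parabolic actions have none by Theorem~\ref{thm:actionsclassrk1}), so the Busemann quasimorphism $\b_i\colon H_0\to\R$ associated to $\xi_i$ vanishes identically, since $\b_i(h)\neq 0$ if and only if $\rho_i(h)$ is loxodromic. Fix $(1,20\delta)$-quasigeodesic rays $q_i\colon\N\to X_i$ converging to $\xi_i$ (Remark~\ref{Rem: (1, 20delta)_p1}) and set $q:=\Delta(q_1,\dots,q_\D)$, which is a $(1,20\delta)$-quasigeodesic ray in $\X$ with the $\ell^\8$-metric converging to $\xi$, and is unbounded. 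Applying Lemma~\ref{Lem:shiftfn} in each factor with $\b_i(h)=0$, there is for each $h\in H_0$ a finite waiting time $s_0(h)$ (the maximum over $i$ of the per-factor waiting times for $q_i$ and $\rho_i(h)q_i$) and a constant $E$ (the maximum over $i$ of the per-factor constants $E_i$), the latter independent of $h$, such that $d(q(s),hq(s))\leq E$ for all $s\geq s_0(h)$; here $hq(s)$ has $i$-th coordinate $\rho_i(h)q_i(s)$ since $H_0$ preserves the factors. Now apply the acylindricity of $H_0$ on $\X$ with $\e=E$, obtaining constants $R,N$: given any $a\in\N$, choose $b\geq a$ with $d(q(a),q(b))\geq R$ (possible since $q$ is unbounded); then each $h\in H_0$ with $s_0(h)\leq a$ belongs to $\set_E\{q(a),q(b)\}$, so $|\{h\in H_0:s_0(h)\leq a\}|\leq N$. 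These sets increase to all of $H_0$ as $a\to\8$, forcing $|H_0|\leq N<\8$ and contradicting that $H_0$ is infinite. Hence $J\neq\varnothing$.

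Now I would define $\xi'\in\partial_{reg}\X$ by $\xi'_i:=\eta_i$ for $i\in J$ and $\xi'_i:=\xi_i$ otherwise; by the previous step $\xi'\neq\xi$. Every $h\in H_0$ fixes $\xi'$: in a coordinate $i\in J$ the oriented lineal group $\rho_i(H_0)$ fixes both of its limit points $\xi_i$ and $\eta_i$, while for $i\notin J$ we have $\xi'_i=\xi_i$, so $H_0\subseteq\fix_\G\{\xi'\}$. To promote this to all of $H$, I would note that the type of $\rho_i(H_0)$ (elliptic, parabolic, or oriented lineal) and the unordered limit pair $\Lambda(\rho_i(H_0))$ are canonically attached to the $i$-th coordinate; since $H_0$ is normal in $H$ and any $h\in H$ acts on $\X$ by permuting the factors via some $\sigma_h$ and conjugating $H_0$ to itself, $h$ transports this data of coordinate $i$ to that of $\sigma_h(i)$, and since $h$ also fixes $\xi$ it must send $\xi'$ to $\xi'$ coordinate by coordinate. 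Thus $H\subseteq\fix_\G\{\xi'\}$, whence $\fix_\G\{\xi,\xi'\}=\fix_\G\{\xi\}\cap\fix_\G\{\xi'\}=\fix_\G\{\xi\}$, the desired equality. Finally, if no factor action $\rho_i(H_0)$ is elliptic or parabolic then $J=\{1,\dots,\D\}$ and $\xi'_i=\eta_i\neq\xi_i$ for every $i$, which is the last assertion. Besides the step $J\neq\varnothing$, the one genuinely delicate point is this last bookkeeping with factor permutations, which is what upgrades a finite-index containment to an equality of fixators.
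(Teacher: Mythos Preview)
Your proof is correct and follows the same overall arc as the paper's, but two steps are handled differently and it is worth pointing these out.

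For the key step that $J\neq\varnothing$, the paper argues by a three-case analysis: it invokes Theorem~\ref{thm:elimmixedfactors} and Proposition~\ref{prop:removepara} to strip away the parabolic factors, then uses Remark~\ref{Rem: ell, fixing point on bound} on the remaining elliptic factors together with acylindricity along the half-ray. You instead give a unified argument: since $\b_i\equiv 0$ on both elliptic and parabolic factors, Lemma~\ref{Lem:shiftfn} yields eventual $E$-closeness of $q$ and $hq$ in \emph{all} coordinates simultaneously, and one application of acylindricity on the full product finishes it. This is essentially the mechanism inside Claim~1 of Lemma~\ref{Lem:noqpinacyl} specialised to $\|\b\|_\infty=0$, and it bypasses the need for the parabolic-removal machinery in this particular spot. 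The paper's route is more modular (reusing earlier structural results), while yours is more self-contained.

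Second, the paper dispatches the factor-permuting elements with a ``without loss of generality pass to a finite-index factor-preserving subgroup'', which strictly speaking only yields $\fix_{\G_0}\{\xi\}=\fix_{\G_0}\{\xi,\xi'\}$. Your promotion argument---using that $H_0\trianglelefteq H$, so that conjugation by $h\in H$ transports the type and limit set of $\rho_i(H_0)$ to those of $\rho_{\sigma_h(i)}(H_0)$, and that $h$ already fixes the $\xi$-coordinate of each pair---is exactly what is needed to upgrade this to the full equality $\fix_\G\{\xi\}=\fix_\G\{\xi,\xi'\}$, and is a genuine addition of care over the paper's presentation.
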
 

Our strategy to prove the above propositions is as follows:

\begin{enumerate}
    \item We first prove, under the hypotheses of Proposition \ref{prop:elemsubvirtab} that $E_\G(\xi^-, \xi^+)$ is amenable by showing that all finitely generated subgroups have polynomial growth  (of degree at most $\D$). 
    \item Using amenability, the Busemann quasimorphisms are now  homomorphisms and  the associated product of these allows the action to descend to $\R^{\D}$. We prove that the AU-acylindricity of the action also descends. An application of Lemma~\ref{lem: AU eucl is Zk} then proves Proposition~\ref{prop:elemsubvirtab}.
    \item Without loss of generality, up to passing to a finite index subgroup that preserves the conditions, we may assume that $\G \to \Aut_{\!0} \X$. We then deduce Proposition~\ref{prop: reg fixed implies reg pair}  by utilizing the uniformity  of acylindricity to exclude the possibility of a quasi-parabolic factor and hence  find another  point in $\partial_{reg} \X$ which is also fixed. We then conclude by applying Proposition~\ref{prop:elemsubvirtab} and noting that the structure of $\fix_\G\{ \xi \}$ (whether finite or infinite) lifts back to $\Aut \X$. 
\end{enumerate}

Assuming the above results hold, we can immediately prove our version of the Tits Alternative as stated in Theorem~\ref{intro:titsalt}. 

\begin{proof}[Proof of Theorem~\ref{intro:titsalt}] Without loss of generality, we may first pass to a finite index subgroup if necessary and assume that $\G$ is factor-preserving, i.e. assume $\G\to \Aut_{\! 0}\X= \Prod{i=1}{\D}\Isom X_i$. Indeed, the finite index subgroup must also act non-elliptically and the acylindricity is naturally inherited. Since the action is not elliptic, by Corollary~\ref{cor: clean up acyl}, we may assume that all factors are neither elliptic nor parabolic. 

If any projection $\G\to \Isom X_i$ is general type then $\G$ contains a free group by the classification of actions Theorem~\ref{thm:actionsclassrk1}.  Therefore, we may assume that no factor is of general type.

We are then in the situation that each factor is lineal or quasi-parabolic. Up to passing to a finite index subgroup once more, we may assume that each factor is either oriented lineal or quasi-parabolic. This means the action is not elliptic and fixes some point $\xi\in \partial_{reg}\X$. 
By applying Proposition~\ref{prop: reg fixed implies reg pair} first and then applying Proposition~\ref{prop:elemsubvirtab}, the result follows.
\end{proof}

We now establish some preliminary results that we will need to prove the propositions listed above. We start with an examination of actions on quasilines in rank-one and products of such in higher rank in the next two subsections. 

\subsection{Rank-one}\label{Sec:elem rank1}

Fix  distinct boundary points $\xi^-, \xi^+\in \partial X$. Let $L$ be the union of all $(1, 20\delta)$ quasi-geodesics that limit to $\xi^-$ and $ \xi^+$, which is nonempty by Remark~\ref{Rem: (1, 20delta)_p1}. By the Morse Property (Theorem~\ref{Slim Q-Bigons infinite_p1}), $L$ is a quasiline, i.e. there exists a $(1,20\delta)$ quasi-isomtery $q: \Z\to L$ that is $M_{20\delta}$ coarsely surjective. Furthermore, $L$ is invariant under the subgroup of $\Isom X$ which stabilizes the set $\{\xi^-, \xi^+\}$. Therefore, to understand elementary subgroups it is key to understand actions on quasilines. In the spirit of the above, we shall consider quasilines $L$ that admit a $(1, \lambda)$ quasi-isometry $q: \Z\to L$ that is $M_\lambda$-coarsely onto. In Remark \ref{rem: Morse constant}, we fixed $M_\lambda$ the Morse constant for $(1, \lambda)$ quasigeodesics in a $\delta$-hyperbolic space.

Recall  that $(1, \lambda)$ quasigeodesics between the same end-points synchronously fellow travel by Corollary~\ref{cor: (1,mu)-fellow travel}, this is the key reason we consider such quasigeodesics. The following lemma generalizes this to the group setting and implies  that a group acting elliptically on a quasi-line and fixing the Gromov boundary pointwise has uniformly bounded orbits and is hence a tremble (see Theorem~\ref{thm:actionsclassrk1}).

Note that if $L$ is a quasiline then $\Isom L$ has a subgroup of index at most two that fixes the boundary pointwise. We shall denote this subgroup by $\Isom_{\! 0} L$. 

\begin{lemma}\label{lemma: Unif Bound}
Fix $\delta, \lambda\geq 0$. Let $L$ be a $\delta$-hyperbolic quasiline and $q: \Z \to L$  a $(1,\lambda)$ quasigeodesic. Then $q$ is $M$-coarsely surjective, where $M= M_\lambda$. Let $M'=M_{\lambda +2M_\lambda}$ be the Morse constant for $(1, \lambda + 2M_\lambda)$ quasigeodesics. Fix  $g\in \Isom_{\!0} L$. Setting $C_0(g) = 2M' + 8M + 5\lambda + d(gq(0), q(0))$ and $C(g) = 2M + C_0(g)$, we get that

 \begin{enumerate}
\item  If $n\in \Z$ then $d(g q(n), q(n))\leq C_0(g)$. 
\item If $x\in L$ then $d(g x, x)\leq C(g)$.
\end{enumerate}
In particular if $\G\to \Isom_{\!0} L$ is elliptic then $d(g x, x)\leq C_{ell}(\G)$ for every $g\in \G$ and $x\in L$, where $C_{ell}(\G) = 2M' + 10M + 5\lambda + B_0$ and $B_0:=\Sup{g\in \G}d(g q(0), q(0))<\8$.
\end{lemma}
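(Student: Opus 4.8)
The plan is to deduce the lemma from the synchronous fellow-traveling estimate of Corollary~\ref{cor: (1,mu)-fellow travel}, which already isolates the constants $M=M_\lambda$ and $M'=M_{\lambda+2M_\lambda}$ appearing in the statement. The coarse surjectivity of $q$ (with constant $M$) is built into the working notion of quasiline recalled above: every point of $L$ lies on some $(1,\lambda)$-quasigeodesic joining the two points of $\partial L$, and by the Infinite Morse Property (Theorem~\ref{Slim Q-Bigons infinite_p1}) any such quasigeodesic lies at Hausdorff distance at most $M$ from $q$; if one prefers, this is simply the defining property of the quasi-isometry furnished by the quasiline hypothesis. So I would dispose of the first assertion in one line and then concentrate on (1)--(2).

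For (1), fix $g\in\Isom_{\! 0}L$. Because $g$ is an isometry, $gq\colon\Z\to L$ is again a $(1,\lambda)$-quasigeodesic. Because $g$ lies in $\Isom_{\! 0}L$ it fixes each of the two points of $\partial L$ --- not merely the pair --- so $gq$ converges to the same endpoints as $q$ and \emph{in the same direction}. Thus $c:=gq$ and $q$ satisfy the hypotheses of Corollary~\ref{cor: (1,mu)-fellow travel}, and that corollary gives, for every $n\in\Z$,
\[
d\big(gq(n),q(n)\big)\;\le\;2M'+8M+5\lambda+d\big(gq(0),q(0)\big)\;=\;C_0(g).
\]

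For (2), let $x\in L$ and, using coarse surjectivity, pick $n\in\Z$ with $d(x,q(n))\le M$. Since $g$ is an isometry, $d(gx,gq(n))=d(x,q(n))\le M$, so the triangle inequality together with (1) yields
\[
d(gx,x)\;\le\;d(gx,gq(n))+d(gq(n),q(n))+d(q(n),x)\;\le\;M+C_0(g)+M\;=\;C(g).
\]
For the elliptic case, if $\G\to\Isom_{\! 0}L$ is elliptic then the $\G$-orbit of $q(0)$ is bounded, so $B_0:=\Sup{g\in\G}d(gq(0),q(0))<\8$; hence $C_0(g)\le 2M'+8M+5\lambda+B_0$ and $C(g)=2M+C_0(g)\le 2M'+10M+5\lambda+B_0=C_{ell}(\G)$ for all $g\in\G$, and the estimate from (2) finishes the proof. (Taking $\e=C_{ell}(\G)$ then shows $\O^\e(\rho(\G))=L$, i.e. such an action is a tremble, as asserted before the lemma.)

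The argument is largely bookkeeping once Corollary~\ref{cor: (1,mu)-fellow travel} is available; the one point that genuinely needs care is the orientation issue in step (1): one must use that $\Isom_{\! 0}L$ fixes each boundary point individually, so that $gq$ and $q$ fellow-travel \emph{synchronously} and the corollary applies with the stated constant $M'=M_{\lambda+2M_\lambda}$ (rather than with a larger bigon-type constant coming from a possible swap of the endpoints), together with keeping straight which of $M$ and $M'$ enters each estimate.
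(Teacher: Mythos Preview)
Your proposal is correct and follows essentially the same route as the paper: both invoke Corollary~\ref{cor: (1,mu)-fellow travel} directly with $c=gq$ to obtain (1), then use the triangle inequality through an $M$-close point $q(n)$ for (2), and uniformize over $\G$ via $B_0$ for the elliptic statement. Your explicit remark that $g\in\Isom_{\!0}L$ guarantees $gq$ and $q$ share the same orientation is a helpful clarification the paper leaves implicit.
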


\begin{proof}

(1) Observe that for $g\in \Isom_{\!0}L$ we can directly apply Corollary~\ref{cor: (1,mu)-fellow travel} with $c=gq$ and hence we set $C_0(g)=2M' + 8M + 5\lambda + d(gq(0), q(0))\geq 0$ so that for every $n\in \Z$ 
$$d(g. q(n), q(n))\leq C_0(g).$$

\noindent
(2) Let $x\in L$. There is an $n$ such that $d(x, q(n)) \leq M$ and since $g$ is an isometry, $d(gx, gq(n)) \leq M$. We deduce:
\begin{eqnarray*}
d(gx,x) &\leq& d(x, q(n))+ d(q(n), gq(n)) + d(gq(n), gx)
\\
&\leq&2M +  C_0(g)= C(g).
\end{eqnarray*}

\noindent
Now consider the case when $\G\to \Isom_{\!0} L$ elliptic. Then the constant $B_0$ defined in the statement is finite. Applying  part (1) uniformly to $q$ and all $gq$, $g\in \G$ we can set $C'_{ell}(\G)= 2M' + 8M + 5\lambda + B_0$ so that $d(g q(n), q(n)) \leq C'_{ell}(\G)$.

Let $x\in X$. Then, $d(x, q(n))\leq M$ for some $n\in \Z$. Letting $g\in \G$ and using the triangle inequality as above, we get 
\begin{align*}
    d(g x, x)     & \leq 2M+ C'_{ell}(\G) = C_{ell}(\G).
\end{align*}
\end{proof}

\begin{remark}\label{Rem: ell, fixing point on bound} 
    We note that the above proof also shows that if $\G\to \Isom X$ is elliptic and also fixes a point $\xi \in \partial X$, then there is an $\e>0$ so that the set $\O^\e(\rho(\G))$ is unbounded. Indeed, take a $(1, 10\delta)$ quasi-ray $q$ converging to $\xi$. Since $\G$ fixes $\xi$, the Hausdorff distance between $q$ and $gq$ for any $g \in \G$ is uniformly bounded. The arguments from the lemma can then be adapted to the quasi-ray $q$ to get the desired conclusion.  
\end{remark}

\subsection{Higher rank} 

Similar to the rank-one case, given   distinct boundary points $\xi_i^-, \xi_i^+\in \partial X_i$ for $i=1, \dots, \D$, we may consider  $L_i$ to be the union of all $(1, 20\delta)$ quasi-geodesics that limit to $\xi_i^-$ and $ \xi_i^+$. Set $\xi^-= (\xi_1^-, \dots, \xi_\D^-)$, and $\xi^+= (\xi_1^+, \dots, \xi_\D^+)$ and  form the product $\Prod{i=1}{\D}L_i\subset \X$, which is invariant under  the subgroup of $\Aut \X$ that stabilizes the set $\{\xi^-, \xi^+\}$. We can therefore take the product of these to form a $(1,20\delta)$ quasi-isomtery $q = \Prod{i=1}{\D} q_i: \Z^\D\to \Prod{i=1}{\D}L_i$ that is $M_{20\delta}$ coarsely surjective. Recall we are using the $\ell^\8$-product metric. Furthermore, $\Prod{i=1}{\D}L_i$ is invariant under the subgroup of $\Aut \X$ that stabilizes $\{\xi^-, \xi^+\}$ as well as the finite index subgroup  which fixes the factors $\{\xi_i^-, \xi_i^+\}$ for each $i=1, \dots, \D$.

 The following is immediate from Lemma~\ref{lemma: Unif Bound} applied to the factors. It implies  that an elliptic action on a product of quasilines preserving factors and fixing the boundary pointwise has uniformly bounded orbits and is hence a tremble. 
 \begin{cor}\label{cor: Unif Bound quasiflat}  Fix $\delta,\lambda\geq 0$. Let $L_1, \dots, L_\D$ be $\delta$-hyperbolic quasilines. Let $q=\Prod{i=1}{\D}q_i: \Z^{\D} \to \Prod{i=1}{\D}L_i$ be the product of  $(1,\lambda)$ quasigeodesic. Then $q$ is a $(1,\lambda)$ quasi-isometry that is  $M$-coarsely surjective, where $M=M_\lambda$ is the Morse constant for $(1, \lambda)$ quasigeodesics. Let $M'=M_{\lambda + 2M}$ be the Morse constant for $(1, \lambda + 2M)$ quasigeodesics. Fix  $g\in \Prod{i=1}{\D}\Isom_{\! 0}L_i$. Setting $C_0(g):= 2M' + 8M+ 5\lambda+ d(g q(0), q(0))$ and $C(g) := 2M + C_0(g)$, we have that
 
 \begin{enumerate}
\item If $a\in \Z^{\D}$ then $d(gq(a), q(a))\leq C_0(g)$.
\item If $x\in \Prod{i=1}{\D}L_i$ then $d(g x, x)\leq C(g)$.
\end{enumerate}
 In particular, if $\G\to \Prod{i=1}{\D}\Isom_{\! 0} L_i$ is elliptic, then $d(g x, x)\leq C_{ell}(\G)$ for every $g\in \G$ and $x\in \Prod{i=1}{\D}L_i$, where $C_{ell}(\G) = 2M' + 10M + 5\lambda +B_0$, and  $B_0:=\Sup{g\in \G}d(g q(0), q(0))<\8$.
\end{cor}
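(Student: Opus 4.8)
The plan is to deduce the entire statement coordinate-by-coordinate from Lemma~\ref{lemma: Unif Bound}, using only that we work with the $\ell^\8$-product metric. The one elementary fact doing all the work is that the maximum is $1$-Lipschitz for the $\ell^\8$-norm: if $u_1,\dots,u_\D$ and $v_1,\dots,v_\D$ are reals with $|u_i-v_i|\leq c$ for every $i$, then $\big|\max{i} u_i-\max{i} v_i\big|\leq c$. Combined with the identity $d(x,y)=\max{i} d(x_i,y_i)$ for the $\ell^\8$-product metric, this is essentially all that is needed.

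First I would check that $q=\Prod{i=1}{\D}q_i$ is a $(1,\lambda)$-quasi-isometry. For $a,b\in\Z^{\D}$, each factor gives $\big|d(q_i(a_i),q_i(b_i))-|a_i-b_i|\big|\leq\lambda$; applying the Lipschitz fact with $u_i=d(q_i(a_i),q_i(b_i))$ and $v_i=|a_i-b_i|$ yields $\big|d(q(a),q(b))-d(a,b)\big|\leq\lambda$. For $M_\lambda$-coarse surjectivity I would use that each $q_i$ is $M_\lambda$-coarsely surjective (the opening assertion of Lemma~\ref{lemma: Unif Bound}): given $x=(x_1,\dots,x_\D)$, pick $n_i$ with $d(x_i,q_i(n_i))\leq M_\lambda$ and assemble $a=(n_1,\dots,n_\D)$, so that $d(x,q(a))=\max{i} d(x_i,q_i(n_i))\leq M_\lambda=M$.

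Next I would prove Items (1) and (2). Fix $g=(g_1,\dots,g_\D)\in\Prod{i=1}{\D}\Isom_{\! 0}L_i$, so each $g_i\in\Isom_{\! 0}L_i$; since all the $L_i$ are $\delta$-hyperbolic, the Morse constants $M=M_\lambda$ and $M'=M_{\lambda+2M_\lambda}$ of Lemma~\ref{lemma: Unif Bound} are the same in every factor. Part (1) of that lemma gives $d(g_iq_i(a_i),q_i(a_i))\leq 2M'+8M+5\lambda+d(g_iq_i(0),q_i(0))$ for each $i$, and taking the maximum over $i$ produces exactly $d(gq(a),q(a))\leq 2M'+8M+5\lambda+d(gq(0),q(0))=C_0(g)$, which is Item (1). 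Item (2) follows as in Lemma~\ref{lemma: Unif Bound}(2): choose $a$ with $d(x,q(a))\leq M$ by coarse surjectivity, note $d(gx,gq(a))=d(x,q(a))\leq M$ since $g$ is an isometry, and apply the triangle inequality to get $d(gx,x)\leq 2M+C_0(g)=C(g)$. For the elliptic conclusion, ellipticity of $\G\to\Prod{i=1}{\D}\Isom_{\! 0}L_i$ makes the orbit of $q(0)$ bounded, so $B_0:=\Sup{g\in\G}d(gq(0),q(0))<\8$; feeding this uniform bound into Items (1) and (2) gives $d(gx,x)\leq 2M+(2M'+8M+5\lambda+B_0)=2M'+10M+5\lambda+B_0=C_{ell}(\G)$ for all $g\in\G$ and $x\in\Prod{i=1}{\D}L_i$.

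I do not expect any real obstacle: the corollary is genuinely immediate from the factorwise lemma. The only care needed is bookkeeping — verifying that the Morse constants $M_\lambda$ and $M_{\lambda+2M_\lambda}$ are uniform across factors (which is exactly why the hypothesis is that every $L_i$ is $\delta$-hyperbolic for the same $\delta$), that membership in $\Prod{i=1}{\D}\Isom_{\! 0}L_i$ really means each component fixes $\partial L_i$ pointwise so that Lemma~\ref{lemma: Unif Bound} applies in each factor, and that the arithmetic $2M+C_0(g)$ collapses to the stated $C(g)$ and $C_{ell}(\G)$. If one preferred to avoid citing Lemma~\ref{lemma: Unif Bound}, one could instead apply Corollary~\ref{cor: (1,mu)-fellow travel} in each factor and then take maxima, but the route through Lemma~\ref{lemma: Unif Bound} is the most economical.
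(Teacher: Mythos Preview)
Your proof is correct and takes essentially the same approach as the paper, which simply states that the corollary is immediate from Lemma~\ref{lemma: Unif Bound} applied to the factors. You have filled in the routine coordinate-wise bookkeeping with the $\ell^\8$-product metric that the paper leaves implicit.
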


We are now ready  to prove the first step towards the amenability of $E_\G(\xi^-, \xi^+)$ when $\G$ is acting AU-acylindrically on $\X$. We note that to keep with our convention that subscripts denote components in products, we will use superscripts for sequences as we have done in previous sections. We hope that the reader will not confuse these for powers, as the only powers we take are inverses, specifically $g_a^{-1}$.

\begin{theorem}\label{theorem: elementary acyl is amenable}
Let $\G \to \Aut\X$ be  AU-acylindrical. If $\xi^-, \xi^+\in \partial_{reg}\X$ have distinct factors then  $E_\G(\xi^-, \xi^+)$ is amenable.
\end{theorem}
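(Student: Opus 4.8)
The plan is to show that every finitely generated subgroup $H \leq E_\G(\xi^-,\xi^+)$ has polynomial growth of degree at most $\D$; this gives amenability (indeed, by Gromov's theorem, virtual nilpotence, although for amenability the growth bound suffices directly). First I would reduce to the factor-preserving case: since $\xi^-,\xi^+$ have distinct factors, every element of $E_\G(\xi^-,\xi^+)$ fixes each pair $\{\xi_i^-,\xi_i^+\}$ as a \emph{set}, so by passing to the index-$\leq 2^\D$ subgroup that fixes every $\xi_i^\pm$ pointwise (this is where we use that polynomial growth of a finite-index subgroup transfers back, together with Lemma~\ref{Lem:AU-acyl has finite kernel} to handle the finite kernel), we may assume $H \to \Prod_{i=1}{\D}\Isom_{\!0} L_i$ where $L_i$ is the quasiline associated to $\{\xi_i^-,\xi_i^+\}$ built as in Section~\ref{Sec:elem rank1}, and $\prod L_i$ is $H$-invariant.

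Next I would set up the Busemann machinery in each factor. For each $i$ the action $H \to \fix\{\xi_i^+\}$ is either oriented lineal or, a priori, could be quasiparabolic; in either case we get a Busemann quasimorphism $\b_i : H \to \R$ of uniformly bounded defect (Definition~\ref{Bpc}), and we assemble $\b = \Delta(\b_1,\dots,\b_\D) : H \to \R^\D$, a quasimorphism of bounded defect. The key quantitative input is Lemma~\ref{Lem:shiftfn}: there is a constant $\consttwo$ so that $d(q_i(s - \b_i(g)), g_i q_i(s)) \leq \consttwo$ for $s$ large enough, which for a point $x = q(0)$ on the quasiflat gives an \emph{upper} bound $d(x, g x) \leq \|\b(g)\|_\infty + (\text{const})$ via the triangle inequality and the fact that $q$ is a $(1,20\delta)$ quasigeodesic (exactly the computation in Claim~1 of Lemma~\ref{Lem:noqpinacyl}). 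Now I would define the Følner-type sets $F_n = \{g \in H : \|\b(g)\|_\infty \leq n\}$ and use AU-acylindricity: choosing $a, b$ on the quasiflat with $d(q(a),q(b))$ large, every $g \in F_n$ moves both $q(a)$ and $q(b)$ by at most $n + (\text{const})$, so $F_n \subset \set_\e\{q(a),q(b)\}$ is finite. Then, exactly as in Claim~2 of Lemma~\ref{Lem:noqpinacyl}, a word of length $\ell$ in a finite generating set of $H$ lands in $F_{\ell(m+B)}$ for appropriate constants, so the growth of $H$ is bounded by $|F_{C\ell}|$.

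The main obstacle is turning the pointwise-finiteness $|F_n| < \infty$ into a \emph{polynomial} bound $|F_n| \leq P(n)$ with $\deg P \leq \D$. Under mere AU-acylindricity we only get that each $F_n$ is finite with no uniform control, so the naive argument only yields that $H$ is locally finite-by-(finitely many $F_n$), which is not enough. The fix is to exploit that $\b|_H$ is \emph{close to a homomorphism on the amenable pieces}: I would first establish amenability of $H$ by a bootstrapping/induction on a chain of finitely generated subgroups, or alternatively argue that since each $F_n$ is finite and $\b$ has bounded defect, the map $\b$ is ``almost injective modulo a finite kernel'' — more precisely, $\b^{-1}(\text{bounded set})$ is finite, so $H$ embeds quasi-isometrically into $\R^\D$ via $\b$ up to finite kernel, forcing $H$ to be virtually $\Z^k$ with $k \leq \D$ by Lemma~\ref{lem: AU eucl is Zk} applied to the image. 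Concretely: once $H$ is finitely generated and each $F_n$ finite, pick generators $s^1,\dots,s^k$; their $\b$-images generate a subgroup of $\R^\D$ whose ``ball of radius $C\ell$'' pulls back to contain the $\ell$-ball of $H$, and since $\b$ has finite fibers on bounded sets, the word-ball of radius $\ell$ in $H$ injects (up to the bounded multiplicity coming from the defect) into a lattice-point count in $\R^\D$, which is $O(\ell^\D)$. Hence $H$ has polynomial growth of degree $\leq \D$, so $H$ is amenable; as $H$ was an arbitrary finitely generated subgroup, $E_\G(\xi^-,\xi^+)$ is amenable. The delicate point to get right is the ``finite fibers on bounded sets'' claim, which is precisely where AU-acylindricity (applied along the quasiflat, using that the $\xi_i^\pm$ have distinct factors so the quasiflat is genuinely $\D$-dimensional and unbounded in every direction) does its work.
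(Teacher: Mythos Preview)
Your overall strategy—prove every finitely generated subgroup has polynomial growth of degree at most $\D$ via a F\o lner-type filtration—matches the paper's. You also correctly identify the main obstacle: under AU-acylindricity your sets $F_n=\{g:\|\b(g)\|_\infty\leq n\}$ are only \emph{finite}, with no a priori polynomial control in $n$. However, your proposed resolutions do not close this gap. Bootstrapping amenability along a chain of finitely generated subgroups is circular, and invoking Lemma~\ref{lem: AU eucl is Zk} presupposes that the action by translations on $\R^\D$ via $\b$ is AU-acylindrical; in the paper this is Proposition~\ref{prop: acyldescends}, which is proved \emph{after} amenability (precisely so that each $\b_i$ is a genuine homomorphism). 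Your final ``concretely'' sentence gestures at the right idea (fibers of $\b$ over small balls are uniformly bounded), but phrasing it as ``$\b$-images generate a subgroup'' and a ``lattice-point count'' is incorrect while $\b$ is only a quasimorphism: the image need not be a subgroup, let alone a lattice.

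The paper avoids the obstacle by \emph{not} using Busemann at this stage. Instead it works in geometric coordinates on the quasiflat $\Fl=\prod L_i$ with $(1,20\delta)$ quasi-isometry $q:\Z^\D\to\Fl$, and uses Corollary~\ref{cor: Unif Bound quasiflat}: if $d(gq(0),q(0))\leq 2\lambda$ then $d(gq(a),q(a))\leq\e$ for \emph{every} $a\in\Z^\D$, with $\e$ a fixed constant. A single application of AU-acylindricity to one far-apart pair $q(0),q(z)$ then bounds $|\set_{2\lambda}\{q(0)\}|\leq N_\e$. Now take $F_n=\bigcup_{a\in[-n,n]^\D}\{g:d(gq(0),q(a))\leq\lambda\}$; each summand is a translate of a set of size $\leq N_\e$, so $|F_n|\leq N_\e(2n+1)^\D$ directly. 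The induction showing words of length $n$ lie in $F_{Cn}$ finishes the polynomial growth bound. The key point you are missing is that the fellow-travelling Corollary~\ref{cor: Unif Bound quasiflat} converts the two-point AU-acylindricity bound into a one-point uniform bound, which is what makes the packing argument on $\Z^\D$ go through. (Your Busemann route can in fact be salvaged by the same mechanism: if $\|\b(g)-\b(h)\|_\infty\leq 1$ then $gh^{-1}\in F_{1+B}$, so the fibers of $\b$ over unit balls have size $\leq|F_{1+B}|=:N_0$, and covering the radius-$n$ ball in $\R^\D$ by $O(n^\D)$ unit balls gives $|F_n|\leq N_0\cdot O(n^\D)$; but you did not articulate this, and the paper's geometric argument is cleaner.)
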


\begin{proof}
Recall that we are using the $\ell^\8$-product metric, and that $\set_\e\{\cdot\}$ denotes the coarse pointwise stabilizer. Since the action of $\G$ is AU-acylindrical on $\X$, so is the action of the subgroup $E_\G(\xi^-, \xi^+)$. To conserve notation, without loss of generality, we may assume $\G= E_\G(\xi^-, \xi^+)$. Further, let $M = M_{20\delta}$ be the associated Morse constant from the Morse Property (See Theorem~\ref{Slim Q-Bigons infinite_p1}), and let $\lambda= \max{}\{20\delta, M\}$. 

By Remark~\ref{Rem: (1, 20delta)_p1}   for each $i\in \{1,\dots, \D\}$ there exists a $(1,20\delta)$ quasigeodesic $q_i: \Z\to X_i$ with end points $\xi_i^-$ and $\xi_i^+$. Let $L_i$ be the union of all such quasigeodsics. We have that $L_i$ is a quasiline and $q_i$ is $M$ coarsely surjective. Let $\Fl=\Prod{i=1}{\D}L_i\subset \X$ be the corresponding product. Then $q=\Prod{i=1}{\D}q_i: \Z^\D\to \Fl$ is a $(1, 20\delta)$ quasi-isometry that is $M$ coarsely surjective. Note that $\G$ preserves $\Fl$. Let $M' = M_{\lambda + 2M}$ is the Morse constant for $(1, \lambda + 2M)$ quasigeodesics.

We claim that $\set_{2\lambda}\{q(0)\}\subset\set_{2M' + 8M + 7\lambda}\{q(0), q(a))\}$ for all $a\in \Z^\D$. Indeed, if $g\in \set_{2\lambda}\{q(0)\}$ then $d(gq(0), q(0))\leq 2\lambda$. So, by   Corollary~\ref{cor: Unif Bound quasiflat} (1), if  $a\in \Z^\D$ then  $d(gq(a), q(a))\leq C_0(g) \leq 2M' + 8M + 7\lambda$.

Now, since the action of $\G$ on $\X$ is AU-acylindrical and   $\Fl$ is $\G$ invariant, it follows that the restricted action of $\G$  to $\Fl$ is also AU-acylindrical. We now use this assumption: Let  $\e = 2M' + 8M + 7\lambda$. By AU-acylindricity, there exists an $R=R(\e)>0$ so that  for all $z \in \Z^{\D}$ if $d(q(0), q(z)) >R$ then the cardinality $|\set_\e\{q(0), q(z)\}|<\8$. Since $\Fl$ is unbounded, there exists a $z\in \Z^\D$ such  that $d(q(0), q(z)) >R$, which we now fix. Let $N_{\e}=N_{\e}(q(0),q(z))= |\set_\e\{q(0), q(z)\}|<\8$ and note that by the previous paragraph applied to our choice of $z$, it follows that

$$|\{g \in \G : d(gq(0), q(0)) \leq 2\lambda\}|\leq N_{\e}<\8.$$

Let $F_n = \Cup{a\in [-n,n]^{\D}}{}\{ g \in \G : d(gq(0), q(a))\leq \lambda \}$.  Since $\lambda\geq M$ and $q$ is $M$-coarsely surjective, it follows that $\Cup{n\in \N}{} F_n = \G$. To give an upper bound on the cardinality $|F_n|$ observe that  for each $a\in [-n,n]^{\D}$ either $\{ g \in\G: d(gq(0), q(a))\leq \lambda \}= \varnothing$ or there is a $g_{a}\in \{ g \in \G: d(gq(0), q(a))\leq \lambda \}$. If there is such a $g_{a}$, then for any $h\in \{ g \in \G: d(gq(0), q(a))\leq \lambda\}$, we get that 
\begin{eqnarray*}
   d(g_{a}^{-1}.h .q(0), q(0))&\leq& d(g_{a}^{-1}.h .q(0), g_a^{-1}q(a))+ d(g_a^{-1}.q(a), q(0)) \leq  2 \lambda .
\end{eqnarray*}

Therefore, $| g_a^{-1}\cdot \{ g \in \G: d(gq(0), q(a))\leq \lambda\}|\leq N_{\e}$ and we have shown by isometry of the action that
\begin{eqnarray*}
|F_n |\leq N_{\e}\cdot (2n+1)^{\D}.
\end{eqnarray*}

Note that the finite constant $N_{\e}$ above is independent of $n$. Since amenable groups are closed under taking unions, we now prove that every finitely generated subgroup of $\G$ has polynomial growth of degree bounded by $\D$ and is hence amenable. To this end, let $S\subset\G$ be a finite set and consider the associated finitely generated subgroup $H$. Then there is a $k^0\in \N$ so that  $S\subset F_{k^0}$, i.e. if $s^1, s^2 \in S$ then there are $a^1, a^2 \in [-k^0,k^0]^{\D}$ so that $d(s^i q(0), q(a^i))\leq \lambda$ for $i=1, 2$. 

Let $C_0(s)$ are the constants provided by Corollary~\ref{cor: Unif Bound quasiflat}. Since the action is by isometries, this implies that 
\begin{eqnarray*}
d(s^1s^2.q(0), q(a^1+ a^2))&\leq& d(s^1s^2.q(0), s^1.q(a^2)) + 
d(s^1.q(a^2), s^1.q(a^1+a^2)) \\
&&+ d(s^1.q(a^1+a^2), q(a^1+a^2))\\
&\leq& \lambda +\|a^1\|_\8+ \lambda + \max{s\in S}\, d(s.q(a^1 + a^2), q(a^1 + a^2))\\
&\leq&2\lambda +k^0 + \max{s\in S}\, C_0(s).
\end{eqnarray*}

Now by the coarse surjectivity, there exists  $b \in \Z^{\D}$ such that $d( s^1s^2.q(0), q(b)) \leq \lambda$. The following calculation shows that $s^1s^2 \in F_{4\lambda + 3k^0 + \max{s\in S}\, C_0(s)}$:
\begin{eqnarray*} 
\|b\|_\8 & = &\|a^1 + a^2 +b -a^1 -a^2\|_\8 \\
 & \leq & \|a^1 + a^2 -b\|_\8 + \|a^1 +a^2\|_\8 \\
& \leq & d( q(a^1+ a^2) , q(b)) + \lambda + 2k^0 \\
& \leq & d(q(a^1+ a^2), s^1s^2.q(0)) + d(s^1s^2.q(0), q(b)) + \lambda + 2k^0 \\
& \leq & 4\lambda + 3k^0 + \max{s\in S}\, C_0(s)
\end{eqnarray*} 

Fix $n\in \N$. We claim that if $s^1, \dots, s^n \in S$  then $s^1s^2\cdots s^n \in F_{ (3n -2)\lambda + 3k^0 + (n-1) \max{s\in S}\, C_0(s)}$, which we shall prove by induction on $n$. Obviously, we have established above that this holds for $n = 2$ (and holds for $n=1$ as well). Assume we have proven the claim for $s^1s^2\cdots s^n$, i.e. there exists $b \in \Z^{\D}$ such that $$d(s^1s^2\cdots s^n q(0), q(b)) \leq \lambda$$ and $\|b\|_\infty \leq (3n -2)\lambda + 3k^0 + (n-1)\max{s\in S}\, C_s$. 

Let $s^{n+1}\in S$ and consider $s^1s^2\cdots s^n s^{n+1}. q(0)$. Choose $b$ as above and $w \in \Z^{\D}$ such that $$d(s^1s^2\cdots s^n s^{n+1}. q(0), q(w)) \leq \lambda.$$ 

Then similar to the computation above, we deduce that \begin{eqnarray*} 
\|w\|_\8 &= &\|w +b-b\|_\8\\
& \leq &\|w-b\|_\8 + \|b\|_\8 \\
& \leq& \|w-b\|_\8 + (3n -2)\lambda + 3k^0 + (n-1)\max{s\in S}\, C_0(s)\\
& \leq &d(q(w), q(b)) + \lambda + (3n -2)\lambda + 3k^0 + (n-1)\max{s\in S}\, C_0(s)\\
& =& d(q(w), q(b)) + (3n -1)\lambda + 3k^0 + (n-1)\max{s\in S}\, C_0(s) \\
& \leq& d(q(w), s^1\cdots s^n s^{n+1}. q(0)) + d( s^1\cdots s^n s^{n+1}. q(0), s^1s^2\cdots s^n. q(0)) \\
 && + d( s^1s^2\cdots s^n. q(0), q(b)) + (3n -1)\lambda + 3k^0 + (n-1)\max{s\in S}\, C_0(s)  \\
& \leq &\lambda + d(s^{n+1}. q(0), q(0)) + \lambda + (3n -1)\lambda + 3k^0 + (n-1)\max{s\in S}\, C_0(s) \\
& \leq &(3n +1)\lambda + 3k^0 +  d(s_{n+1}. q(0), q(0))  + (n-1)\max{s\in S}\, C_0(s) \\
& \leq &(3n +1)\lambda + 3k^0 + n \hspace{2pt}\max{s\in S}\, C_0(s).
\end{eqnarray*}

This completes the induction argument, and gives us the linear relation that the set of words from $S$ of length bounded by $n$ is a subset of $F_{n(\lambda + \max{s \in S}C_0(s)) + \lambda'}$.  Therefore, $H$ has polynomial growth and is hence amenable. Finally $\G$ is the union of it's finitely generated subgroups, all of which have polynomial growth of exponent $\D$, and hence $\G$ is also amenable.
\end{proof}

\noindent
\textbf{\underline{Note}:} For the remainder of this subsection,  suppose that $\G$ is an amenable group with an  action $\G\to \Aut_{\!0}\Fl$, where $\Fl = \Prod{i=1}{\D}L_i$. Up to replacing $\G$ with a subgroup of index at most $2^\D$, we may assume that $\G$ acts trivially on $\partial_{reg} \Fl$. Fix $\xi\in \partial_{reg} \Fl$. We then obtain $\b_i: \G \to \R$, the Busemann homomorphism associated to $\xi_i\in \partial L_i$ for each $i =1, \dots, \D$  and hence $\b:=\Delta(\b_1, \dots, \b_\D): \G\to \R^\D$. This yields an action by translations $g(r)= r+\b(g)$, for $g\in \G$ and $r\in \R^{\D}$. In coordinates we have $$g.(r_1, \dots, r_{\D}) = (r_1 + \b_1(g), \dots, r_{\D} + \b_{\D}(g)).$$ 

We will show that AU-acylindricity of the original action on $\F$ descends to  AU-acylindricity for this action by translations $\R^{\D}$.

\begin{prop}\label{prop: acyldescends} Let $\G$ be amenable and $\G \to \Aut_{\!0}\Fl$  be an AU-acylindrical action with trivial action on $\partial_{reg}\Fl$. Then the action by translation on $\R^{\D}$ defined above is also AU-acylindrical.  \end{prop}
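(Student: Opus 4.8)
The plan is to first notice that a translation action has base-point-independent displacement: in the $\ell^\8$-metric on $\R^{\D}$, $d(g.r,r)=\|\b(g)\|_\8$ for every $g\in\G$ and $r\in\R^{\D}$. Consequently, for any $\e>0$ and any $r,r'\in\R^{\D}$, the joint coarse stabilizer $\set_\e\{r,r'\}$ is exactly the set $K_\e:=\{g\in\G:\|\b(g)\|_\8\leq\e\}$, which does not depend on $r$ or $r'$. So the translation action is AU-acylindrical (indeed proper) precisely when $K_\e$ is finite for every $\e>0$, in which case $R=0$ works in the definition. Thus the whole statement reduces to showing $|K_\e|<\8$ for all $\e>0$, and this is where the AU-acylindricity of the given action on $\Fl$ is used.

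Next I would transfer smallness of $\b$ to smallness of displacement inside $\Fl$. Keeping the notation of the proof of Theorem~\ref{theorem: elementary acyl is amenable}, for each $i$ fix a $(1,20\delta)$ quasigeodesic $q_i\colon\Z\to L_i$ converging to $\xi_i$ in the positive direction, set $q=\Prod{i=1}{\D}q_i\colon\Z^{\D}\to\Fl$ (a $(1,20\delta)$ quasi-isometry that is $M$-coarsely surjective for $M=M_{20\delta}$), and let $E$ be the constant of Lemma~\ref{Lem:shiftfn}. Since each $L_i$ is a quasiline, the quasiline clause of Lemma~\ref{Lem:shiftfn} gives $d(q_i(0),g_iq_i(0))\leq|\b_i(g)|+E+20\delta$ for every $g\in\G$ and every $i$, and taking the maximum over $i$ (we use the $\ell^\8$-metric) yields $d(q(0),g.q(0))\leq\|\b(g)\|_\8+E+20\delta$. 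Hence $K_\e\subset\set_\mu\{q(0)\}$ with $\mu:=\e+E+20\delta$. This displacement inequality is the one genuinely geometric ingredient, and it is exactly here that the quasiline hypothesis is indispensable: on a general hyperbolic factor a parabolic isometry has vanishing Busemann quasimorphism yet unbounded displacement, so no such bound could hold --- this is the higher-rank shadow of the incompatibility of parabolic actions with acylindricity.

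It then remains to show $\set_\mu\{q(0)\}$ is finite, which I would do by repeating the counting step from the proof of Theorem~\ref{theorem: elementary acyl is amenable}. Since $\G$ acts trivially on $\partial_{reg}\Fl$, we have $\G\to\Prod{i=1}{\D}\Isom_{\! 0}L_i$, so Corollary~\ref{cor: Unif Bound quasiflat}(1) applies with $\lambda=20\delta$: whenever $d(g.q(0),q(0))\leq\mu$, we get $d(g.q(a),q(a))\leq C_0(g)\leq 2M'+8M+5\lambda+\mu=:\e'$ for every $a\in\Z^{\D}$, where $M,M'$ are the Morse constants occurring in that corollary. In particular $\set_\mu\{q(0)\}\subset\set_{\e'}\{q(0),q(a)\}$ for all $a$. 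By the AU-acylindricity of the action on $\Fl$, there is $R(\e')\geq0$ with $|\set_{\e'}\{q(0),q(z)\}|<\8$ whenever $d(q(0),q(z))>R(\e')$; since $\Fl$ is unbounded and $q$ is coarsely surjective, such a $z\in\Z^{\D}$ exists. Fixing one, $K_\e\subset\set_\mu\{q(0)\}\subset\set_{\e'}\{q(0),q(z)\}$ is finite, completing the argument.

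I do not expect a serious technical obstacle here: the proposition follows by layering the reduction of the first paragraph onto machinery already in place (Lemma~\ref{Lem:shiftfn}, Corollary~\ref{cor: Unif Bound quasiflat}, and the growth-counting of Theorem~\ref{theorem: elementary acyl is amenable}). The only points demanding care are the consistent bookkeeping of the constants $E$, $M$, $M'$, $\lambda$, $\mu$, $\e'$, and recognizing at the outset that the coarse stabilizers of a translation action are insensitive to the base points; the sole substantive geometric input is the quasiline displacement bound in the second step.
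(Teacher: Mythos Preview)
Your proof is correct and, in several respects, cleaner than the paper's. Both arguments begin the same way: use the quasiline clause of Lemma~\ref{Lem:shiftfn} in each factor to convert $\|\b(g)\|_\8\leq\e$ into a bound $d(g.q(0),q(0))\leq\mu$. The difference lies in how one then produces a \emph{second} far-away point with small $g$-displacement. The paper invokes Proposition~\ref{prop:smalltranslation}: it picks a group element $h\in\G$ with $d(h.x,x)\geq R$ (relying on unboundedness of the \emph{orbit}) and uses the function $A$ to control $d(g.h.x,h.x)$. You instead reuse the propagation step from the proof of Theorem~\ref{theorem: elementary acyl is amenable}, applying Corollary~\ref{cor: Unif Bound quasiflat}(1) to push small displacement at $q(0)$ to small displacement at every $q(a)$, and then simply choose $z\in\Z^{\D}$ with $d(q(0),q(z))>R(\e')$, which exists because the \emph{space} $\Fl$ is unbounded.

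Your route avoids Proposition~\ref{prop:smalltranslation} altogether, needs only unboundedness of $\Fl$ rather than of the orbit, and exploits a simplification the paper does not make explicit: since the action on $\R^{\D}$ is by translations, the coarse stabilizer $\set_\e\{r,r'\}$ equals $K_\e$ regardless of $r,r'$, so the AU-acylindricity parameter $R$ is irrelevant on the target side (the paper's choice $R_\e=2\e$ is harmless but unnecessary). The paper's approach, on the other hand, keeps the argument parallel to the general quasimorphism machinery of Section~\ref{sec:isomrank1} and makes the uniform case (acylindrical $\Rightarrow$ acylindrical) slightly more transparent, since the $N'$ appearing there is explicitly the acylindricity constant for $\e'$.
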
 

\begin{proof} Fix  $q_i: \Z\to L_i$ a $(1, 20\delta)$ quasi-isometry for $i=1, \dots, \D$. Then $q= \Prod{i=1}{\D}: \Z^{\D}\to \Fl$ is a$(1, 20\delta)$ quasi-isometry as well.  Let $\e > 0$. Let $x= q(0)$ be a base point in $\Fl$. Since $\G$ preserves the factors,  for  each $i\in \{1, \dots, \D\}$  we may project $\G \to \Isom L_i$ and apply Proposition~\ref{prop:smalltranslation} and Lemma~\ref{Lem:shiftfn}, yielding the functions $A_i$  and associated constants $B_i, E_i$. Let $B = \max{i} \hspace{2pt}B_i$ and $E = \max{i} \hspace{2pt} E_i$ (with respect to the base point $x_i = q_i(0)$).

Let $R$ be the AU-acylindricity constant associated to the action of $\G \to \Aut_{\! 0} \Fl$ for $\displaystyle \e' = \e + 20\delta + B + E$. We claim that AU-acylindricity holds for $\e$ for the action on $\R^\D$ by taking $R_\e = 2\e$.

Indeed, let $r, s\in\R^{\D}$ at distance greater than $2\e$. Consider $g \in \G$ such that  $$d( r, gr) \leq \e \text{  and  } d(s, gs) \leq \e.$$

Since $\G$ is acting by translation, the above inequalities imply that $|\b_i(g_i)| \leq \e$ for all $i \in\{1, \dots, \D\}$. Applying Lemma~\ref{Lem:shiftfn} to each factor, it follows that in $\F$, $$d(x, gx) \leq \e + E + 20\delta.$$ 

Fix an element $h \in \G$ such that $d(h  x,  x) \geq R$. Such an element must exist as the orbit of $\G$ on $\F$ is unbounded. There are two possibilities to consider in each factor $L_i$. If $\b_i(h_i) \leq A_i(\e + 20\delta + E_i)$, then it follows from Proposition~\ref{prop:smalltranslation} that $$d(h_ix_i, g_ih_ix_i) \leq |\b_i(g_i)| + B_i \leq \e + B_i.$$
If not, then $\b_i(h_i) \leq A_i(n)$ for a sufficiently large $n \geq \e + 20\delta + E_i$. However, then $d(x_i, g_ix_i) \leq \e + 20\delta + E_i \leq n$, and Proposition~\ref{prop:smalltranslation} still implies $d( h_ix, g_ih_ix) \leq |\b_i(g_i)| + B_i \leq \e + B_i$.

Thus we have $$d(x, gx)  \leq \e + 20\delta + E \leq \e'$$ and $$d(gh x , hx)  \leq \e  + B \leq \e'.$$

Since the action is AU-acylindrical, the element $g$ has at most finitely many choices (which is a uniform $N' = N(\e')$ in the acylindrical case) and thus we are done. \end{proof}

\begin{proof}[Proof of Proposition~\ref{prop:elemsubvirtab}] By Theorem~\ref{theorem: elementary acyl is amenable}, we know that $E_\G(\xi^-, \xi^+)$ is amenable.  By Proposition~\ref{prop: acyldescends}, we see that the action via the Busemann quasimorphisms descends to an AU-acylindrical action on $\R^{\D}$ by translations. The result now follows by Lemma~\ref{lem: AU eucl is Zk}.

\end{proof}

We now turn to the proof of Proposition~\ref{prop: reg fixed implies reg pair}. Thinking of acylindricity as being a generalization of a cocompact lattice, this result is in line with saying that there are no unipotents in a cocompact lattice.

\begin{proof}[Proof of Proposition~\ref{prop: reg fixed implies reg pair}]  We recall our hypotheses:  $\rho:\G \to \Aut\X$ is acylindrical and not elliptic and $\xi \in \partial_{reg}  \X$. Without loss of generality, we may first pass to a finite index subgroup if necessary and assume that $\G$ is factor-preserving. For ease of notation, let $H = \stab_\G\{\xi\}$.

\noindent 
\textbf{Case 1:} For each $i=1, \dots, \D$, the action $\rho_i(H)$ is either elliptic or parabolic.

As acylindricity passes to subgroups, by Theorem~\ref{thm:elimmixedfactors}, at least one factor is elliptic. By Proposition~\ref{prop:removepara}, if $J$ is the set of indices for which $\rho_i(H)$ is elliptic, then $\Delta(\rho_i: i\in J): H\to \Prod{i\in J}{}\Isom X_i$ is acylindrical. 
By Remark~\ref{Rem: ell, fixing point on bound} and the acylindricity applied to a half-ray tending to each $\xi_i \in \partial X_i$ proves that $H$ is finite.

\noindent
\textbf{Case 2:} For each $i=1, \dots, \D$, the action $\rho_i(H)$ is neither elliptic nor parabolic.

Lemma~\ref{Lem:noqpinacyl} rules out the possibility that some factor action is quasiparabolic. Therefore, by Theorem~\ref{thm:actionsclassrk1}  for each $i=1, \dots, \D$ the action of $\rho_i(H)$ on $X_i$ is oriented lineal so $\rho_i(H)$ fixes exactly two points in $\partial X_i$. Let $\xi_i'$ be the other fixed point.

Let $\xi'=(\xi_1', \dots, \xi_\D')\in \partial_{reg}\X$. It follows that $\xi$ and $\xi'$ have distinct factors and $H =\fix_\G\{\xi, \xi'\}$.  Moreover, since acylindricity passes to subgroups, the action $\rho(H)$ on $\X$ is acylindrical and not elliptic. By Corollary~\ref{prop:elemsubvirtab}, $H$ is virtually $\Z^k$ where $1\leq k \leq \D$. 

\noindent
\textbf{Case 3:} The set of indices $I$ for which  $\rho_i(H)$ is neither elliptic nor parabolic is not empty.

 By Corollary~\ref{cor: clean up acyl}, the action $\Delta(\rho_i:i\in I):H\to \Prod{i\in I}{}\Isom X_i$ is  acylindrical. By Case 2, for each $i\in I$ there exists $\xi_i'\in \partial X_i\setminus \{\xi_i\}$ that is fixed by $\rho_i(H)$. Finally, for $i\notin I$, let $\xi_i'= \xi_i$. Observing that $|I|\leq \D$ and setting $\xi'=(\xi_1', \dots, \xi_\D')$ completes the proof as $H = \fix_\G\{\xi, \xi'\}$ in this case as well.

Lastly, the furthermore part if the statement of the proposition follows directly from the details of Case 2 above. \end{proof}

\begin{cor}
    Suppose that $\G \to \Aut\X$ is $AU$-acylindrical with all factors lineal. Then $\G$ is virtually  $\Z^k$ for $0 \leq k \leq \D$. 
\end{cor}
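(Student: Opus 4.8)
The plan is to reduce to a factor-preserving, orientedly lineal action and then quote Proposition~\ref{prop:elemsubvirtab}. First I would pass to the finite-index subgroup $\G_0\leq\G$ that preserves the factors of $\X$; it is still AU-acylindrical, since acylindricity of either flavour is inherited by subgroups, and each of its factor actions $\G_0\to\Isom X_i$ is still lineal, because a suitable power of a loxodromic element lies in $\G_0$ and retains its fixed-point pair on $\partial X_i$. Each $\G_0\to\Isom X_i$ therefore fixes an unordered pair $\{\xi_i^-,\xi_i^+\}\subset\partial X_i$ setwise; passing to a further subgroup $\G'\leq\G_0$ of index at most $2^\D$ we may assume every factor action is orientedly lineal, so that $\G'$ fixes each $\xi_i^-$ and each $\xi_i^+$ individually.

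Next, set $\xi^-=(\xi_1^-,\dots,\xi_\D^-)$ and $\xi^+=(\xi_1^+,\dots,\xi_\D^+)$; these lie in $\partial_{reg}\X$ and have distinct factors. By construction $\G'\leq\fix_\G\{\xi_i^-,\xi_i^+\}$ for every $i$, hence $\G'\leq E_\G(\xi^-,\xi^+)$. The given action $\G\to\Aut\X$ is not elliptic, since each factor action is lineal and thus has unbounded orbits, so Proposition~\ref{prop:elemsubvirtab} applies and yields that $E_\G(\xi^-,\xi^+)$ is virtually $\Z^k$ for some $1\leq k\leq\D$.

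Finally I would carry out the routine group-theoretic bookkeeping: pick $E_0\leq E_\G(\xi^-,\xi^+)$ of finite index with $E_0\cong\Z^k$; then $\G'\cap E_0$ is of finite index in $\G'$ and is a subgroup of $\Z^k$, hence isomorphic to $\Z^m$ with $0\leq m\leq k$. Consequently $\G'$ is virtually $\Z^m$, and since $\G'$ has finite index in $\G$, so is $\G$, with $0\leq m\leq k\leq\D$ (the case $m=0$ being $\G$ finite). No step here is genuinely hard: the only points needing a moment's care are that the lineality of each factor survives the two passages to finite-index subgroups and that the ``not elliptic'' hypothesis of Proposition~\ref{prop:elemsubvirtab} indeed holds, and both are immediate.
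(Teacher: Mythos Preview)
Your proposal is correct and follows essentially the same route as the paper: pass to a finite-index factor-preserving subgroup with orientedly lineal factors, assemble the two fixed boundary points in each factor into $\xi^-,\xi^+\in\partial_{reg}\X$ with distinct factors, and invoke Proposition~\ref{prop:elemsubvirtab}. Your version is simply more explicit about the intermediate reductions and the final bookkeeping; note that since $\G'$ is finite index in $\G$ and contained in $E_\G(\xi^-,\xi^+)$, it is in fact finite index in $E_\G(\xi^-,\xi^+)$, so you could take $m=k$ directly rather than allowing $m<k$.
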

\begin{proof} Up to passing to a finite index subgroup, we may assume that $\G \to \Aut_{\~0} \X$ with all factors admitting oriented lineal factors. Then result now follows from the fact that each lineal factor has exactly two distinct limit points on $\partial X_i$ that are fixed by the whole group and applying Proposition~\ref{prop:elemsubvirtab}.  \end{proof}

The following is the second part of Theorem~\ref{intro:regelts}. Let $g\in \G$ and $H\leq \G$. We shall denote the centralizer of $g$ in $\G$ by $\mathfrak C_\G(g)$ and the normalizer of $H$ in $\G$ by $\mathfrak N_\G(H)$. 

\begin{cor}\label{cor: cent reg elmt} Let $\G\to \Aut\X$ be AU-acylindrical, and $\g\in \G$ a regular element. Let $\g^-, \g^+\in \partial_{reg}\X$ be the repelling, respectively attracting fixed points of $\g$ with coordinates $\g_i^-, \g_i^+\in \partial X_i$, for $i=1, \dots, \D$. Then the following are finite index inclusions $$\mathfrak C_\G(\g)\leq E_\G(\g^-, \g^+)\leq \mathfrak N_\G(E_\G(\g^-, \g^+))\leq \Sym_\X(\D)\ltimes \Cap{i=1}{\D}\stab_\G\{\g_i^-, \g_i^+\}.$$ 
\end{cor}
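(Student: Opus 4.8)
The plan is to establish the four containments from right to left and then argue finiteness of each index. First I would observe that the rightmost inclusion $E_\G(\g^-,\g^+)\le \Sym_\X(\D)\ltimes \Cap{i=1}{\D}\stab_\G\{\g_i^-,\g_i^+\}$ is essentially a tautology: by Definition~\ref{def: elem subgroup}, $E_\G(\g^-,\g^+) = \Cap{i=1}{\D}\fix_\G\{\g_i^-,\g_i^+\}$, and $\fix_\G\{\g_i^-,\g_i^+\}\le \stab_\G\{\g_i^-,\g_i^+\}$ always; elements that fix each pair of factors pointwise certainly do not permute factors (they lie in $\Aut_{\!0}\X$), so the inclusion into the larger group that allows permutations is immediate. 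For the normalizer inclusion $E_\G(\g^-,\g^+)\le \mathfrak N_\G(E_\G(\g^-,\g^+))$, this is just the general fact that any subgroup normalizes itself, provided $E_\G(\g^-,\g^+)$ is not trivial — and since $\g\in E_\G(\g^-,\g^+)$ and $\g$ is regular (hence of infinite order), it is nontrivial. The leftmost inclusion $\mathfrak C_\G(\g)\le E_\G(\g^-,\g^+)$ follows because any $h$ commuting with $\g$ must preserve the fixed-point data of $\g$: $h\g^\pm = (\text{fixed points of } h\g h^{-1}) = (\text{fixed points of }\g) = \g^\pm$ coordinatewise, using the classification (Definition~\ref{defn:elementsclassextn} and the dynamics of loxodromic isometries in each factor); hence $h$ fixes each $\{\g_i^-,\g_i^+\}$ — in fact fixes each point — so $h\in E_\G(\g^-,\g^+)$.

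Next I would address the finiteness of the indices, which is where the substantive content lies. By Proposition~\ref{prop:elemsubvirtab} (applied with $\xi^- = \g^-$, $\xi^+ = \g^+$, which have distinct factors since $\g$ is regular), $E_\G(\g^-,\g^+)$ is virtually $\Z^k$ with $1\le k\le\D$; in particular it is finitely generated, amenable, and has a finite-index free abelian subgroup. For the index $[\mathfrak N_\G(E_\G(\g^-,\g^+)) : E_\G(\g^-,\g^+)]$: the normalizer permutes the fixed-point data; more precisely, since $E := E_\G(\g^-,\g^+)$ contains the regular element $\g$ and $\partial_{reg}$-fixed points are canonically attached to $E$ (the pair $\{\g^-,\g^+\}$ is, up to the finitely many choices coming from torsion, determined by $E$ via its action on the boundary — the limit set of $E$ in each factor $\partial X_i$ is exactly $\{\g_i^-,\g_i^+\}$ since $E$ is virtually lineal in each factor), conjugation by an element of $\mathfrak N_\G(E)$ must send $\{\g_i^-,\g_i^+\}$ to itself as a set for each $i$, possibly also permuting isometric factors. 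This gives a homomorphism $\mathfrak N_\G(E)\to \Sym_\X(\D)\ltimes (\Z/2)^\D$ (the $\Sym_\X(\D)$ recording factor permutations and each $\Z/2$ recording whether $\g_i^-,\g_i^+$ are swapped), a finite group; its kernel $K$ fixes all $\g_i^\pm$ pointwise, hence $K\le E$, and conversely $E\le K$, so $K = E$ and $[\mathfrak N_\G(E):E]$ divides $|\Sym_\X(\D)|\cdot 2^\D<\8$. The same argument shows $[\,\Sym_\X(\D)\ltimes \Cap{i=1}{\D}\stab_\G\{\g_i^-,\g_i^+\} : E\,]$ is finite, since $\stab_\G\{\g_i^-,\g_i^+\}/\fix_\G\{\g_i^-,\g_i^+\}$ embeds in $\Z/2$ and there are $\D$ factors plus the finite factor-permutation group.

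Finally, for $[E_\G(\g^-,\g^+):\mathfrak C_\G(\g)]$: here I would use that $E$ is virtually $\Z^k$. Since $\g\in E$ has infinite order, the centralizer of $\g$ inside a virtually abelian group is of finite index — concretely, pass to a finite-index $A\cong \Z^k$ in $E$; then $A$ centralizes its finite-index subgroup $\langle \g\rangle\cap A$, and by Lemma~\ref{lem: virt isom to virt for abelian}-type reasoning (or directly: the conjugation action of $E$ on the finite-index characteristic subgroup, combined with $E/(\text{finite}) \cong \Z^k$ acting on a power of $\g$), one sees $\mathfrak C_E(\g) = \mathfrak C_\G(\g)$ has finite index in $E$. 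I expect the main obstacle to be pinning down cleanly the claim that $\mathfrak N_\G(E)$ must preserve the boundary data $\{\g_i^-,\g_i^+\}$ — one has to rule out the possibility that conjugation sends $E$ to itself while moving its limit set, which requires knowing that the limit set of $E$ in each $\partial X_i$ is intrinsic to $E$ (not just to the pair $(\G,\g)$); this follows from $E$ being virtually lineal on each factor with the two boundary fixed points being precisely its unique limit points, together with the Morse/quasiline machinery of Section~\ref{Sec:elem rank1}, but it needs to be stated carefully, accounting for the finitely many ambiguities introduced by the finite kernel when passing between $E$ and its free abelian part.
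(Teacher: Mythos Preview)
Your proposal is correct and arrives at the same conclusions, but the paper's route to the normalizer inclusion is noticeably simpler than yours and sidesteps exactly the obstacle you flag. Rather than arguing that the limit set of $E:=E_\G(\g^-,\g^+)$ in each factor is intrinsic (via $E$ being virtually lineal, etc.), the paper exploits the concrete regular element $\g\in E$ directly: if $g\in\mathfrak N_\G(E)$ then $g\g g^{-1}\in E$, so $g_i\g_i g_i^{-1}$ fixes $\g_i^-$ and $\g_i^+$; but by Remark~\ref{rem: conj lox fixed pts} it is loxodromic with fixed points $g_i.\g_i^-$ and $g_i.\g_i^+$, and a loxodromic has exactly two boundary fixed points, whence $\{g_i.\g_i^-,g_i.\g_i^+\}=\{\g_i^-,\g_i^+\}$. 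This one line replaces your entire ``limit set is intrinsic'' paragraph and its attendant worries about torsion and quasiline machinery. The same observation with $g\g g^{-1}=\g$ gives $\mathfrak C_\G(\g)\le E$ immediately.

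A minor organizational point: you describe ``the rightmost inclusion $E\le \Sym_\X(\D)\ltimes\Cap{i=1}{\D}\stab_\G\{\g_i^-,\g_i^+\}$'' as a tautology, but that is not one of the three inclusions in the chain---what needs to be shown is that the \emph{normalizer} lies in the right-hand side. You do eventually establish this via your kernel argument, so nothing is wrong, but it makes the write-up look confused. Both proofs handle the $\Sym_\X(\D)$ part by reducing to the factor-preserving subgroup at cost $\le\D!$; both use Proposition~\ref{prop:elemsubvirtab} to make $E$ virtually $\Z^k$ and thereby get finite index of the centralizer. The payoff of your approach is a conceptually clean ``homomorphism to a finite group with kernel $E$'' packaging; the payoff of the paper's is that it never needs to know the limit set of $E$, only the fixed points of the single loxodromic $\g$.
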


\begin{remark}\label{rem: conj lox fixed pts}
   Let $\g, g\in \Isom X$. If $\g$ is loxodromic  then it is straightforward to verify that $g\g g^{-1}$ is also loxodromic with repelling fixed point $g.\g^-$ and attracting fixed point $g.\g^+$. 
\end{remark}

\begin{proof} Recall that $E_\G(\g^-, \g^+)= \Cap{i=1}{\D}\fix_\G\{\g_i^-, \g_i^+\}$. As any subgroup normalizes itself, the second inclusion  $E_\G(\g^-, \g^+)\leq \mathfrak N_\G(E_\G(\g^-, \g^+))$ is immediate. 

Without loss of generality, assume that $\G$ is factor-preserving, as this  amounts to passing to a subgroup of index at most $\D!$. Let us prove that $\mathfrak N_\G(E_\G(\g^-, \g^+))\leq \Cap{i=1}{\D}\stab_\G\{\g_i^-, \g_i^+\}$ and so by the orbit stabilizer theorem the index of the second inclusion is at most $2^\D$.

Let $g\in \mathfrak N_\G(E_\G(\g^-, \g^+))$ and note that $g\g g^{-1}\in E_\G(\g^-, \g^+)$, meaning that $g_i\g_i g_i^{-1}$ fixes $\g_i^-$ and $\g_i^+$. By Remark~\ref{rem: conj lox fixed pts}, $g_i\g_i g_i^{-1}$ is a loxodromic with repelling and attracting fixed points $g.\g_i^-$ and $g.\g_i^+$, respectively, and therefore, $\{g_i.\g_i^-, g_i.\g_i^+\}=\{\g_i^-, \g_i^+\}$, for each $i=1, \dots, \D$ i.e. $g\in \Cap{i=1}{\D}\stab_\G\{\g_i^-, \g_i^+\}$.

The above argument also shows that if $g\in \mathfrak C_\G(\g)$ then $g\in E_\G(\g^-, \g^+)$, as in this case $g\g g^{-1}=\g$ and therefore, only the question of finite index remains. By Proposition~\ref{prop:elemsubvirtab}, $E_\G(\g^-, \g^+)$ is virtually free abelian. This completes the proof.
\end{proof}

\bibliographystyle{alpha}
\bibliography{AHR}

\end{document}